\definecolor{mydarkblue}{rgb}{0,0.08,0.45}
\newcommand{\ignore}[1]{}
\newcommand{\seb}[1]{}
\newcommand{\sai}[1]{}
\newcommand{\praneeth}[1]{}
\newcommand{\martin}[1]{}
\newcommand{\as}[1]{}
\newcommand{\iseb}[1]{}
\newcommand{\primal}{\mathcal{P}}
\newcommand{\dual}{\mathcal{D}}
\newcommand{\deltav}{\Delta \alphav}
\newcommand{\scal}[2]{{\left\langle #1, #2\right\rangle}}
\newcommand{\sigmoid}[1]{{\dfrac{1}{1 + \exp(-#1)}}}
\newcommand{\defeq}{\mathrel{\overset{\makebox[0pt]{\mbox{\normalfont\tiny\sffamily def}}}{=}}}
\DeclareMathOperator{\gap}{gap}
\newcommand{\A}{\mathcal{A}}
\renewcommand{\P}{\mathcal{P}}
\newcommand{\B}{\mathcal{B}}
\newcommand{\Q}{\mathcal{Q}}
\newcommand{\G}{\mathcal{G}}
\newcommand{\shrinkage}{S}
\newcommand{\good}{{\texttt{good}}}
\newcommand{\bad}{{\texttt{bad}}}
\newcommand{\cross}{{\texttt{cross}}}
\newcommand{\gs}{{\texttt{GS}}}
\newcommand{\steepsub}{\textnormal{\texttt{GS{\tiny-}s}}}
\newcommand{\steeplook}{\textnormal{{\texttt{GS{\tiny-}q}}}}
\newcommand{\steeplookr}{\textnormal{{\texttt{GS{\tiny-}r}}}}
\newcommand{\falconn}{{\texttt{FALCONN}}}
\newcommand{\nmslib}{{\texttt{nmslib}}}
\DeclareMathOperator{\MIPS}{MIPS}
\mathchardef\mhyphen="2D 
\DeclareMathOperator{\SMIPS}{S\mhyphen MIPS}
\newcommand{\plus}{+}
\newcommand{\minus}{{-}}
\newcommand{\dhillon}{{\texttt{dhillon}}}
\newcommand{\lsh}{{\texttt{steepest{\tiny-}lsh}}}
\newcommand{\nms}{{\texttt{steepest{\tiny-}nn}}}
\newcommand{\steepest}{{\texttt{steepest}}}
\newcommand{\uniform}{{\texttt{uniform}}}
\newcommand{\rcv}{{\texttt{rcv1}}}
\newcommand{\makeregression}{{\texttt{make{\tiny-}regression}}}
\newcommand{\sector}{{\texttt{sector}}}
\newcommand{\wa}{{\texttt{w1a}}}
\newcommand{\ijcnn}{{\texttt{ijcnn1}}}
\newtheorem{definition}{Definition}
\newtheorem{theorem}{Theorem}
\newtheorem{lemma}{Lemma}
\newtheorem{remark}[definition]{Remark}
\DeclareMathOperator{\prox}{prox}
\DeclareMathOperator*{\E}{\mathbb{E}}
\DeclareMathOperator*{\argmin}{\arg\min}
\DeclareMathOperator*{\argmax}{\arg\max}
\DeclareMathOperator*{\sign}{sign}
\newcommand\norm[1]{\left\lVert#1\right\rVert}
\newcommand\floor[1]{\left\lfloor#1\right\rfloor}
\newcommand\ceil[1]{\left\lceil#1\right\rceil}
\newcommand\abs[1]{\left|#1\right|}
\newcommand\encase[1]{\left[ #1 \right]}
\newcommand\encaser[1]{\left( #1 \right)}
\newcommand\encasecurly[1]{\left\{ #1 \right\}}
\newcommand{\positive}[1]{\encase{#1}_+}
\newcommand{\where}{\ | \ }
\newcommand{\R}{\mathbb{R}}
\newcommand{\real}{\mathbb{R}}
\newcommand{\bv}{\bm{b}}
\newcommand{\cv}{\bm{c}}
\newcommand{\av}{\bm{a}}
\newcommand{\vv}{\bm{v}}
\newcommand{\gv}{\bm{g}}
\newcommand{\wv}{\bm{w}}
\newcommand{\xv}{\bm{x}}
\newcommand{\yv}{\bm{y}}
\newcommand{\sv}{\bm{s}}
\newcommand{\qv}{\bm{q}}
\newcommand{\pv}{\bm{p}}
\newcommand{\zv}{\bm{z}}
\newcommand{\alphav}{{\boldsymbol \alpha}}
\newcommand{\zetav}{{\boldsymbol \zeta}}
\newcommand{\Av}{\bm{A}}
\newcommand{\ind}[1]{\mathbf{1}_{\encasecurly{#1}}} 
\newcommand{\0}{\mathbf{0}} 
\newcommand{\unit}{\mathbf{e}} 
\newcommand{\one}{\mathbf{1}}
\def\gradient[#1]{{\dfrac{\partial}{\partial #1}}}
\title{Efficient Greedy Coordinate Descent for Composite Problems}
\author{
	Sai Praneeth Karimireddy\footnote{Equal contribution.}\\
	\texttt{sai.karimireddy@epfl.ch}
	\and
	Anastasia Koloskova$^*$\\
	\texttt{anastasia.koloskova@epfl.ch}
	\and
	Sebastian U. Stich\\
	\texttt{sebastian.stich@epfl.ch}
	\and
	Martin Jaggi\\
	\texttt{martin.jaggi@epfl.ch}
}
\begin{document}

\maketitle

\begin{abstract}
Coordinate descent with random coordinate selection is the current state of the art for many large scale optimization problems.
However, greedy selection of the steepest coordinate on smooth problems can yield convergence rates independent of the dimension $n$, and requiring upto $n$ times fewer iterations.

In this paper, we consider greedy updates that are based on subgradients for a class of non-smooth composite problems, which includes $L1$-regularized problems, SVMs and related applications. For these problems we provide (i) the first linear rates of convergence independent of $n$, and show that our greedy update rule provides speedups similar to those obtained in the smooth case. This was previously conjectured to be true for a stronger greedy coordinate selection strategy.

Furthermore, we show that (ii) our new selection rule can be mapped to instances of maximum inner product search, allowing to leverage standard nearest neighbor algorithms to speed up the implementation.
We demonstrate the validity of the approach through extensive numerical experiments.
\end{abstract}

\setlength{\abovedisplayskip}{6pt}
\setlength{\belowdisplayskip}{6pt}

\section{Introduction}\label{sec:intro}
In recent years, there has been increased interest in coordinate descent (CD) methods
due to their simplicity, low cost per iteration, and efficiency~\citep{wright_coordinate_2015}.
Algorithms based on coordinate descent are the state of the art for many optimization
problems~\citep{nesterov_efficiency_2012,shalev-shwartz_stochastic_2013,lin_accelerated_2014,shalev-shwartz_accelerated_2013-1,shalev-shwartz_accelerated_2016,richtarik_parallel_2016,fercoq_accelerated_2015,nesterov_efficiency_2017}.
Most of the CD methods draw their coordinates from a fixed distribution---for instance from the uniform distribution as in uniform coordinate descent~(UCD).
However, it is clear that significant improvements can be achieved by choosing more
\emph{important} coordinates more frequently~\citep{nesterov_efficiency_2012,nesterov_efficiency_2017,stich_approximate_2017,
StichSafeAdaptiveImportance2017,perekrestenko_faster_2017}. In particular, we could greedily choose the `best' coordinate at each iteration i.e. the steepest coordinate descent (SCD).

\paragraph{SCD for composite problems.}
Consider the smooth quadtratic function $f(\alphav) \defeq \frac{1}{2}\norm{A\alphav - \bv}_2^2$. There are three natural notions of the `best' coordinate.\footnote{Following standard notation (cf.~\citep{nutini_coordinate_2015}) we call them the Gauss-Southwell (GS) rules.} One could choose (i) {\steepsub}: the \emph{steepest} coordinate direction based on (sub)-gradients,
(ii)~{\steeplookr}: the coordinate which allows us to take the largest step, and
(iii)~{\steeplook}: the coordinate that allows us to minimize the function value the most. For our example (and in general for smooth functions), the three rules are equivalent. When we add an additional non-smooth function to $f$, such as $g(\alphav) = \lambda \norm{\alphav}_1$, however, the three notions are no more equivalent. The performance of greedy coordinate descent in this composite setting is not well understood, and is the focus of this work.

\paragraph{Iteration complexity of SCD.}
If the objective $f$ decomposes into $n$ identical separable problems, then clearly SCD is identical to UCD. In all but such extreme cases, \citet{nutini_coordinate_2015} give a refined analysis of SCD for smooth functions and show that it outperforms UCD.
This lead to a renewed interest in greedy methods (e.g. \citep{karimi2016linear, you2016asynchronous, dunner2017efficient, song2017accelerated, nutini2017let, stich_approximate_2017, locatello2018matching, lu2018accelerating}). However, for the composite case the analysis in~\citep{nutini_coordinate_2015} of SCD methods for any of the three rules mentioned earlier falls back to that of UCD. Thus they fail to demonstrate the advantage of greedy methods for the composite case. In fact it is claimed that the rate of the {\steepsub} greedy rule may even be worse than that of UCD. In this work we provide a refined analysis of SCD for a certain class of composite problems, and show that all three strategies ({\steepsub}, {\steeplookr}, and {\steeplook}) converge on composite problems at a rate similar to SCD in the smooth case. Thus for these problems too, greedy coordinate algorithms are provably faster than UCD other than in extreme cases.

\paragraph{Efficiency of SCD.}
A na{\"i}ve implementation of SCD 
would require computing the full gradient at a cost roughly~$n$ times more than just computing one coordinate of the gradient as required by UCD.
This seems to negate any potential gain of SCD over UCD. The working principle behind \emph{approximate SCD} methods is to trade-off exactness of the greedy direction against the time spent to decide the steepest direction (e.g. \citep{stich_approximate_2017}).
For smooth problems, \citet{DhillonNearestNeighborbased2011} show that \emph{approximate nearest neighbor search} algorithms can be used to provide in \emph{sublinear time} an approximate steepest descent direction. We build upon these ideas and extend the framework to non-smooth composite problems, thereby capturing a significantly larger class of input problems. In particular we show how to efficiently map the {\steepsub} rule to an instance of maximum inner product search~($\MIPS$).

\paragraph{Contributions.}
We analyze and advocate the use of the {\steepsub} greedy rule to compute the update direction for composite problems.
Our main contributions are:\vspace{-2mm}
\begin{itemize}[leftmargin=15pt,itemsep=1pt]
 \item[i)] We show that on a class of composite problems, greedy coordinate methods achieve convergence rates which are very similar to those obtained for smooth functions, thereby extending the applicability of SCD.
  This class of problems covers several important applications such as SVMs (in its dual formulation), Lasso regression, $L1$-regularized logistic regression among others. With this we establish that greedy methods significantly outperform UCD also on composite problems, except in extreme cases (cf. Remark~\ref{rem:comparision-with-UCD}).
  \item[ii)] 
  We show that both the {\steepsub} as well as the {\steeplookr} rules achieve convergence rates which are (other than in extreme cases) faster than UCD. This sidesteps the negative results by \citet{nutini_coordinate_2015} for these methods through a more fine-grained analysis. We also study the effect of approximate greedy directions on the convergence.
 \item[iii)] Algorithmically, we show how to precisely map the {\steepsub} direction computation as a special instance of a maximum inner product search problem (MIPS).
 Many standard nearest neighbor algorithms such as e.g. Locality Sensitive Hashing (LSH) can therefore be used to efficiently run SCD on composite optimization problems.
 \item[iv)] We perform extensive numerical experiments 
 to study the advantages and limitations of our steepest descent combined with a current state-of-the-art MIPS implementation \citep{boytsov2013engineering}.
\end{itemize}

\paragraph{Related Literature.}
Coordinate descent, being one of the earliest known optimization methods, has a rich history (e.g. \citep{bickley1941relaxation, warga1963minimizing, bertsekas1989parallel, bertsekas1991some}). A significant renewal in interest followed the work of \citet{nesterov_efficiency_2012}, who provided a simple analysis of the convergence of UCD. In practice, many solvers (e.g. \citep{ndiaye2015gap,massias2018celer}) combine UCD with active set heuristics where attention is restricted to a subset of \emph{active} coordinates. These methods are orthogonal to, and hence can be combined with, the greedy rules studied here. Greedy coordinate methods can also be viewed as an `extreme' version of adaptive importance sampling \citep{stich_approximate_2017,perekrestenko_faster_2017}. However unlike greedy methods, even in the smooth case, there are no easily characterized function classes for which the adaptive sampling schemes or the active set methods are provably faster than UCD. The work closest to ours, other than the already discussed \citet{nutini_coordinate_2015}, would be that of \cite{DhillonNearestNeighborbased2011}. The latter show a sublinear $O(1/t)$ convergence rate for {\steeplookr} on composite problems. They also propose a practical variant for $L1$-regularized problems which essentially ignores the regularizer and is hence not guaranteed to converge.

\section{Setup}\label{sec:setup}

We consider composite optimization problems of the structure
\begin{align} \label{eq:general-nonsmooth}  \min_{\alphav \in \R^n} \Big[F(\alphav) :=  f(\alphav) + \sum_{i = 1}^n g_i(\alpha_i) \Big],
\end{align}
where $n$ is the number of coordinates, 
$f \colon \R^n \to \R$ is convex and smooth, and the $g_i \colon \R \to \R$, $i \in [n]$ are
convex and possibly non-smooth.
In this exposition, we further restrict the function $g(\alphav) := \sum_{i = 1}^n g_i(\alpha_i)$ to either enforce a \emph{box constraint} or an \emph{$L1$ regularizer}. This comprises many important problem classes, for instance dual SVM or Lasso regression, see Appendix~\ref{subsec:applications}.

We further assume that the smooth component $f$ is coordinate-wise $L$ smooth: for any $\alphav$, $\gamma$ and $i$,
  \begin{equation}\label{eq:coord-smoothness}
    f(\alphav + \gamma \unit_i) \leq f(\alphav)
    + {\nabla_i f(\alphav)}{\gamma}
    + \frac{L \gamma^2}{2}\,.
  \end{equation}
Sometimes we will assume that $f$ is in addition also strongly convex with respect to the $\norm{\cdot}_p$ norm, $p \in \{1,2\}$, that is,
\begin{equation}
	    f(\alphav + \deltav) \geq f(\alphav)
    + \scal{\nabla f(\alphav)}{\deltav}
    + \frac{\mu_p}{2}\norm{\deltav}_p^2
\end{equation}
  for any $\alphav$ and $\alphav + \deltav$ in the domain of $F$.
  In general it holds $\mu_1 \in [\mu_2/n , \mu_2]$. See \citet{nutini_coordinate_2015} for a detailed comparison of the two constants.

\section{SCD for Non-Smooth Problems} \label{subsec:approx-steepest}
Here we briefly recall the definitions of the {\steepsub}, {\steeplookr} and {\steeplook} coordinate selection rules and introduce the approximate {\steepsub} rule that we will consider in detail.
\begin{align}
\steepsub(\alphav) &:= \argmax_j \Big[ \min_{s\in \partial g_j} \abs{\nabla_j
    f(\alphav) + s } \Big] \label{eq:prox-steepest} \,,\\
\steeplookr(\alphav) &:= \argmax_{j \in [n]} \abs{\gamma_j}\,,\\
\steeplook(\alphav) &:= \argmax_{j \in [n]} \abs{\chi_j(\alphav)}\,,
\end{align}
for an iterate $\alphav \in \R^n$, $\nabla_j f(\alphav) := \langle \nabla f(\alphav),\unit_j \rangle $ for standard unit vector $\unit_j$. Here $\chi_j(\alphav)$ and $\gamma_j$ are defined as the minimum value and minimizer respectively of
\begin{equation*}
  \min_{\gamma} \encase{\gamma\nabla_{j} f(\alphav)
  + \frac{L\gamma^2}{2} + g_{j}(\alpha_{j} + \gamma) - g_{j}(\alpha_{j})}\,.
\end{equation*}
We relax the requirement for an exact steepest selection, and define an approximate {\steepsub} rule.
\begin{definition}[$\Theta$-approximate {\steepsub}]\label{def:approx-steepest}
  For given $\alphav$, the coordinate $j$ is considered
  to be a $\Theta$-approximate steepest direction for $\Theta \in (0,1]$ if
  $$
  \min_{s\in \partial g_j} \abs{\nabla_j
  f(\alphav) + s } \geq \Theta\max_i \encase{\min_{s\in \partial g_i}
  \abs{\nabla_i f(\alphav) + s }}\,.
  $$
\end{definition}

\subsection{SCD for $L1$-regularized problems}
\label{sec:method}
\label{subsec:l1-method}
We now discuss the {\steepsub} rule for the concrete example of $L1$ problems, and collect some observations that we will use later to define the mapping to the $\MIPS$ instance. A similar discussion is included for box constrained problems in Appendix~\ref{sec:box-method}.

Consider $L1$-regularized problems of the form
	\begin{equation}\label{eq:l1_prob}
	   \min_{\alphav \in \R^n} \big[ F(\alphav) :=
     f(\alphav) + \lambda\norm{\alphav}_1 \big] \,.
	\end{equation}
The {\steepsub} steepest rule \eqref{eq:prox-steepest}
  and update rules can be simplified for such functions.
  Let $\sign(x)$ denote the sign function, and define $\shrinkage_{\lambda}(x)$
  as the shrinkage operator
  $$
    \shrinkage_\lambda(x) :=
    \begin{cases}
      x - \sign(x)\lambda, & \text{if } \abs{x} \geq \lambda \\
      0 & \text{otherwise}\,.
    \end{cases}
  $$
  Further, for any $\alphav$, let us define $\sv(\alphav)$ as
  \begin{equation}\label{eq:def-s-vector}
    s(\alphav)_i :=
    \begin{cases}
      \shrinkage_{\lambda}(\nabla_i f(\alphav)), & \text{if } \alpha_i = 0\\
      \nabla_i f(\alphav)  + \sign(\alpha_i)\lambda  & \text{otherwise} \,.
    \end{cases}
  \end{equation}

  \begin{lemma}\label{lem:l1-steepest}
    For any $\alphav$, the {\steepsub} rule is equivalent to
    \begin{equation}\label{eq:l1-steepest}
      \max_i \encase{\min_{s\in \partial g_i} \abs{\nabla_i
      f(\alphav) + \sv }} \equiv \max_{i}\abs{s(\alphav)_i}\,.
    \end{equation}
  \end{lemma}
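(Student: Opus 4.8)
The plan is to verify directly that the inner minimization $\min_{s \in \partial g_i} |\nabla_i f(\alphav) + s|$ evaluates to exactly $|s(\alphav)_i|$ as defined in \eqref{eq:def-s-vector}, for each coordinate $i$ separately. Since the $\steepsub$ rule takes the $\argmax$ (equivalently, the statement records the max value) over a function computed coordinate-wise, once I establish the coordinate-wise identity the equivalence \eqref{eq:l1-steepest} follows immediately. So the entire proof reduces to a one-dimensional computation of the subdifferential of $g_i(\alpha_i) = \lambda |\alpha_i|$ and minimizing the absolute value of a shifted version of it.

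\textbf{Step 1 (subdifferential of the $L1$ term).} Recall that for $g_i(t) = \lambda |t|$ we have $\partial g_i(t) = \{\lambda \sign(t)\}$ when $t \neq 0$ and $\partial g_i(0) = [-\lambda, \lambda]$. I would split into the two cases matching the definition of $s(\alphav)_i$.

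\textbf{Step 2 (case $\alpha_i \neq 0$).} Here $\partial g_i(\alpha_i)$ is the singleton $\{\sign(\alpha_i)\lambda\}$, so the inner minimization is trivial: $\min_{s \in \partial g_i} |\nabla_i f(\alphav) + s| = |\nabla_i f(\alphav) + \sign(\alpha_i)\lambda|$, which is exactly $|s(\alphav)_i|$ by the second branch of \eqref{eq:def-s-vector}.

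\textbf{Step 3 (case $\alpha_i = 0$, the main point).} Now we must compute $\min_{s \in [-\lambda, \lambda]} |\nabla_i f(\alphav) + s|$. The quantity $|\nabla_i f(\alphav) + s|$ as a function of $s$ is a V-shaped function minimized (over all of $\R$) at $s = -\nabla_i f(\alphav)$ with value $0$. If $|\nabla_i f(\alphav)| \le \lambda$, the unconstrained minimizer lies in $[-\lambda,\lambda]$, so the constrained minimum is $0$; meanwhile $\shrinkage_\lambda(\nabla_i f(\alphav)) = 0$ in this regime, so $|s(\alphav)_i| = 0$ and the identity holds. If $|\nabla_i f(\alphav)| > \lambda$, the function is monotone on $[-\lambda,\lambda]$ (the unconstrained minimizer is outside the interval), so the minimum is attained at the endpoint nearest to $-\nabla_i f(\alphav)$, namely $s = -\sign(\nabla_i f(\alphav))\lambda$, giving value $|\nabla_i f(\alphav)| - \lambda = |\nabla_i f(\alphav) - \sign(\nabla_i f(\alphav))\lambda| = |\shrinkage_\lambda(\nabla_i f(\alphav))| = |s(\alphav)_i|$. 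This completes the coordinate-wise identity, and taking the max over $i$ yields \eqref{eq:l1-steepest}.

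\textbf{Where the difficulty lies.} There is no real obstacle here — the argument is a routine one-dimensional convexity/monotonicity case analysis. The only thing to be careful about is the boundary case $|\nabla_i f(\alphav)| = \lambda$, where one should check that both formulas give $0$ consistently (the shrinkage operator is defined with a non-strict inequality $|x| \ge \lambda$, which still yields $x - \sign(x)\lambda = 0$ at equality, matching the constrained minimum of $0$). I would also remark that the same style of argument, applied with $\partial g_i$ being the normal cone of a box constraint instead of $[-\lambda,\lambda]$, gives the analogous simplification used in the box-constrained case in Appendix~\ref{sec:box-method}.
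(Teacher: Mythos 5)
Your proof is correct and follows essentially the same route as the paper's own argument in Appendix~\ref{sec:proofs-mapping}: compute $\partial g_i$ in the two cases $\alpha_i \neq 0$ (singleton $\{\lambda\sign(\alpha_i)\}$) and $\alpha_i = 0$ (interval $[-\lambda,\lambda]$), and observe that minimizing $|\nabla_i f(\alphav)+s|$ over the interval reproduces the shrinkage operator. Your write-up is in fact slightly more careful than the paper's, spelling out the monotonicity argument and the boundary case $|\nabla_i f(\alphav)| = \lambda$, which the paper leaves implicit.
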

  Our analysis of {\steepsub} rule requires bounding the number of `{\bad}' steps (to be detailed in Section \ref{sec:convergence-main}). For this, we will slightly modify the update of the coordinate descent method. Note that we still always follow the {\steepsub} direction, but will sometimes not perform the standard proximal coordinate update along this direction.
  To update the $i_t$-th coordinate, we either rely on the standard proximal step on the coordinate,
  \begin{equation}\label{eq:l1-intermediate-update}
    \alpha_{i_t}^{+} := \shrinkage_{\frac{\lambda}{L}}\encaser{\alpha_{i_t}^{(t)} -
    \frac{1}{L}\nabla_{i_t}f(\alphav^{(t)})}\,.
  \end{equation}
  or we perform line-search
  \begin{equation}\label{eq:l1-line-search}
    \alpha_{i_{t}}^{+} :=
          \argmin_{\gamma}F\big(\alphav^{(t)} + (\gamma - \alpha^{(t)}_{i_t})\unit_{i_t}\big)
  \end{equation}
  Finally, the $i_t$-th coordinate is updated as
  \begin{equation}\label{eq:l1-update}
	\alpha^{(t+1)}_i := \begin{cases}
	\alpha^{+}_i, & \text{if } \alpha^{+}_i \alpha^{(t)}_i \geq 0\\
	0, & \text{otherwise}
	\end{cases}.
	\end{equation}
  Our modification or `post-processing' step \eqref{eq:l1-update} ensures that the coordinate $\alpha_i$ can never `cross' the origin. This small change will later on help us bound the precise number of steps needed in our convergence rates (Sec.~\ref{sec:convergence-main}).
	The details are summarized in Algorithm \ref{alg:L1}.

\begin{algorithm}[h!]
   \caption{$L1$ Steepest Coordinate Descent}
   \label{alg:L1}
\begin{algorithmic}[1]
   \STATE {\bfseries Initialize:} $\alphav_0 := \0 \in \R^n$.
   \FOR{$t=0,1,\dots,\text{until convergence}$}
      \STATE Select coordinate $i_t$ as in {\steepsub}, {\steeplookr}, or {\steeplook}.
        \label{stp:l1-steepest}
      \STATE Find $\alpha_{i_t}^{+}$ via gradient \eqref{eq:l1-intermediate-update} or line-search \eqref{eq:l1-line-search}.
      \STATE Compute $\alpha^{(t+1)}_{i_t}$ as in \eqref{eq:l1-update}.
   \ENDFOR
\end{algorithmic}
\end{algorithm}

\section{Convergence Rates}\label{sec:convergence-main}

In this section, we present our main convergence results. 
\praneeth{This paragraph can probably be improved.}
We illustrate the novelty of our results in the important $L1$-regularized case: For strongly convex functions $f$, we provide the first linear rates of convergence independent of $n$ for greedy coordinate methods over $L1$-regularized problems, matching the rates in the smooth case.
In particular, for {\steepsub} this was conjectured to be impossible~\cite[Section H.5, H.6]{nutini_coordinate_2015} (see Remark \ref{rem:comparision-with-UCD}). We also show the sublinear convergence of the three rules in the non-strongly convex setting. Similar rates also hold for box-constrained problems.

\subsection{Linear convergence for strongly convex $f$}

\begin{theorem}
  \label{thm:informal-primal}
  	Consider an $L1$-regularized optimization problem \eqref{eq:l1_prob}, with $f$ being  coordinate-wise $L$ smooth, and $\mu_1$ strongly convex with respect to the $L1$ norm. After $t$ steps of Algorithm \ref{alg:L1} where the coordinate~$i_t$ is chosen using either the {\steepsub} , {\steeplookr}, or {\steeplook} rule,
	\[
		F(\alphav^{(t)}) - F(\alphav^{\star}) \leq \encaser{1 - \frac{\mu_1}{L}}^{\ceil{t/2}}\encaser{F(\alphav^{(0)}) - F(\alphav^{\star})}\,.
	\]
\end{theorem}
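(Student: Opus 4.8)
The plan is to follow the classical two-phase ``good step / bad step'' accounting familiar from greedy coordinate descent on composite problems, but to exploit the structure of the $L1$-regularizer together with the non-crossing post-processing rule \eqref{eq:l1-update} to bound the number of bad steps by the number of good steps. Concretely, I would call a step $t$ a \emph{good} step if the coordinate $\alpha_{i_t}$ does not hit or leave the origin during the update (so the update is a genuine interior proximal/line-search step on a locally smooth problem), and a \emph{bad} step otherwise --- i.e.\ one where either $\alpha_{i_t}^{(t)}=0$ and the coordinate stays $0$, or where the raw update $\alpha_{i_t}^+$ would cross the origin and gets clipped to $0$ by \eqref{eq:l1-update}. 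The first step is to establish a per-step descent bound on a good step: since no origin-crossing occurs, the active sign pattern of $\alphav$ is locally constant, $g$ looks locally like a linear function in the $i_t$-th coordinate, and the $\steepsub$ (equivalently by Lemma~\ref{lem:l1-steepest}, $\max_i |s(\alphav)_i|$) direction coincides with the steepest direction of the smooth surrogate; a standard argument using coordinate-wise $L$-smoothness \eqref{eq:coord-smoothness} then gives
\[
F(\alphav^{(t)}) - F(\alphav^{(t+1)}) \ge \frac{1}{2L}\,\norm{\sv(\alphav^{(t)})}_\infty^2 \ge \frac{1}{2L}\,\Theta^2\,\norm{\sv(\alphav^{(t)})}_\infty^2,
\]
(with $\Theta = 1$ in the exact case; the $\steeplookr$ and $\steeplook$ rules give at least as much descent on a good step, which is why all three obey the same bound).

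The second step is the key structural lemma: \textbf{a bad step is immediately followed by a good step, so at most half of all steps are bad}. If step $t$ is bad because the coordinate crossed the origin and was clipped to $0$, then $\alpha_{i_t}^{(t+1)} = 0$; if it is bad because it was already at $0$ and stayed there, then the proximal/line-search computation revealed $|\nabla_{i_t} f| \le \lambda$, i.e.\ $s(\alphav^{(t)})_{i_t} = 0$, which (since $i_t$ was the $\steepsub$ choice) forces $\sv(\alphav^{(t)}) = \0$, meaning $\alphav^{(t)}$ is already optimal and the bound is trivially true. So the only bad steps that matter are origin-crossing clips. For those, I would argue that the subsequent step $t+1$ cannot again be a bad step at the same coordinate — after clipping to $0$, re-selecting and re-updating the same coordinate would again move it off zero by a genuine shrinkage step (this is exactly what the post-processing buys us: it separates ``the coordinate wanted to cross'' from ``the coordinate oscillates around $0$''). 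More carefully, I would show that after a clip, either the algorithm has reached optimality, or the next $\steepsub$ step is a good step — by checking that immediately after a clip the relevant coordinate's $s$-value behaves monotonically and the descent on step $t+1$ can be lower-bounded in the same way as a good step. This yields $\#\{\text{bad steps in } [0,t]\} \le \#\{\text{good steps in }[0,t]\}$, hence at least $\lceil t/2 \rceil$ good steps.

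The third step is to convert the per-good-step descent into a linear rate using $\mu_1$-strong convexity w.r.t.\ $\norm{\cdot}_1$. Strong convexity in the $1$-norm gives the standard primal-gap bound relating $F(\alphav^{(t)}) - F(\alphav^\star)$ to a dual-type quantity that is controlled by $\norm{\sv(\alphav^{(t)})}_\infty^2$; precisely, one shows $\norm{\sv(\alphav^{(t)})}_\infty^2 \ge \mu_1 \bigl(F(\alphav^{(t)}) - F(\alphav^\star)\bigr)$ — this is the composite analogue of the inequality $\norm{\nabla f}_\infty^2 \ge 2\mu_1(f - f^\star)$ for smooth functions, and it is where the $1$-norm strong convexity (rather than $2$-norm) is essential to avoid the factor-$n$ loss. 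Combining with the good-step descent inequality gives a contraction factor $(1-\mu_1/L)$ per good step, and multiplying over the $\ge \lceil t/2\rceil$ good steps (the bad steps are non-increasing in $F$ by \eqref{eq:l1-update} and convexity, so they never hurt) yields the claimed $\bigl(1-\mu_1/L\bigr)^{\lceil t/2\rceil}$ bound.

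The main obstacle, and the part deserving the most care, is the bad-step bounding lemma --- specifically, ruling out that the algorithm can make two (or more) consecutive origin-crossing clips, and handling the $\steeplookr$/$\steeplook$ rules, where a ``bad'' event must be defined relative to their own updates and where one must still verify that a bad step is charged to a following good step with enough descent. I would expect the $\steeplookr$ and $\steeplook$ cases to need a short separate argument showing their bad-step patterns are no worse than $\steepsub$'s, since the selection rule and the clipping interact slightly differently for each; the $L1$-specific shrinkage computations in \eqref{eq:l1-intermediate-update}–\eqref{eq:l1-update} make all three tractable but the bookkeeping is the delicate part.
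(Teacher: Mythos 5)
Your proposal follows essentially the same route as the paper: the {\good}/{\bad} step dichotomy in which the clipping rule \eqref{eq:l1-update} lets each {\bad} (origin-crossing) step be charged to a {\good} step at the same coordinate (so at least $\ceil{t/2}$ steps are {\good}), the exact per-{\good}-step decrease $\tfrac{1}{2L}\norm{\sv(\alphav)}_\infty^2$, and $L1$-strong convexity to turn this into a $(1-\mu_1/L)$ contraction over the {\good} steps while {\bad} steps are merely non-increasing. The only cosmetic differences are that the paper obtains the contraction by rescaling the $L1$-squared surrogate via \citet[Lemma~9]{karimireddy2018adaptive} rather than through your directly asserted proximal-PL inequality (which is correct but should read $\norm{\sv(\alphav)}_\infty^2 \geq 2\mu_1\bigl(F(\alphav)-F(\alphav^{\star})\bigr)$, with the factor $2$ needed to recover $1-\mu_1/L$ rather than $1-\mu_1/(2L)$), and that your headline claim ``a bad step is immediately followed by a good step'' must be read per-coordinate, since consecutive {\bad} steps at \emph{different} coordinates are perfectly possible.
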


\begin{remark}
	The linear convergence rate of Theorem~\ref{thm:informal-primal} also holds for the $\Theta$-approximate {\steepsub} rule as in Definition \ref{def:approx-steepest}. In this case the $\mu_1$ will be multiplied by $\Theta^2$.
\end{remark}

\begin{remark}
	All our linear convergence rates can be extended to objective functions which only satisfy the weaker condition of proximal-PL strong convexity \citep{karimi2016linear}. 
\end{remark}

\begin{remark}\label{rem:comparision-with-UCD}
The standard analysis (e.g. in \citet{nesterov_efficiency_2012}) of UCD gives a convergence rate of
\[
	\E\big[F(\alphav^{(t)})\big] - F(\alphav^{\star}) \leq \encaser{1 - \frac{\mu_2}{nL}}^{t}\encaser{F(\alphav^{(0)}) - F(\alphav^{\star})}\,.
\]
Here $\mu_2$ is the strong convexity constant with respect to the $L2$ norm, which satisfies $\mu_1 \in [\mu_2/n,\mu_2]$.  The left boundary $\mu_1 = \mu_2/n$ marks the worst-case for SCD, resulting in convergence slower than UCD. It is shown in \citet{nutini_coordinate_2015} that this occurs only in extreme examples (e.g. when $f$ consists of $n$ identical separable functions). For all other situations when $\mu_1 \geq 2\mu_2/n$, our result shows that SCD is faster.
\end{remark}

\begin{remark}\label{rem:diff-rules}
Our analysis in terms of $\mu_1$ works for all three selection rules {\steepsub}, {\steeplookr}, or {\steeplook} rules. In \cite[Section H5, H6]{nutini_coordinate_2015} it was conjectured (but not proven) that this linear convergence rate holds for {\steeplook}, but that it cannot hold for {\steepsub} or {\steeplookr}. Example functions were constructed where it was shown that the single step progress of {\steepsub} or {\steeplookr} is much smaller than $1 - \mu_2/(nL)$. However these example steps were all {\bad} steps, as we will define in the following proof sketch, whose number we show can be bounded. 
\end{remark}

We state an analogous linear rate for the box-constrained case too, but refer to Appendix~\ref{sec:box-method} for the detailed algorithm and proof.
\begin{theorem}\label{thm:informal-box}
Suppose that $f$ is coordinate-wise $L$ smooth, and $\mu_1$ strongly convex with respect to the $L1$ norm, for problem \eqref{eq:general-nonsmooth} with $g$ encoding a box-constraint. After $t$ steps of Algorithm \ref{alg:box-steepest} (the box analogon of Algorithm \ref{alg:L1}) where the coordinate~$i_t$ is chosen using the {\steepsub} , {\steeplookr}, or {\steeplook} rule, then
	\begin{equation*}
	f(\alphav^{(t)}) - f(\alphav^{\star}) \leq\\ \encaser{1 - \max\encaser{\frac{1}{2n},\frac{\mu_1}{L}}}^{\ceil{t/2}}\encaser{f(\alphav^{(0)}) - f(\alphav^{\star})}\,.
	\end{equation*}
\end{theorem}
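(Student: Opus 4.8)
The plan is to mirror the proof of Theorem~\ref{thm:informal-primal}, replacing the event ``a coordinate crosses the origin'' by ``a coordinate is clipped onto a face of the box''. Call iteration $t$ a \emph{bad} step if the unconstrained proximal/line-search update of the selected coordinate $i_t$ (the box analogues of~\eqref{eq:l1-intermediate-update}--\eqref{eq:l1-line-search}) would leave the box, so that the post-processing of Algorithm~\ref{alg:box-steepest} (the box counterpart of~\eqref{eq:l1-update}) clips $\alpha_{i_t}$ to a boundary value $l_{i_t}$ or $u_{i_t}$; otherwise call it a \emph{good} step. Since $g$ is the indicator of the box, $F\equiv f$ on the feasible set and Algorithm~\ref{alg:box-steepest} stays feasible, so it suffices to track $f$. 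It also suffices to argue for the {\steepsub} rule: the same good/bad bookkeeping applies verbatim to whichever coordinate is actually selected, and for {\steeplookr} and {\steeplook} the per-good-step decrease is moreover at least that of the {\steepsub} coordinate; the $\Theta$-approximate variant only rescales the progress constants.

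\textbf{Good steps.} On a good step the update is an unconstrained coordinate proximal/line-search step, so coordinate smoothness~\eqref{eq:coord-smoothness} yields $f(\alphav^{(t)})-f(\alphav^{(t+1)})\ge \tfrac{1}{2L}\big[\min_{s\in\partial g_{i_t}}|\nabla_{i_t}f(\alphav^{(t)})+s|\big]^2$, and because {\steepsub} selects the coordinate with the largest reduced gradient this equals $\tfrac{1}{2L}\max_i\big[\min_{s\in\partial g_i}|\nabla_i f(\alphav^{(t)})+s|\big]^2$. I would then prove two lower bounds on the right-hand side in terms of $f(\alphav^{(t)})-f(\alphav^\star)$: (i) the box counterpart of the proximal-PL / strong-convexity estimate used for Theorem~\ref{thm:informal-primal}, invoking $\mu_1$-strong convexity w.r.t.\ $\|\cdot\|_1$, giving decrease $\ge \tfrac{\mu_1}{L}\big(f(\alphav^{(t)})-f(\alphav^\star)\big)$; and (ii) a conditioning-independent estimate specific to the box, exploiting that the segment $[\alphav^{(t)},\alphav^\star]$ is feasible and that the steepest coordinate captures at least a $1/n$ fraction of the aggregate coordinate-wise decrease toward $\alphav^\star$, giving decrease $\ge \tfrac{1}{2n}\big(f(\alphav^{(t)})-f(\alphav^\star)\big)$. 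Taking the better of the two produces the factor $\max\{\tfrac{1}{2n},\tfrac{\mu_1}{L}\}$ per good step.

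\textbf{Bad steps are harmless and rare.} A clipped proximal/line-search step is still a descent step (the post-processing only moves the coordinate toward the one-dimensional minimizer), so $f(\alphav^{(t+1)})\le f(\alphav^{(t)})$ on bad steps. The combinatorial heart, exactly as in the $L1$ sketch referenced in Remark~\ref{rem:diff-rules}, is that at least $\lceil t/2\rceil$ of the first $t$ iterations are good. I would prove this by an injective map from bad steps to earlier good steps: a bad step at time $t$ places $\alpha_{i_t}$ on a face $\mathcal F$, and away from optimality $\alpha_{i_t}^{(t)}$ is off $\mathcal F$ (otherwise the reduced gradient of $i_t$, hence the whole selection criterion, vanishes). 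Let $\tau(t)<t$ be the last iteration at which coordinate $i_t$ moved off $\mathcal F$; such a step cannot itself clip onto $\mathcal F$, hence is good, and $t\mapsto\tau(t)$ is injective (after the bad step at $t_1$ the coordinate sits on $\mathcal F$, so being off $\mathcal F$ again at a later bad step $t_2$ forces a move off $\mathcal F$ inside $(t_1,t_2)$, contradicting maximality of $\tau(t_2)$). Thus $\#\{\text{bad}\}\le\#\{\text{good}\}$ among the first $t$ iterations, so there are $\ge\lceil t/2\rceil$ good steps; telescoping the good-step contraction over these while bad steps only decrease $f$ gives the claimed bound.

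\textbf{Main obstacle.} I expect the real work to lie in two places: (a) the box-specific $\tfrac{1}{2n}$ good-step estimate — turning ``$1/n$ of the aggregate decrease toward $\alphav^\star$'' into a precise inequality on a general box and matching exactly the stated constant; and (b) making the injective count airtight for the box rather than for the sign pattern of the $L1$ case: one must rule out a coordinate being clipped directly from one face onto the opposite face, and handle the initial configuration $\alphav^{(0)}=\mathbf 0$ (which for the typical SVM-style boxes lies on several faces), so that $\tau(t)$ is always well defined and no additive lower-order loss appears in the count. This is precisely what the box post-processing of Algorithm~\ref{alg:box-steepest} (the analogue of~\eqref{eq:l1-update}) is engineered to guarantee, and carrying out this verification is the bulk of the argument deferred to Appendix~\ref{sec:box-method}.
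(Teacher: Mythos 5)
There is a genuine gap, and it sits exactly where you flagged your ``main obstacle'': a coordinate \emph{can} be clipped directly from one face onto the opposite face, and this cannot be ruled out. The paper's proof (Appendix~\ref{sec:convergence-box}) therefore uses a \emph{three}-way classification that your two-way good/bad split collapses: a step is {\bad} only if it starts in the \emph{interior} and gets clipped to the boundary, while a step that starts \emph{on} a face and gets clipped onto the opposite face is a separate {\cross} step. Your counting argument breaks precisely on cross steps: for the first such step on a coordinate (e.g.\ a coordinate initialized at $0$ whose unconstrained update overshoots $1$) the map $\tau(t)$ is undefined, and more generally the ``last iteration at which $i_t$ moved off $\mathcal F$'' can itself be a clipping (cross) step, so it need not be good and the injection into good steps fails. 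A coordinate may cross back and forth between the two faces arbitrarily often, so under your classification the fraction of good steps can be driven far below $1/2$ and the telescoping collapses. The paper instead only needs to bound the interior-to-boundary steps (Lemma~\ref{lem:count-bad-steps-box}: at most $\floor{t/2}$ of them, since after such a step the coordinate sits on the boundary and its next update cannot again be of that type), and shows that cross steps are not merely harmless but actually contract the error.

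The second gap is where the $1/(2n)$ factor comes from. You attribute it to good steps, claiming a conditioning-independent decrease of $\frac{1}{2n}\big(f(\alphav^{(t)})-f(\alphav^\star)\big)$ per good step. This does not follow: a good step only guarantees progress $\frac{1}{2L}\abs{\nabla_{i_t} f(\alphav^{(t)})}^2$, which is \emph{quadratic} in the gradient and cannot be turned into a multiplicative contraction without strong convexity; from $\max_j\abs{\nabla_j f}\ge\frac1n\scal{\nabla f}{\alphav-\alphav^\star}$ you would only obtain a $(f-f^\star)^2/(2Ln^2D^2)$-type (sublinear) estimate. In the paper the $1/(2n)$ contraction is extracted from the {\cross} steps (Lemma~\ref{lem:cross-step-progress-box}): because such a step traverses the full unit width of the box against the gradient, its progress is at least $\frac12\abs{\nabla_{i_t}f(\alphav^{(t)})} \ge \frac{1}{2n}\max_{\vv\in[0,1]^n}\scal{\nabla f(\alphav^{(t)})}{\alphav^{(t)}-\vv} \ge \frac{1}{2n}\big(f(\alphav^{(t)})-f(\alphav^\star)\big)$ --- \emph{linear} in the gradient, which is exactly what makes a curvature-free multiplicative rate possible. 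Relatedly, the appendix version (Theorem~\ref{thm:strong-convex-box-appendix}) establishes the per-step factor $1-\min\big(\frac{1}{2n},\frac{\mu_1}{L}\big)$ --- the \emph{worse} of the two guarantees, since one does not control which type of productive step occurs --- so your plan of taking the better of two bounds on every productive step does not match what is actually provable.
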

While the proof shares ideas with the $L1$-case, there are significant differences, e.g. the division of the steps into three categories: i) {\good} steps which give a $(1 - \mu_1/L)$ progress, ii) {\bad} steps which may not give much progress but are bounded in number, and a third iii)~{\cross} steps which give a $(1 - 1/n)$ progress.
\begin{remark}
For the box case, the greedy methods converge faster than UCD if $\mu_1 \geq 2\mu_2 /n$, as before, and if $\mu_2/L \leq 1/4$. Typically, $\mu_2/L$ is much smaller than 1 and so the second condition is almost always satisfied. Hence we can expect greedy to be much faster in the box case, just as in the unconstrained smooth case. 
It remains unclear if the $1/n$ term truly affects the rate of convergence. For example, in the separated quadratic case considered in \cite[Sec. 4.1]{nutini_coordinate_2015}, $\mu_1/L \leq 1/n$ and so we can ignore the $1/n$ term in the rate (see Remark \ref{rem:box-is-n-there} in the Appendix).
\end{remark}

\paragraph{Proof sketch.}
While the full proofs are given in the appendix, we here give a proof sketch of the convergence of Algorithm \ref{alg:L1} for $L1$-regularized problems in the strongly convex case, as in Theorem \ref{thm:informal-primal}. 

The key idea of our technique is to partition the iterates into two sets: {\good} and {\bad} steps depending on whether they make (provably) sufficient progress. Then we show that the modification to the update we made in \eqref{eq:l1-update} ensures that we do not have too many {\bad} steps. 
Since Algorithm \ref{alg:L1} is a descent method, we can focus only on the {\good} steps and describe its convergence. The ``contradiction'' to the convergence of {\steepsub} provided in~\cite[Section H.5, H.6]{nutini_coordinate_2015} are in fact instances of {\bad} steps.

\begin{figure}\vspace{0em}
  \begin{center}
      \hfill
  	\includegraphics[width=0.45\textwidth]{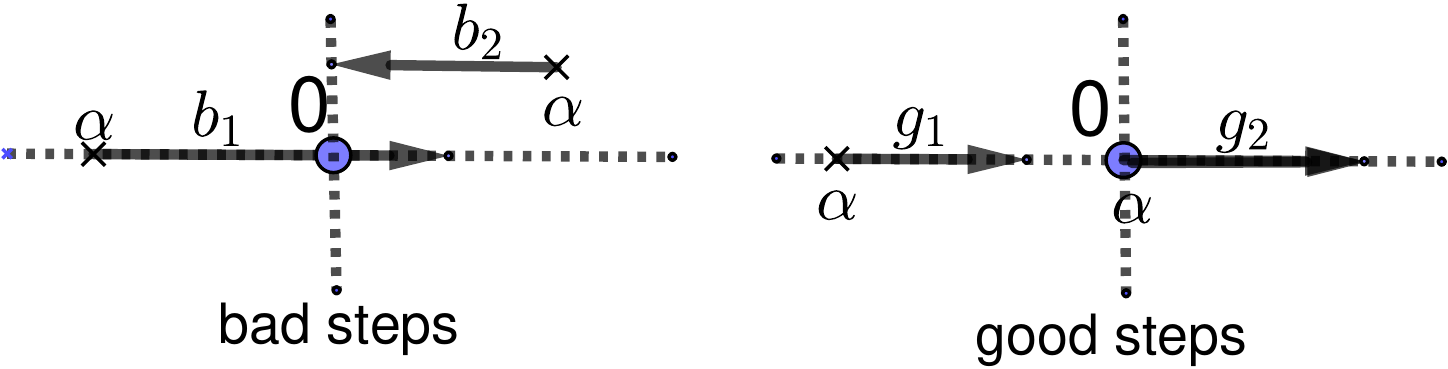}\hfill\null
  	\caption{ The arrows represent proximal coordinate updates $\shrinkage_\frac{\lambda}{L}(\alpha_i -\frac{1}{L}\nabla_i f(\alphav))$ from different starting points $\alphav$. Updates which `cross' ($b_1$) or `end at' ($b_2$) the origin are {\bad}, whereas the rest ($g_1$, $g_2$) are {\good}.}\vspace{-3mm}
  	\label{fig:good-bad-main}
  \end{center}
\end{figure}

The definitions of {\good} and {\bad} steps are explained in Fig.~\ref{fig:good-bad-main} (and formally in Def.~\ref{def:good-steps-L1}). The core technical lemma below shows that in a {\good} step, the update along the {\steepsub} direction has an alternative characterization. For the sake of simplicity, let us assume that $\Theta = 1$ and that we use the exact {\steepsub} coordinate.

\begin{lemma}\label{lem:characterize-informal}
  Suppose that iteration $t$ of Algorithm \ref{alg:L1} updates coordinate $i$ and that it was a {\good} step. Then
  \begin{equation*}
  F(\alphav^{(t+1)}) - F(\alphav^{(t)}) \leq  \min_{\wv \in \R^n}\Big\{\scal{\nabla f(\alphav^{(t)})}{\wv - \alphav^{(t)}} \\+ \frac{L}{2}\|\wv - \alphav^{(t)}\|_1^2 + \lambda(\norm{\wv}_1 - \|\alphav^{(t)}\|_1)\Big\}\,.
  \end{equation*}
\end{lemma}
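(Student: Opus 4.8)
The plan is to unpack what a \good{} step means for an $L1$-regularized problem and to show that the single-coordinate update along the {\steepsub} direction dominates the (seemingly much stronger) full-vector proximal model on the right-hand side. First I would recall from the setup that in a \good{} step the proximal update $\alpha_{i}^{+} = \shrinkage_{\lambda/L}(\alpha_i^{(t)} - \tfrac1L \nabla_i f(\alphav^{(t)}))$ does not cross or land on the origin, so the post-processing rule \eqref{eq:l1-update} is inactive and $\alphav^{(t+1)} = \alphav^{(t)} + (\alpha_{i}^{+} - \alpha_i^{(t)}) \unit_i$. By coordinate-wise $L$-smoothness \eqref{eq:coord-smoothness} applied with $\gamma = \alpha_i^{+} - \alpha_i^{(t)}$, together with the fact that $\|\wv - \alphav^{(t)}\|_1 = |\gamma|$ when $\wv$ differs from $\alphav^{(t)}$ only in coordinate $i$, one gets
\[
  F(\alphav^{(t+1)}) - F(\alphav^{(t)}) \leq \gamma \nabla_i f(\alphav^{(t)}) + \tfrac{L}{2}\gamma^2 + \lambda(|\alpha_i^{+}| - |\alpha_i^{(t)}|),
\]
which is exactly the objective of the one-coordinate restriction of the minimization on the right-hand side, evaluated at the proximal point. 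So it remains to argue that this one-coordinate minimum (achieved by the proximal step) is in fact no larger than the full $n$-dimensional minimum.

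That is the crux, and it is where I expect the real work to be. The $n$-dimensional model $\Phi(\wv) := \scal{\nabla f(\alphav^{(t)})}{\wv - \alphav^{(t)}} + \tfrac{L}{2}\|\wv - \alphav^{(t)}\|_1^2 + \lambda(\|\wv\|_1 - \|\alphav^{(t)}\|_1)$ is separable in its linear and $L1$ parts but coupled through the squared-$L1$ term $\tfrac L2 \|\wv - \alphav^{(t)}\|_1^2$. The key observation is that $\|\wv - \alphav^{(t)}\|_1 = \sum_j |w_j - \alpha_j^{(t)}|$, so if we fix the total "mass moved" $r := \|\wv - \alphav^{(t)}\|_1$, the penalty $\tfrac L2 r^2$ is fixed and we are left minimizing the \emph{linear} functional $\scal{\nabla f}{\wv - \alphav^{(t)}} + \lambda(\|\wv\|_1 - \|\alphav^{(t)}\|_1)$ subject to $\|\wv - \alphav^{(t)}\|_1 = r$. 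This linear-over-a-scaled-simplex problem is minimized at a vertex, i.e. by moving all the mass $r$ into a single coordinate — and the best such coordinate is precisely the one maximizing the per-unit decrease, which is $|s(\alphav^{(t)})_i|$ by the definition \eqref{eq:def-s-vector} of $\sv$ and the reduction in Lemma~\ref{lem:l1-steepest}. Hence the unconstrained minimum over $\wv$ equals $\min_r \{ -|s(\alphav^{(t)})_{i^\star}| \cdot r + \tfrac L2 r^2 \}$ where $i^\star$ is the {\steepsub} coordinate, and this is attained at $r^\star = |s(\alphav^{(t)})_{i^\star}|/L$, giving value $-|s(\alphav^{(t)})_{i^\star}|^2/(2L)$.

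To close the loop I would then check that the one-coordinate proximal step we actually take along $i = i^\star$ realizes this same value $-|s(\alphav^{(t)})_{i^\star}|^2/(2L)$: this is the standard computation that the soft-threshold/prox step minimizes the quadratic-plus-$L1$ one-dimensional model exactly, and in a \good{} step (no origin crossing) the minimizer of that 1D model is attained at an interior point of the correct sign region so the clipping in \eqref{eq:l1-update} changes nothing. Combining the three pieces — smoothness upper bound, the "mass moves to one coordinate" reduction of $\Phi$, and the exact value of the prox step — yields the claimed inequality. The one subtlety to handle carefully is the sign-constraint interplay: when $\alpha_{i^\star}^{(t)} \neq 0$ the per-unit gain of moving coordinate $i^\star$ in the shrinkage direction is $|\nabla_{i^\star} f + \sign(\alpha_{i^\star}^{(t)})\lambda| = |s(\alphav^{(t)})_{i^\star}|$ only as long as $w_{i^\star}$ stays on the same side of the origin, and it is exactly the \good{}-step hypothesis that guarantees the optimal $r^\star$ does not push it across; I would spell out that the case $\|\wv-\alphav^{(t)}\|_1$ large enough to flip a sign can only increase $\Phi$, so the vertex analysis restricted to the "no sign flip" regime still gives a global minimizer.
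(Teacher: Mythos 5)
Your proposal is correct and follows essentially the same route as the paper's proof (Lemma~\ref{lem:good-step-progress-L1} combined with Lemma~\ref{lem:good-step-character-L1}): show that a {\good} step achieves exactly $-\tfrac{1}{2L}\,s(\alphav^{(t)})_{i}^2$, and separately lower-bound the full $n$-dimensional model by $-\tfrac{1}{2L}\max_j s(\alphav^{(t)})_j^2$. The one step you state loosely is the claim that, at fixed radius $r=\norm{\wv-\alphav^{(t)}}_1$, the functional $\scal{\nabla f(\alphav^{(t)})}{\wv-\alphav^{(t)}}+\lambda(\norm{\wv}_1-\norm{\alphav^{(t)}}_1)$ is ``linear'' and hence vertex-minimized: it is only convex piecewise-linear, and a convex function on an $\ell_1$-sphere need not attain its minimum at a vertex, so the reduction needs the separability of the model plus the subgradient inequality $|\alpha_j+\delta_j|-|\alpha_j|\geq \zeta_j\delta_j$. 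The paper makes exactly this rigorous by choosing $\zeta_j=(s(\alphav)_j-\nabla_j f(\alphav))/\lambda\in[-1,1]$, which turns the model into $\scal{s(\alphav)}{\wv-\alphav^{(t)}}+\tfrac{L}{2}\norm{\wv-\alphav^{(t)}}_1^2$ and yields your ``all mass into one coordinate'' value $-\tfrac{1}{2L}\norm{s(\alphav)}_\infty^2$ via the $\norm{\cdot}_1^2$--$\norm{\cdot}_\infty^2$ conjugacy; your closing remark that sign flips can only increase the objective is precisely this inequality, so the looseness is cosmetic rather than a substantive gap.
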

\emph{Proof sketch.}
  We will only examine the case when $\alphav > 0$ here for the sake of simplicity. Combining this with the assumption that iteration $t$ was a {\good} step gives that both $\alpha_i > 0$, $\alpha_i^+ > 0$, and $\alpha_i^+ = \alpha_i - \frac{1}{L}(\nabla_j f(\alphav) + \lambda)$. Further if $\alphav > 0$, the {\steepsub} rule simplifies to $\argmax_{j \in [n]}\abs{\nabla_j f(\alphav) + \lambda}$.
  
  Since $f$ is coordinate-wise smooth \eqref{eq:coord-smoothness},
	\begin{align*}
		F(\alphav^+) - F(\alphav) &\leq 
		(\nabla_j f(\alphav))(\alpha_i^+ - \alpha_i) + \frac{L}{2}(\alpha_i^+ - \alpha_i)^2 + \lambda(\abs{\alpha_i^+} - \abs{\alpha_i}) \\
		&= -\frac{1}{L}(\nabla_j f(\alphav))(\nabla_j f(\alphav) + \lambda) + \frac{L}{2L^2}(\nabla_j f(\alphav) + \lambda)^2 - \frac{\lambda}{L}(\nabla_j f(\alphav) + \lambda)\\
		&= -\frac{1}{2L}(\nabla_j f(\alphav) + \lambda)^2\,.
	\end{align*}  
	But the {\steepsub} rule exactly maximizes the last quantity. Thus we can continue:
\begin{align*}
		F(\alphav^+) - F(\alphav) &\leq
		 -\frac{1}{2L}\norm{\nabla f(\alphav) + \lambda \one}_{\infty}^2\\
		 &= \min_{\wv \in \R^n}\encasecurly{\scal{\nabla f(\alphav) + \lambda \one}{\wv - \alphav} + \frac{L}{2}\norm{\wv - \alphav}_1^2}\\
		 &= \min_{\wv \in \R^n}\Big\{\scal{\nabla f(\alphav)}{\wv - \alphav} + \frac{L}{2}\norm{\wv - \alphav}_1^2 + \lambda(\scal{ \one}{\wv} - \scal{ \one}{\alphav})\Big\}\,.
	\end{align*}
Recall that $\alphav > 0$ and so $\norm{\alphav}_1 = \scal{\alphav}{\one}$. Further for any $x \in \R$, $\abs{x} \geq x$ and so $\scal{ \one}{\wv} \leq \norm{\wv}_1$. This means that
\[
	\lambda(\scal{ \one}{\wv} - \scal{ \one}{\alphav}) \leq \lambda(\norm{\wv}_1 - \norm{\alphav}_1)\,.
\]
Plugging this into our previous equation gives us the lemma. See Lemma \ref{lem:good-step-progress-L1} for the full proof.
\qed

If $\lambda = 0$ (i.e. $F$ is smooth), Lemma \ref{lem:characterize-informal} reduces to the `refined analysis' of \citet{nutini_coordinate_2015}. We can now state the rate obtained in the strongly convex case.

\emph{Proof sketch for Theorem \ref{thm:informal-primal}.}
Notice that if $\alpha_{i_t} = 0$, the step is necessarily {\good} by definition (see Fig. \ref{fig:good-bad-main}). Since we start at the origin $\0$, the first time each coordinate is picked is a {\good} step. Further, if some step $t$ is {\bad}, this implies that $\alpha_{i_t}^{+}$ `crosses' the origin. In this case our modified update rule \eqref{eq:l1-update} sets the coordinate $\alpha_{i_t}$ to 0. The next time coordinate $i_t$ is picked, the step is sure to be {\good}. Thus in $t$ steps, we have at least $\ceil{t/2}$ {\good} steps.

As per Lemma~\ref{lem:characterize-informal}, every {\good} step corresponds to optimizing the upper bound with the $L1$-squared regularizer. We can finish the proof:
\begin{align*}
	F(\alphav^{(t+1)}) - F(\alphav^{(t)}) &\leq  \min_{\wv \in \R^n}\Big\{\scal{\nabla f(\alphav^{(t)})}{\wv - \alphav^{(t)}} + \frac{L}{2}\|\wv - \alphav^{(t)}\|_1^2 + \lambda(\norm{\wv}_1 - \norm{\alphav^{(t)}}_1)\Big\}\\
	&\overset{(a)}{\leq}  \frac{\mu_1}{L}\min_{\wv \in \R^n}\Big\{\scal{\nabla f(\alphav^{(t)})}{\wv - \alphav^{(t)}} + \frac{\mu_1}{2}\|\wv - \alphav^{(t)}\|_1^2 + \lambda(\norm{\wv}_1 - \norm{\alphav^{(t)}}_1)\Big\}\\
	&\overset{(b)}{\leq} \frac{\mu_1}{L}\encaser{F(\alphav^{\star}) - F(\alphav^{(t)})} \,.
\end{align*}
Inequality $(a)$ follows from \citet[Lemma 9]{karimireddy2018adaptive}, and $(b)$ from strong convexity of $f$. Rearranging the terms above gives us the required linear rate of convergence.
\qed

\subsection{Sublinear convergence for general convex~$f$}
A sublinear convergence rate independent of $n$ for SCD can be obtained when $f$ is not strongly convex.
\begin{theorem}\label{thm:informal-general-convex}
	Suppose that $F$ is coordinate-wise $L$ smooth and convex, for $g$ being an $L1$-regularizer or a box-constraint. Also let $\Q^\star$ be the set of minima of $F$ with a minimum value $F^\star$. After $t$ steps of Algorithm \ref{alg:L1} or Algorithm \ref{alg:box-steepest} respectively, 
	where the coordinate~$i_t$ is chosen using the {\steepsub}, {\steeplookr}, or {\steeplook} rule,
\[
		F(\alphav^{(t)}) - F^\star \leq \mathcal{O}\Big(\frac{LD^2}{t}\Big)\,,
\]
where $D$ is the $L1$-diameter of the level set. For the set of minima $\Q^\star$,
 \[D = \max_{\wv \in \R^n}\min_{\alphav^\star\in\Q^\star} \big\{\norm{\wv - \alphav^\star}_1 \,\big|\, F(\wv) \leq F(\alphav^{(0)})\big\}\,.\]
\end{theorem}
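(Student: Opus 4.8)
The plan is to reuse the {\good}/{\bad} step decomposition from the strongly convex analysis and combine the per-step progress of a {\good} step (Lemma~\ref{lem:characterize-informal}) with a Frank--Wolfe-type argument in the $L1$ geometry. Write $\delta_t \defeq F(\alphav^{(t)}) - F^\star$. Algorithm~\ref{alg:L1} (and its box analogue) is a descent method, so $\delta_t$ is non-increasing; moreover, exactly as in the proof sketch of Theorem~\ref{thm:informal-primal}, a {\bad} step sets the active coordinate to $0$, forcing the next selection of that coordinate to be {\good}, so among the first $t$ iterations at least $\ceil{t/2}$ are {\good}. (In the box case the additional {\cross} steps of Theorem~\ref{thm:informal-box} are also descent steps, and a constant fraction of the first $t$ steps remain {\good}, which is all the argument below needs.) Hence it suffices to lower-bound the decrease of $\delta_t$ on a {\good} step.

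The second step is to turn the right-hand side of Lemma~\ref{lem:characterize-informal} into a recursion by restricting the minimization over $\wv$ to the segment $\wv = \alphav^{(t)} + \eta(\alphav^\star - \alphav^{(t)})$, $\eta \in [0,1]$. Here $\alphav^\star \in \Q^\star$ is chosen with $\norm{\alphav^{(t)} - \alphav^\star}_1 \le D$, which is possible because $\alphav^{(t)}$ lies in the level set $\{F \le F(\alphav^{(0)})\}$ (descent) and $D = \max_{\wv}\min_{\alphav^\star \in \Q^\star}\{\norm{\wv - \alphav^\star}_1 : F(\wv)\le F(\alphav^{(0)})\}$; the whole segment stays in the level set by convexity of $F$. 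Bounding the linear term by convexity of $f$, the quadratic term by $\norm{\wv - \alphav^{(t)}}_1 = \eta\norm{\alphav^\star - \alphav^{(t)}}_1 \le \eta D$, and the regularizer term by convexity of $\norm{\cdot}_1$ (so $\norm{\wv}_1 - \norm{\alphav^{(t)}}_1 \le \eta(\norm{\alphav^\star}_1 - \norm{\alphav^{(t)}}_1)$), the three pieces assemble into
\begin{equation*}
  \delta_{t+1} - \delta_t \;=\; F(\alphav^{(t+1)}) - F(\alphav^{(t)}) \;\le\; -\eta\,\delta_t + \tfrac{L\eta^2 D^2}{2}\,, \qquad \eta \in [0,1]\,,
\end{equation*}
on a {\good} step, and $\delta_{t+1}\le\delta_t$ otherwise. (For box constraints one uses the box analogue of Lemma~\ref{lem:characterize-informal}, whose minimization ranges over the box and whose regularizer term is absent; the segment still lies in the feasible set.)

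The third step is the standard $1/t$ recursion. While $\delta_t > LD^2$, take $\eta = 1$, which after at most one {\good} step gives $\delta_t \le LD^2/2$; thereafter take $\eta = \delta_t/(LD^2) \in (0,1]$, giving $\delta_{t+1} \le \delta_t - \delta_t^2/(2LD^2)$ on every {\good} step, hence $1/\delta_{t+1} \ge 1/\delta_t + 1/(2LD^2)$. Summing over the $\ge \ceil{t/2}$ {\good} steps among the first $t$ iterations, and using that $\delta_t$ is non-increasing on the remaining ones, yields $\delta_t \le \mathcal{O}(LD^2/\ceil{t/2}) = \mathcal{O}(LD^2/t)$, which is the claim. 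The $\Theta$-approximate {\steepsub} rule only rescales the $-\eta\delta_t$ term by $\Theta$ (the {\steepsub} value enters Lemma~\ref{lem:characterize-informal} through $\norm{\cdot}_\infty$), changing the constant but not the rate.

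I expect the main obstacle to be the box-constrained case: one must (a) establish the box analogue of the {\good}-step characterization, with the minimization taken over the box and the correct bookkeeping for the three step types of Theorem~\ref{thm:informal-box}, and (b) verify that the {\cross} steps — which in the strongly convex analysis contributed a $(1-1/n)$ factor — do not interfere with the $1/t$ bound; since they are descent steps this should reduce to checking that the {\good}-step count stays a constant fraction of $t$. A minor but genuine point in both cases is making the diameter argument robust when $\Q^\star$ is not a singleton, i.e. letting the reference minimizer $\alphav^\star$ depend on $t$, which is precisely why $D$ is defined with the inner $\min$ over $\Q^\star$.
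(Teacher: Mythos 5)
For the $L1$ case your argument is correct and is essentially the paper's own proof (Theorem~\ref{thm:general-convex-L1-appendix}): count at least $\ceil{t/2}$ {\good} steps via Lemma~\ref{lem:num-good-L1}, restrict the minimization in Lemma~\ref{lem:good-step-progress-L1} to the segment joining $\alphav^{(t)}$ to the nearest $\alphav^\star\in\Q^\star$ (within $L1$-distance $D$ because the iterates remain in the level set), use convexity of $f$ and of $\norm{\cdot}_1$ to obtain $\delta_{t+1}\le\delta_t-\eta\delta_t+\tfrac{L\eta^2D^2}{2}$, and telescope the inverse suboptimality. The paper runs the last step slightly differently (it minimizes over all $\gamma\in\R$ and then uses $h_t^2\ge h_th_{t+1}$); your explicit restriction to $\eta\in[0,1]$ is if anything more careful, since the convexity bound on the regularizer term needs $\eta\in[0,1]$. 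One small inaccuracy: the $\Theta$-approximation multiplies the \emph{entire} minimum in Lemma~\ref{lem:good-step-progress-L1} by $\Theta^2$, not just the linear term by $\Theta$, so the constant degrades as $\Theta^{-2}$; this does not affect the order of the rate.

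The gap is in the box case, exactly where you suspected, and your proposed fix is not the right one. Lemma~\ref{lem:count-bad-steps-box} only bounds the {\bad} steps by $\floor{t/2}$, so what is guaranteed to be at least $\ceil{t/2}$ is the count of {\good}-\emph{or}-{\cross} steps; there is no a priori reason the {\good} steps alone form a constant fraction of $t$. Since in your scheme only {\good} steps contribute the additive $-\Omega(\delta_t^2/(LD^2))$ decrease, while a {\cross} step yields (via Lemma~\ref{lem:cross-step-progress-box}) only a multiplicative $(1-\Theta/(2n))$ factor---which for $t\ll n$ does not by itself produce an $O(1/t)$ bound---the telescoping sum could run over too few terms. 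The paper closes this differently (Theorem~\ref{thm:general-convex-box-appendix}): on {\good} steps all three rules coincide, a {\cross} step takes the largest feasible step and is therefore a valid {\steeplookr} step, and so all $t-\abs{\B_t}\ge\ceil{t/2}$ non-{\bad} steps fall under the {\steeplookr} sublinear analysis of \citet{DhillonNearestNeighborbased2011}, giving $8LD^2/(\Theta^2(t-\abs{\B_t}))$. If you prefer to stay inside your framework, a direct patch is available: a {\cross} step requires $\abs{\nabla_{i_t}f(\alphav^{(t)})}\ge L$ and moves the coordinate by a full unit, hence decreases $f$ by at least $L/2$, so there are at most $2(f(\alphav^{(0)})-f^\star)/L$ of them in total and they can be excised from the counting for $t$ large; either way, some such argument is needed and is currently missing from your write-up.
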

While a similar convergence rate was known for the {\steeplookr} rule \citep{DhillonNearestNeighborbased2011}, we here establish it for all three rules---even for the approximate {\steepsub}.

\section{Maximum Inner Product Search}\label{subsec:approx-mips}
We now shift the focus from the theoretical rates to the actual implementation.
A very important observation---as pointed out by~\citet{DhillonNearestNeighborbased2011}---is that finding the steepest descent direction is closely related to a geometric problem.
As an example consider the function $f(\alphav)\defeq \frac{1}{2} \norm{A\alphav -\bv}^2$ for a data matrix $A \in \R^{d \times n}$. The gradient takes the form $\nabla f(\alphav) = A^\top \qv$ for $\qv = (A\alphav-\bv)$ and thus finding steepest coordinate direction is equal 
to finding the datapoint with the largest (in absolute value) inner product $\langle{\Av_i,\qv}\rangle$ with the query vector $\qv$, which a priori requires the evaluation of all $n$ scalar products. However, when we have to perform multiple similar queries (such as over the iterations of SCD), it is possible to pre-process the dataset $A$ to speed up the query time. Note that we do not require the columns $\Av_i$ to be normalized.

For the more general set of problems we consider here, we need the following slightly stronger primitive.

\begin{definition}[$\SMIPS$]\label{def:snns}
Given a set of $m$, $d$-dimensional points $\pv_1,\dots,\pv_m \in \R^d$,
the \emph{Subset Maximum Inner Product Search} or $\SMIPS$ problem is to pre-process the set $\P$ such that for any query vector $\qv$ and any subset of the points $\B \subseteq [m]$, 
 the best point $j \in \B$, i.e.
\begin{equation*}
  \SMIPS_{\P}(\qv;\B) := \argmax_{j \in \B}\encasecurly{\scal{\pv_j}{\qv}}\,,
\end{equation*}
can be computed with $o(m)$ scalar product evaluations.
\end{definition}

State-of-the-art algorithms relax the exactness assumption and compute an approximate solution in time equivalent to a \emph{sublinear} number of scalar product evaluations, i.e. $o(n)$ (e.g. \citep{CharikarSimilarityEstimationTechniques2002, lv2007multi, Shrivastava2014ww, Neyshabur2015uz, AndoniPracticalOptimalLSH2015}).
We consciously refrain from stating more precise running times, as these will depend on the actual choice of the algorithm and the parameters chosen by the user. Our approach in this paper is transparent to the actual choice of $\SMIPS$ algorithm, we only show how SCD steps can be \emph{exactly cast} as such instances. By employing an arbitrary solver one thus gets a sublinear time approximate SCD update. An important caveat is that in subsequent queries, we will \emph{adaptively} change the subset $\B$ based on the solution to the previous query. Hence the known theoretical guarantees shown for LSH do not directly apply, though the practical performance does not seem to be affected by this (see Appendix Fig.~\ref{fig:adaptivity}, \ref{fig:adaptivity-lsh}).
Practical details of efficiently solving $\SMIPS$ are provided in Section \ref{sec:experiments}.

\section{Mapping {\steepsub} to $\MIPS$}\label{sec:efficient}

We now move to our next 
 contribution and show how the {\steepsub} rule can be \emph{efficiently} implemented. We aim to cast the problem of computing the {\steepsub} update as an instance of $\MIPS$ (Maximum Inner Product Search), for which very fast query algorithms exist. 
In contrast, the {\steeplookr} and {\steeplook} rules do not allow such a mapping.
In this section, we will only consider objective functions of the following special structure:
\begin{equation}\label{eq:problem-structure}
		\min_{\alphav \in \real^n}\Big\{F(\alphav) \defeq \underbrace{l(A\alphav) + \cv^\top\alphav}_{f(\alphav)} + \sum_{i=1}^n g_i(\alpha_i)\Big\}\,.
\end{equation}
The usual problems such as Lasso, dual SVM, or logistic regression, etc. have such a structure (see Appendix~\ref{subsec:applications}).

\paragraph{Difficulty of the Greedy Rules.}
This section will serve to strongly motivate our choice of using the {\steepsub} rule over the {\steeplookr} or {\steeplook}. Let us pause to examine the three greedy selection rules and compare their relative difficulty.
As a warm-up, consider again the smooth function $f(\alphav) \defeq \frac{1}{2} \norm{A\alphav -\bv}^2$ for a data matrix $A \in \R^{d \times n}$ as introduced above in Section~\ref{subsec:approx-mips}. We have observed that the steepest coordinate direction is equal to
\vspace{-2pt}
\begin{align}\label{eq:example-simple-steepest}
 \argmax_{j \in [n]}\abs{\nabla_j f(\alphav)} \, \equiv \, \argmax_{j \in [n]} \max_{s \in \{-1,1\}}{\scal{s\Av_j}{\vv}}\,.
\end{align}
\vspace{-2pt}
The formulation on the right is an instance of $\MIPS$ over the $2n$ vectors $\pm \Av_i$.
Now consider a non-smooth problem of the form $F(\alphav) \defeq \frac{1}{2} \norm{A\alphav -\bv}^2 + \lambda \norm{\alphav}_1$. For simplicity, let us assume $\alphav > 0$ and $L=1$. In this case, the subgradient is $A^\top \vv + \lambda\one$ and the {\steepsub} rule is
\vspace{-2pt}
\begin{equation}\label{eq:example-prox-gss}
 \argmax_{j \in [n]}\min_{s \in \delta \abs{\alpha_j}}\abs{\nabla_i f(\alphav)  +s} \equiv \, \argmax_{j \in [n]} \max_{s \in \{-1,1\}}{\scal{s\Av_j}{\vv} + s\lambda}\,.
\end{equation}
\vspace{-2pt}
The rule \eqref{eq:example-prox-gss} is clearly not much harder than \eqref{eq:example-simple-steepest}, and can be cast as a $\MIPS$ problem with minor modifications (see details in Sec. \ref{subsec:efficient-L1}). 

Let $\alpha_j^+$ denote the proximal coordinate update along the $j$-th coordinate. In our case, $\alpha_j^+ = \shrinkage_{\lambda}(\alpha_j - \scal{\Av_j}{\vv})$. The {\steeplookr} rule can now be `simplified' as:
\vspace{-2pt}
\begin{equation}\label{eq:example-prox-gsr}
 \argmax_{j \in [n]}\abs{\alpha_j^+ - \alpha_j} \, \equiv   \argmax_{j \in [n]} 
 \left\{\!\begin{aligned}
 		\alpha_j,  \quad \mbox{if } \abs{\alpha_j - \scal{\Av_j}{\vv}} \leq \lambda\\
 		\sign(\alpha_j - \scal{\Av_j}{\vv}), \, \mbox{otherwise}\,.
\end{aligned}\right\}
\end{equation}
\vspace{-2pt}
It does not seem easy to cast \eqref{eq:example-prox-gsr} as a $\MIPS$ instance. It is even less likely that the {\steeplook} rule which reads
\[
	\argmin_{j \in [n]}\encasecurly{\nabla_j f(\alphav)(\alpha_j^+ \! - \! \alpha_j) \! + \! \tfrac{1}{2}(\alpha_j^+ \! - \! \alpha_j)^2 \! + \! \lambda(\abs{\alpha_j^+} \! - \! \alpha_j)}
\]
can be mapped as to $\MIPS$. This highlights the simplicity and usefulness of the {\steepsub} rule.

\subsection{Mapping $L1$-Regularized Problems}\label{subsec:efficient-L1}
Here we focus on problems of the form \eqref{eq:problem-structure} where $g(\alphav) = \lambda \norm{\alphav}_1$. 
Again, we have $\nabla f(\alphav) = A^\top \nabla l(\vv) + \cv$ where $\vv = A\alphav$. 

For simplicity, let $\alphav \neq 0$. Then the {\steepsub} rule in~\eqref{eq:l1-steepest} is
\begin{align}
	\argmax_{j \in [n]}\abs{s(\alphav)_j} &= \argmax_{j \in [n]}\abs{\scal{\Av_j}{\nabla l(\vv)} \! + \! c_j \! + \! \sign(\alpha_j)\lambda} \nonumber\\
&\hspace*{-0.5in}= 	\argmax_{j \in [n]}\max_{s \in \pm 1}s\encase{\scal{\Av_j}{\nabla l(\vv)} +  c_j + \sign(\alpha_j)\lambda}\label{eq:gss-example-case}\,.
\end{align}
We want to map the problem of the above form to a $\SMIPS$ instance. Define for some $\beta > 0$, vectors
\begin{equation}\label{eq:A-tilde-L1}
  \tilde{\Av}_j^\pm := \begin{pmatrix} \pm\beta,  & \beta c_j, & \Av_j \end{pmatrix}^\top \,,
\end{equation}
and form a query vector $\qv$ as
\begin{equation}\label{eq:query-L1-steepest}
\qv := \begin{pmatrix} \frac{\lambda}{\beta}, & \frac{1}{\beta}, & \nabla l(\vv) \end{pmatrix}^\top \,.
\end{equation}
A simple computation shows that the problem in \eqref{eq:gss-example-case} is equivalent to
\[
	\argmax_{j \in [n]} \, \max_{s \in \pm 1}\scal{s \tilde\Av_j^{\sign(\alpha_j)}}{\qv}\,.
\]
Thus by searching over a subset of vectors in $\{\pm \tilde\Av_j^\pm\}$, we can compute the {\steepsub} direction. Dealing with the case where $\alpha_j =0$ goes through similar arguments, and the details are outlined in Appendix~\ref{sec:proofs-mapping}. Here we only state the resulting mapping.

The constant $\beta$ in \eqref{eq:A-tilde-L1} and \eqref{eq:query-L1-steepest} is chosen to ensure that the entry is of the same order of magnitude on average as the rest of the coordinates of $\Av_i$. The need for $\beta$ only arises out of the performance concerns about the underlying algorithm to solve the $\SMIPS$ instance. For example, $\beta$ has no effect if we use exact search.

Formally, define the set $\P := \{\pm \tilde\Av_j^\pm: j \in [n]\}$.
Then at any iteration $t$ with current iterate $\alphav^{(t)}$, we also define $\B_t$ as $\B_t = \B_t^{1} \cup \B_t^{2} \cup \B_t^{3} \cup \B_t^{4}$, where \vspace{-1mm}
\begin{equation}
\begin{aligned}\label{eq:L1-Bt}
    \B_t^{1} &= \encasecurly{\tilde{\Av}_j^\plus: \alpha_j^{(t)} > 0}\,, &  \quad
    \B_t^{2} &= \encasecurly{-\tilde{\Av}_j^\plus: \alpha_j^{(t)} \geq 0}\,,\\
    \B_t^{3} &= \encasecurly{\tilde{\Av}_j^\minus: \alpha_j^{(t)} \leq 0}\,, & 
    \B_t^{4} &= \encasecurly{-\tilde{\Av}_j^\minus: \alpha_j^{(t)} < 0}\,.
\end{aligned}
\end{equation}

\begin{lemma}\label{lem:L1-steepest-efficient}
  At any iteration $t$, for $\P$ and $\B_t$ as defined in \eqref{eq:L1-Bt}, the query vector $\qv_t$ as in \eqref{eq:query-L1-steepest}, and $s(\alphav)$ as in~\eqref{eq:l1-steepest} then the following are equivalent for $f(\alphav)$ is of the form $l(A\alphav) + \cv^\top\alphav$:
  $$
    \SMIPS_{\P}(\qv_t;\B_t) = \argmax_{i}\abs{s(\alphav)_i}\,.\vspace{-1mm}
  $$
  \vspace{-3mm}
\end{lemma}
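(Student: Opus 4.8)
\textbf{Proof proposal for Lemma~\ref{lem:L1-steepest-efficient}.}
The plan is to verify the claimed equivalence by a case analysis on the sign of each coordinate $\alpha_j^{(t)}$, matching each term in the $\SMIPS$ objective over $\P$ restricted to $\B_t$ with the corresponding term in $\max_i \abs{s(\alphav)_i}$. First I would recall from \eqref{eq:def-s-vector} that for $\alpha_j \neq 0$ we have $s(\alphav)_j = \nabla_j f(\alphav) + \sign(\alpha_j)\lambda = \scal{\Av_j}{\nabla l(\vv)} + c_j + \sign(\alpha_j)\lambda$, and that $\abs{s(\alphav)_j} = \max_{s\in\pm1} s\,[\scal{\Av_j}{\nabla l(\vv)} + c_j + \sign(\alpha_j)\lambda]$, exactly the expression appearing in \eqref{eq:gss-example-case}. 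For $\alpha_j = 0$ the quantity to maximize is $\abs{\shrinkage_\lambda(\nabla_j f(\alphav))}$, which equals $\positive{\abs{\nabla_j f(\alphav)} - \lambda}$; I would record the identity $\positive{\abs{x}-\lambda} = \max\{\,x - \lambda,\ -x-\lambda,\ 0\,\} = \max_{s\in\pm1} \max\{\, s(x-\lambda\cdot\operatorname{sign}?) \,\}$ — more precisely $\shrinkage_\lambda(x)$ has absolute value $\max\{\max_{s\in\pm1} s(x) - \lambda, 0\}$, so it is captured by taking the max over $s\in\pm1$ of $s\,x - \lambda$ and flooring at $0$.

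Next I would do the bookkeeping with $\beta$. For the vectors in \eqref{eq:A-tilde-L1} and the query \eqref{eq:query-L1-steepest}, a direct computation gives
\begin{equation*}
\scal{\tilde\Av_j^{\pm}}{\qv} = (\pm\beta)\tfrac{\lambda}{\beta} + (\beta c_j)\tfrac{1}{\beta} + \scal{\Av_j}{\nabla l(\vv)} = \pm\lambda + c_j + \scal{\Av_j}{\nabla l(\vv)}\,,
\end{equation*}
so $\beta$ cancels entirely and $\scal{\tilde\Av_j^{\operatorname{sign}(\alpha_j)}}{\qv} = s(\alphav)_j$ whenever $\alpha_j\neq 0$, while $\scal{\tilde\Av_j^{+}}{\qv} = \nabla_j f(\alphav) + \lambda$ and $\scal{\tilde\Av_j^{-}}{\qv} = \nabla_j f(\alphav) - \lambda$ in general. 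Then I would argue that $\SMIPS_{\P}(\qv_t;\B_t) = \argmax_{p\in\B_t}\scal{p}{\qv_t}$ picks out, for each index $j$, exactly the contributions we want: if $\alpha_j^{(t)} > 0$, then $\B_t$ contains $\tilde\Av_j^{+}$ and $-\tilde\Av_j^{+}$ (from $\B_t^1$ and $\B_t^2$), whose inner products with $\qv_t$ are $+s(\alphav)_j$ and $-s(\alphav)_j$, so $\max$ over this pair is $\abs{s(\alphav)_j}$; symmetrically for $\alpha_j^{(t)} < 0$ using $\B_t^3,\B_t^4$; and if $\alpha_j^{(t)} = 0$, then $\B_t$ contributes $-\tilde\Av_j^{+}$ (from $\B_t^2$, since $\alpha_j\geq0$) and $\tilde\Av_j^{-}$ (from $\B_t^3$, since $\alpha_j\leq0$), with inner products $-(\nabla_j f(\alphav)+\lambda) = -\nabla_j f(\alphav)-\lambda$ and $\nabla_j f(\alphav) - \lambda$; the max of these two equals $\abs{\nabla_j f(\alphav)} - \lambda$, and since $\abs{\shrinkage_\lambda(\nabla_j f(\alphav))} = \positive{\abs{\nabla_j f(\alphav)} - \lambda}$ this correctly represents the $\alpha_j=0$ term up to the flooring at $0$. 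I would then handle the flooring remark: when $\abs{\nabla_j f(\alphav)} < \lambda$ the coordinate $j$ simply is not a maximizer (its $\SMIPS$ value is negative, and the true objective value is $0$); since the overall $\argmax$ is over all $i$ and coordinates that should be clipped to $0$ never beat a coordinate with positive steepest value, the $\argmax$ of the $\SMIPS$ instance agrees with $\argmax_i \abs{s(\alphav)_i}$. (A brief note that ties are broken consistently, or that the statement is about equality of the optimal value / a valid maximizer, closes this gap.)

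The main obstacle I expect is precisely the $\alpha_j = 0$ case and the interaction between the shrinkage/flooring operation and the linear $\SMIPS$ objective: the set $\B_t$ deliberately keeps only \emph{two} of the four signed vectors for a zero coordinate (one from $\B_t^2$ and one from $\B_t^3$, exploiting the overlapping inequalities $\alpha_j\geq0$ and $\alpha_j\leq0$), and one has to check carefully that this choice reproduces $\abs{\shrinkage_\lambda(\nabla_j f(\alphav))}$ rather than, say, $\abs{\nabla_j f(\alphav)} + \lambda$ or some wrong sign combination. I would make this airtight by writing out $\abs{\shrinkage_\lambda(x)} = \max\{\,x-\lambda,\,-x-\lambda,\,0\,\}$ explicitly and matching the first two arguments to the two available inner products, then observing that the ``$0$'' branch is automatically dominated whenever the reported maximizer over all of $\B_t$ is a genuine steepest coordinate. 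The remaining parts — the $\beta$ cancellation and the nonzero-coordinate cases — are routine once the inner products are computed, and I would defer the full details to Appendix~\ref{sec:proofs-mapping} as the excerpt already indicates.
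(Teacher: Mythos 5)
Your proposal is correct and follows essentially the same route as the paper's proof in Appendix~\ref{sec:proofs-mapping}: compute the inner products $\scal{\pm\tilde\Av_j^{\pm}}{\qv_t}$ (noting the $\beta$ cancellation), then case-split on $\sign(\alpha_j^{(t)})$ and match the surviving members of $\B_t$ against the cases of $\abs{s(\alphav)_j}$ from Lemma~\ref{lem:l1-steepest}. Your explicit handling of the $\alpha_j=0$ flooring issue (the $\SMIPS$ value $\abs{\nabla_j f(\alphav)}-\lambda$ versus $\positive{\abs{\nabla_j f(\alphav)}-\lambda}$, resolved because clipped coordinates never attain the argmax when a nonzero steepest value exists) is in fact slightly more careful than the paper's, which leaves that step implicit in the phrase ``comparing the four cases \dots shows that the lemma holds.''
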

The sets $\B_t$ and $\B_{t+1}$ differ in at most four points since $\alphav^{(t)}$  and $\alphav^{(t+1)}$  differ only in a single coordinate. This makes it computationally very efficient to incrementally maintain $\B_{t+1}$ and $\alphav^{(t+1)}$ for $L1$-regularized problems.

\subsection{Mapping Box-Constrained Problems}
\vspace{-1mm}
Using similar ideas, we demonstrate how to efficiently map problems of the form \eqref{eq:problem-structure} where $g$ enforces box constraints, such as for the dual SVM. 
The detailed approach is provided in Appendix~\ref{subsec:efficient-box}.

\section{Experimental Results}
\label{sec:experiments}
\vspace{-2mm}
Our experiments focus on the standard tasks of Lasso regression, as well as SVM training (on the dual objective). We refer the reader to Appendix \ref{subsec:applications} for definitions. Lasso regression is performed on the {\rcv} dataset while SVM is performed on {\wa} and the {\ijcnn} datasets. All columns of the dataset (features for Lasso, datapoints for SVM) are normalized to unit length,
allowing us to use the standard cosine-similarity algorithms {\nmslib} \citep{boytsov2013engineering} to efficiently solve the $\SMIPS$ instances. Note however that our framework is applicable without any normalization, if using a general $\MIPS$ solver instead.

We use the \texttt{hnsw} algorithm of the {\nmslib} library with the default hyper-parameter value $M$ and other parameters as in Table \ref{tab:datasets}, selected by grid-search.\footnote{A short overview of how to set these hyper-parameters can be found at \url{https://github.com/nmslib/nmslib/blob/master/python_bindings/parameters.md}.} 
More details such as the meaning of these parameters can be found in the {\nmslib} manual \cite[pp.~61]{naidan2015non}.
We exclude the time required for pre-processing of the datasets since it is amortized over the multiple experiments run on the same dataset (say for hyper-parameter tuning etc.). All our experiments are run on an Intel Xeon CPU E5-2680 v3 (2.50GHz, 30 MB cache) with 48 cores and 256GB RAM. 
\begin{table}[]
\centering
\caption{Datasets and hyper-parameters: Lasso is run on {\rcv}, and SVM on {\wa} and {\ijcnn}. ($\mathbf{d}$, $\mathbf{n}$) is dataset size, the constant $\mathbf{\beta}$ from \eqref{eq:A-tilde-L1}, \eqref{eq:query-L1-steepest} is set to $50/\sqrt{n}$, {\nmslib} hyper-parameter $M$ is set as a default, $efC = 100$.}
\label{tab:datasets}\vspace*{0.1in}
\resizebox{0.5\linewidth}{!}{\small
\setlength{\tabcolsep}{2pt}
\begin{tabular}{lrrrrrr}
\hline
\textbf{Dataset} & {$\mathbf{n}$} & {$\mathbf{d}$} & {$\mathbb{\rho}$} & {efS} & {post} \\ \hline
{\rcv}, $\lambda = 1$         &  47,236          & 15,564  & 19\%      & 100 &  2 \\
{\rcv}, $\lambda = 10$         &  47,236          & 15,564   & 3\%       & 400 & 2\\
{\wa}          &  2,477           & 300           &     		 & 100 & 0 \\
{\ijcnn}       & 49,990         &  22           &      & 50 & 0\\ \hline
\end{tabular}
}\vspace*{0.1in}
\end{table}

First we compare the practical algorithm ({\dhillon}) of \citet{DhillonNearestNeighborbased2011}, which disregards the regularization part in choosing the next coordinate, and Algorithm~\ref{alg:L1} with {\steepsub} rule ({\steepest}) for Lasso regression. Note that {\dhillon} is not guaranteed to converge. To compare the selection rules without biasing on the choice of the library, we perform exact search to answer the $\MIPS$ queries.
As seen from Fig.~\ref{fig:dhillon-lambda}, {\steepest} significantly outperforms {\dhillon}.
In fact {\dhillon} stagnates (though it does not diverge), once the error $f(\alphav)$ becomes small and the $L1$ regularization term starts playing a significant role. Increasing the regularization $\lambda$ further worsens its performance.
This is understandable since the rule used by {\dhillon} ignores the $L1$ regularizer.

\begin{figure}
	\centering
	\includegraphics[width=0.4\linewidth]{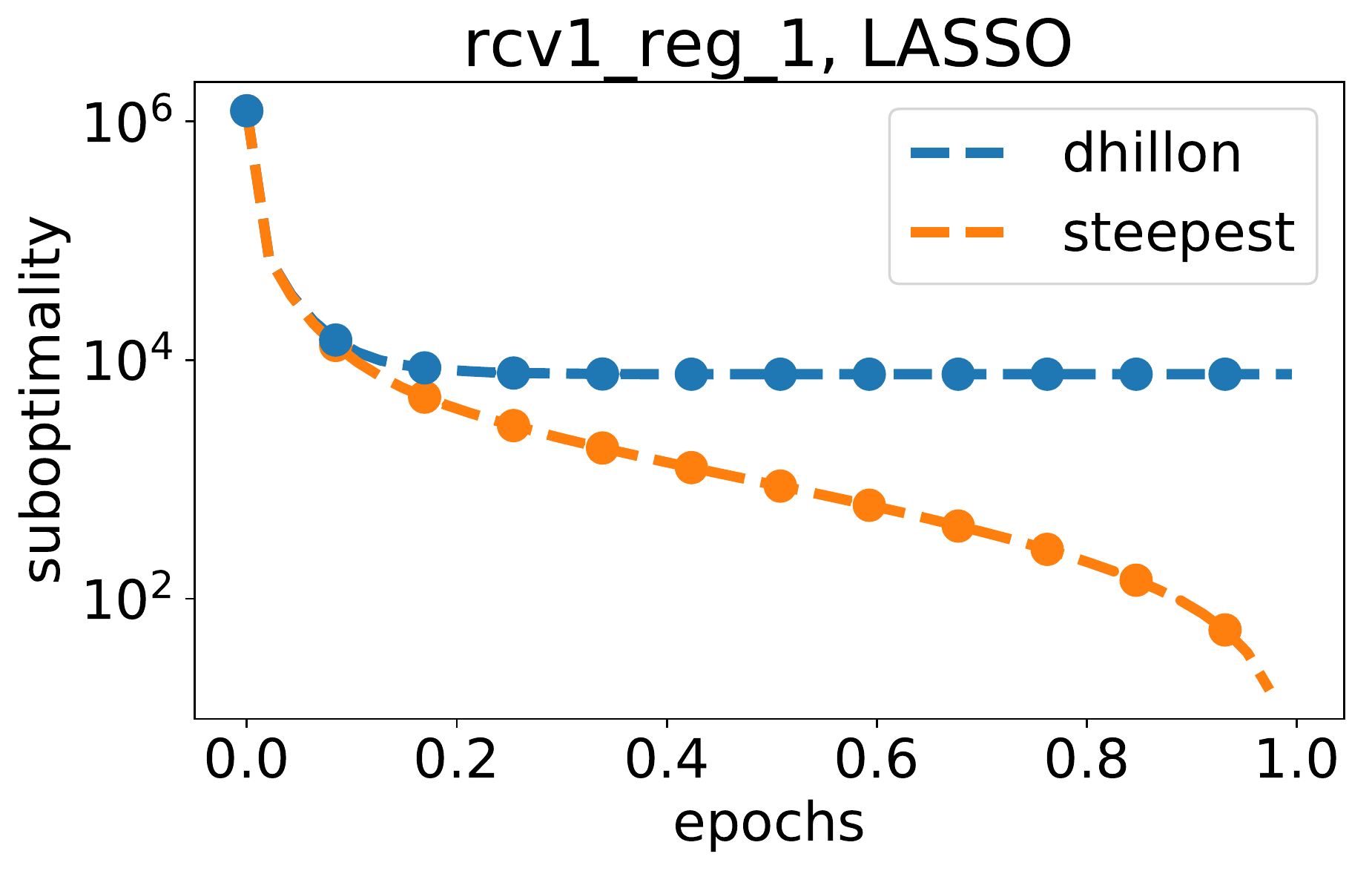}
	\includegraphics[width=0.4\linewidth]{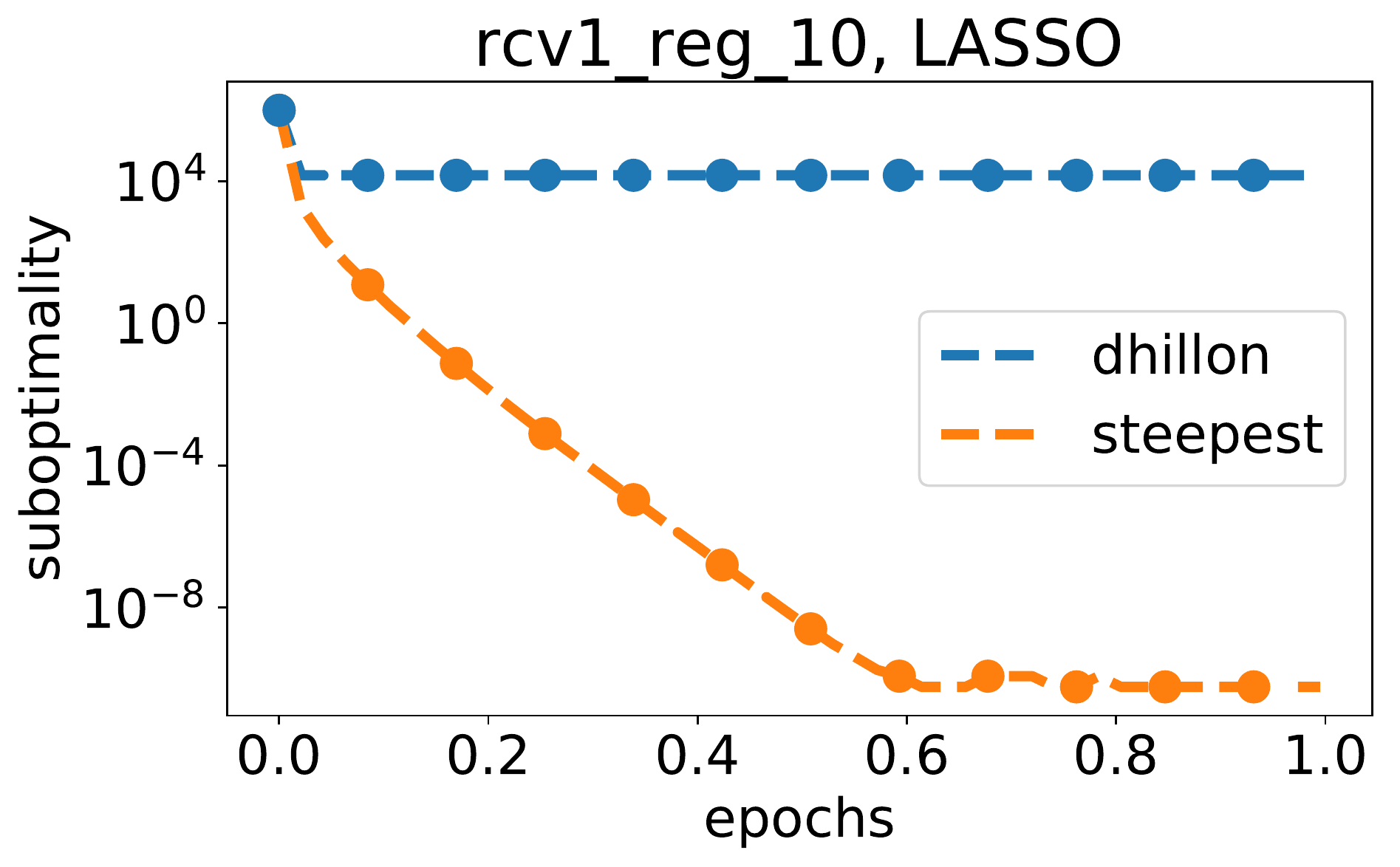}
	\caption{ Evaluating {\dhillon}: {\steepest} which is based on the {\steepsub} rule outperforms {\dhillon} which quickly stagnates. Increasing the regularization, it stagnates in even fewer iterations.}
	\label{fig:dhillon-lambda}
\end{figure}

Next we compare our {\steepest} strategy (Algorithms~\ref{alg:L1} and \ref{alg:box-steepest} using the {\steepsub} rule), and the corresponding nearest-neighbor-based approximate versions 
({\nms}) against {\uniform}, which picks coordinates uniformly at random.
 In all these experiments, $\lambda \in \{1,10\}$ for Lasso and at $1/n$ for SVM. 
 Fig.~\ref{fig:iterations} shows the clearly superior performance in terms of iterations of the {\steepest} strategy as well as {\nms} over {\uniform} for both the Lasso as well as SVM problems. However, towards the end of the optimization i.e. in high accuracy regimes, {\nms} fails to find directions substantially better than {\uniform}. This is because towards the end, all components of the gradient $\nabla f(\alphav)$ become small, meaning that the query vector is nearly orthogonal to all points---a setting in which the employed nearest neighbor library {\nmslib} performs poorly~\citep{boytsov2013engineering}.

\begin{figure}
	\centering
	\includegraphics[width=0.4\linewidth]{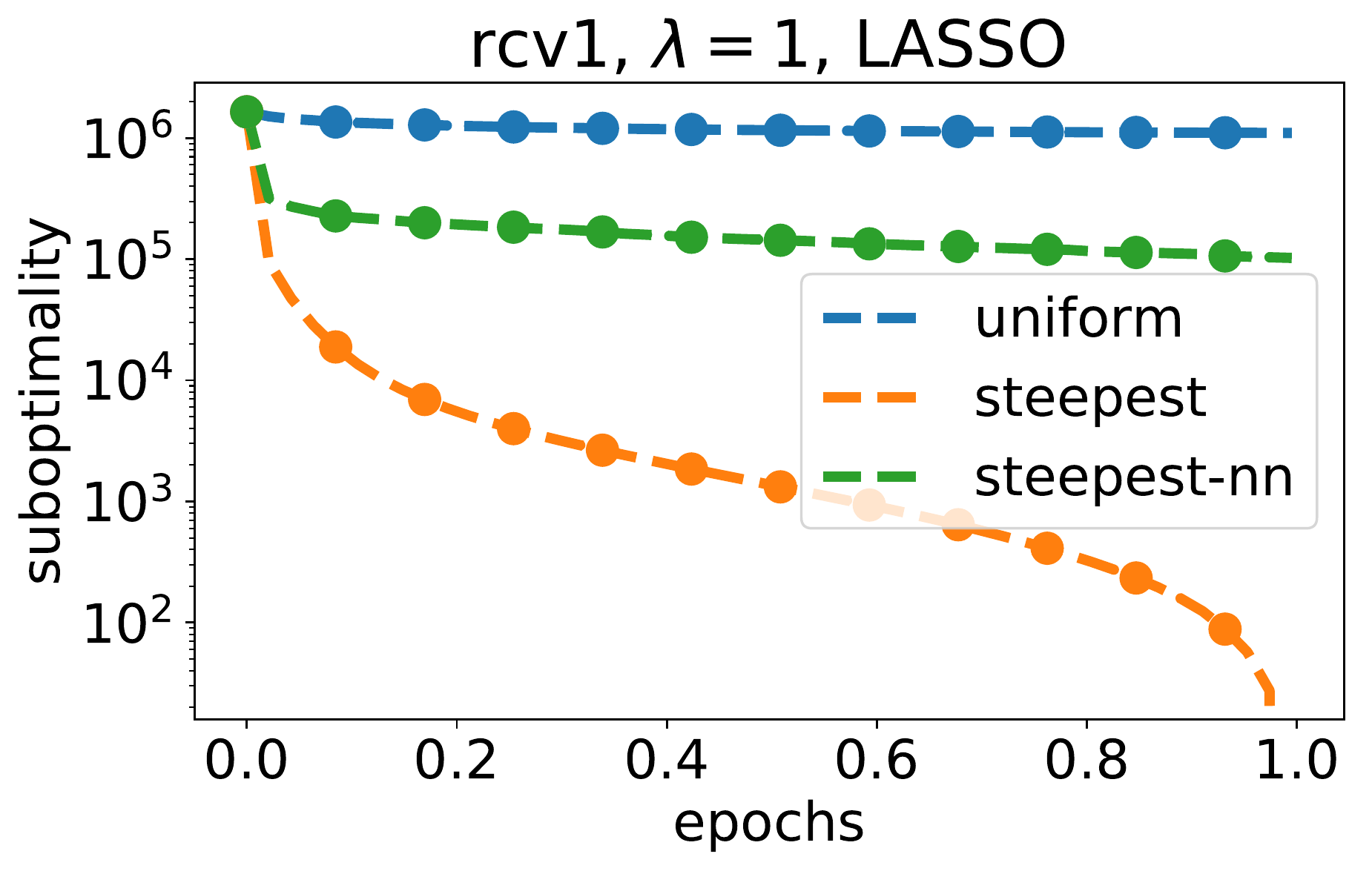}
	\includegraphics[width=0.4\linewidth]{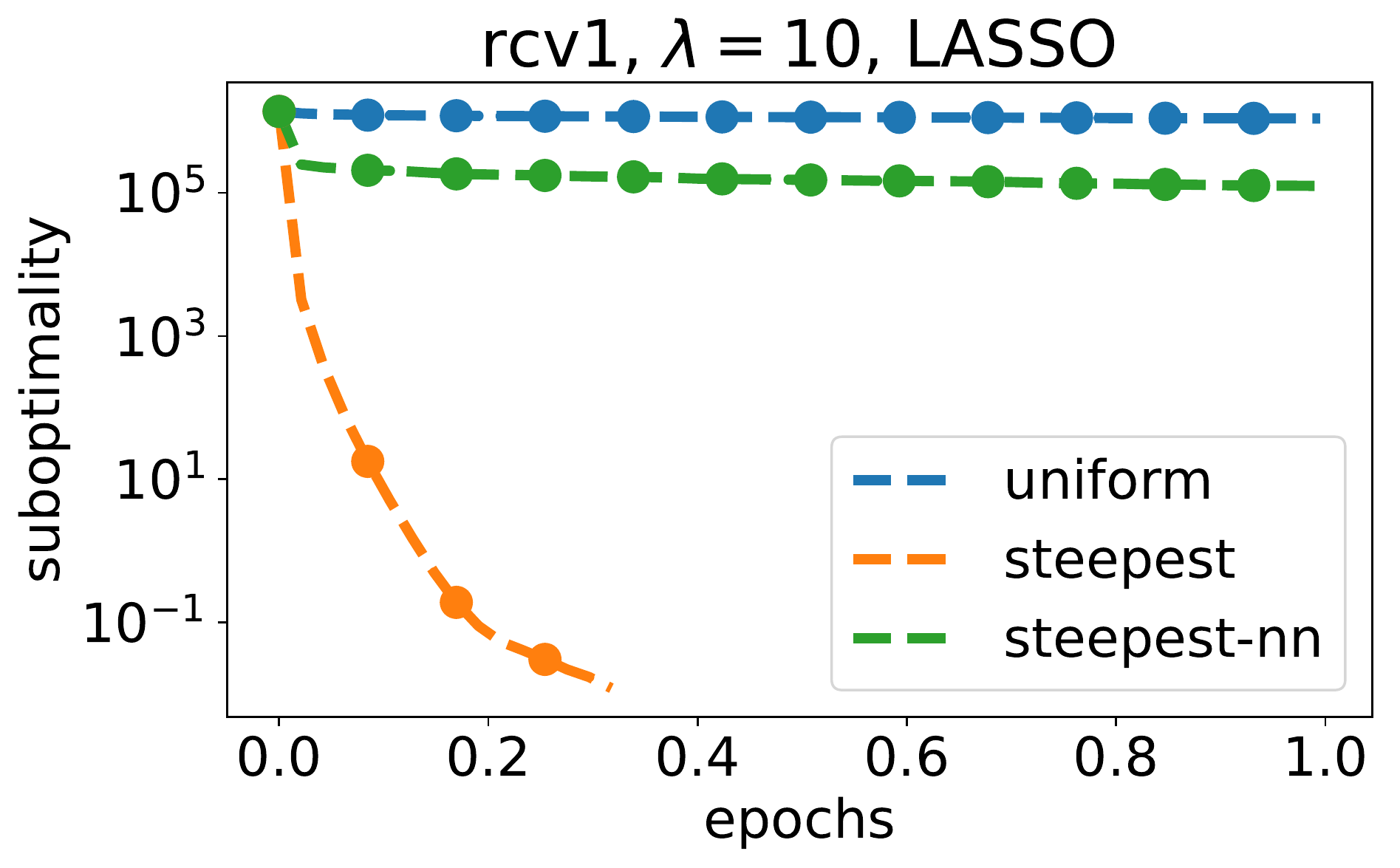}\\
	\includegraphics[width=0.4\linewidth]{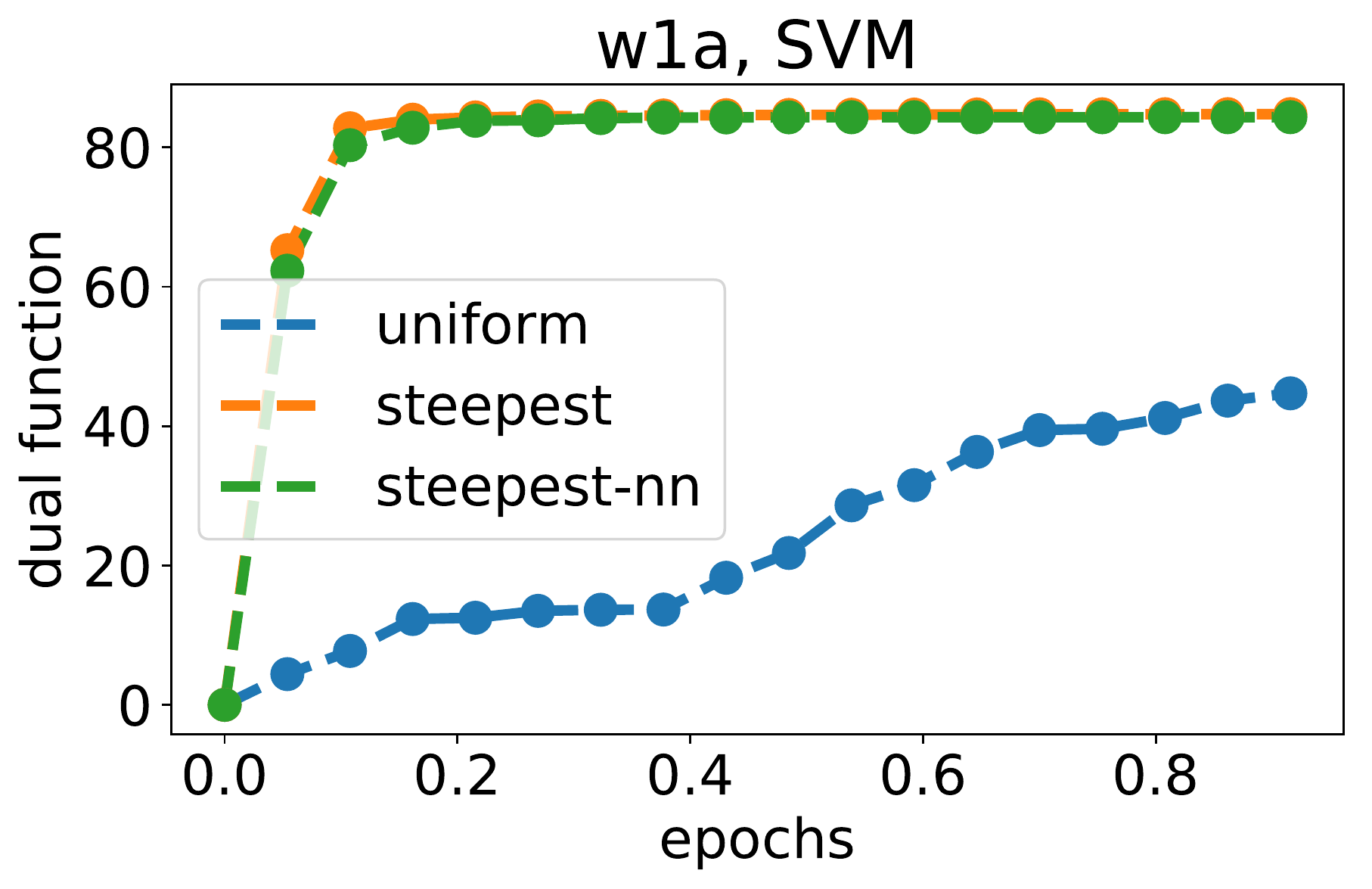}
	\includegraphics[width=0.4\linewidth]{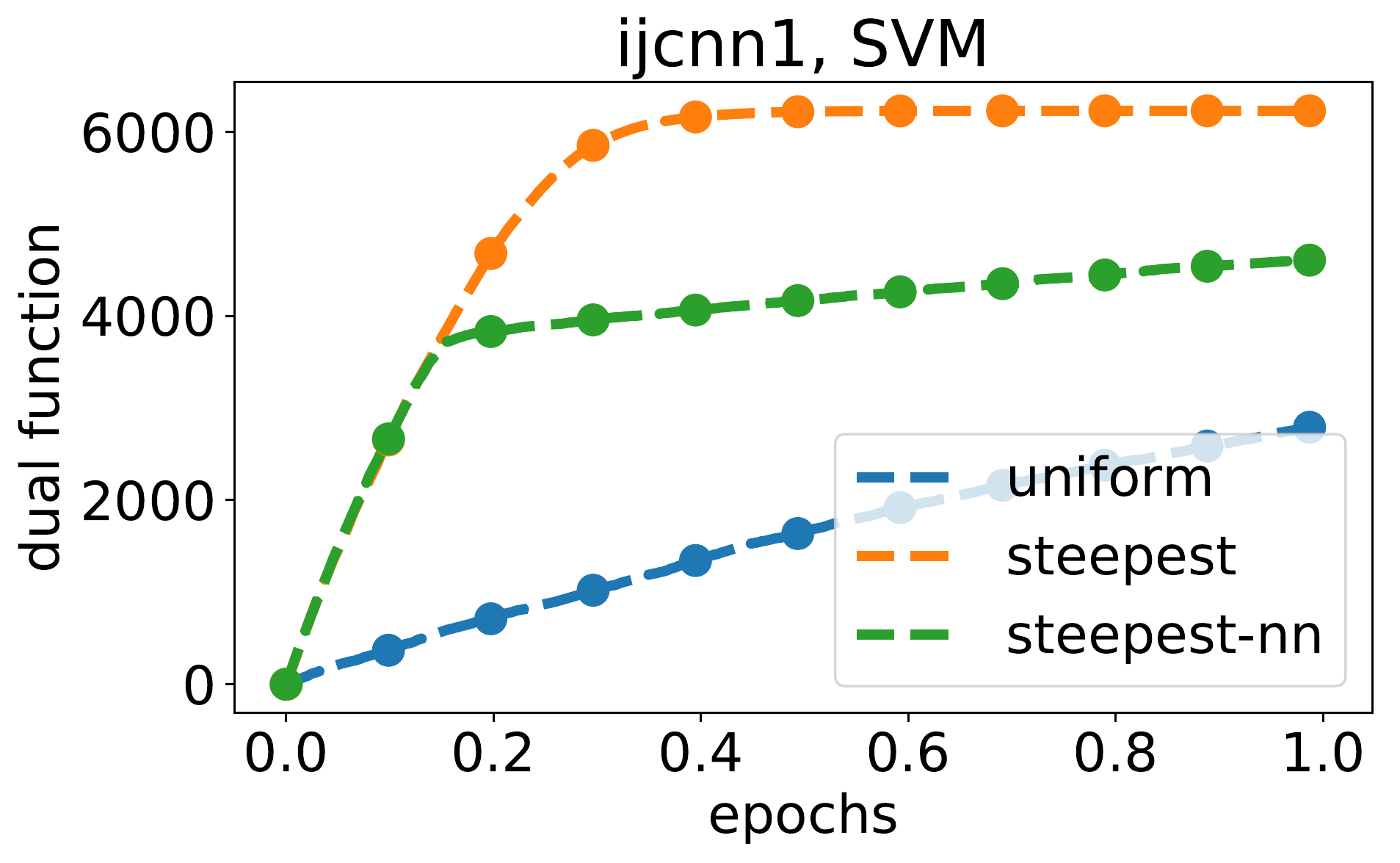}
	\caption{ {\steepest} as well {\nms} significantly outperform {\uniform} in number of iterations.}
	\label{fig:iterations}
\end{figure}

Fig.~\ref{fig:time} compares the wall-time performance of the {\steepest}, {\nms} and {\uniform} strategies. This includes all the overhead of finding the descent direction. In all instances, the {\nms} algorithm is competitive with {\uniform} at the start, compensating for the increased time per iteration by increased progress per iteration. However towards the end {\nms} gets comparable progress per iteration at a significantly larger cost, making its performance worse. 
With increasing sparsity of the solution (see Table \ref{tab:datasets} for sparsity levels), exact {\steepest} rule starts to outperform {\uniform} and {\nms}. 

Wall-time experiments (Fig.~\ref{fig:time}) show that {\nms} always shows a significant performance gain in the important early phase of optimization, but in the later phase loses out to {\uniform} due to the query cost and poor performance of {\nmslib}. In practice, the recommended implementation is to use {\nms} algorithm in the early optimization regime, and switch to uniform once the iteration cost outweighs the gain. In the Appendix (Fig. \ref{fig:adaptivity}) we further investigate the poor quality of the solution provided by {\nmslib}.
\begin{figure}
	\centering
	\includegraphics[width=0.4\linewidth]{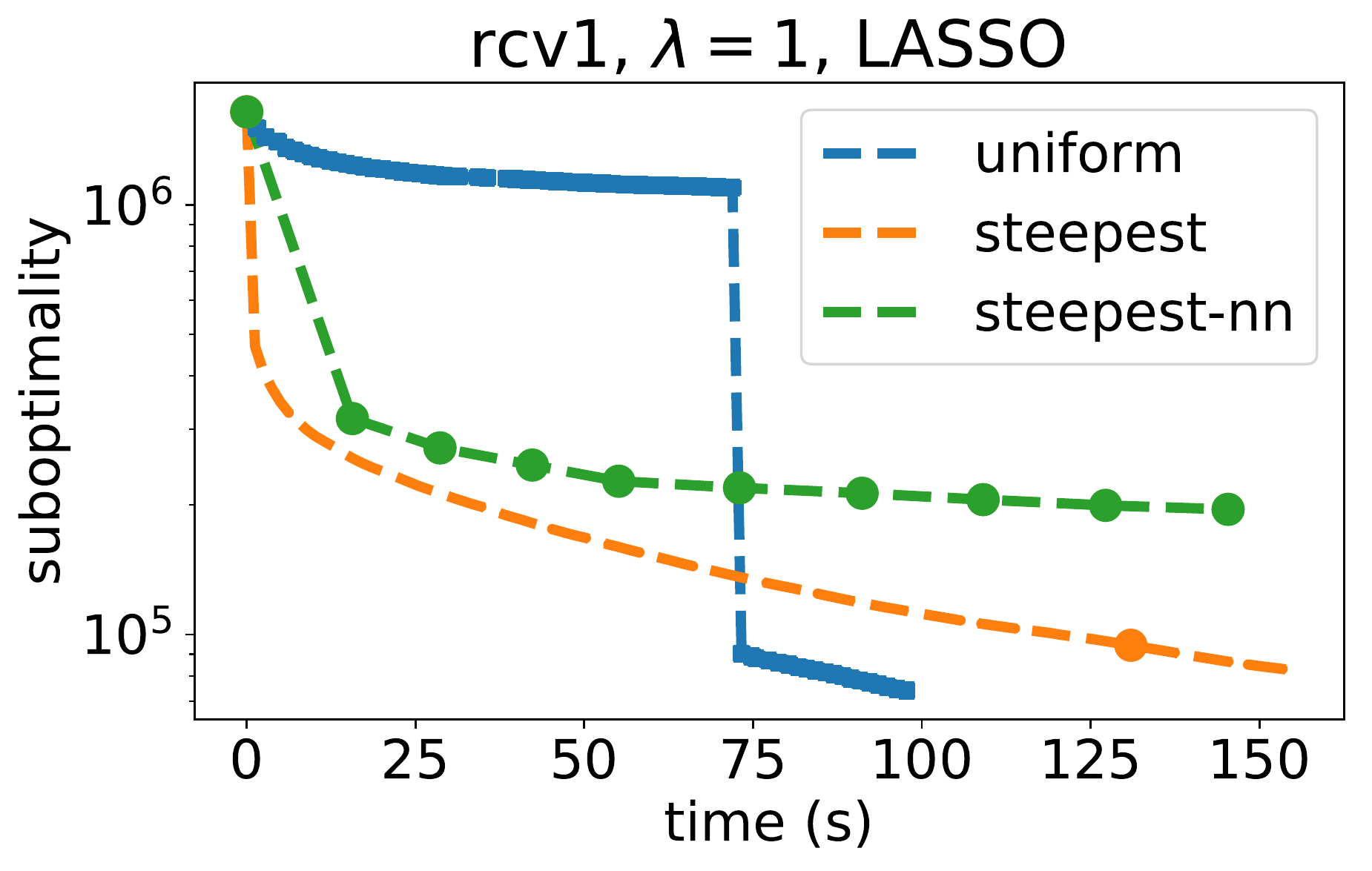}
	\includegraphics[width=0.4\linewidth]{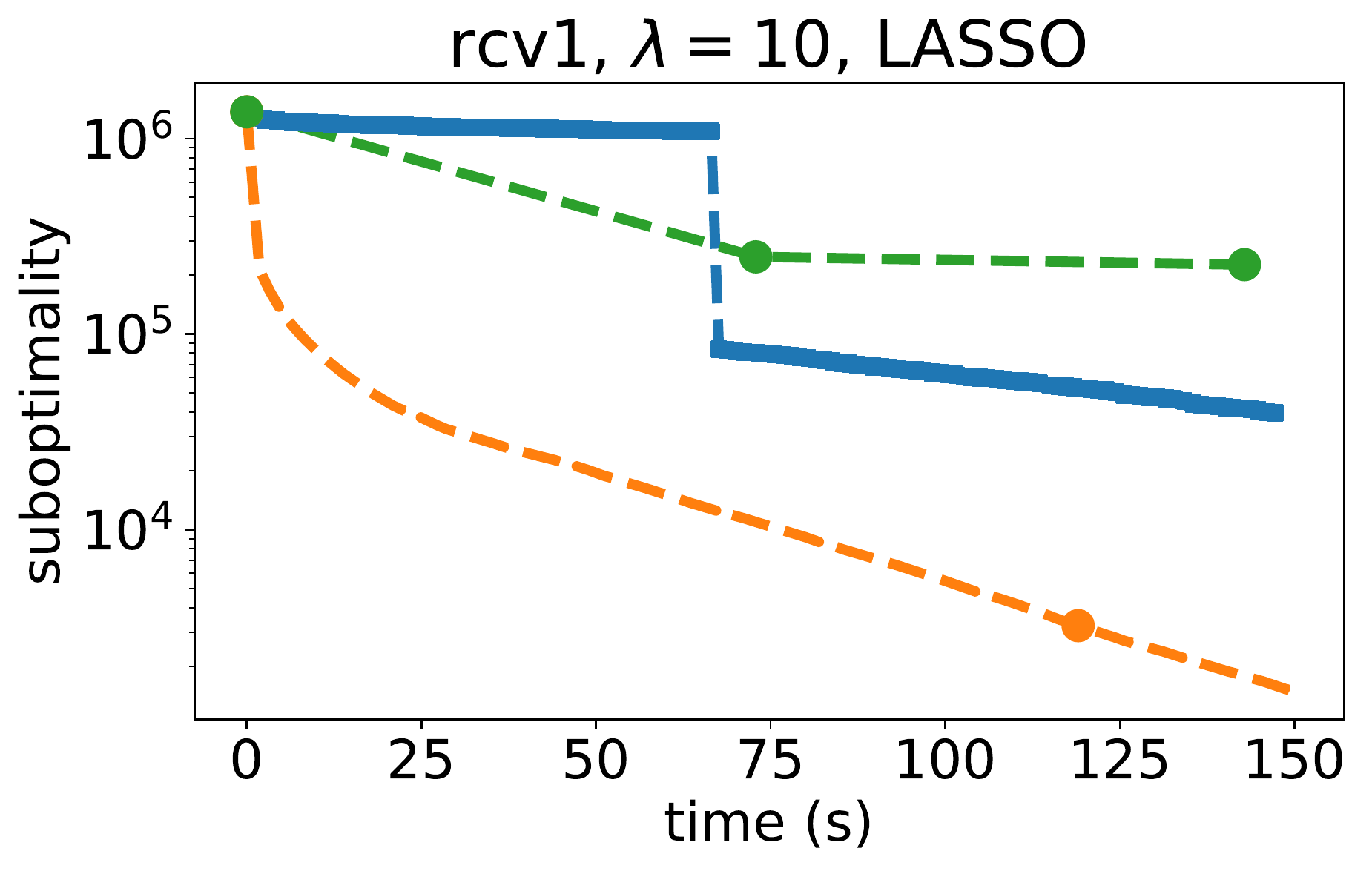}\\
	\includegraphics[width=0.4\linewidth]{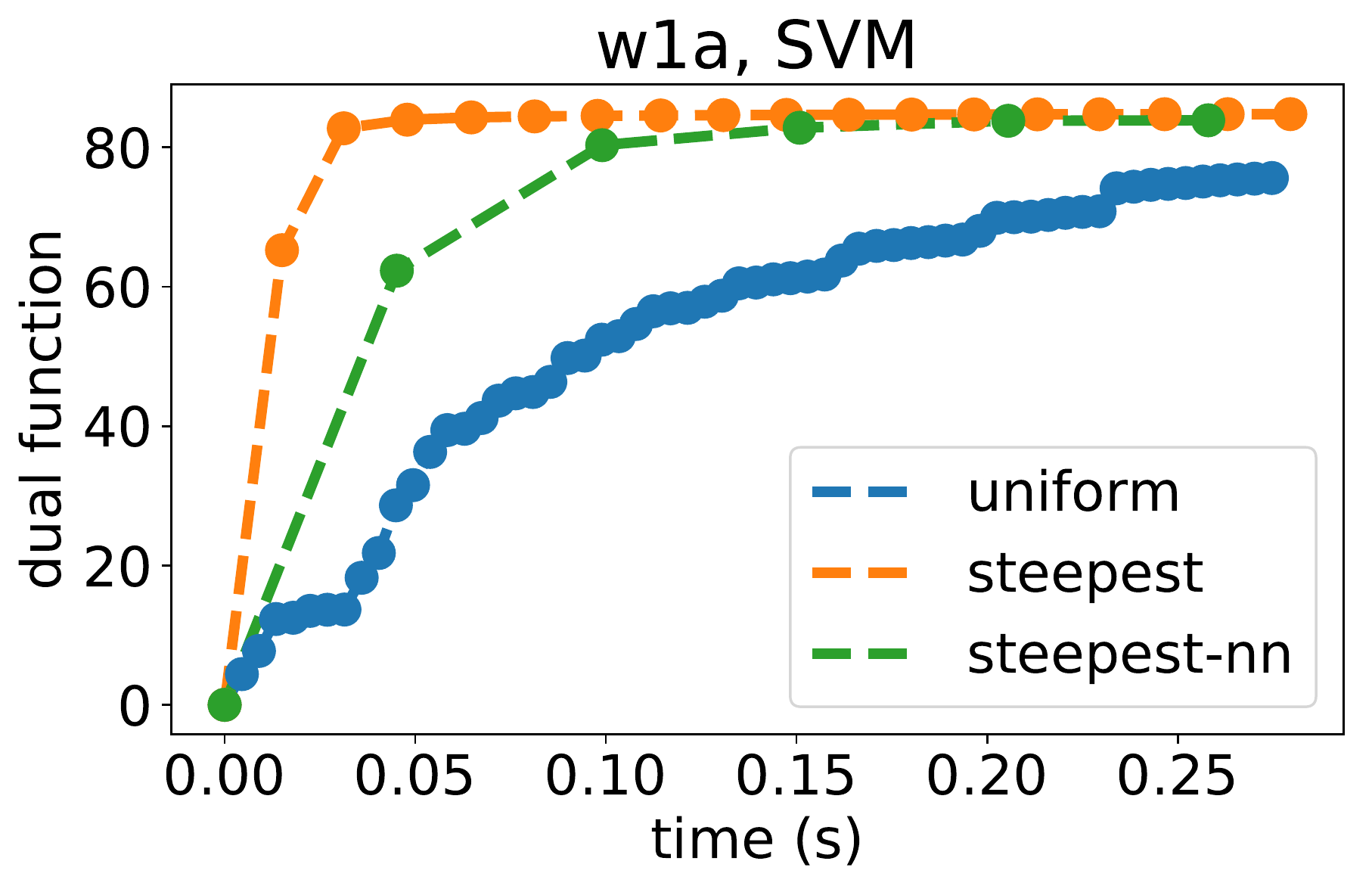}
	\includegraphics[width=0.4\linewidth]{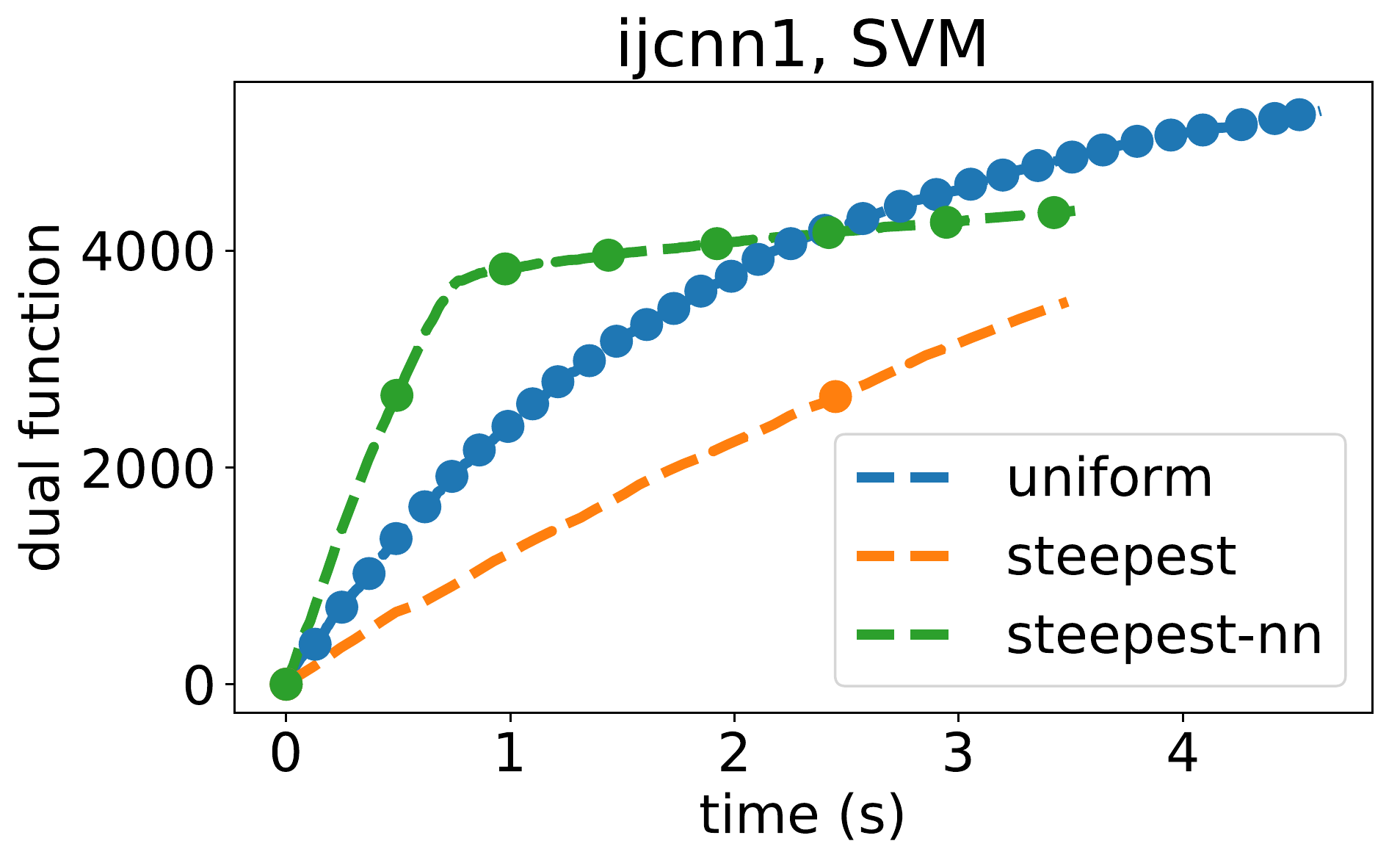}
	\caption{ {\nms} is very competitive and sometimes outperforms {\uniform} even in terms of wall time especially towards the beginning. However eventually the performance of {\uniform} is better than {\nms}. This is because as the norm of the gradient becomes small, the used {\nmslib} algorithm performs poorly.}
	\label{fig:time}
\end{figure}

Repeating our experiments with other datasets, or using {\falconn} \citep{AndoniPracticalOptimalLSH2015}, another popular library for $\MIPS$, yielded comparable results, provided in Appendix~\ref{sec:figures}.

\section{Concluding Remarks}\label{sec:conclusion}
In this work we have proposed a $\Theta$-approximate {\steepsub} selection rule for coordinate descent, and showed its convergence for several important classes of problems for the first time, furthering our understanding of steepest descent on non-smooth problems.
We have also described a new primitive, the Subset Maximum Inner Product Search ($\SMIPS$), and casted the {\steepsub} selection rule as an instance of $\SMIPS$. This enabled the use of fast sublinear algorithms designed for this problem to efficiently compute a $\Theta$-approximate {\steepsub} direction.

We obtained strong empirical evidence for the superiority of the {\steepsub} rule over randomly picking coordinates on several real world datasets, validating our theory. Further, we showed that for Lasso regression, our algorithm consistently outperforms the practical algorithm presented in \citep{DhillonNearestNeighborbased2011}. Finally, we perform extensive numerical experiments showcasing the strengths and weaknesses of a current state-of-the-art libraries for computing a $\Theta$-approximate {\steepsub} direction. As $n$ grows, the cost per iteration for {\nmslib} remains comparable to that of UCD, while the progress made per iteration increases. This means that as problem sizes grow, {\steepsub} implemented via $\SMIPS$ becomes an increasingly attractive approach. Further, we also show that when the norm of the gradient becomes small, current state-of-the-art methods struggle to find directions substantially better than uniform. Alleviating this, and leveraging some of the very active development of recent alternatives to LSH as subroutines for our method is a promising direction for future work. In a different direction, since the {\steepsub} rule, as opposed to {\steeplook} or the {\steeplookr}, uses only local subgradient information, it might be amenable to gradient approximation schemes typically used in zeroth-order algorithms (e.g. \citep{wibisono2012finite}).

\paragraph*{Acknowledgements.} We thank Ludwig Schmidt for numerous discussions on using {\falconn}, and for his useful advice on setting its hyperparameters. We also thank Vyacheslav Alipov for insights about {\nmslib}, Hadi Daneshmand for algorithmic insights, and Mikkel Thorup for discussions on using hashing schemes in practice. The feedback from many anonymous reviewers has also helped significantly improve the presentation.

\bibliography{CD-LSH-bibtex}
\bibliographystyle{apalike}

\newpage
\appendix
\setlength{\belowdisplayskip}{5pt} \setlength{\belowdisplayshortskip}{3pt}
\setlength{\abovedisplayskip}{5pt} \setlength{\abovedisplayshortskip}{3pt}
\part*{Appendix}

\section{Setup and Notation}
\label{sec:basicstuff}
In this section we go over some of the definitions and notations which we had skipped over previously. We will also describe the class of functions we tackle and applications which fit into this framework.

\subsection{Function Classes}
\label{sec:defsmoothandconvex}

\begin{definition}[coordinate-wise $L$-smoothness]\label{def:smoothness}
  A function $f:\R^n\rightarrow \R$ is \emph{coordinate-wise $L$-smooth} if
	$$
    f(\alphav + \gamma \unit_i) \leq f(\alphav) + \gamma\,\nabla_i f(\alphav) + \frac{L \gamma^2}{2},
  $$
  for any $\alphav \in \R^n$, $\gamma \in \R$, $i \in [n]$, and ${\unit}_{i}$ is a coordinate basis vector. 
  \end{definition}

We also define strong convexity of the function $f$.
\begin{definition}[$\mu$-strong convexity]\label{def:strong-convex}
  A function $f:\R^n\rightarrow \R$ is \emph{$\mu$-strongly convex} with respect to some norm $\norm{\cdot}$ if
  $$
    f(\alphav + \deltav) \geq f(\alphav)
    + \scal{\nabla f(\alphav)}{\deltav}
    + \frac{\mu}{2}\norm{\deltav}^2
  $$
  for any $\alphav$ and $\alphav + \deltav$ in the domain of $f$ (note that $f$ does not necessarily need to be defined on the entire space $\R^n$).
\end{definition}
We will frequently denote by $\mu_2$ the strong convexity constant corresponding to the usual Euclidean norm, and by $\mu_1$ the strong convexity constant corresponding the $L1$ norm. In general it holds : $\mu_1 \in [\mu_2/n , \mu_2]$. See \citep{nutini_coordinate_2015} for a detailed comparision of the two constants.

\begin{theorem}\label{lem:L1-coord-smoothnes-relation}
  Suppose that the function is twice-differentiable and 
	  \[
  		L \geq \max_{i \in [n]}[\nabla^2 f(\alphav)]_{i,i}\,,
	  \]
	 for any $\alphav$ in the domain of $f$ i.e. the maximum diagonal element of the Hessian is bounded by $L$.
		Then $f(\alphav)$ is coordinate-wise $L$-smooth. Additionally, for any $\deltav \in \R^n$
	  	\[
  			f(\alphav + \deltav) \leq f(\alphav) + \scal{\nabla f(\alphav)}{\deltav} + \frac{L}{2}\norm{\deltav}_1^2\,.
	  	\]
\end{theorem}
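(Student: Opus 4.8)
The plan is to prove Theorem~\ref{lem:L1-coord-smoothnes-relation} in two parts, corresponding to its two claims: first the coordinate-wise $L$-smoothness, and then the stronger global inequality with the $\norm{\cdot}_1^2$ term. Both follow from Taylor's theorem with integral (or Lagrange) remainder applied to the twice-differentiable function $f$.

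\textbf{Part 1 (coordinate-wise smoothness).} Fix $\alphav$, a coordinate $i$, and a scalar $\gamma$. Applying the second-order Taylor expansion to the one-variable function $t \mapsto f(\alphav + t\gamma\unit_i)$ on $[0,1]$, there is some $\xi$ on the segment between $\alphav$ and $\alphav + \gamma\unit_i$ with
\[
  f(\alphav + \gamma\unit_i) = f(\alphav) + \gamma\nabla_i f(\alphav) + \tfrac{1}{2}\gamma^2 [\nabla^2 f(\xi)]_{i,i}\,.
\]
Since $\xi$ lies in the domain of $f$, the hypothesis $L \geq \max_i [\nabla^2 f(\xi)]_{i,i}$ gives $[\nabla^2 f(\xi)]_{i,i} \leq L$, and substituting yields exactly Definition~\ref{def:smoothness}. (One should note the diagonal Hessian entries are nonnegative by convexity, but this is not even needed for the upper bound.)

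\textbf{Part 2 (global $L1$ inequality).} For arbitrary $\deltav \in \R^n$, expand $g(t) := f(\alphav + t\deltav)$ to second order: for some $\xi$ on the segment between $\alphav$ and $\alphav+\deltav$,
\[
  f(\alphav + \deltav) = f(\alphav) + \scal{\nabla f(\alphav)}{\deltav} + \tfrac{1}{2}\deltav^\top \nabla^2 f(\xi)\,\deltav\,.
\]
It remains to bound the quadratic form $\deltav^\top \nabla^2 f(\xi)\,\deltav$ by $L\norm{\deltav}_1^2$. Write $H := \nabla^2 f(\xi)$, which is symmetric positive semidefinite (by convexity of $f$) with $H_{jj} \leq L$ for all $j$. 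Then
\[
  \deltav^\top H \deltav = \sum_{j,k} \delta_j \delta_k H_{jk} \leq \sum_{j,k} \abs{\delta_j}\abs{\delta_k}\sqrt{H_{jj}H_{kk}} \leq L\sum_{j,k}\abs{\delta_j}\abs{\delta_k} = L\norm{\deltav}_1^2\,,
\]
where the first inequality is the Cauchy--Schwarz bound $\abs{H_{jk}} \leq \sqrt{H_{jj}H_{kk}}$ valid for PSD matrices, and the second uses $H_{jj}, H_{kk} \leq L$. Dividing by $2$ and substituting gives the claimed inequality.

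\textbf{Main obstacle.} There is no deep obstacle; the one point requiring care is the justification that $\abs{H_{jk}} \leq \sqrt{H_{jj}H_{kk}}$ for a positive semidefinite matrix $H$ — this follows by considering the $2\times 2$ principal submatrix indexed by $\{j,k\}$, whose determinant $H_{jj}H_{kk} - H_{jk}^2$ must be nonnegative. One must also make sure the intermediate point $\xi$ from Taylor's theorem indeed lies in the domain of $f$, which holds since the domain is convex (it is the domain of $F$, an intersection of a box or the whole space with $\mathrm{dom}\, f$) and $\xi$ is on the segment between two points of the domain. Everything else is a routine application of Taylor's theorem.
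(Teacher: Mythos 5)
Your proof is correct, and for the second (and only nontrivial) claim it takes a genuinely different route from the paper. Both proofs start identically, with the exact second-order Taylor expansion $f(\alphav+\deltav)=f(\alphav)+\scal{\nabla f(\alphav)}{\deltav}+\tfrac12\deltav^\top \nabla^2 f(\xi)\deltav$ and the observation that the coordinate case is immediate. Where they diverge is in bounding the quadratic form by $L\norm{\deltav}_1^2$: the paper normalizes $\deltav$ to the unit $L1$ ball and invokes an auxiliary lemma (Lemma~\ref{lem:max-convex}) stating that a convex function attains its maximum over a compact convex set at a ``corner,'' so that $\max_{\norm{\widehat\deltav}_1\le 1}\widehat\deltav^\top H\widehat\deltav$ is attained at some $\pm\unit_i$ and hence equals a diagonal entry $H_{ii}\le L$; you instead bound the form entrywise via $\abs{H_{jk}}\le\sqrt{H_{jj}H_{kk}}\le L$ (nonnegativity of the $2\times 2$ principal minors of a PSD matrix) and sum to get $L\bigl(\sum_j\abs{\delta_j}\bigr)^2$. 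Both arguments use convexity of $f$ in an essential way --- the paper to make $\widehat\deltav\mapsto\widehat\deltav^\top H\widehat\deltav$ convex, you to make $H$ PSD --- and both are rigorous. Your version is more elementary and self-contained (no vertex-maximization lemma needed); the paper's version isolates a reusable geometric fact about maximizing convex functions over polytopes, which they also use to remark that smoothness measured in the $L1$ norm coincides with coordinate-wise smoothness. Your attention to the location of the intermediate point $\xi$ in the (convex) domain is a detail the paper glosses over.
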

\begin{proof}
  	By Taylor's expansion and the intermediate value theorem, we have that for any $\alphav, \deltav \in \R^n$, there exists a $\alpha \in [0,1]$ such that for $\vv = \alphav + \alpha \deltav$,
  	\begin{equation}\label{eq:taylor-equality}
		  		f(\alphav + \deltav) = f(\alphav) + \scal{\nabla f(\alphav)}{\deltav} + \frac{1}{2}\deltav^\top \nabla^2 f(\vv)\deltav\,.  	
  	\end{equation}
  	Now if $\deltav = \gamma \unit_i$ for some $\gamma \geq 0$ and coordinate $i$, the equation \eqref{eq:taylor-equality} becomes
  	\[
  		f(\alphav + \gamma \unit_i) = f(\alphav) + \gamma\,\nabla_i f(\alphav) + \frac{\gamma^2}{2}[\nabla^2 f(\vv)]_{i,i}\,.
  	\]
  	The first claim now follows since $L$ was defined such that $L \geq \nabla^2 f(\vv)_{i,i}$. For the second claim, consider the following optimization problem over $\widehat\deltav$,
  	\begin{equation}\label{eq:optimization-L1-norm}
  		\max_{\norm{\widehat\deltav}_1 \leq 1}\encasecurly{\widehat\deltav^\top \nabla^2 f(\vv)\widehat\deltav \defeq \Q(\widehat\deltav)} \,.
  	\end{equation}
  	We claim that the maximum is achieved for $\widehat\deltav = \unit_i$ for some $i \in [n]$. For now assume this is true. Then we would have that
  	\begin{align*}
  		\deltav^\top \nabla^2 f(\vv)\deltav &= \norm{\deltav}_1^2 \widehat\deltav^\top \nabla^2 f(\vv)\widehat\deltav\\
  		&\leq \norm{\deltav}_1^2 \unit_i^\top\nabla^2 f(\vv)\unit_i\\
  		&=\norm{\deltav}_1^2 [\nabla^2 f(\vv)]_{i,i}\\
  		&\leq \norm{\deltav}_1^2 L\,.
  	\end{align*}
  	Using this result in equation \eqref{eq:taylor-equality} would prove our second claim of the theorem. Thus we need to study the optimum of \eqref{eq:optimization-L1-norm}. Since $f$ is a convex function, $\Q$ is also a convex function. We can now appeal to Lemma \ref{lem:max-convex} for the convex set $\norm{\xv}_1 \leq 1$. The corners of the set exactly correspond to the unit directional vectors $\unit_i$. With this we finish the proof of our theorem.
\end{proof}
\begin{remark}
	This result states that if we define smoothness with respect to the $L1$ norm, the resulting smoothness constant is same as the coordinate-wise smoothness constant. This is surprising since for a general convex function $g$, using the update rule 
	\[
		\alphav^{(t+1)} = \argmin_{\sv}\scal{\nabla f(\alphav^{(t)})}{\sv - \alphav^{(t)}} + \frac{L}{2}\norm{\sv - \alphav^{(t)}}_1^2 + g(\sv)\,,
	\]
does not necessarily yield a coordinate update. We believe this observation (though not crucial to the current work) was not known before.
\end{remark}
 Let us prove an elementary lemma about maximizing convex functions over convex sets.
 \begin{lemma}[Maximum of a constrained convex function]\label{lem:max-convex} 
 	For any convex function $\Q(\xv)$, the maximum over a compact convex set $B$ is achieved at a `corner'. Here a `corner' is defined to be a point $\xv \in B$ such that there do not exist two points $\yv \in B$ and $\zv \in B$, $\yv \neq \xv, \zv \neq \xv$ such that for some $\gamma \in [0,1]$, $(1 - \gamma)\yv +\gamma\zv = \xv$. 
 \end{lemma}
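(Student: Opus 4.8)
The statement to prove is Lemma~\ref{lem:max-convex}: a convex function $\Q$ on a compact convex set $B$ attains its maximum at a corner (extreme point) of $B$.

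\medskip

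\noindent\textbf{Plan of proof.} The plan is to argue by contradiction combined with a reduction to extreme points. First I would note that since $\Q$ is convex and $B$ is compact, $\Q$ is continuous on $B$ (convex functions on $\R^n$ are continuous on the interior of their domain; here we may assume $\Q$ is finite and convex on a neighborhood, as is the case for the quadratic form $\widehat\deltav^\top \nabla^2 f(\vv)\widehat\deltav$ we apply it to), so the maximum $M := \max_{\xv \in B}\Q(\xv)$ is attained; let $\xv^\star$ be a maximizer. If $\xv^\star$ is already a corner we are done, so suppose it is not. Then by definition there exist $\yv,\zv \in B$ with $\yv \neq \xv^\star$, $\zv \neq \xv^\star$, and $\gamma \in [0,1]$ with $(1-\gamma)\yv + \gamma\zv = \xv^\star$; note $\gamma \in (0,1)$ since otherwise $\xv^\star \in \{\yv,\zv\}$. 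By convexity of $\Q$,
\[
	M = \Q(\xv^\star) \leq (1-\gamma)\Q(\yv) + \gamma\Q(\zv) \leq (1-\gamma)M + \gamma M = M\,,
\]
so both inequalities are equalities, forcing $\Q(\yv) = \Q(\zv) = M$. Hence the set of maximizers is itself closed under this kind of decomposition: it contains $\yv$ and $\zv$, which are ``more extreme'' than $\xv^\star$ in the sense of lying on the boundary of a segment through $\xv^\star$.

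\medskip

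\noindent\textbf{Descending to a corner.} The key step is to turn the above observation into an actual corner. I would consider the face of $B$ containing $\xv^\star$ in its relative interior, or more elementarily, induct on the dimension of the smallest face (equivalently the affine hull of the set of maximizers near $\xv^\star$). Concretely: among all maximizers, pick one, $\xv^{\star}$, lying in a face $G$ of $B$ of minimal dimension. If $\dim G = 0$ then $G = \{\xv^\star\}$ is a corner and we are done. If $\dim G \geq 1$, then $\xv^\star$ is not a corner of $B$: it can be written as a proper convex combination of two distinct points $\yv, \zv \in B$; by the argument above both are maximizers, and one can choose $\yv, \zv$ to lie in $G$ itself (extend the segment through $\xv^\star$ within $G$ until it hits the relative boundary of $G$, which is possible since $G$ is compact), so $\yv$ lies in a proper face of $G$, hence in a face of $B$ of strictly smaller dimension, contradicting minimality. (Alternatively, avoid face machinery entirely: since $B$ is compact and $\xv^\star$ is not extreme, move along a chord through $\xv^\star$ to the boundary, obtaining a maximizer on a lower-dimensional slice $B \cap H$ for a supporting hyperplane $H$; iterate, and in at most $n$ steps land at a corner. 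The slices remain compact convex, and corners of the final zero-dimensional slice are corners of $B$.)

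\medskip

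\noindent\textbf{Main obstacle.} The genuine mathematical content is trivial — it is the equality case of Jensen/convexity. The only thing requiring care is bookkeeping: making precise the notion of ``corner'' used in the lemma and checking that an extreme point of a face $B \cap H$ (or of an iterated slice) is genuinely an extreme point of $B$, so that the inductive descent terminates at a corner of $B$ and not merely of some sub-polytope. For the application in Theorem~\ref{lem:L1-coord-smoothnes-relation}, the set is $\{\xv : \norm{\xv}_1 \leq 1\}$, whose corners are exactly $\pm\unit_i$, $i \in [n]$; once the lemma is established, that application is immediate.
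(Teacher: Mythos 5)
Your proof is correct and its core step is the same as the paper's: if a maximizer $\xv^\star$ is not a corner, write it as a proper convex combination of $\yv,\zv\in B$ and use convexity of $\Q$ to conclude $\max(\Q(\yv),\Q(\zv))\geq\Q(\xv^\star)$, so the maximum propagates to a ``more extreme'' point. The difference is that the paper stops there---its proof consists of exactly this one inequality and implicitly leaves the reader to iterate---whereas you explicitly carry out the descent, by induction on the dimension of the minimal face containing a maximizer (or equivalently by repeatedly slicing with supporting hyperplanes), and you also note the continuity/attainment issue that the paper takes for granted. Your version is therefore strictly more complete: the paper's single step only shows that a non-corner maximizer is accompanied by another maximizer, not that the process terminates at a corner, and your observation that an extreme point of a face of $B$ is an extreme point of $B$ is precisely the bookkeeping needed to close that loop. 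For the one application in the paper (the $L1$ ball, whose corners are $\pm\unit_i$) either level of rigor suffices, but your write-up is the one that would survive scrutiny as a standalone lemma.
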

 \begin{proof}
  	Suppose that the maximum is achieved at a point $\xv$ which is not a `corner'. Then let $\yv,\zv \in B$ be two points such that for $\gamma \in [0,1]$, we have $(1 - \gamma)\yv +\gamma\zv = \xv$. Since the function is convex,
  	\begin{equation}
  		\max(\Q(\yv), \Q(\zv)) \geq (1 - \gamma)\Q(\yv) + \gamma \Q(\zv) \geq \Q((1 - \gamma)\yv +\gamma\zv) = \Q(\xv)\,.\vspace{-2mm}
  	\end{equation}
 \end{proof}
We also assume that the proximal term $g(\alphav) = \sum_{i=1}^n g_i(\alpha_i)$ is such that $g_i(\alpha_i)$ is either $|\alpha_i|$ or enforces a box-constraint.

\subsection{Proximal Coordinate Descent}
\label{sec:proximal}
As argued in the introduction, coordinate descent is the method of choice for large scale problems of the form~\eqref{eq:general-nonsmooth}.
We denote the iterates by $\alphav^{(t)} \in \R^n$, and a single coordinate of this vector by a subscript $\alpha^{(t)}_i$, for $i \in [n]$.
CD methods only change one coordinate of the iterates in each iteration. That is, when coordinate $i$ is updated at iteration $t$, we have $\alpha^{(t+1)}_j = \alpha^{(t)}_j$ for $j \neq i$, and
\begin{equation}\label{eq:coordinate-update}
\alpha^{(t+1)}_i := \text{prox}_{\frac{1}{L} g_{i}} \encase{\alpha^{(t)}_i - \dfrac{1}{L} \nabla_{i} f(\alphav^{(t)})},
\end{equation}
\[
  \text{where }\prox_{\gamma g_{i}}[y] := \argmin_{\xv \in \R }  \tfrac{1}{2} (x - y)^2 + \gamma g_i(x).
 \]

Combining the smoothness condition, and the definition of the proximal update \eqref{eq:coordinate-update}, we get the progress made $\chi_j(\alphav)$ is 
\begin{equation}\label{eq:coordinate-progress}
	\chi_j(\alphav) \defeq \min_{\gamma} \encase{\gamma\nabla_{j} f(\alphav^{(t)})
 		  + \frac{L\gamma^2}{2} + g_{j}(\alpha^{(t)}_{j} + \gamma) - g_{j}(\alpha^{(t)}_{j})} \geq F(\alphav) - \min_{\gamma}F(\alphav + \gamma\unit_i)\,.
\end{equation}

\subsection{Applications}\label{subsec:applications}
There is a number of relevant problems in machine learning which are the form
\[
\min_{\alphav \in \real^n}\encasecurly{F(\alphav) \defeq l(A\alphav) + \cv^\top\alphav + \sum_{i=1}^n g_i(\alpha_i)}\,,
\]
where the non-smooth term $g(\alphav)$ either enforces a box constraint, or is an $L1$-regularizer.
This class covers several important problems such as SVMs, Lasso regression, logistic regression and elastic net regularized problems. We use SVMs and Lasso regression as running examples for our methods.

\paragraph{SVM.} The loss function for training SVMs with $\lambda$ as the  regularization parameter can be written as
\begin{equation}\label{eq:primal-svm}
  \min_{\wv}\encase{\primal(\wv) \defeq
  \frac{1}{n}\sum_{i=1}^n \positive{1 - b_i \wv ^\top \av_i}
  + \frac{\lambda}{2}\norm{\wv}_2^2} \,,
\end{equation}
where $\{(\av_i, b_i)\}_{i=1}^n$ for $\av_i\in\R^d$ and $b_i\in\{\pm1\}$ the
training data. We can define the corresponding \emph{dual} problem for
\eqref{eq:primal-svm} as
\begin{equation}\label{eq:svm-loss}
  \max_{\alphav \in [0,1]^n}\encase{\dual(\alphav) \defeq \dfrac{1}{n} \sum_{i = 1}^n \alpha_{i} - \frac{1}{2\lambda n^2} \norm{A\alphav}_2^2 }\,,
\end{equation}
where $A \in \R^{d \times n}$ for the data matrix of the columns $b_i\av_i$
\citep{shalev-shwartz_stochastic_2013}. We can map this to~\eqref{eq:general-nonsmooth}
with $l(\alphav) := -\dual(\alphav)$, with $\cv:= -\frac1n \boldsymbol{\vec{1}}$, and $g_i(\alpha_i) := \ind{\alpha_i \in [0,1]}$ the box indicator function, i.e.
$$
  F(\alphav) \defeq \frac{1}{2\lambda n^2} \norm{A\alphav}_2^2
  - \dfrac{1}{n} \sum_{i = 1}^n \alpha_{i}
  +\sum_{i=1}^n \ind{\alpha_i \in [0,1]} \,.
$$
It is straight-forward to see that the function $f$ is coordinate-wise $L$-smooth for
$
L = \frac{1}{\lambda n^2} \max_{i\in[n]}\norm{\Av_i}^2
$.

We map the dual variable $\alphav$ back to the primal variable as $\wv(\alphav) = \frac{1}{\lambda n^2} A \alphav$ and the duality gap defined as
$$
  {\gap}(\alphav) \defeq \primal(\wv(\alphav)) - \dual(\alphav)\,.
$$

\paragraph{Logistic regression.} Here we consider the $L1$-regularized
logistic regression loss which is of the form
\begin{equation} \label{eq:log-loss}
  \min_{\alphav}\encase{F(\alphav) \defeq \sum_{i=1}^{d}\sigmoid{b_i \alphav^\top \av_i}
  + \lambda \norm{\alphav}_1} \,,
\end{equation}
where $\{(\av_i, b_i)\}_{i=1}^d$ for $\av_i\in\R^n$ and $b_i\in\{\pm1\}$ is the
training data. The data matrix  $A \in \R^{d \times n}$ is composed with $\av_i$
as the \emph{rows}. Denote $\Av_i$ to be the $i$th \emph{column} of $A$.
As in the Lasso regression case, the regularizer $\lambda \norm{\alphav}_1$ is $g(\alphav)$ and the sigmoid loss corresponds to $l(\alphav)$ which is coordinate-wise
$L$-smooth for
$$
L = \frac{1}{4}\max_{i\in[n]}\norm{\Av_i}^2 \,.
$$

\paragraph{Lasso regression.} The objective function of Lasso regression (i.e. $L1$-regularized least-squares) can directly be mapped to formulation \eqref{eq:general-nonsmooth} as
\begin{equation} \label{ref:lasso-loss}
  \min_{\alphav}\encase{F(\alphav) \defeq \frac{1}{2}\norm{A \alphav - \bv}_2^2
  + \lambda \norm{\alphav}_1} \,.
\end{equation}
Here $l(\alphav) = \frac{1}{2}\norm{A \alphav - \bv}_2^2$,
$g(\alphav) = \lambda \norm{\alphav}_1$ and $f$ is coordinate-wise $L$-smooth for
$
L = \max_{i\in[n]}\norm{\Av_i}^2 \,.
$

\paragraph{Elastic net regression.} The loss function used for Elastic net
can similarly be easily mapped to \eqref{eq:general-nonsmooth} as
\begin{equation} \label{ref:elasticnet-loss}
  \min_{\alphav}\encase{F(\alphav) \defeq \frac{1}{2}\norm{A \alphav - \bv}_2^2
  +\frac{\lambda_2}{2} \norm{\alphav}_2^2 + \lambda_1 \norm{\alphav}_1} \,.
\end{equation}
Here $l(\alphav) = \frac{1}{2}\norm{A \alphav - \bv}_2^2
+\frac{\lambda_2}{2} \norm{\alphav}_2^2 $,
$g(\alphav) = \lambda \norm{\alphav}_1$. The function $f$ is coordinate-wise $L$-smooth for
$$
L = \max_{i\in[n]}\norm{\Av_i}^2 + \lambda_2 \,,
$$
as well as $\lambda_2$-strongly convex. Similarly, $l(\alphav)$ could
also include an $L2$-regularization term for logistic regression.

Note that in the SVM case $n$ represents the number of data points with $d$ features
whereas the roles are reversed in the regression problems.
In all the above cases, $g$ was either a box-constraint (in the SVM) case or
was an $L1$-regularizer.

\section{Algorithms for Box-Constrained Problems}\label{sec:box-method}
In this section, we give an outline of the {\steepsub} strategy as it applies to box constrained problems as well as derive a reduction to the $\MIPS$ framework.
Assume that the minimization problem at hand is of the form
\footnote{If the constraints are of the form
$\alpha_i \in [a_i, b_i]$ for $i \in [n]$,
we simply rescale and shift the coordinates.}
	\begin{equation}\label{eq:smooth_constraint_prob}
	\min_{\alphav \in [0,1]^n} f(\alphav)\,.
	\end{equation}

The proximal coordinate update \eqref{eq:coordinate-update} for updating
coordinate~$i_t$ simplifies to
\begin{equation}\label{eq:box-update}
  \alpha_{i_t} := \min\encaser{1,\positive{\alpha^{(t)}_{i_t} -
  \dfrac{1}{L}\nabla_{i_t}f(\alphav^{(t)})}}.
\end{equation}

At any iteration $t$, let us define an \emph{active set} of coordinates
$\A_t \subseteq [n]$ as follows
\begin{equation}\label{eq:active-set}
  \A_t := \encasecurly{i \in [n]\ \text{ s.t. } \
    \begin{split}
      \alpha_i^{(t)} &\in (0,1), \text{ or}\\
      \alpha_i^{(t)} &= 0 \text{ and } \nabla_i f(\alphav^{(t)}) < 0, \text{ or}\\
      \alpha_i^{(t)} &= 1 \text{ and } \nabla_i f(\alphav^{(t)}) > 0 \,.
    \end{split}}
\end{equation}
Then, the following lemma shows that we can also simplify the rule
\eqref{eq:prox-steepest} for selecting the steepest direction.

\begin{lemma}\label{lem:box-steepest}
  At any iteration $t$, the {\steepsub} rule is equivalent to
  \begin{equation}\label{eq:box-steepest}
    \max_i \encase{\min_{s\in \partial g_i} \abs{\nabla_i
    f(\alphav^{(t)}) + \sv }} \equiv \max_{i \in \A_t}\abs{\nabla_i
    f(\alphav^{(t)})}\,,
  \end{equation}
  assuming that the right side evaluates to 0 if $\A_t$ is empty.
\end{lemma}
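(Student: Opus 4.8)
The plan is to unravel the definition of the \steepsub\ rule \eqref{eq:prox-steepest} for the specific case $g_i(\alpha_i) = \ind{\alpha_i \in [0,1]}$ and show that the inner minimization $\min_{s \in \partial g_i}\abs{\nabla_i f(\alphav^{(t)}) + s}$ collapses to exactly $\abs{\nabla_i f(\alphav^{(t)})}$ when $i \in \A_t$ and to $0$ when $i \notin \A_t$. The key fact is the description of the subdifferential of the box-indicator at a point $\alpha_i \in [0,1]$: $\partial g_i(\alpha_i) = \{0\}$ if $\alpha_i \in (0,1)$, $\partial g_i(0) = (-\infty, 0]$, and $\partial g_i(1) = [0,+\infty)$. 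So the proof is essentially a three-way case split on the value of $\alpha_i^{(t)}$, followed by minimizing a one-dimensional absolute-value function over the relevant ray or point.

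First I would fix a coordinate $i$ and compute $m_i := \min_{s \in \partial g_i(\alpha_i^{(t)})}\abs{\nabla_i f(\alphav^{(t)}) + s}$. Case 1: $\alpha_i^{(t)} \in (0,1)$. Then $\partial g_i = \{0\}$, so $m_i = \abs{\nabla_i f(\alphav^{(t)})}$, and $i \in \A_t$ by definition. Case 2: $\alpha_i^{(t)} = 0$. Then $\partial g_i = (-\infty, 0]$, so $m_i = \min_{s \le 0}\abs{\nabla_i f + s}$; if $\nabla_i f(\alphav^{(t)}) < 0$ the minimizing $s$ would want to be $-\nabla_i f > 0$, which is infeasible, so the minimum over $s \le 0$ is attained at $s = 0$, giving $m_i = \abs{\nabla_i f} = \abs{\nabla_i f(\alphav^{(t)})}$ — and here $i \in \A_t$; if $\nabla_i f(\alphav^{(t)}) \ge 0$, then taking $s = -\nabla_i f \le 0$ is feasible and gives $m_i = 0$ — and here $i \notin \A_t$. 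Case 3: $\alpha_i^{(t)} = 1$ is the mirror image, with $\partial g_i = [0,\infty)$: $m_i = \abs{\nabla_i f(\alphav^{(t)})}$ when $\nabla_i f(\alphav^{(t)}) > 0$ (so $i \in \A_t$) and $m_i = 0$ when $\nabla_i f(\alphav^{(t)}) \le 0$ (so $i \notin \A_t$).

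Combining the three cases: $m_i = \abs{\nabla_i f(\alphav^{(t)})}$ for every $i \in \A_t$, and $m_i = 0$ for every $i \notin \A_t$. Therefore $\max_i m_i = \max_{i \in \A_t}\abs{\nabla_i f(\alphav^{(t)})}$ whenever $\A_t \ne \emptyset$ (since the left side is a max over a superset but the extra terms are all $0 \le$ the relevant values, and at least one term in $\A_t$ is nonnegative), and $\max_i m_i = 0$ when $\A_t = \emptyset$, matching the stated convention. I would also note the boundary subtlety: when $\nabla_i f(\alphav^{(t)}) = 0$ at an endpoint, the coordinate is excluded from $\A_t$ but $m_i = 0$ anyway, so the equivalence still holds. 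This gives \eqref{eq:box-steepest}.

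I do not expect a serious obstacle here; the only thing requiring a little care is the tie-breaking/boundary case $\nabla_i f(\alphav^{(t)}) = 0$ at an endpoint, and making sure the $\max$ over the full index set genuinely equals the $\max$ over $\A_t$ rather than just dominating it — which follows because every coordinate outside $\A_t$ contributes $0$ and, as long as $\A_t$ is nonempty, the max over $\A_t$ is itself $\ge 0$. The argmax version of the rule then selects a coordinate in $\A_t$ (with the same convention for ties), which is what Algorithm~\ref{alg:box-steepest} uses.
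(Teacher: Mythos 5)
Your proof is correct and follows essentially the same route as the paper's: compute the subdifferential of the box indicator in the three cases $\alpha_i^{(t)}\in(0,1)$, $\alpha_i^{(t)}=0$, $\alpha_i^{(t)}=1$, observe that the inner minimum equals $\abs{\nabla_i f(\alphav^{(t)})}$ exactly on $\A_t$ and $0$ otherwise, and conclude. Your extra care with the boundary case $\nabla_i f(\alphav^{(t)})=0$ at an endpoint and with why the max over all $i$ equals the max over $\A_t$ is a welcome (if minor) tightening of the paper's argument.
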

While we will see a formal proof later in Section \ref{sec:proofs-mapping}, let us quickly get some intuition for why the above works. When the $i$th coordinate is on the border i.e. $\alpha_i \in \{0,1\}$, there is only one valid direction to move---inside the domain. The set $\A_t$ just maintains the coordinates for which the negative gradient direction leads to a valid non-zero step.

We summarize all the details in Algorithm \ref{alg:box-steepest}. Note that we
have not specified \emph{how} to perform steps \ref{stp:box-steepest} (coordinate selection) and
\ref{stp:box-active} (updating the active set). This we will do next in Section \ref{subsec:efficient-box}.
Our algorithm also supports performing an optional explicit line search to update the coordinate
at step \ref{stp:box-linesearch}. 

\begin{algorithm}[h!]
   \caption{Box Steepest Coordinate Descent (Box-SCD)}
   \label{alg:box-steepest}
\begin{algorithmic}[1]
   \STATE {\bfseries Initialize:} $\alphav_0 \leftarrow {\bf 0}  \in \R^n$,
   and $\A_0 \leftarrow \encasecurly{i \where \nabla_i f(\alphav^{(t)}) < 0}$.
   \FOR{$t=0,1,\dots,\text{until convergence}$}
      \STATE Select coordinate $i_t$ as in {\steepsub}, {\steeplookr}, or {\steeplook}.
        \eqref{eq:box-steepest}. \label{stp:box-steepest} 
      \STATE Find $\alpha_{i_{t}}$ according to \eqref{eq:box-update}.
      \STATE \textit{(Optional line search:)}\\
        $\quad \alpha_{i_{t}} \leftarrow
          \min_{\gamma \in [0,1]}f(\alphav^{(t)} + (\gamma - \alpha^{(t)}_{i_t})\unit_{i_t})$.
          \label{stp:box-linesearch}
      \STATE $\alpha^{(t+1)}_{i_{t}} \leftarrow \alpha_{i_{t}}$.
      \STATE Update $\A_{t+1}$ according to \eqref{eq:active-set}.
      \label{stp:box-active}
   \ENDFOR
\end{algorithmic}
\end{algorithm}

\subsection{Mapping Box Constrained Problems}\label{subsec:efficient-box}
For this part, just as in the $L1$-regularization case in Section \ref{sec:efficient}, we will only look at functions having the special structure of \eqref{eq:problem-structure}, which for $g(\alphav)$ being the box indicator function becomes the constrained problem
\[
	\min_{\alphav \in [0,1]^n}\encasecurly{f(\alphav) \defeq l(A\alphav) + \cv^\top\alphav = \sum_{i=1}^d l_i(\Av_i^\top\alphav) + \cv^\top\alphav}\,.
\]
For this case the selection rule in equation \eqref{eq:box-steepest} in Algorithm \ref{alg:box-steepest} becomes
\[
	\max_{i \in \A_t}\abs{\nabla_i f(\alphav)} = \max_{i \in \A_t}\abs{\Av_i^\top\nabla l(A\alphav) + c_i\alpha_i}
\]
We see that the above is nearly in the form suited for $\MIPS$ oracle already, which as we recall from Def. \ref{def:snns}, is
\begin{equation*}
  \SMIPS_{\P}(\qv;\B) := \argmax_{j \in \B}\encasecurly{\scal{\pv_j}{\qv}}\,.
\end{equation*}
The bulk of our work is now to efficiently maintain the active set $\A_t$ as per \eqref{eq:active-set}. We will make two changes to the formulation above to make it amenable to the $\SMIPS$ solvers: i) get around the extra $c_i\alpha_i$ term, and ii) get around the $\abs{\cdot}$.

First to tackle the extra $c_i\alpha_i$ term, we modify the columns of the matrix $A$ as follows
\begin{equation}\label{eq:A-tilde-box}
  \tilde{\Av}_i := \begin{pmatrix} \beta \\ \Av_i \end{pmatrix}\,.
\end{equation}
Now define the query vector $\qv_t$ as
\begin{equation}\label{eq:query-box-steepest}
  \qv_t := \begin{pmatrix} \frac{c_i}{\beta} \\ \nabla l(A \alphav^{(t)}) \end{pmatrix}\,.
\end{equation}
It is easy to see that 
\[
	\tilde{\Av}_i^\top\qv_t =  \Av_i^\top\nabla l(A\alphav) + c_i\alpha_i\,.
\]
The constant $\beta$ is chosen to ensure that the entry is of the same order of magnitude on average as the rest of the coordinates of $\Av_i$ and can in general be set to $\frac{1}{\sqrt{n}}$. The $\beta$ affects the distribution of the data points, as well as the input query vector. Depending on the technique used to solve the $\SMIPS$ instance, tuning $\beta$ might be useful for the practical performance of the algorithm.

Now to get rid of the absolute value, we use the fact that $\abs{x} = \max(x,-x)$. Define the set $\P = \{\pm \tilde{\Av}_i\}$. Then,
\[
	\max_{i \in [n]}\abs{\tilde{\Av}_i^\top\qv_t} = \max_{\av \in \P}\av^\top\qv_t\,.
\]
To remove the coordinate $j$ from the search space for the equation on the left, it is enough to remove the vector $\argmax_{\av \in \pm \tilde{\Av}_i}\av^\top\qv_t$ from $\P$. In this manner, by controlling the search space $\B_t \subseteq \P$, we can effectively restrict the coordinates over which we search. Formally, let us define
\begin{align}\label{eq:P-box-def}
  \begin{split}
    \P &:= \P^\plus \cup \P^\minus \,\text{, where}\\
    \P^\pm &:= \encasecurly{\pm\tilde{\Av}_1,\dots,\pm\tilde{\Av}_n} \,.
  \end{split}
\end{align}
The active set $\A$ contains only the `valid' directions. Hence, if $\alpha_j =0$ then $j \in \A_t$ only if $\nabla_j f(\alphav_t) = \tilde{\Av}_j^\top\qv_t \leq 0$. This can be accomplished by removing $\tilde{\Av}_j$ and keeping only $-\tilde{\Av}_j$ in $\P$. In general, at any iteration $t$ with current iterate $\alphav^{(t)}$, we define $\B_t := \B_t^\plus \cup \B_t^\minus$ where
\begin{align}\label{eq:box-Bt}
  \begin{split}
    \B_t^\plus &:= \encasecurly{\tilde{\Av}_j: \alpha_j^{(t)} > 0}\,,\\
    \B_t^\minus &:= \encasecurly{-\tilde{\Av}_j: \alpha_j^{(t)} < 1}\,.
  \end{split}
\end{align}
\begin{lemma}\label{lem:box-steepest-efficient}
  At any iteration $t$, for $\P$ and $\B_t$ as defined in \eqref{eq:P-box-def}, \eqref{eq:box-Bt}, the query vector $\qv_t$ as in \eqref{eq:query-box-steepest}, and $\A_t$ as in \eqref{eq:active-set} then the following are equivalent for $f(\alphav)$ is of the form $l(A\alphav) + \cv^\top\alphav$:
  $$
    \SMIPS_{\P}(\qv_t;\B_t) = \argmax_{j \in \A_t}\abs{\nabla_j f(\alphav^{(t)})}\,.
  $$
\end{lemma}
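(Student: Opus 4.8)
The plan is to verify the claimed equivalence by unrolling the definitions of $\P$, $\B_t$, the query vector $\qv_t$, and the active set $\A_t$, and checking that the $\SMIPS$ maximization on the left visits exactly the coordinates in $\A_t$ with exactly the right objective values. First I would record the algebraic identity $\tilde{\Av}_i^\top\qv_t = \Av_i^\top\nabla l(A\alphav^{(t)}) + c_i = \nabla_i f(\alphav^{(t)})$, which follows directly from the block form of $\tilde{\Av}_i$ in \eqref{eq:A-tilde-box} and $\qv_t$ in \eqref{eq:query-box-steepest} (the $\beta$ cancels), and observe that for the opposite sign $(-\tilde{\Av}_i)^\top\qv_t = -\nabla_i f(\alphav^{(t)})$. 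Since $\abs{x} = \max(x,-x)$, if we were maximizing $\scal{\pv}{\qv_t}$ over all of $\P = \P^\plus \cup \P^\minus$ we would exactly recover $\max_{i\in[n]}\abs{\nabla_i f(\alphav^{(t)})}$; the point of restricting to $\B_t$ is to (a) drop coordinates outside $\A_t$ entirely and (b) for boundary coordinates, keep only the single sign corresponding to the one feasible direction.

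Next I would carry out the case analysis coordinate-by-coordinate, matching the three clauses of the definition of $\A_t$ in \eqref{eq:active-set} against the membership rules for $\B_t$ in \eqref{eq:box-Bt}. For $\alpha_j^{(t)} \in (0,1)$: both $\tilde{\Av}_j \in \B_t^\plus$ and $-\tilde{\Av}_j \in \B_t^\minus$, so the $\SMIPS$ objective over this coordinate's two entries is $\max(\nabla_j f, -\nabla_j f) = \abs{\nabla_j f}$, and $j \in \A_t$ — consistent. For $\alpha_j^{(t)} = 0$: only $-\tilde{\Av}_j \in \B_t^\minus$ is present (since $\alpha_j^{(t)} = 0 \not> 0$, while $0 < 1$), contributing value $-\nabla_j f(\alphav^{(t)})$; this entry is "positive" exactly when $\nabla_j f(\alphav^{(t)}) < 0$, which is precisely the condition for $j \in \A_t$ in the second clause, and in that case $-\nabla_j f = \abs{\nabla_j f}$ — consistent. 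Symmetrically for $\alpha_j^{(t)} = 1$: only $\tilde{\Av}_j \in \B_t^\plus$ is present, value $\nabla_j f$, which matches the third clause $\nabla_j f > 0$ and again equals $\abs{\nabla_j f}$. Finally, if $j \notin \A_t$ (e.g. $\alpha_j^{(t)} = 0$ with $\nabla_j f \geq 0$), the single surviving entry has value $\leq 0$, so it can never be the strict argmax unless the maximum is $0$, i.e. $\A_t$ is effectively empty — which is the degenerate case already flagged in Lemma~\ref{lem:box-steepest}.

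Putting these together: the set of $\pv_j \in \B_t$ is in bijection (after merging the two entries of interior coordinates) with $\A_t$, and on this set $\max_{\pv \in \B_t}\scal{\pv}{\qv_t} = \max_{j \in \A_t}\abs{\nabla_j f(\alphav^{(t)})}$, with the maximizing index recovering the coordinate. Combined with Lemma~\ref{lem:box-steepest}, which already identifies $\max_{j \in \A_t}\abs{\nabla_j f(\alphav^{(t)})}$ with the {\steepsub} rule, this gives $\SMIPS_{\P}(\qv_t;\B_t) = \argmax_{j \in \A_t}\abs{\nabla_j f(\alphav^{(t)})}$ as claimed. The main obstacle — though it is more bookkeeping than genuine difficulty — is handling the boundary coordinates carefully: making sure the asymmetry in \eqref{eq:box-Bt} (strict $>0$ for $\B_t^\plus$ versus strict $<1$ for $\B_t^\minus$) lines up exactly with the sign conditions in \eqref{eq:active-set}, and confirming that when a coordinate is inactive its lone $\SMIPS$ entry is non-positive so it is never spuriously selected. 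One should also note that the mapping requires the absolute value inside \eqref{eq:box-steepest}, which is legitimate because $\tilde{\Av}_i^\top\qv_t$ equals $\nabla_i f$ exactly (the $c_i\alpha_i$ term in the earlier display is a typo; the correct identity is $\tilde{\Av}_i^\top\qv_t = \nabla_i f(\alphav^{(t)})$, since the first coordinate of $\qv_t$ is $c_i/\beta$ and of $\tilde\Av_i$ is $\beta$).
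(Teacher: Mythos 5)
Your proof is correct and follows essentially the same route as the paper's: use the identity $\tilde{\Av}_j^\top\qv_t=\Av_j^\top\nabla l(A\alphav^{(t)})+c_j=\nabla_j f(\alphav^{(t)})$, write $\abs{x}=\max(x,-x)$, and check coordinate-by-coordinate that the sign of $\tilde{\Av}_j$ retained in $\B_t$ matches exactly the feasible-direction condition defining $\A_t$, with inactive coordinates contributing a non-positive value and hence never being selected. You are also right that the displayed identity $\tilde{\Av}_i^\top\qv_t=\Av_i^\top\nabla l(A\alphav)+c_i\alpha_i$ contains a typo (the correct right-hand side is $\Av_i^\top\nabla l(A\alphav)+c_i=\nabla_i f(\alphav)$), which is what both your argument and the paper's actually use.
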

 Finally note that the since the vector $\alphav^{(t+1)}$ and $\alphav^{(t)}$ differ in a single coordinate (say $i_t$), the set $\B_{t+1}$ and $\B_{t}$ differs in at most two points and can be updated in $O(1)$ time.

 \section{Theoretical Analysis for $L1$-regularized Problems}\label{sec:convergence}
 In this section we discuss our main theoretical contributions. We give formal proofs of the rates of convergence of our algorithms and demonstrate the theoretical superiority of the {\steepsub} rule over uniform coordinate descent.
 Recall the definitions of smoothness and strong convexity of $f$.
  \[
    f(\alphav + \gamma \unit_i) \leq f(\alphav) + \gamma\,\nabla_i f(\alphav) + \frac{L \gamma^2}{2},
  \]
  where ${\unit}_{i}$ for $i \in [n]$ is a coordinate basis vector
and that for any $\deltav \in \R^n$,
  \[
   f(\alphav + \deltav) - \encaser{ f(\alphav) + \scal{\nabla f(\alphav)}{\deltav}} \geq \frac{\mu_1}{2}\norm{\deltav}_1^2\,.
  \]
  In our proofs we will only focus on the {\steepsub} rule, but they are true for the {\steeplookr} or {\steeplook} rules too.

 \subsection{Good and Bad Steps}
 	The key to the convergence analysis is dividing the steps into {\good} steps, which make sufficient progress, and {\bad} steps which do not.
 
\begin{figure}\vspace{0em}
  \begin{center}
      \hfill
  	\includegraphics[width=0.45\textwidth]{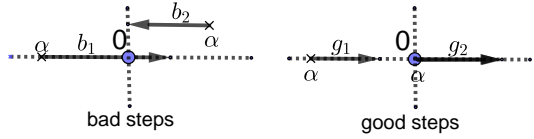}\hfill\null
  	\caption{Characterizing steps for the $L1$ case: The arrows represent proximal coordinate updates $\shrinkage_\frac{\lambda}{L}(\alpha_i -\frac{1}{L}\nabla_i f(\alphav))$ from different starting points $\alphav$. Updates which `cross' ($b_1$) or `end at' ($b_2$) the origin are {\bad} whereas the rest ($g_1$, $g_2$) are {\good}.}\vspace{-1em}
	\label{fig:good-bad-L1}
  	\end{center}
\end{figure}

 	\begin{definition}[{\good} and {\bad} steps for $L1$ case]\label{def:good-steps-L1}
 		At any step $t$, let $i_t$ be the coordinate to be updated. Then if either i) $\alpha_i \cdot \encaser{\shrinkage_\frac{\lambda}{L}(\alpha_i -\frac{1}{L}\nabla_i f(\alphav))} > 0$, or ii) $\alpha_i = 0$, then the step is deemed a {\good} step (see Figure \ref{fig:good-bad-L1}). The steps where $\alpha_i \cdot \encaser{\shrinkage_\frac{\lambda}{L}(\alpha_i -\frac{1}{L}\nabla_i f(\alphav))} \leq 0$ are called {\bad} steps.
 	\end{definition}

 	In general there is no guarantee that all steps are {\good}. However our algorithms exploit the structure of the proximal function $g$ to ensure that at least a constant fraction of total steps are {\good}. Recall that the algorithm \ref{alg:L1} performs the following update step to update the $i$th coordinate  as in \eqref{eq:l1-intermediate-update} and \eqref{eq:l1-update}:
 	\begin{align*}
	    \alpha_{i}^{+} &= \shrinkage_{\frac{\lambda}{L}}\encaser{\alpha_{i}^{(t)} -
    	\frac{1}{L}\nabla_{i}f(\alpha_{i}^{(t)})}\,\text{ and}\\
		\alpha^{(t+1)}_i &= 
			\begin{cases}
				\alpha^{+}_i, & \text{if } \alpha^{+}_i \alpha^{(t)}_i \geq 0\\
				0, & \text{otherwise}
			\end{cases}.
	\end{align*}
	From here on we will stop indicating the iteration number $t$ when obvious.
 	\begin{lemma}[Counting {\good} steps]\label{lem:num-good-L1}
 		After $t$ iterations of Algorithm \ref{alg:L1}, out of the $t$ steps, at least $\ceil{t/2}$ steps are {\good}.
 	\end{lemma}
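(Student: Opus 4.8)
The plan is to track, for each coordinate, whether the \emph{next} time it is selected is guaranteed to be a \good{} step, and to argue that a \bad{} step always "pays" for itself by forcing the subsequent selection of that coordinate to be \good{}. Concretely, I would maintain the following invariant over the course of Algorithm~\ref{alg:L1}: at every iteration $t$, every coordinate $i$ with $\alpha_i^{(t)} = 0$ is in a state such that if it is picked at iteration $t$, the step is \good{} (this is immediate from part (ii) of Definition~\ref{def:good-steps-L1}). Since we initialize $\alphav^{(0)} = \0$, all coordinates start in this "zeroed" state.

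The key structural observation, which I would isolate as the first step, is what the post-processing rule \eqref{eq:l1-update} does on a \bad{} step. Suppose iteration $t$ updates coordinate $i_t = i$ and is \bad{}, i.e. $\alpha_i^{(t)} \cdot \shrinkage_{\lambda/L}(\alpha_i^{(t)} - \tfrac1L \nabla_i f(\alphav^{(t)})) \le 0$. By Definition~\ref{def:good-steps-L1} this forces $\alpha_i^{(t)} \neq 0$, and the intermediate value $\alpha_i^+$ either crosses the origin (opposite sign to $\alpha_i^{(t)}$) or equals zero; in either case $\alpha_i^+ \alpha_i^{(t)} \le 0$, so rule \eqref{eq:l1-update} sets $\alpha_i^{(t+1)} = 0$. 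Hence immediately after any \bad{} step on coordinate $i$, that coordinate is back in the "zeroed" state, and therefore the \emph{next} time $i$ is selected, that step is \good{}.

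With this, I would run a simple charging/counting argument. Consider the sequence of $t$ iterations. Each \bad{} step at iteration $s$ on some coordinate $i$ can be charged to the first iteration $s' > s$ at which coordinate $i$ is selected again (if such $s'\le t$ exists); by the observation above, iteration $s'$ is \good{}. This charging is injective: distinct \bad{} steps on the \emph{same} coordinate are separated by a selection of that coordinate which is \good{} and distinct (the selection following the first \bad{} step is exactly the one charged, and it resets the state, so the coordinate must again become nonzero and be selected before it can be \bad{} again); and \bad{} steps on \emph{different} coordinates are charged to \good{} steps on different coordinates, hence to distinct iterations. Moreover, a \bad{} step at iteration $s$ and the \good{} step $s'$ it is charged to satisfy $s \ne s'$ (one is \bad{}, the other \good{}), so each pair $(s,s')$ consists of two distinct iterations, and these pairs are disjoint across different \bad{} steps. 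A \bad{} step at iteration $s$ that is \emph{not} charged (because coordinate $i$ is never reselected within the first $t$ iterations) can simply be paired with the iteration $s'>t$ of its next reselection — but to keep everything within the first $t$ steps, I would instead argue directly: letting $B$ be the number of \bad{} steps among the first $t$, the injective charging into \good{} steps shows that the number of \good{} steps is at least the number of charged \bad{} steps; combined with the fact that the first selection of each coordinate is automatically \good{} (it starts zeroed), one gets that \good{} steps number at least $t - B$ trivially and at least $B' \ge B - n$ from charging, but the cleanest route is: pair each \bad{} step with its following \good{} reselection, noting uncharged \bad{} steps are at most...

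Actually the cleanest finish, and the one I would write, is this: define a map $\phi$ from $\{$\bad{} iterations $\le t\}$ to $\{$\good{} iterations $\le t\} \cup \{\infty\}$ sending a \bad{} iteration to its next reselection of the same coordinate. Since reselections after \bad{} steps are \good{}, $\phi$ lands in \good{} iterations or $\infty$, and $\phi$ is injective on its preimage of finite values. The subtlety — and this is the one step I expect to require care — is ruling out that many \bad{} steps map to $\infty$; but a \bad{} step mapping to $\infty$ means its coordinate is never touched again, so each coordinate contributes at most one such "dangling" \bad{} step, and in fact that very \bad{} step is the \emph{last} event on that coordinate, so it can be re-paired with the \good{} first-selection of that same coordinate (which exists, is $\le t$, and is not the image of $\phi$ on any other \bad{} step). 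This yields an injection from all \bad{} steps into \good{} steps, hence $\#\good{} \ge \#\bad{} = t - \#\good{}$, giving $\#\good{} \ge t/2$, i.e. at least $\ceil{t/2}$ \good{} steps. The main obstacle is exactly this bookkeeping for "dangling" \bad{} steps whose coordinate is never revisited; everything else is a direct consequence of Definition~\ref{def:good-steps-L1} and the reset property of update~\eqref{eq:l1-update}.
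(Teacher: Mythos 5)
Your proposal is correct and follows essentially the same route as the paper's proof: the first selection of each coordinate is {\good} because the iterate starts at the origin, a {\bad} step forces the post-processing rule \eqref{eq:l1-update} to reset that coordinate to zero so its next selection is {\good}, and pairing each {\bad} step with a {\good} selection of the same coordinate yields the $\ceil{t/2}$ bound. The paper states this in three sentences and leaves the charging implicit; your explicit injection (including the treatment of ``dangling'' {\bad} steps via the {\good} first selection) is just a more careful write-up of the same counting.
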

 	\begin{proof}
 		The definition \ref{def:good-steps-L1} says that the only bad steps are those for which i) $\alpha_i^{(t)} \neq 0$ and ii) $\alpha_i\alpha_i^{+} \leq 0$. However by our modification to the update rule \eqref{eq:l1-update} ensures that in this case, $\alpha_i^{(t+1)} = 0$. This ensures that the next time coordinate $i$ is picked, we are guaranteed a {\good} step. Since we start at the origin, we have our lemma.
 	\end{proof}
 	\subsection{Progress Made in {\good} and {\bad} Steps}\label{subsec:characterizing-progress-L1}
 	Below is the core technical lemma in the convergence proof. We show that if the step was {\good}, then the update chosen by the {\steepsub} rule actually corresponds to optimizing an upper-bound on the function with the usual $L2$-squared regularizer replaced by an $L1$-squared regularizer. We of course also have only an approximate {\steepsub} coordinate. 	
 	 
 	 Recall that we had defined
 	 \[
 		\chi_j(\alphav) \defeq \min_{\gamma} \encase{\gamma\nabla_{j} f(\alphav)
 		  + \frac{L\gamma^2}{2} + \lambda(\norm{\alpha_{j} + \gamma}_1 - \norm{\alpha_{j}}_1)}\,.
 	\]
\begin{lemma}[{\good} steps make a lot of progress]\label{lem:good-step-progress-L1}
 	Suppose that the $\Theta$-approximate {\steepsub} rule (recall Definition \ref{def:approx-steepest}) chose to update the $i$th coordinate, and further suppose that it was a {\good} step. Then the following holds:
 	\[
 		\chi_i(\alphav) \leq \Theta^2 \min_{\deltav \in \R^n}\encase{\scal{\nabla f(\alphav)}{\deltav} + \frac{L}{2}\norm{\deltav}_1^2 + \lambda(\abs{\alphav+\deltav} - \abs{\alphav})}\,.
 	\]
\end{lemma}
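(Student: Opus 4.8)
The plan is to reduce the multi-coordinate minimization on the right-hand side to a single-coordinate (one-dimensional) problem, and then match that against the progress $\chi_i(\alphav)$ actually made along the chosen coordinate. First I would observe that since $\|\deltav\|_1 = \sum_j |\delta_j|$ and the linear/regularizer terms also decompose coordinate-wise, the bound $\scal{\nabla f(\alphav)}{\deltav} + \frac{L}{2}\|\deltav\|_1^2 + \lambda(\|\alphav+\deltav\|_1 - \|\alphav\|_1)$ is, after fixing the $L1$-ball radius $r = \|\deltav\|_1$, minimized by putting all the mass on the single coordinate $j$ that maximizes $|\nabla_j f(\alphav) + s_j|$ over subgradients $s_j \in \partial g_j$ — this is exactly the content of Lemma~\ref{lem:max-convex}-style reasoning applied to the piecewise-linear-plus-quadratic structure. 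Concretely, $\min_{\deltav}\{\cdots\} = \min_\gamma [\, -\gamma \cdot (\text{best directional slope}) + \frac{L}{2}\gamma^2\,]$ where the best directional slope equals $\max_j \min_{s\in\partial g_j}|\nabla_j f(\alphav) + s| = \max_j |s(\alphav)_j|$ by Lemma~\ref{lem:l1-steepest}. Evaluating this scalar minimization gives $-\frac{1}{2L}\max_j |s(\alphav)_j|^2$, so the right-hand side equals $-\frac{1}{2L}\max_j|s(\alphav)_j|^2$.

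Next I would compute the left-hand side $\chi_i(\alphav)$ for the coordinate $i$ actually selected, under the \emph{good-step} hypothesis. The point of the good-step assumption (Definition~\ref{def:good-steps-L1}) is that the unclipped proximal update either starts from $\alpha_i = 0$ or does not cross the origin, so the sign of $\alpha_i + \gamma$ stays constant over the relevant range and $g_i$ is locally \emph{linear} along the update direction. In that regime the one-dimensional problem $\chi_i(\alphav) = \min_\gamma[\gamma \nabla_i f(\alphav) + \frac{L\gamma^2}{2} + \lambda(|\alpha_i+\gamma| - |\alpha_i|)]$ has the closed-form minimum $-\frac{1}{2L}(\min_{s\in\partial g_i}|\nabla_i f(\alphav)+s|)^2 = -\frac{1}{2L}|s(\alphav)_i|^2$; the good-step condition is precisely what licenses replacing the absolute value by a linear term and guarantees the minimizing $\gamma$ is the natural shrinkage step. (The case $\alpha_i = 0$ needs a small separate check that the shrinkage operator produces this same value, via the definition of $s(\alphav)_i$.)

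Finally I would invoke the $\Theta$-approximate selection guarantee (Definition~\ref{def:approx-steepest}): $|s(\alphav)_i| \geq \Theta \max_j |s(\alphav)_j|$, hence $|s(\alphav)_i|^2 \geq \Theta^2 \max_j|s(\alphav)_j|^2$, so
\[
\chi_i(\alphav) = -\tfrac{1}{2L}|s(\alphav)_i|^2 \le -\tfrac{\Theta^2}{2L}\max_j|s(\alphav)_j|^2 = \Theta^2 \min_{\deltav\in\R^n}\Big[\scal{\nabla f(\alphav)}{\deltav} + \tfrac{L}{2}\|\deltav\|_1^2 + \lambda(\|\alphav+\deltav\|_1 - \|\alphav\|_1)\Big].
\]
I expect the main obstacle to be the careful bookkeeping in step two: verifying that the good-step condition really does keep $g_i$ linear along the \emph{entire} segment from $\alpha_i$ to the optimal $\gamma$ (not just at the endpoint), and handling the boundary case $\alpha_i = 0$ where the subdifferential $\partial g_i = [-\lambda,\lambda]$ is an interval — there one must argue the minimizing subgradient is attained and matches $\shrinkage_{\lambda}(\nabla_i f(\alphav))$. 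The decomposition argument in step one is essentially Lemma~\ref{lem:max-convex} applied to a convex function on the $L1$ ball, so it should be routine once stated carefully.
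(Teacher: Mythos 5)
Your steps (2) and (3) are correct and coincide with the paper's: for a {\good} step one indeed has $\chi_i(\alphav) = -\tfrac{1}{2L}(s(\alphav)_i)^2$ (this is exactly the paper's Lemma~\ref{lem:good-step-character-L1}), and the $\Theta$-approximation chain at the end is right. The gap is in step (1). What the lemma needs is a \emph{lower} bound on the right-hand-side minimum, namely
$$\min_{\deltav \in \R^n}\Big[\scal{\nabla f(\alphav)}{\deltav} + \tfrac{L}{2}\norm{\deltav}_1^2 + \lambda(\norm{\alphav+\deltav}_1 - \norm{\alphav}_1)\Big] \;\ge\; -\tfrac{1}{2L}\max_j (s(\alphav)_j)^2\,,$$
but your argument --- exhibiting a single-coordinate candidate $\deltav$ and evaluating it --- can only ever yield an \emph{upper} bound on a minimum, which is the wrong direction. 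Moreover both of your intermediate claims fail: (i) Lemma~\ref{lem:max-convex} is about \emph{maximizing} a convex function over a convex set (that is why corners appear); for \emph{minimization} the optimum need not concentrate on one coordinate. For instance with $n=2$, $\alphav=(0.1,0.1)$, $\nabla f(\alphav)=(10,10)$, $\lambda=L=1$, the best single-coordinate value is $-40.7$ while $\deltav=(-4.5,-4.5)$ gives $-40.9$. (ii) Along a single coordinate $j$, the term $\lambda\abs{\alpha_j+\gamma}$ is linear with the slope matching $s(\alphav)_j$ only until $\alpha_j+\gamma$ changes sign, so $\min_\gamma[-\gamma\abs{s(\alphav)_j} + \tfrac{L}{2}\gamma^2] = -\tfrac{1}{2L}(s(\alphav)_j)^2$ is merely a lower bound on the true single-coordinate minimum $\chi_j(\alphav)$, with equality precisely when coordinate $j$'s own step would be {\good}. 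Consequently, whenever the coordinate attaining $\max_j\abs{s(\alphav)_j}$ would be a {\bad} step, your asserted equality for the right-hand side is strictly false --- the lemma is an inequality for exactly this reason.

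The missing ingredient, which is the heart of the paper's proof, is to lower-bound the nonsmooth term \emph{globally} by a linear function built from the specific subgradient defining $s(\alphav)$: set $\zeta_j := (s(\alphav)_j - \nabla_j f(\alphav))/\lambda \in [-1,1]$, use $\norm{\alphav+\deltav}_1 \ge \scal{\zetav}{\alphav+\deltav}$ together with $\scal{\zetav}{\alphav}=\norm{\alphav}_1$, and conclude that the objective is bounded below by $\scal{s(\alphav)}{\deltav} + \tfrac{L}{2}\norm{\deltav}_1^2$, whose exact minimum is $-\tfrac{1}{2L}\norm{s(\alphav)}_\infty^2$. With that one-sided bound in hand, your steps (2) and (3) complete the proof unchanged.
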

\begin{proof}
 	Recall from Section \ref{sec:method} that the {\steepsub} rule for the $L1$ case was to find a coordinate $i$ such that
 	\[
 		\abs{s(\alphav)_j} \geq \Theta \argmax_{j}\abs{s(\alphav)_j}\,.
 	\]
 	The vector $s(\alphav)$ (by expanding the definition of the shrinkage operator) can equivalently be defined as
 	\[
 		s(\alphav)_j :=
    \begin{cases}
      0, & \text{if }	\alpha_j = 0, \abs{\nabla_j f(\alphav)} \leq \lambda\\
	  \nabla_j f(\alphav) - \lambda, & \text{if } \alpha_j = 0, \nabla_j f(\alphav) > \lambda\\
      \nabla_j f(\alphav) + \lambda, & \text{if } \alpha_j = 0, \nabla_j f(\alphav) < - \lambda\\
      \nabla_j f(\alphav)  + \sign(\alpha_j)\lambda  & \text{otherwise}\,.
    \end{cases}
 	\]
 	Let us define the folowing vector $\zetav \in [-1,1]^n$ as
 	\[
 		\zeta_j := (s(\alphav)_j - \nabla_j f(\alphav))/\lambda \,. 
 	\]
	Examining the four cases listed above shows indeed that $\zeta_j \in [-1,1]$.
	We will use the fact that for any $\beta \in [-1,1]$, $\abs{x} \geq \beta x$. We have that
	\[
		\abs{\alphav+\deltav} \geq \scal{\zetav}{\alphav+\deltav} = \scal{\zetav}{\alphav} + \scal{\zetav}{\deltav}\,.
	\]
 	This implies that for any vector $\deltav \in \R^n$,
 	\begin{align*}
 		\encasecurly{\P_{\abs{}}(\alphav, \deltav)\defeq \scal{\nabla f(\alphav)}{\deltav} + \frac{L}{2}\norm{\deltav}_1^2 + \lambda(\norm{\alphav+\deltav}_1 - \norm{\alphav}_1)}&\\
 		&\hspace*{-1.7in}\geq \scal{\nabla f(\alphav)}{\deltav} + \frac{L}{2}\norm{\deltav}_1^2 + \lambda(\scal{\zetav}{\alphav + \deltav} - \norm{\alphav}_1)\\
 		&\hspace*{-1.7in}= \scal{\nabla f(\alphav) + \lambda \zetav}{\deltav} + \frac{L}{2}\norm{\deltav}_1^2 + \lambda(\scal{\zetav}{\alphav} - \norm{\alphav}_1)\\
 		&\hspace*{-1.7in}= \scal{s(\alphav)}{\deltav} + \frac{L}{2}\norm{\deltav}_1^2 + \lambda(\scal{\zetav}{\alphav} - \norm{\alphav}_1)\,.
 	\end{align*}
 	In the last step we used the definition of $\zetav$.
 	 Let us examine the expression $(\zeta_j \alpha_j - \abs{\alpha_j})$. If $\alpha_j =0$, each of the terms in the expression is 0 and so it evaluates to 0. If $\alpha_j \neq 0$, then by the definition, $\zeta_j = \sign(\alpha_j)$ and $\zeta_j \alpha_j = \abs{\alpha_j}$. In this case too, the expression is 0. Thus,
 	 \begin{align*}
 	 	\P_{\abs{}}(\alphav, \deltav) &\geq \scal{s(\alphav)}{\deltav} + \frac{L}{2}\norm{\deltav}_1^2 + \lambda(\scal{\zetav}{\alphav} - \norm{\alphav}_1)\\
 	 	&= \encasecurly{ \scal{s(\alphav)}{\deltav} + \frac{L}{2}\norm{\deltav}_1^2 \defeq \P_{\zetav}(\alphav, \deltav)}\,.
 	 \end{align*}
 	 Minimizing $ \P_{\zetav}(\alphav, \deltav)$ over $\deltav$ gives us that
 	\[
 		\min_{\deltav \in \R^n} \scal{s(\alphav)}{\deltav} + \frac{L}{2}\norm{\deltav}_1^2 = -\frac{1}{2L}\max_{j \in [n]}\encaser{s(\alphav)_j}^2 \,.
 	\]
 	This exactly corresponds to the {\steepsub} rule. Since $i$ was a $\Theta$-approximate {\steepsub} direction,
	\[
		\min_{\deltav \in \R^n} \scal{s(\alphav)}{\deltav} + \frac{L}{2}\norm{\deltav}_1^2 \geq -\frac{1}{2L\Theta^2}\encaser{s(\alphav)_i}^2
	\] 	
 	  Further since this is a {\good} step, by Lemma \ref{lem:good-step-character-L1}, we can replace the right side of the above equation with $\chi_i(\alphav)$. Putting these observations together we have that for any $\alphav \in \R^n$
 	\begin{align*}
 	\min_{\deltav \in \R^n}\P_{\abs{}}(\alphav, \deltav) &\geq \min_{\deltav \in \R^n}\P_{\zetav}(\alphav, \deltav)\\
 	&\geq \frac{1}{\Theta^2}\chi_i(\alphav)\,.
	\end{align*}
	Rearranging the terms gives the proof of the lemma. 
\end{proof}

\begin{lemma}[Characterizing a {\good} step]\label{lem:good-step-character-L1}
 		Suppose that we update the $i$th coordinate and that it was a {\good} step. Then 
 		\[
 			\chi_i(\alphav) = -\frac{1}{2L}(\alpha_i^+ - \alpha_i)^2 = -\frac{1}{2L}(s(\alphav)_i)^2\,.
 		\]
\end{lemma}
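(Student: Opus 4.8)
The plan is to prove Lemma~\ref{lem:good-step-character-L1} by a direct computation, treating separately the two cases in the definition of a {\good} step. First I would recall that the {\steepsub} update with step size $1/L$ is $\alpha_i^+ = \shrinkage_{\lambda/L}(\alpha_i - \tfrac1L\nabla_i f(\alphav))$, and that $\chi_i(\alphav) = \min_\gamma[\gamma\nabla_i f(\alphav) + \tfrac{L\gamma^2}{2} + \lambda(|\alpha_i + \gamma| - |\alpha_i|)]$ is precisely the value of the proximal coordinate subproblem, whose minimizer is $\gamma^\star = \alpha_i^+ - \alpha_i$. So the task reduces to showing the optimal value equals $-\tfrac{1}{2L}(\alpha_i^+ - \alpha_i)^2$, and separately that $\alpha_i^+ - \alpha_i = -s(\alphav)_i/L$ (equivalently $s(\alphav)_i = L(\alpha_i - \alpha_i^+)$), so that $-\tfrac1{2L}(\alpha_i^+-\alpha_i)^2 = -\tfrac{1}{2L}(s(\alphav)_i)^2$.

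The key steps, in order: (1) \textbf{Case $\alpha_i = 0$.} Here a {\good} step imposes no extra constraint. The subproblem becomes $\min_\gamma[\gamma\nabla_i f + \tfrac{L\gamma^2}{2} + \lambda|\gamma|]$; I would minimize over $\gamma > 0$, $\gamma < 0$, and $\gamma = 0$ and observe the minimizer is $\gamma^\star = -\tfrac1L\shrinkage_\lambda(\nabla_i f(\alphav)) = \alpha_i^+ - \alpha_i$. If $|\nabla_i f| \le \lambda$ then $\gamma^\star = 0$ and $\chi_i = 0 = -\tfrac1{2L}\cdot 0^2$; otherwise substituting $\gamma^\star$ into the objective gives $-\tfrac{1}{2L}(\shrinkage_\lambda(\nabla_i f))^2 = -\tfrac1{2L}(L\gamma^\star)^2 = -\tfrac1{2L}(\alpha_i^+-\alpha_i)^2$. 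And $s(\alphav)_i = \shrinkage_\lambda(\nabla_i f) = -L(\alpha_i^+-\alpha_i)$ by~\eqref{eq:def-s-vector}, which closes this case. (2) \textbf{Case $\alpha_i \neq 0$ and $\alpha_i\alpha_i^+ > 0$.} Here $\alpha_i$ and $\alpha_i^+$ have the same sign, so on a neighborhood of the relevant $\gamma$ values the term $|\alpha_i + \gamma|$ is smooth and equals $\sign(\alpha_i)(\alpha_i + \gamma)$; thus the subproblem is an unconstrained quadratic in $\gamma$: $\min_\gamma[\gamma(\nabla_i f + \sign(\alpha_i)\lambda) + \tfrac{L\gamma^2}{2}]$ (the constant $\lambda(\sign(\alpha_i)\alpha_i - |\alpha_i|) = 0$ drops out). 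Its minimizer is $\gamma^\star = -\tfrac1L(\nabla_i f + \sign(\alpha_i)\lambda) = -\tfrac1L s(\alphav)_i$, and since the step is {\good} this $\gamma^\star$ indeed keeps the same sign, so $\gamma^\star = \alpha_i^+ - \alpha_i$; the optimal value is $-\tfrac1{2L}(s(\alphav)_i)^2 = -\tfrac1{2L}(\alpha_i^+ - \alpha_i)^2$, as claimed.

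The one point requiring a little care — the main (mild) obstacle — is justifying in case~(2) that the piecewise-linear kink of $|\alpha_i + \gamma|$ at $\gamma = -\alpha_i$ does not interfere: one must argue that the unconstrained quadratic minimizer $\gamma^\star$ lies on the same side of $-\alpha_i$ as $\gamma = 0$, which is exactly what the {\good}-step hypothesis $\alpha_i\alpha_i^+ > 0$ (equivalently $\alpha_i(\alpha_i + \gamma^\star) > 0$) guarantees, so the restriction of the objective to that half-line is a genuine quadratic whose global minimizer coincides with $\gamma^\star$. I would also note that this same identity $\alpha_i^+ - \alpha_i = -s(\alphav)_i/L$ on {\good} steps is what is implicitly used in the proof sketch of Lemma~\ref{lem:characterize-informal} and in Lemma~\ref{lem:good-step-progress-L1}, so it is worth isolating cleanly here. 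Everything else is a routine completion-of-the-square / case-check, and no deeper machinery (convexity of $f$, strong convexity, the approximation factor $\Theta$) is needed for this particular lemma.
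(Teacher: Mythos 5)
Your proof is correct and follows essentially the same route as the paper's: a case split on $\alpha_i=0$ versus $\alpha_i\neq 0$, using the {\good}-step hypothesis to identify $\alpha_i^+-\alpha_i=-s(\alphav)_i/L$ as the minimizer of the one-dimensional proximal subproblem, then completing the square. If anything, your treatment is slightly more careful than the paper's in the $\alpha_i=0$, $\abs{\nabla_i f(\alphav)}\leq\lambda$ subcase (where $\alpha_i^+=0$ and both sides vanish), which the paper's proof glosses over.
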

\begin{proof}
By the definition of the coordinate proximal update,
\begin{align*}
	\chi_i(\alphav) &= \min_{\gamma \in \R}\encase{\gamma\nabla_{i} f(\alphav)
 		  + \frac{L\gamma^2}{2} + \lambda(\abs{\alpha_{i} + \gamma} - \abs{\alpha_{i}})}\\
 		  &= (\alpha_i^{+} - \alpha_i)\nabla_{j} f(\alphav)
 		  + \frac{L}{2}(\alpha_i^{+} - \alpha_i)^2 + \lambda(\abs{\alpha_{i}^{+}} - \abs{\alpha_{i}})\,.
\end{align*}
Now first suppose that $\alpha_i \neq 0$. Since this is a good step, $\sign(\alpha_i^+) = \sign(\alpha_i)$. Without loss of generality, let us assume that $\alpha_i > 0$. Then $(\alpha_i^+ = \alpha_i - (\nabla_i f(\alphav) + \lambda)/L)$. Using this in the above expression,
\begin{align*}
	\chi_i(\alphav) &= (\alpha_i^{+} - \alpha_i)\nabla_{i} f(\alphav)
 		  + \frac{L}{2}(\alpha_i^{+} - \alpha_i)^2 + \lambda(\abs{\alpha_{i}^{+}} - \abs{\alpha_{i}})\\
 		  &= - \frac{1}{L}(\nabla_i f(\alphav) + \lambda)(\nabla_i f(\alphav) + \frac{L}{2 L^2}\encaser{\nabla_i f(\alphav) + \lambda}^2 - \frac{\lambda}{L}(\nabla_i f(\alphav) + \lambda)\\
 		  &=- \frac{1}{L}(\nabla_i f(\alphav) + \lambda)(\nabla_i f(\alphav) + \lambda) + \frac{1}{2L}\encaser{\nabla_i f(\alphav) + \lambda}^2 \\
 		  &= -\frac{1}{2L}(\nabla_i f(\alphav) + \lambda)^2\\
 		  &= -\frac{1}{2L}(s(\alphav)_i)^2\,.
\end{align*}
The proof for when $\alpha_i < 0$ is identical. Now let us see the case when $\alpha_i = 0$. Since this is not a {\bad} step, we have that $\alpha_i^+ \neq 0$ meaning that $\abs{\nabla_i f(\alphav)} > \lambda$. Without loss of generality, assume that $\alpha_i^+ > 0$---the other case is identical. Then $(\alpha_i^+ = -\frac{1}{L}\encaser{\nabla_i f(\alphav) + \lambda})$ and $\encaser{s(\alphav)_i = \nabla_i f(\alphav) + \lambda}$. Doing the same computations as before,
\begin{align*}
	\chi_i(\alphav) &= (\alpha_i^{+} - \alpha_i)\nabla_{i} f(\alphav)
 		  + \frac{L}{2}(\alpha_i^{+} - \alpha_i)^2 + \lambda(\abs{\alpha_{i}^{+}} - \abs{\alpha_{i}})\\
 		  &= - \frac{1}{L}(\nabla_i f(\alphav) + \lambda)(\nabla_i f(\alphav) + \frac{L}{2 L^2}\encaser{\nabla_i f(\alphav) + \lambda}^2 - \frac{\lambda}{L}(\nabla_i f(\alphav) + \lambda)\\
 		  &=- \frac{1}{L}(\nabla_i f(\alphav) + \lambda)(\nabla_i f(\alphav) + \lambda) + \frac{1}{2L}\encaser{\nabla_i f(\alphav) + \lambda}^2 \\
 		  &= -\frac{1}{2L}(\nabla_i f(\alphav) + \lambda)^2\\
 		  &= -\frac{1}{2L}(\alpha_i^+ - \alpha_i)^2\\
 		  &= -\frac{1}{2L}(s(\alphav)_i)^2\,.
\end{align*}
Such simple calculations shows that indeed the lemma holds in all cases.
\end{proof}
\begin{remark}
	Lemma \ref{lem:good-step-character-L1} shows that for a {\good} step, the {\steepsub}, {\steeplookr}, and {\steeplook} rules coincide. Thus even though we explicitly write the analysis for the {\steepsub} rule, it also holds for the other updates. Note that while the three rules coincide for a {\good} step, they can be quite different during the {\bad} steps. Further the computations required for the three rules is not the same since we a priori do not know if a step is going to be {\good} or {\bad}.
\end{remark}
We have characterized the update made in a {\good} step and now let us look at the {\bad} steps.
 \begin{lemma}[{\bad} steps are not too bad]\label{lem:bad-progress-L1}
 		In any step of Algorithm \ref{alg:L1} including the {\bad} steps, the objective value never increases.
 	\end{lemma}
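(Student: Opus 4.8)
The statement to prove is Lemma~\ref{lem:bad-progress-L1}: in any step of Algorithm~\ref{alg:L1}, including the \bad{} steps, the objective value $F$ never increases. The plan is to argue that the modified update rule \eqref{eq:l1-update} always produces an iterate with value no larger than that of the standard proximal coordinate step \eqref{eq:l1-intermediate-update}, which in turn never increases $F$ by the usual descent property of proximal coordinate descent combined with coordinate-wise $L$-smoothness \eqref{eq:coord-smoothness}.

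Concretely, I would proceed as follows. First, recall that for the standard proximal step $\alpha_{i}^{+}$ defined in \eqref{eq:l1-intermediate-update}, the descent lemma for proximal coordinate descent gives $F(\alphav^{(t)} + (\alpha_{i}^{+} - \alpha_{i}^{(t)})\unit_i) \le F(\alphav^{(t)})$; this is exactly the statement that $\chi_{i}(\alphav^{(t)}) \le 0$, which follows because $\gamma = 0$ is a feasible point of the minimization defining $\chi_i$ (so the minimum is $\le 0$), and the $L$-smoothness bound \eqref{eq:coord-smoothness} shows $\chi_i$ upper-bounds the actual change in $F$. If line search \eqref{eq:l1-line-search} is used instead, the inequality $F(\alphav^{(t+1)}) \le F(\alphav^{(t)})$ is immediate since $\gamma = \alpha_{i_t}^{(t)}$ is feasible. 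So the only thing left is to handle the post-processing step \eqref{eq:l1-update}: when $\alpha_{i}^{+}\alpha_{i}^{(t)} < 0$ (the \bad{} case), we instead set $\alpha_{i}^{(t+1)} = 0$ rather than $\alpha_{i}^{+}$, and I must show this does not undo the progress.

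The key step is a convexity/monotonicity argument along the single coordinate $i$. Define the one-dimensional function $\phi(\gamma) := F(\alphav^{(t)} + (\gamma - \alpha_i^{(t)})\unit_i)$, which is convex in $\gamma$ (sum of the convex restriction of $f$ and the convex term $\lambda|\gamma|$, plus constants). In the \bad{} case, $\alpha_i^{(t)}$ and $\alpha_i^{+}$ have strictly opposite signs (or $\alpha_i^{+}=0$ — but note the \bad{} definition \ref{def:good-steps-L1} in this branch requires $\alpha_i^{(t)} \neq 0$, and the product $\le 0$), so $0$ lies in the closed interval between $\alpha_i^{(t)}$ and $\alpha_i^{+}$. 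For the \emph{gradient}-based update, $\alpha_i^{+}$ minimizes the quadratic-plus-$\ell_1$ surrogate $Q(\gamma) := \gamma\nabla_i f(\alphav^{(t)}) + \tfrac{L}{2}(\gamma-\alpha_i^{(t)})^2 \cdot(\text{shift}) + \lambda|\gamma|$ (equivalently $\chi_i$'s minimizer), which is a convex function whose minimizer is $\alpha_i^{+}$; since $0$ is between $\alpha_i^{(t)}$ and $\alpha_i^{+}$, convexity of $Q$ gives $Q(0) \le Q(\alpha_i^{(t)})$. Because $F$ at these points is upper-bounded by $Q$ via $L$-smoothness and $F$ at $\alpha_i^{(t)}$ equals $Q(\alpha_i^{(t)})$, we conclude $F(\alphav^{(t+1)}) = \phi(0) \le Q(0) \le Q(\alpha_i^{(t)}) = F(\alphav^{(t)})$. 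For the line-search update, it is even simpler: $\alpha_i^{+}$ is the global minimizer of the convex $\phi$, so $\phi$ is monotone between $\alpha_i^{(t)}$ and $\alpha_i^{+}$, hence $\phi(0) \le \phi(\alpha_i^{(t)})$ since $0$ is in that interval; thus $F(\alphav^{(t+1)}) \le F(\alphav^{(t)})$.

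The main obstacle, such as it is, is getting the surrogate-vs-actual bookkeeping right in the gradient-update case: one has to be careful that the inequality $F(\text{point}) \le Q(\text{point})$ holds at $\gamma = 0$ (it does, by \eqref{eq:coord-smoothness} applied with step $-\alpha_i^{(t)}$) while $F = Q$ exactly at $\gamma = \alpha_i^{(t)}$, and then chain these with the convexity inequality $Q(0)\le \max(Q(\alpha_i^{(t)}), Q(\alpha_i^+))=Q(\alpha_i^{(t)})$ (the max being $Q(\alpha_i^{(t)})$ because $\alpha_i^+$ is the minimizer). Everything else is routine one-dimensional convexity. I would also remark that the \good{}-step case is already covered by Lemma~\ref{lem:good-step-progress-L1} (or directly by $\chi_i \le 0$), so it suffices to treat the \bad{} steps, and that the argument is uniform over whether \eqref{eq:l1-intermediate-update} or \eqref{eq:l1-line-search} was used.
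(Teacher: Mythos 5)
Your proof is correct. It rests on the same underlying fact as the paper's proof---namely that setting the coordinate to $0$ does not increase the one-dimensional smooth surrogate $Q(\gamma) = \nabla_i f(\alphav^{(t)})(\gamma - \alpha_i^{(t)}) + \tfrac{L}{2}(\gamma - \alpha_i^{(t)})^2 + \lambda\abs{\gamma}$ relative to its value at $\gamma = \alpha_i^{(t)}$, combined with $F \le Q$ (coordinate-wise smoothness) and $F = Q$ at the starting point---but you reach that fact by a different route. The paper substitutes the explicit shrinkage formula $\alpha_i^+ = \alpha_i - \tfrac{1}{L}(\nabla_i f(\alphav) + \lambda)$ and verifies $Q(0) - Q(\alpha_i) = L\alpha_i\bigl(\alpha_i^+ - \alpha_i/2\bigr) \le 0$ by direct factoring, whereas you observe that $0$ lies between $\alpha_i^{(t)}$ and the minimizer $\alpha_i^+$ of the convex $Q$, so $Q(0) \le \max\bigl(Q(\alpha_i^{(t)}), Q(\alpha_i^+)\bigr) = Q(\alpha_i^{(t)})$. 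Your version buys two things: it never needs the closed form of the proximal update, and it handles the line-search variant \eqref{eq:l1-line-search} by the identical monotonicity argument applied directly to $\phi$ (the paper's proof, as written, only treats the gradient update). The paper's version buys an explicit quantitative bound $L\alpha_i(\alpha_i^+ - \alpha_i/2)$ on the decrease, though nothing downstream uses it. One small point worth making explicit in your write-up: the condition triggering the reset in \eqref{eq:l1-update} is $\alpha_i^+\alpha_i^{(t)} < 0$, so in that branch the signs are strictly opposite and $0$ is indeed an interior point of the segment; your parenthetical about $\alpha_i^+ = 0$ is harmless but that case falls in the ``keep $\alpha_i^+$'' branch anyway.
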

 	\begin{proof}
 		If the step was {\good}, since the update \eqref{eq:l1-intermediate-update} is just a proximal coordinate update, it is guaranteed to not increase the function value.
 		The modification we make when the step is {\bad} \eqref{eq:l1-update} makes the lemma slightly less obvious. Without loss of generality, by symmetry of $L1$ about the origin, let us assume that $\alpha_i > 0$. Since the step was {\bad}, this implies that $\alpha_i^{+} = \alpha_i - \frac{1}{L}(\nabla_i f(\alphav) + \lambda) < 0$ for \eqref{eq:l1-update}. Then 
 		\begin{align*}
 			F(\alphav^{t+1}) - F(\alphav^{(t)}) &\leq -\nabla_i f(\alphav) \alpha_i + \frac{L}{2}\alpha_i^2 - \lambda\abs{\alpha_i}\\
 				&= \alpha_i\encaser{-\nabla_i f(\alphav) + \frac{L}{2}\alpha_i - \lambda}\\
 				&\leq L{\alpha_i}\encaser{-\frac{1}{L}\nabla_i f(\alphav) + \alpha_i/2 - \frac{1}{L}\lambda}\\
 				&= L{\alpha_i}\encaser{\encaser{\alpha_i - \frac{1}{L}[\nabla_i f(\alphav)+\lambda]} - \alpha_i/2}\\
 				&\leq  L{\alpha_i}\encaser{0 + 0}\,. \qedhere
 		\end{align*}
 	\end{proof}

\subsection{Convergence in the Strongly Convex Case}\label{subsec:strong-convergence-rate-L1}
\begin{theorem}\label{thm:strong-convex-L1-appendix}
	After $t$ steps of Algorithm \ref{alg:L1} where in each step the coordinate was selected using the $\Theta$-approximate {\steepsub} rule, let $\G_t \subseteq [t]$ indicate the {\good} steps. Assume that the function was $L$-coordinate-wise smooth and $\mu_1$ strongly convex with respect to the $L1$ norm. The size $\abs{\G_t} \geq \ceil{t/2}$ and
	\[
		F(\alphav^{(t+1)}) - F(\alphav^\star) \leq \encaser{1 - \frac{\Theta^2 \mu_1}{L}}^{\abs{\G_t}}\encaser{F(\alphav^{(0)}) - F(\alphav^{\star})}\,.
	\]
\end{theorem}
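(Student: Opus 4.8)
The plan is to reduce the whole statement to a per-step recursion on the suboptimality gap $F(\alphav^{(t)}) - F(\alphav^{\star})$, handling {\good} and {\bad} steps separately, and then to telescope. First I would invoke Lemma~\ref{lem:num-good-L1} to get $\abs{\G_t} \geq \ceil{t/2}$, and Lemma~\ref{lem:bad-progress-L1} to conclude that on a {\bad} step the objective does not increase, so that trivially $F(\alphav^{(t+1)}) - F(\alphav^{\star}) \leq F(\alphav^{(t)}) - F(\alphav^{\star})$. All the actual work is therefore in lower-bounding the progress of a {\good} step.

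For a {\good} step updating coordinate $i_t$, I would first note that since in such a step the post-processing \eqref{eq:l1-update} does nothing, coordinate-wise $L$-smoothness \eqref{eq:coord-smoothness} together with the definition of the proximal/line-search update \eqref{eq:l1-intermediate-update}, \eqref{eq:l1-line-search} gives $F(\alphav^{(t+1)}) - F(\alphav^{(t)}) \leq \chi_{i_t}(\alphav^{(t)})$. Lemma~\ref{lem:good-step-progress-L1} then converts this into a bound by the full-dimensional $L1$-squared surrogate, $\chi_{i_t}(\alphav^{(t)}) \leq \Theta^2 \min_{\deltav \in \R^n}\{\scal{\nabla f(\alphav^{(t)})}{\deltav} + \tfrac{L}{2}\norm{\deltav}_1^2 + \lambda(\norm{\alphav^{(t)} + \deltav}_1 - \norm{\alphav^{(t)}}_1)\}$. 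The key algebraic step is to trade the smoothness constant $L$ in this surrogate for the strong-convexity constant $\mu_1$ at the cost of a factor $\mu_1/L$: this is exactly \citet[Lemma~9]{karimireddy2018adaptive} applied with the $\norm{\cdot}_1$ norm, the convex function $\lambda\norm{\cdot}_1$, and the constant pair $(L,\mu_1)$, yielding $\min_{\deltav}\{\scal{\nabla f}{\deltav} + \tfrac{L}{2}\norm{\deltav}_1^2 + \lambda(\cdots)\} \leq \tfrac{\mu_1}{L}\min_{\deltav}\{\scal{\nabla f}{\deltav} + \tfrac{\mu_1}{2}\norm{\deltav}_1^2 + \lambda(\cdots)\}$. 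Finally, evaluating the $\mu_1$-surrogate at $\deltav = \alphav^{\star} - \alphav^{(t)}$ and using $\mu_1$-strong convexity of $f$ with respect to $\norm{\cdot}_1$ (add $\lambda(\norm{\alphav^{\star}}_1 - \norm{\alphav^{(t)}}_1)$ to both sides of the strong-convexity inequality) bounds it by $F(\alphav^{\star}) - F(\alphav^{(t)})$. Chaining these gives the one-step contraction $F(\alphav^{(t+1)}) - F(\alphav^{\star}) \leq (1 - \Theta^2\mu_1/L)(F(\alphav^{(t)}) - F(\alphav^{\star}))$ on every {\good} step.

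To conclude, I would multiply the per-step inequalities over all iterations: each {\good} step contributes a factor $(1 - \Theta^2\mu_1/L) < 1$ and each {\bad} step a factor $\leq 1$, so $F(\alphav^{(t+1)}) - F(\alphav^{\star}) \leq (1 - \Theta^2\mu_1/L)^{\abs{\G_t}}(F(\alphav^{(0)}) - F(\alphav^{\star}))$, which together with $\abs{\G_t} \geq \ceil{t/2}$ is the claimed bound. I expect the main obstacle to be the combination of the {\good}/{\bad} bookkeeping (Lemmas~\ref{lem:num-good-L1} and~\ref{lem:bad-progress-L1}), which confines all pathological single-step behaviour to the provably-rare {\bad} steps, with the smoothness-to-strong-convexity rescaling, which is what lets us use the tight $\norm{\cdot}_1$-based surrogate rather than the loose $\norm{\cdot}_2$-based one underlying the negative results of \citet{nutini_coordinate_2015}; the only place where the modified update rule \eqref{eq:l1-update} is genuinely needed is in establishing that a {\good} step really does produce the $L1$-squared surrogate via Lemma~\ref{lem:good-step-progress-L1}.
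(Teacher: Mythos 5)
Your proposal is correct and follows essentially the same route as the paper's own proof: the counting of {\good} steps via Lemma~\ref{lem:num-good-L1}, the non-increase on {\bad} steps via Lemma~\ref{lem:bad-progress-L1}, the per-{\good}-step chain $\chi_{i_t} \leq \Theta^2(\text{$L$-surrogate}) \leq \tfrac{\Theta^2\mu_1}{L}(\text{$\mu_1$-surrogate}) \leq \tfrac{\Theta^2\mu_1}{L}\bigl(F(\alphav^{\star}) - F(\alphav^{(t)})\bigr)$ using Lemma~\ref{lem:good-step-progress-L1}, Lemma~\ref{lem:change-mu-L1}, and $\mu_1$-strong convexity, and the final telescoping are exactly the paper's argument. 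No gaps.
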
 
 
 We have almost everything we need in place to prove our convergence rates for the strongly convex, and the general convex case.
Recall that the function $f$ is $\mu_1$ strongly convex with respect to the $L1$ norm. This implies that
\[
	F(\alphav^\star) \geq f(\alphav) + \scal{\nabla f(\alphav)}{\alphav^\star - \alphav} + \frac{\mu_1}{2}\norm{\alphav^\star - \alphav}_1^2 + g(\alphav^\star)\,.
\]	
 We will need one additional Lemma from \citep{karimireddy2018adaptive} to relate the upper bound we minimize in Lemma \ref{lem:good-step-progress-L1}.
 \begin{lemma}[Relating different regularizing constants \citep{karimireddy2018adaptive}]\label{lem:change-mu-L1}
 For any vectors $\gv, \alphav \in \R^n$, and constants $L \geq \mu > 0$,
 	\begin{multline*}
 		\min_{\wv \in \R^n}\encasecurly{\lambda(\norm{\wv}_1 - \norm{\alphav}_1) + \scal{\gv}{\wv - \alphav} + \frac{L}{2}\norm{\wv - \alphav}_1^2} \leq \\ \frac{\mu}{L} \min_{\wv \in R^n}\encasecurly{ \lambda(\norm{\wv}_1 - \norm{\alphav}_1) + \scal{\gv}{\wv - \alphav} + \frac{\mu}{2}\norm{\wv - \alphav}_1^2}\,.
 	\end{multline*}
 \end{lemma}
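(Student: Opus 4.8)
The plan is to prove the inequality by exhibiting an explicit feasible point for the left-hand side minimization and upper-bounding the objective there. Write $R(\wv) := \lambda(\norm{\wv}_1 - \norm{\alphav}_1) + \scal{\gv}{\wv - \alphav} + \frac{\mu}{2}\norm{\wv - \alphav}_1^2$ for the right-hand side objective and let $\wv^\star$ be a minimizer of $R$; such a minimizer exists because $R$ is continuous and coercive (as $\norm{\wv}_1 \to \infty$ the term $\frac{\mu}{2}\norm{\wv - \alphav}_1^2$ grows quadratically while the affine and $L1$ terms grow only linearly). Set $\tau := \mu/L$, which lies in $(0,1]$ by the hypothesis $L \geq \mu > 0$.

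Next I would plug the candidate point $\wv := (1-\tau)\alphav + \tau\wv^\star = \alphav + \tau(\wv^\star - \alphav)$ into the left-hand side objective, noting that this $\wv$ is a convex combination of $\alphav$ and $\wv^\star$ since $\tau \in [0,1]$. For the smooth part, $\wv - \alphav = \tau(\wv^\star - \alphav)$ gives $\scal{\gv}{\wv - \alphav} = \tau\scal{\gv}{\wv^\star - \alphav}$ and $\frac{L}{2}\norm{\wv - \alphav}_1^2 = \frac{L}{2}\tau^2\norm{\wv^\star - \alphav}_1^2 = \tau\cdot\frac{\mu}{2}\norm{\wv^\star - \alphav}_1^2$; it is precisely this extra factor $\tau$ picked up from squaring that converts the constant $\frac{L}{2}$ into $\frac{\mu}{2}$, so scaling the displacement by exactly $\mu/L$ is essential. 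For the non-smooth part, convexity of $\norm{\cdot}_1$ yields $\norm{\wv}_1 \leq (1-\tau)\norm{\alphav}_1 + \tau\norm{\wv^\star}_1$, hence $\lambda(\norm{\wv}_1 - \norm{\alphav}_1) \leq \tau\lambda(\norm{\wv^\star}_1 - \norm{\alphav}_1)$ using $\lambda \geq 0$.

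Summing the three bounds shows that the left-hand side objective evaluated at $\wv$ is at most $\tau R(\wv^\star) = \frac{\mu}{L}$ times the right-hand side minimum; since the left-hand side is an infimum over all of $\R^n$ it is no larger than its value at this particular $\wv$, which gives the claim. I do not expect a genuine obstacle: the argument is essentially the one-line ``take a short step'' trick, and the only point requiring care is that the non-smooth term must be controlled using convexity of the $L1$ norm (not any smoothness), together with the degree-two homogeneity of $\norm{\cdot}_1^2$. If one prefers to avoid invoking the existence of a minimizer of $R$, the identical chain of inequalities applied to an arbitrary $\wv' \in \R^n$ shows that the left-hand side is at most $\frac{\mu}{L}R(\wv')$, and taking the infimum over $\wv'$ finishes the proof.
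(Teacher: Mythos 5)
Your proof is correct. Note that the paper does not actually prove this lemma from scratch: its ``proof'' consists of observing that $\lambda(\norm{\wv}_1 - \norm{\alphav}_1) + \scal{\gv}{\wv - \alphav}$ is convex and vanishes at $\wv = \alphav$, that $\norm{\wv - \alphav}_1$ is convex and nonnegative, and then citing Lemma~9 of \citet{karimireddy2018adaptive} as a black box. What you have written is a complete, self-contained proof of exactly that black-box fact, via the standard short-step argument: evaluate the left-hand objective at $\wv = \alphav + \tau(\wv' - \alphav)$ with $\tau = \mu/L$, use degree-two homogeneity of $\norm{\cdot}_1^2$ to trade $L\tau^2$ for $\mu\tau$, use convexity of $\norm{\cdot}_1$ (together with $\lambda \geq 0$, which holds since $\lambda$ is the regularization parameter) to pull the factor $\tau$ out of the non-smooth term, and take the infimum over $\wv'$ at the end --- your closing remark that this last step makes the existence of a minimizer $\wv^\star$ unnecessary is exactly right. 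The one sanity check worth being explicit about is the sign: the right-hand minimum is $\leq 0$ (take $\wv = \alphav$), so multiplying it by $\tau \in (0,1]$ increases it, and the inequality you derive, $\min_{\wv} P(\wv) \leq \tau R(\wv')$ for every $\wv'$, does pass correctly to $\min_{\wv} P(\wv) \leq \tau \min_{\wv'} R(\wv')$ because $\tau > 0$. So your argument buys self-containedness at essentially no extra cost, while the paper's version buys brevity by deferring to the cited reference.
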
	
 	\begin{proof}
 		The function $\lambda(\abs{\wv} - \abs{\alphav}) + \scal{\gv}{\wv - \alphav}$ is convex and is 0 when $\wv = \alphav$, and $\norm{\wv - \alphav}_1$ is a convex positive function. Thus we can apply Lemma 9 from \citep{karimireddy2018adaptive}.
 	\end{proof}
 
 The proof of the theorem now easily follows. 

\paragraph*{Proof of Theorem \ref{thm:strong-convex-L1-appendix}.}
First, by Lemma \ref{lem:num-good-L1}, we know that there are at least as many {\good} steps as there are {\bad} steps. This means that $\abs{\G_t} \geq \ceil{t/2}$ for any $t$.
 Now suppose that $t$ was a {\good} step and updated the $i$th coordinate. Now by the progress made in a proximal update \eqref{eq:coordinate-progress} and Lemma \ref{lem:good-step-progress-L1}, 
 \begin{align*}
  F(\alphav^{(t+1)}) &\leq F(\alphav^{(t)}) +  \chi_i(\alphav^{(t)})\\
  &= F(\alphav^{(t)}) + \Theta^2\min_{\wv \in \R^n}\encasecurly{\scal{\nabla f(\alphav^{(t)})}{\wv - \alphav^{(t)}} + \frac{L}{2}\norm{\wv - \alphav^{(t)}}_1^2 + \lambda(\norm{\wv}_1 - \norm{\alphav^{(t)}}_1)}\\
  &\leq F(\alphav^{(t)}) + \frac{\Theta^2\mu_1}{L}\min_{\wv \in \R^n}\encasecurly{\scal{\nabla f(\alphav^{(t)})}{\wv - \alphav^{(t)}} + \frac{\mu_1}{2}\norm{\wv - \alphav^{(t)}}_1^2 + \lambda(\norm{\wv}_1 - \norm{\alphav^{(t)}}_1)}
\end{align*} 	
In the last step we used Lemma \ref{lem:change-mu-L1}. We will now use our definition of strong convexity. We have shown that
\begin{align*}
	F(\alphav^{(t+1)}) - F(\alphav^\star) &\leq F(\alphav^{(t)}) - F(\alphav^\star)\\&\hspace*{0.2in} + \frac{\Theta^2\mu_1}{L}\min_{\wv \in \R^n}\encasecurly{\scal{\nabla f(\alphav^{(t)})}{\wv - \alphav^{(t)}} + \frac{\Theta^2\mu_1}{2}\norm{\wv - \alphav^{(t)}}_1^2 + \lambda(\norm{\wv}_1 - \norm{\alphav^{(t)}}_1)}\\
	&\leq  F(\alphav^{(t)}) - F(\alphav^\star) +\\&\hspace*{0.2in} \frac{\Theta^2\mu_1}{L}\encaser{\scal{\nabla f(\alphav^{(t)})}{\alphav^\star - \alphav^{(t)}} + \frac{\Theta^2\mu_1}{2}\norm{\alphav^\star - \alphav^{(t)}}_1^2 + \lambda(\norm{\alphav^{\star}}_1 - \norm{\alphav^{(t)}}_1)} \\
	&\leq F(\alphav^{(t)}) - F(\alphav^\star) + \frac{\Theta^2\mu_1}{L}\encaser{f(\alphav^\star) - f(\alphav^{(t)}) + \lambda(\norm{\alphav^{\star}}_1 - \norm{\alphav^{(t)}}_1)}\\
	&= (1 - \frac{\Theta^2\mu_1}{L})\encaser{F(\alphav^{(t)}) - F(\alphav^\star)}\,.
\end{align*}
We have shown that we make significant progress every {\good} step. Combining this with Lemma \ref{lem:bad-progress-L1} which shows that the function value does not increase in a {\bad} step finishes the proof.
\qed
 	
 \begin{remark}
 	We show that the {\steepsub} rule has convergence rates indepedent of $n$ for $L1$-regularized problems. As long as the update is kept the same as in Algorithm \ref{alg:L1} (with the modification), our proof also works for the {\steeplookr} and the {\steeplook} rules. This answers the conjecture posed by \citep{nutini_coordinate_2015} in the affirmative, at least for $L1$-regularized problems.
 \end{remark}	
 	
\subsection{Convergence in the General Convex Case}\label{subsec:general-convergence-rate-L1}
\begin{theorem}\label{thm:general-convex-L1-appendix}
		After $t$ steps of Algorithm \ref{alg:L1} where in each step the coordinate was selected using the $\Theta$-approximate {\steepsub} rule, let $\G_t \subseteq [t]$ indicate the {\good} steps. Assume that the function was $L$-coordinate-wise smooth. Then the size $\abs{\G_t} \geq \ceil{t/2}$ and
	\[
		F(\alphav^{(t+1)}) - F(\alphav^\star) \leq \frac{L D^2}{2\Theta^2 \abs{\G_t}}\,,
	\]
	where $D$ is the $L1$-diameter of the level set. For the set of minima $\Q^\star$,
	\[
		D = \max_{\wv \in \R^n}\min_{\alphav^\star \in \Q^\star}\encasecurly{\norm{\wv - \alphav^\star}_1 \Big| F(\wv) \leq F(\alphav^{(0)})}\,.
	\]
\end{theorem}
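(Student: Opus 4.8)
The plan is to run the textbook $O(1/t)$ analysis for smooth convex minimization, but filtered through the good/bad step decomposition. Since Lemma~\ref{lem:bad-progress-L1} shows the objective never increases, every iterate satisfies $F(\alphav^{(t)}) \le F(\alphav^{(0)})$ and thus lies in the level set; hence for each $t$ we may pick a minimizer $\alphav^{\star}_t \in \Q^\star$ with $\norm{\alphav^{(t)} - \alphav^{\star}_t}_1 \le D$ (and we are free to re-select this nearest minimizer at every step). Write $r_t := F(\alphav^{(t)}) - F^\star$. On a bad step $r_{t+1} \le r_t$ by Lemma~\ref{lem:bad-progress-L1}, so it will suffice to establish a per-step decrease on the good steps and then invoke Lemma~\ref{lem:num-good-L1} to guarantee $\abs{\G_t} \ge \ceil{t/2}$ of them.

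On a good step updating coordinate $i$, I would combine the proximal progress bound $F(\alphav^{(t+1)}) - F(\alphav^{(t)}) \le \chi_i(\alphav^{(t)})$ with Lemma~\ref{lem:good-step-progress-L1} to get $r_{t+1} \le r_t + \Theta^2 \min_{\deltav \in \R^n} \{ \scal{\nabla f(\alphav^{(t)})}{\deltav} + \tfrac{L}{2}\norm{\deltav}_1^2 + \lambda(\norm{\alphav^{(t)} + \deltav}_1 - \norm{\alphav^{(t)}}_1) \}$. Rather than minimizing exactly, I would plug in the test point $\deltav = \eta(\alphav^{\star}_t - \alphav^{(t)})$ for $\eta \in [0,1]$: convexity of $f$ bounds the linear term by $\eta(f(\alphav^{\star}_t) - f(\alphav^{(t)}))$, convexity of $\norm{\cdot}_1$ bounds the regularizer term by $\lambda\eta(\norm{\alphav^{\star}_t}_1 - \norm{\alphav^{(t)}}_1)$, and $\norm{\alphav^{\star}_t - \alphav^{(t)}}_1 \le D$ bounds the quadratic term by $\tfrac{L\eta^2 D^2}{2}$. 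Summing, the bracket is at most $-\eta r_t + \tfrac{L\eta^2 D^2}{2}$, so $r_{t+1} \le r_t - \Theta^2\eta r_t + \tfrac{\Theta^2\eta^2 L D^2}{2}$ for every $\eta \in [0,1]$. Choosing $\eta = r_t/(LD^2)$ (valid once $r_t \le LD^2$) gives the recursion $r_{t+1} \le r_t - \tfrac{\Theta^2}{2LD^2} r_t^2$; dividing by $r_t r_{t+1}>0$ and using $r_{t+1}\le r_t$ yields $\tfrac{1}{r_{t+1}} \ge \tfrac{1}{r_t} + \tfrac{\Theta^2}{2LD^2}$. Summing this over the good steps only (bad steps leave $1/r_t$ non-decreasing) telescopes to $\tfrac{1}{r_{t+1}} \ge \tfrac{\Theta^2 \abs{\G_t}}{2LD^2}$, i.e. $r_{t+1} = O\!\left(\tfrac{LD^2}{\Theta^2 \abs{\G_t}}\right)$, which together with $\abs{\G_t} \ge \ceil{t/2}$ gives the stated $O(LD^2/t)$ rate.

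Everything here is routine once the good-step lemma is available; the points that need genuine care are: (i) verifying the iterates stay in the level set so the diameter bound applies, which is precisely Lemma~\ref{lem:bad-progress-L1}, together with the observation that the comparator $\alphav^{\star}_t$ can change each iteration; (ii) the interleaving of good and bad steps — the progress lemma covers only good steps, so one must argue that the telescoped sum runs over $\abs{\G_t}$ terms while the bad steps are harmless; and (iii) the initial phase where $r_t > LD^2$, in which the optimal $\eta$ exceeds $1$: capping at $\eta=1$ gives geometric decrease $r_{t+1} \le (1 - \Theta^2/2)\, r_t$, which only accelerates convergence and so can be folded in (or dispatched by a short case split) without affecting the $O(1/t)$ conclusion. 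Sharpening this bookkeeping is what would be needed to pin down the exact leading constant claimed in Theorem~\ref{thm:general-convex-L1-appendix}.
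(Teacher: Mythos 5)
Your proposal follows essentially the same route as the paper's proof: the same good/bad decomposition via Lemmas~\ref{lem:num-good-L1} and \ref{lem:bad-progress-L1}, the same use of Lemma~\ref{lem:good-step-progress-L1} with the test point on the segment joining $\alphav^{(t)}$ to a nearest minimizer, and the same $1/r_t$ telescoping over good steps only. Your recursion $r_{t+1} \le r_t - \tfrac{\Theta^2}{2LD^2} r_t^2$ is in fact the correctly computed one (the paper's own derivation writes $h_t - \tfrac{2\Theta^2 h_t^2}{LD^2}$ for the minimum of $-\gamma h_t + \gamma^2 \tfrac{LD^2}{2}$, an algebra slip by a factor of $4$ on which the stated constant $\tfrac{LD^2}{2\Theta^2\abs{\G_t}}$ relies), and your explicit restriction to $\eta \in [0,1]$ with the case split for $r_t > LD^2$ patches a step the paper glosses over, since the convexity bounds used there are only valid on that range.
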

\begin{proof}
	We start exactly as in the strongly convex case. First, by Lemma \ref{lem:num-good-L1}, we know that there are at least as many {\good} steps as there are {\bad} steps. This means that $\abs{\G_t} \geq \ceil{t/2}$ for any $t$.
 Now suppose that $t$ was a {\good} step and updated the $i$th coordinate. Now by the progress made in a proximal update \eqref{eq:coordinate-progress} and Lemma \ref{lem:good-step-progress-L1},
 \begin{align*}
  F(\alphav^{(t+1)}) &\leq F(\alphav^{(t)}) +  \chi_i(\alphav^{(t)})\\
  &\leq f(\alphav^{(t)}) + \Theta^2\min_{\wv \in \R^n}\encasecurly{\scal{\nabla f(\alphav^{(t)})}{\wv - \alphav^{(t)}} + \frac{L}{2}\norm{\wv - \alphav^{(t)}}_1^2 + \lambda \norm{\wv}_1}\\
  &\leq f(\alphav^{(t)}) + \Theta^2\min_{\wv = (1-\gamma)\alphav^{(t)} + \gamma \alphav^\star} \encasecurly{\scal{\nabla f(\alphav^{(t)})}{\wv - \alphav^{(t)}} + \frac{L}{2}\norm{\wv - \alphav^{(t)}}_1^2 + \lambda \norm{\wv}_1}\\
  &= f(\alphav^{(t)}) + \Theta^2\min_{\gamma \in \R} \encasecurly{\gamma\scal{\nabla f(\alphav^{(t)})}{\alphav^\star - \alphav^{(t)}} + \frac{L\gamma^2}{2}\norm{\alphav^\star - \alphav^{(t)}}_1^2 + \lambda \norm{(1-\gamma)\alphav^{(t)} + \gamma \alphav^\star}_1}\\
  &\leq f(\alphav^{(t)}) + \Theta^2\min_{\gamma \in \R} \encasecurly{\gamma (f(\alphav^\star) - f(\alphav^{(t)})) + \frac{L\gamma^2}{2}\norm{\alphav^\star - \alphav^{(t)}}_1^2 + \lambda (1-\gamma)\norm{\alphav^{(t)}}_1 + \lambda\gamma\norm{\alphav^{\star}}_1}\,. 
  \end{align*}
  In the last step we used the convexity of $f$ and of $\abs{\cdot}$. Now denote the suboptimality $h_t = F(\alphav^{(t)}) - F(\alphav^\star)$. We have shown that
  \begin{align*}
  	h_{t+1} &= F(\alphav^{(t+1)}) - F(\alphav^\star)\\
  	&\leq f(\alphav^{(t)}) - F(\alphav^\star) \\&\hspace*{0.4in}+ \Theta^2\min_{\gamma \in \R} \encasecurly{\gamma (f(\alphav^\star) - f(\alphav^{(t)})) + \frac{L\gamma^2}{2}\norm{\alphav^\star - \alphav^{(t)}}_1^2 + \lambda (1-\gamma)\norm{\alphav^{(t)}}_1 + \lambda\gamma\norm{\alphav^{\star}}_1}\\
  	&= f(\alphav^{(t)}) + \lambda \norm{\alphav^{(t)}}_1 - F(\alphav^\star) \\&\hspace*{0.4in}+ \Theta^2\min_{\gamma \in \R} \encasecurly{\gamma (f(\alphav^\star) + \lambda \norm{\alphav^{\star}}_1 - f(\alphav^{(t)}) - + \lambda \norm{\alphav^{(t)}}_1) + \frac{L\gamma^2}{2}\norm{\alphav^\star - \alphav^{(t)}}_1^2 }\\
  	&= h_t + \Theta^2\min_{\gamma \in \R} -\gamma h_t + \gamma^2\frac{L}{2}\norm{\alphav^\star - \alphav^{(t)}}_1^2\\
  	&\leq h_t + \Theta^2\min_{\gamma \in \R} -\gamma h_t + \gamma^2\frac{L D^2}{2}\\
  	&= h_t - \frac{2\Theta^2 h_t^2}{L D^2}\,.
  \end{align*}
  Now we know that the function value is non-increasing both during the {\good} and {\bad} steps from Lemma \ref{lem:bad-progress-L1}. Hence $h_{t}^2 \geq h_t h_{t+1}$. Using this inequality and dividing the entire equation by $h_t h_{t+1}$ gives
  \[
  	\frac{1}{h_{t}} \leq \begin{cases}
  		\frac{1}{h_{t+1}} - \frac{2\Theta^2}{L D^2} & \mbox{ if } t \in \G_t\\
  		\frac{1}{h_{t+1}} & \mbox{ o.w.}
		\end{cases}  		
  		\,. 
  \]
  Summing this up gives that
  \[
  	\frac{1}{h_{t+1}} \geq \frac{1}{h_0} + \frac{\abs{\G_{t+1}}2\Theta^2}{L D^2} \geq \frac{\abs{\G_{t+1}}2\Theta^2}{L D^2}\,.
  \]
  Inverting the above equation gives the required theorem.
\end{proof}

 \begin{remark}
 	We show that the {\steepsub} rule has convergence rates independent of $n$ for $L1$-regularized problems. As long as the update \eqref{eq:l1-update} is kept the same as in Algorithm \ref{alg:L1}, our proof also works for the {\steeplookr} and the {\steeplook} rules. Previously only the {\steeplookr} was analyzed for this case by \cite{DhillonNearestNeighborbased2011}. 
 	A careful reading of their proof gives the rate 
 	\[
 		h_t \leq \frac{8 LD^2}{t}\,.
 	\]
 	Thus we improve the rate of convergence by a factor of 8 even for {\steeplookr}, and show a convergence result for the first time for {\steepsub}. 
 \end{remark}

\section{Theoretical Analysis for Box-constrained Problems}\label{sec:convergence-box}	
 	In this section we examine the algorithm for the box-constrained case. We give formal proofs of the rates of convergence and demonstrate the theoretical superiority of the {\steepsub} rule over uniform coordinate descent.

 Recall the definition of coordinate-wise smoothness of $f$:
  \[
    f(\alphav + \gamma \unit_i) \leq f(\alphav) + \gamma\,\nabla_i f(\alphav) + \frac{L \gamma^2}{2},
  \]
  where ${\unit}_{i}$ for $i \in [n]$ is a coordinate basis vector.
Using strong convexity, for any $\deltav \in \R^n$ gives:
  \[
   f(\alphav + \deltav)  \geq  f(\alphav) + \scal{\nabla f(\alphav)}{\deltav} + \frac{\mu_1}{2}\norm{\deltav}_1^2\,.
  \]

\subsection{The Good, the Bad, and the Cross Steps}
 	Unlike in the $L1$ case, we need to divide the steps into three kinds. Differentiating between the three kinds is key to our analysis.
\begin{definition}[{\good}, {\bad}, and {\cross} steps]
	At any step $t$, let $i$ be the coordinate being updated. Then if $(\alpha_i - \frac{1}{L}\nabla_i f(\alphav)) \in (0,1)$ it is called a {\good} step. If $(\alpha_i - \frac{1}{L}\nabla_i f(\alphav)) \notin (0,1)$, and $\alpha_i \in (0,1)$ the step is considered {\bad}. Finally if $(\alpha_i - \frac{1}{L}\nabla_i f(\alphav)) \notin (0,1)$ and $\alpha_i \in \{0,1\}$, we have a {\cross} step. See Figure \ref{fig:good-bad-cross-box} for illustration.
\end{definition}

\begin{figure}
	\centering
	\def\svgwidth{0.6\columnwidth}
	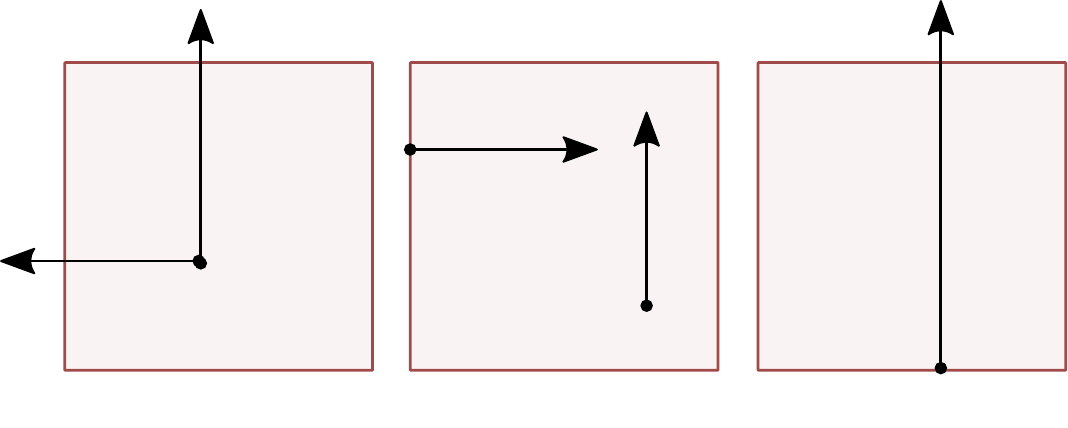
 		\caption{Characterizing steps in box-constrained case (for $n=2$): The {\bad} steps correspond to those which start in the interior and the update $-\frac{1}{L}\nabla_i f(\alphav)$ is interrupted by the boundary of the box constraint (as in steps $b_1$ and $b_2$). The {\good} steps are those which end in the interior of the box ($g_1$ and $g_2$), and the {\cross} steps such as $c$ are those which both start and end at the boundary.}
 		\label{fig:good-bad-cross-box}
\end{figure}
	We would like to bound the number of {\bad} steps. This we can do thanks to the structure of the box constraint.
	\begin{lemma}[Counting {\bad} steps]\label{lem:count-bad-steps-box}
	After $t$ iterations of Algorithm \ref{alg:box-steepest}, we have atmost $\floor{t/2}$ {\bad} steps.
	\end{lemma}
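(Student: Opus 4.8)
\emph{Proof plan.} The engine of the argument is the observation that a {\bad} step --- and, for that matter, also a {\cross} step --- always leaves the updated coordinate on the boundary, i.e. $\alpha_{i_t}^{(t+1)} \in \{0,1\}$. When the plain proximal update \eqref{eq:box-update} is used this is immediate: by definition of a {\bad} or {\cross} step the point $\gamma_{\mathrm{grad}} := \alpha_{i_t}^{(t)} - \tfrac1L \nabla_{i_t} f(\alphav^{(t)})$ lies outside $(0,1)$, so $\min\bigl(1,\positive{\gamma_{\mathrm{grad}}}\bigr)\in\{0,1\}$. When the optional line search (step \ref{stp:box-linesearch}) is used instead, consider the univariate function $\phi(\gamma) := f\bigl(\alphav^{(t)} + (\gamma-\alpha_{i_t}^{(t)})\unit_{i_t}\bigr)$, which is convex and $L$-smooth by coordinate-wise smoothness; a gradient step of stepsize $1/L$ does not overshoot, so $\gamma_{\mathrm{grad}}$ lies between $\alpha_{i_t}^{(t)}\in[0,1]$ and $\argmin_{\gamma}\phi$ (and if $\phi$ has no minimizer it is monotone, for which the conclusion below is clear). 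Hence $\gamma_{\mathrm{grad}}\ge 1$ forces $\phi$ to be non-increasing on $[0,1]$, and $\gamma_{\mathrm{grad}}\le 0$ forces $\phi$ to be non-decreasing on $[0,1]$, so in either case the value $\argmin_{\gamma\in[0,1]}\phi(\gamma)$ returned by the line search lies in $\{0,1\}$.

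With this in hand I would build an injection from the set of {\bad} steps among the first $t$ iterations into the set of {\good} steps among them. Fix a {\bad} step at time $t$ updating coordinate $i$; by definition $\alpha_i^{(t)}\in(0,1)$. Since $\alphav^{(0)}=\zero$ we have $\alpha_i^{(0)}=0\notin(0,1)$, so coordinate $i$ must have been updated at some iteration strictly before $t$; let $t'<t$ be the most recent such iteration. Coordinate descent freezes every coordinate other than $i$ between iterations $t'+1$ and $t$, so $\alpha_i^{(t'+1)}=\alpha_i^{(t)}\in(0,1)$; by the previous paragraph the step $t'$ was therefore neither {\bad} nor {\cross}, i.e. it was a {\good} step. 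Map the {\bad} step $t$ to this {\good} step $t'$.

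Finally I would check injectivity: {\bad} steps on distinct coordinates are sent to {\good} steps on those distinct coordinates, hence to distinct iterations; and if $t_1<t_2$ were two {\bad} steps updating the same coordinate $i$ both mapped to the same $t'$, then $t'$ would be the most recent update of coordinate $i$ before $t_2$, contradicting that $t_1$ is such an update with $t'<t_1<t_2$. Therefore there are at most as many {\bad} steps as {\good} steps among the first $t$ iterations; as these two sets are disjoint, the number of {\bad} steps is at most $t/2$, and being an integer it is at most $\floor{t/2}$. The only delicate point is verifying that a {\bad} (or {\cross}) step ends on the boundary even under \emph{exact line search}; this rests on the elementary no-overshoot property of a $1/L$-gradient step for a convex $L$-smooth univariate function, and is immediate for the plain proximal update. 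Everything else is the bookkeeping of the injection above.
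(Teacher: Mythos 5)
Your proof is correct and follows essentially the same argument as the paper's one-line proof: a {\bad} step starts with $\alpha_i\in(0,1)$ and the clipped update leaves the coordinate in $\{0,1\}$, so since $\alphav^{(0)}=\zero$ every {\bad} step on a coordinate must be preceded by a {\good} step on that same coordinate, yielding the injection and the bound $\floor{t/2}$. Your extra verification that the optional line search also terminates on the boundary (via the no-overshoot property of a $1/L$ gradient step for a convex coordinate-wise smooth function) is a detail the paper glosses over, and is a worthwhile addition rather than a departure in method.
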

	\begin{proof}
		Suppose we are updating the $i$th coordinate. As is clear from the definition (and Fig. \ref{fig:good-bad-cross-box}), a {\bad} step occurs when we start in the interioir ($\alpha_i \in (0,1)$) and attempt to move outside. But in this case, our update ensures that $\alpha_i^{(t+1)} \in \{0,1\}$. Thus the next time coordinate $i$ is picked, it cannot be a {\bad} step. Since we start at the origin $\0^n$, in the first $t$ steps we can have atmost $\floor{t/2}$ {\bad} steps.
	\end{proof}
 	
\subsection{Progress Made in One Step} 	
This section is the core technical part of the proof. The key to our rate is realizing that the three kinds of steps have to be dealt with separately, and proposing a novel analysis of the {\cross} step.
Recall that we had defined
 	 \[
 		\chi_j(\alphav) \defeq \min_{\gamma + \alpha_j \in [0,1]} \encasecurly{\gamma\nabla_{j} f(\alphav)
 		  + \frac{L\gamma^2}{2}}\,.
 	\] 	
Let us also recall the update step used in Algorithm \ref{alg:box-steepest}. 
The update used is
\begin{equation*}
  \alpha_{i}^+ = \min\encaser{1,\positive{\alpha_{i} -
  \dfrac{1}{L}\nabla_{i}f(\alphav)}}.
\end{equation*}
The {\steepsub} rule used to choose the coordinate $i \in \A$ such that 
\[
	\abs{\nabla_i f(\alphav)} \geq \Theta \max_{j \in \A}\abs{\nabla_j f(\alphav)} \,,
\]
where the active set $\A \subseteq [n]$ consists of the coordinates which have feasible update directions:
\[
	j \in \A \mbox{ if } \exists \gamma > 0 \mbox{ such that } \encaser{\alpha_j - \gamma\nabla_j f(\alphav)} \in [0,1]\,.
\]

Before we begin, we should verify if the algorithm is even feasible---make sure that $\A$ is never empty.
\begin{lemma}[$\A$ is not empty]
	If $\A = \emptyset$, then $\alphav$ is the optimum. 
\end{lemma}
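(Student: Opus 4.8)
\emph{Proof proposal.} The plan is to show that $\A=\emptyset$ forces the coordinate-wise first-order (KKT) optimality conditions of $\min_{\alphav\in[0,1]^n}f(\alphav)$ to hold at $\alphav$, and then to invoke convexity of $f$ to upgrade stationarity to global optimality.

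First I would unpack the definition of $\A$ one coordinate at a time. Fix $j\in[n]$. If $\alpha_j\in(0,1)$, then for all small enough $\gamma>0$ the number $\alpha_j-\gamma\nabla_j f(\alphav)$ still lies in $(0,1)\subseteq[0,1]$, so $j\in\A$. If $\alpha_j=0$, then $\alpha_j-\gamma\nabla_j f(\alphav)=-\gamma\nabla_j f(\alphav)\in[0,1]$ for some $\gamma>0$ exactly when $\nabla_j f(\alphav)\le 0$; symmetrically, if $\alpha_j=1$ then $1-\gamma\nabla_j f(\alphav)\in[0,1]$ for some $\gamma>0$ exactly when $\nabla_j f(\alphav)\ge 0$. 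Hence $\A=\emptyset$ implies: no coordinate is strictly interior, every coordinate with $\alpha_j=0$ has $\nabla_j f(\alphav)>0$, and every coordinate with $\alpha_j=1$ has $\nabla_j f(\alphav)<0$. Equivalently, the projected step \eqref{eq:box-update} leaves each coordinate unchanged, i.e.\ $\alphav$ is a fixed point of projected (coordinate) gradient descent.

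Next I would translate this into the variational inequality $\scal{\nabla f(\alphav)}{\yv-\alphav}\ge 0$ for every feasible $\yv\in[0,1]^n$. Since both the box and the inner product decompose across coordinates, it suffices to check $\nabla_j f(\alphav)\,(y_j-\alpha_j)\ge 0$ for each $j$ and each $y_j\in[0,1]$: if $\alpha_j=0$ then $y_j-\alpha_j\ge 0$ while $\nabla_j f(\alphav)>0$; if $\alpha_j=1$ then $y_j-\alpha_j\le 0$ while $\nabla_j f(\alphav)<0$; in both cases the product is nonnegative. Finally, convexity of $f$ on the convex set $[0,1]^n$ gives, for any feasible $\yv$, $f(\yv)\ge f(\alphav)+\scal{\nabla f(\alphav)}{\yv-\alphav}\ge f(\alphav)$, so $\alphav$ minimizes $f$ over the box; contrapositively, whenever $\alphav$ is not optimal the active set $\A$ is nonempty, which is exactly what is needed for Algorithm \ref{alg:box-steepest} to be well-defined.

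I do not expect a genuine obstacle; the only care needed is the boundary bookkeeping in the coordinate-wise case split (strict versus non-strict inequalities, and the degenerate situation $\nabla_j f(\alphav)=0$ at a boundary coordinate, which is automatically excluded because it would place $j\in\A$). The argument uses only convexity of $f$ and the box structure of the constraint — neither coordinate-wise smoothness nor strong convexity is needed.
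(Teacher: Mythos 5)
Your proposal is correct and follows essentially the same route as the paper: from $\A=\emptyset$ you deduce coordinate-wise that $\nabla_j f(\alphav)(y_j-\alpha_j)\ge 0$ for all feasible $y_j$, sum to get the variational inequality $\scal{\nabla f(\alphav)}{\yv-\alphav}\ge 0$, and conclude by convexity. You simply spell out the boundary case analysis that the paper's one-line proof leaves implicit.
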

\begin{proof}
	If $\A = \emptyset$, it means that none of the negative gradient directions are feasible i.e. for any $j \in [n]$ and $v \in [0,1]$, $(\nabla_j f(\alphav)(v - \alpha_j) \geq 0)$. This means that for any vector $\vv \in [0,1]^n$,
	\[
		\scal{\nabla f(\alphav)}{\vv - \alphav} = \sum_{j \in [n]}\nabla_j f(\alphav)(v_j - \alpha_j) \geq 0\,.
	\]
	This implies that $\alphav$ is the optimum.
\end{proof}

Now let us first look at the {\good} steps. 	
\begin{lemma}[{\good} steps make a lot of progress]\label{lem:good-step-progress-box} 	
 	Suppose that the $\Theta$-approximate {\steepsub} rule (recall Def. \ref{def:approx-steepest}) chose to update the $i$th coordinate, and further suppose that it was a {\good} step. Then,
 	\[
 		\chi_i(\alphav) \leq \Theta^2 \min_{\deltav \in [0,1]^n - \alphav}\encasecurly{\scal{\nabla f(\alphav)}{\deltav} + \frac{L}{2}\norm{\deltav}_1^2}\,.
 	\]
 	\end{lemma}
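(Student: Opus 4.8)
The plan is to follow the structure of Lemma~\ref{lem:good-step-progress-L1}, but to replace the auxiliary vector $\zetav \in [-1,1]^n$ used there (which encoded the subgradient of the $L1$ penalty) by an argument that exploits the definition of the active set $\A$. First I would record what a {\good} step buys us: since $\big(\alpha_i - \tfrac1L\nabla_i f(\alphav)\big) \in (0,1)$, the unconstrained one-dimensional minimizer $\gamma^\star = -\tfrac1L\nabla_i f(\alphav)$ of $\gamma \mapsto \gamma\nabla_i f(\alphav) + \tfrac{L}{2}\gamma^2$ is feasible for the constrained problem defining $\chi_i(\alphav)$, so
\[
  \chi_i(\alphav) = -\frac{1}{2L}\big(\nabla_i f(\alphav)\big)^2\,.
\]

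The heart of the argument is then a lower bound on the right-hand side of the lemma. Fix any $\deltav$ with $\alphav + \deltav \in [0,1]^n$, and write $\deltav_{\A}$ for the sub-vector of $\deltav$ indexed by $\A$. If $j \notin \A$, the definition of the active set forces $\alpha_j \in \{0,1\}$ with $\nabla_j f(\alphav)$ pointing out of the box (i.e.\ $\nabla_j f(\alphav)\ge 0$ when $\alpha_j=0$, and $\nabla_j f(\alphav)\le 0$ when $\alpha_j=1$); in either case every feasible $\delta_j$ satisfies $\nabla_j f(\alphav)\,\delta_j \geq 0$. Hence dropping the inactive coordinates only decreases the linear term $\scal{\nabla f(\alphav)}{\deltav}$, while it can only shrink $\norm{\deltav}_1$; combining this with the elementary identity $\min_{\dv}\{\scal{\gv}{\dv} + \tfrac L2\norm{\dv}_1^2\} = -\tfrac{1}{2L}\norm{\gv}_\infty^2$ (place all the mass on a coordinate of largest magnitude) gives
\begin{align*}
  \scal{\nabla f(\alphav)}{\deltav} + \frac{L}{2}\norm{\deltav}_1^2
  &\geq \sum_{j\in\A}\nabla_j f(\alphav)\,\delta_j + \frac{L}{2}\norm{\deltav_{\A}}_1^2 \\
  &\geq \min_{\dv \in \R^{\A}}\encasecurly{\textstyle\sum_{j\in\A}\nabla_j f(\alphav)\,d_j + \frac{L}{2}\norm{\dv}_1^2}
  = -\frac{1}{2L}\max_{j\in\A}\big(\nabla_j f(\alphav)\big)^2\,.
\end{align*}
Since the coordinate $i$ was selected by the $\Theta$-approximate {\steepsub} rule from $\A$, we have $\abs{\nabla_i f(\alphav)} \geq \Theta\max_{j\in\A}\abs{\nabla_j f(\alphav)}$, so $\max_{j\in\A}(\nabla_j f(\alphav))^2 \leq \Theta^{-2}(\nabla_i f(\alphav))^2$, and the right-hand side above is therefore at least $-\tfrac{1}{2L\Theta^2}(\nabla_i f(\alphav))^2 = \Theta^{-2}\chi_i(\alphav)$.

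Putting the pieces together, for every feasible $\deltav$ we obtain $\scal{\nabla f(\alphav)}{\deltav} + \tfrac L2\norm{\deltav}_1^2 \geq \Theta^{-2}\chi_i(\alphav)$; taking the minimum over $\deltav \in [0,1]^n - \alphav$ and multiplying through by $\Theta^2$ yields the claimed inequality. I expect the only delicate point to be the active-set reduction in the second step: one must argue carefully that removing the inactive coordinates simultaneously from the linear term and from the $\norm{\cdot}_1$ penalty preserves the direction of the inequality, and that $\A$ is nonempty whenever $\alphav$ is not already optimal (as established in the preceding lemma). This is precisely the box-constrained counterpart of the sign/subgradient trick used in the $L1$ proof, and it is where the structure of the box constraint enters.
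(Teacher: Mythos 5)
Your proof is correct and takes essentially the same route as the paper's: restrict the minimization to the active set $\A$ by noting that inactive coordinates contribute a nonnegative linear term and only enlarge $\norm{\deltav}_1$, relax to an unconstrained problem whose value is $-\tfrac{1}{2L}\max_{j\in\A}(\nabla_j f(\alphav))^2$, and invoke the $\Theta$-approximation together with the fact that a {\good} step attains the unconstrained one-dimensional minimum. Your writeup is in fact slightly tighter than the paper's (you keep the squares and the $\Theta^{-2}$ constant explicit throughout), but the underlying argument is identical.
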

 	\begin{proof}
 		Define the right hand side above to be $\P(\deltav)$ defined for $\deltav \in \real^n$ such that $\deltav + \alphav \in [0,1]^n$:
 		\[
 			\P(\deltav) \defeq \scal{\nabla f(\alphav)}{\deltav} + \frac{L}{2}\norm{\deltav}_1^2\,.
 		\]
 		Suppose we are given some $\vv$ in the domain. Let us construct a vector $\tilde\vv \in \R^n$ such that 
 		\[
 			\tilde v_j = 
 				\begin{cases}
 					0, & \mbox{if } \alpha_j = 0, \mbox{ and } \nabla_j f(\alphav) \geq 0\\
 					0, & \mbox{if } \alpha_j = 1, \mbox{ and } \nabla_j f(\alphav) \leq 0\\ 					
 					v_j, & \mbox{otherwise}\,.
 				\end{cases}
 		\]
 		Note that we can instead rewrite $\tilde\vv$ as $\tilde v_j = v_j$ if $j\in \A$, or else is $0$. Also we have $\norm{\vv}_1 \geq \norm{\tilde\vv}_1$.
 		
 		For a coordinate $j \in [n]$ such that $\alpha_j = 0$, it holds that $v_j \in [0,1]$. Then $\nabla_j f(\alphav) \geq 0$ implies that $\nabla_j f(\alphav) v_j \geq 0$. Similarly if $\alpha_j = 1$ and $\nabla_j f(\alphav) \leq 0$, then it holds that $\nabla_j f(\alphav) v_j \geq 0$. Thus we have
	\[
		\P(\tilde\vv) \leq \P(\vv)\,.
	\]
	This means that if we want to minimize $\P$, we can restrict our search space to coordinates in $([n]\setminus\A)$. We will use $\deltav \in [0,1]^n - \alphav, \deltav[\A] = 0$ to mean the set $\deltav \in \R^n$ such that $\deltav + \alphav \in [0,1]^n$ and for all $j \in \A$, $\deltav_j = 0$. We then have that 
	\begin{align*}
		\min_{\deltav \in [0,1]^n - \alphav}\P(\deltav) &= \min_{\deltav \in [0,1]^n - \alphav, \deltav[\A] = 0}\P(\deltav)\\
		&\geq \min_{\deltav \in \R^n, \deltav[\A] = 0}\P(\deltav)\\
		&=\min_{\deltav \in \R^n, \deltav[\A] = 0}\encasecurly{\scal{\nabla f(\alphav)}{\deltav} + \frac{L}{2}\norm{\deltav}_1^2}\\
		&= -\frac{1}{2L}\max_{j \in \A}\abs{\nabla_j f(\alphav)}\\
		&\geq -\frac{\Theta^2}{2L}\abs{\nabla_i f(\alphav)}\,.
	\end{align*}
	The last step is because $i$ was defined to be a $\Theta$-approximate {\steepsub} direction. Now we also know that the update was a {\good} step. This means that $(\alpha_i - \nabla_i f(\alphav)/L) \in [0,1]$ which means that
	\begin{align*}
		\chi_i(\alphav) &= \min_{\gamma + \alpha_i \in [0,1]} \encasecurly{\gamma\nabla_{i} f(\alphav) + \frac{L\gamma^2}{2}}\\
 		  &\leq (\alpha_i^+ - \alpha_i)\nabla_{i} f(\alphav) + \frac{L}{2}(\alpha_i^+ - \alpha_i)^2\\
 		  &= -\frac{1}{2L}\abs{\nabla_i f(\alphav)}\\
 		  &\leq \frac{1}{\Theta^2}\min_{\deltav \in [0,1]^n - \alphav}\P(\deltav)\,.
	\end{align*}
	This finishes the proof of the lemma.
 	\end{proof}
 	
 	We now turn our attention to the {\cross} step which crosses from one end to the other.
 	\begin{lemma}[{\cross} steps also make a lot of progress]\label{lem:cross-step-progress-box} 	
 	Suppose that the $\Theta$-approximate {\steepsub} rule chose to update the $i$th coordinate, and further suppose that it was a {\cross} step. Then, 
 	\[
 		\chi_i(\alphav) \leq \frac{\Theta}{2n} \min_{\vv \in [0,1]^n}\encasecurly{\scal{\nabla f(\alphav)}{\vv - \alphav}}\,.
 	\]
 	\end{lemma}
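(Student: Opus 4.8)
The plan is to sandwich the progress made in a \cross step between a lower bound on $-\chi_i(\alphav)$ (which is large, since a \cross step moves a whole unit) and an upper bound on the ``ideal'' progress $\max_{\vv \in [0,1]^n}\scal{\nabla f(\alphav)}{\alphav - \vv}$, and to connect the two through the $\Theta$-approximate \steepsub guarantee. The argument closely parallels the proof of Lemma~\ref{lem:good-step-progress-box}, except that here $\chi_i$ is \emph{linear} rather than quadratic in the gradient, which is why only one factor of $\Theta$ appears.

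First I would analyze $\chi_i(\alphav)$ directly using the definition of a \cross step. Since $i \in \A$ with $\alpha_i \in \{0,1\}$, feasibility forces $\nabla_i f(\alphav)$ to point inward, and the \cross condition $\bigl(\alpha_i - \tfrac1L\nabla_i f(\alphav)\bigr)\notin(0,1)$ forces $\abs{\nabla_i f(\alphav)} \geq L$; consequently the update $\alpha_i^+$ is the opposite vertex, so $(\alpha_i^+ - \alpha_i)^2 = 1$ and $(\alpha_i^+ - \alpha_i)\nabla_i f(\alphav) = -\abs{\nabla_i f(\alphav)}$. Plugging the feasible choice $\gamma = \alpha_i^+ - \alpha_i$ into the definition of $\chi_i$ then gives
\[
  \chi_i(\alphav) \;\leq\; (\alpha_i^+ - \alpha_i)\nabla_i f(\alphav) + \frac{L}{2}(\alpha_i^+ - \alpha_i)^2 \;=\; -\abs{\nabla_i f(\alphav)} + \frac{L}{2} \;\leq\; -\frac12\abs{\nabla_i f(\alphav)}\,,
\]
where the last inequality uses $L \leq \abs{\nabla_i f(\alphav)}$.

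Next I would bound the right-hand side. Exactly as in the $\tilde\vv$-construction in the proof of Lemma~\ref{lem:good-step-progress-box}, for any $j \notin \A$ and any $v_j \in [0,1]$ the gradient points outward at that boundary coordinate, so $\nabla_j f(\alphav)(\alpha_j - v_j) \leq 0$; while for $j \in \A$, since $\abs{\alpha_j - v_j} \leq 1$, we have $\nabla_j f(\alphav)(\alpha_j - v_j) \leq \abs{\nabla_j f(\alphav)} \leq \max_{k \in \A}\abs{\nabla_k f(\alphav)}$. Summing over the at most $n$ coordinates of $\A$ yields $\max_{\vv \in [0,1]^n}\scal{\nabla f(\alphav)}{\alphav - \vv} \leq n\max_{k \in \A}\abs{\nabla_k f(\alphav)}$. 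Combining with the $\Theta$-approximate \steepsub guarantee $\abs{\nabla_i f(\alphav)} \geq \Theta\max_{k \in \A}\abs{\nabla_k f(\alphav)}$ gives $\abs{\nabla_i f(\alphav)} \geq \frac{\Theta}{n}\max_{\vv \in [0,1]^n}\scal{\nabla f(\alphav)}{\alphav - \vv}$, and chaining this with the first bound,
\[
  \chi_i(\alphav) \;\leq\; -\frac12\abs{\nabla_i f(\alphav)} \;\leq\; -\frac{\Theta}{2n}\max_{\vv \in [0,1]^n}\scal{\nabla f(\alphav)}{\alphav - \vv} \;=\; \frac{\Theta}{2n}\min_{\vv \in [0,1]^n}\scal{\nabla f(\alphav)}{\vv - \alphav}\,,
\]
which is the claim. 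I do not foresee a real obstacle here: the lemma is essentially a bookkeeping argument, and the only points requiring care are tracking signs consistently and invoking the already-established ``outward-pointing gradient'' fact for $j \notin \A$ (used in the $\A$-nonemptiness lemma and in Lemma~\ref{lem:good-step-progress-box}); the main subtlety, rather than a difficulty, is making sure the factor is $\Theta$ and not $\Theta^2$, which is automatic once one notices $\chi_i$ is linear in $\nabla_i f(\alphav)$ on a \cross step.
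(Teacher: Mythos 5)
Your proof is correct and follows essentially the same route as the paper: bound $\chi_i(\alphav)\leq-\tfrac12\abs{\nabla_i f(\alphav)}$ using the fact that a {\cross} step moves a full unit (the paper derives this from the unconstrained step size exceeding $1$, you from the equivalent fact $\abs{\nabla_i f(\alphav)}\geq L$), then chain through the $\Theta$-approximation and the bound $\max_{\vv\in[0,1]^n}\scal{\nabla f(\alphav)}{\alphav-\vv}\leq\abs{\A}\max_{k\in\A}\abs{\nabla_k f(\alphav)}\leq n\max_{k\in\A}\abs{\nabla_k f(\alphav)}$. Your observation that the single factor of $\Theta$ arises because the progress bound is linear rather than quadratic in the gradient magnitude is exactly the right reading of why this case differs from the {\good}-step lemma.
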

 	\begin{proof}
 	Since the update was a {\cross} step, this means that $(\alpha_i - \nabla_i f(\alphav)) \neq [0,1]$ and that $\alpha_i^+ \in \{0,1\}$. In particular this imlpies that when solving for the optimal $\gamma$ in the below problem, it is greater than 1:
 	\begin{align*}
 		 		1 &< \argmin_{\gamma \geq 0}\encasecurly{\gamma (\nabla_i f(\alphav))(\alpha_i^+ - \alpha_i) + \frac{L \gamma^2}{2}(\alpha_j^+ - \alpha_j)^2} \\
 		 		&= \frac{-(\nabla_i f(\alphav))(\alpha_j^+ - \alpha_j)}{L(\alpha_i^+ - \alpha_i)^2}\,.
 	\end{align*}
 	Now using this inequality in $\chi_i(\alphav)$ we get
 	\begin{align*}
 		\chi_i(\alphav) &= (\nabla_i f(\alphav))(\alpha_i^+ - \alpha_i) + \frac{L}{2}(\alpha_i^+ - \alpha_i)^2\\
 		&\leq  (\nabla_i f(\alphav))(\alpha_i^+ - \alpha_i) - \frac{1}{2}(\nabla_i f(\alphav))(\alpha_i^+ - \alpha_i)\\
 		&= \frac{1}{2}(\nabla_i f(\alphav))(\alpha_i^+ - \alpha_i)\\
 		&= -\frac{1}{2}\abs{\nabla_i f(\alphav)}\,.
 	\end{align*}
 	The last step is because in a {\cross} step, $\abs{\alpha_i^+ - \alpha_i} = 1$ and is in the opposite direction of the gradient coordinate. Now since $i$ was a  $\Theta$-approximate {\steepsub} direction,
 	\begin{align*}
 	\chi_i(\alphav) &\leq -\frac{1}{2}\abs{\nabla_i f(\alphav)}\\
 	&\leq -\frac{\Theta}{2}\max_{j \in \A}\abs{\nabla_j f(\alphav)}\\
 	&\leq -\frac{\Theta}{2\abs{\A}}\max_{\vv \in [0,1]^n}\scal{\nabla f(\alphav)}{\alphav - \vv}\\
 	&\leq \frac{\Theta}{2n}\min_{\vv \in [0,1]^n}\scal{\nabla f(\alphav)}{\vv - \alphav}\,. \qedhere
 	\end{align*}
 	\end{proof}

 	Finally let us check how bad the {\bad} steps really are.
 	 \begin{lemma}[{\bad} steps are not too bad]\label{lem:bad-progress-box}
 		In any step of Algorithm \ref{alg:box-steepest} including the {\bad} steps, the objective value never increases.
 	\end{lemma}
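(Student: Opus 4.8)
The plan is to observe that, in contrast to the $L1$ case (Lemma~\ref{lem:bad-progress-L1}), Algorithm~\ref{alg:box-steepest} performs an \emph{honest} proximal coordinate step at every iteration: the update~\eqref{eq:box-update} is never overridden by a post-processing step that moves the iterate away from the proximal point. Consequently, monotonicity of the objective will follow directly from coordinate-wise smoothness, with no case distinction between {\good}, {\bad}, and {\cross} steps; the {\bad}-step label is really just an artifact of the \emph{convergence} analysis, not of the progress made.

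Concretely, I would first recall that $\alpha_{i}^{+} = \min\!\big(1,\positive{\alpha^{(t)}_{i} - \tfrac{1}{L}\nabla_{i}f(\alphav^{(t)})}\big)$ is exactly the minimizer of the one-dimensional quadratic model $\gamma \mapsto \gamma\,\nabla_i f(\alphav^{(t)}) + \tfrac{L\gamma^2}{2}$ over the feasible interval $\{\gamma : \alpha^{(t)}_i + \gamma \in [0,1]\}$; this is precisely the defining property of $\chi_i$ specialized to the box constraint. Since $\gamma = 0$ is feasible and yields model value $0$, the minimum satisfies $\chi_i(\alphav^{(t)}) \le 0$. Writing $\gamma^\star := \alpha_i^{+} - \alpha^{(t)}_{i}$, coordinate-wise $L$-smoothness (Definition~\ref{def:smoothness}) gives
\[
  f\big(\alphav^{(t)} + \gamma^\star\unit_{i}\big) - f(\alphav^{(t)}) \le \gamma^\star\,\nabla_i f(\alphav^{(t)}) + \tfrac{L(\gamma^\star)^2}{2} = \chi_i(\alphav^{(t)}) \le 0 \,,
\]
which already establishes the claim for the step~\eqref{eq:box-update}, whether it was {\good}, {\bad}, or {\cross}.

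It then only remains to handle the optional line-search update at step~\ref{stp:box-linesearch}: since $\gamma = \alpha^{(t)}_{i_t} \in [0,1]$ is feasible for $\min_{\gamma\in[0,1]} f\big(\alphav^{(t)} + (\gamma - \alpha^{(t)}_{i_t})\unit_{i_t}\big)$ and attains the value $f(\alphav^{(t)})$, performing the line search can only decrease the objective further. Combining the two observations yields that the objective is non-increasing at every iteration of Algorithm~\ref{alg:box-steepest}, in particular at {\bad} steps. I do not expect any real obstacle here; the only subtlety worth flagging is \emph{why} the obstruction present in the $L1$ proof does not appear in this case — there, the truncation to the origin in~\eqref{eq:l1-update} is \emph{not} a proximal step and must be bounded by a direct computation, whereas in the box case the truncation to the box boundary \emph{is} the (constrained) proximal step and hence inherits the descent property automatically.
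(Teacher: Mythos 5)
Your proof is correct and is essentially the paper's own argument spelled out in detail: the paper disposes of this lemma in one line by noting that the update \eqref{eq:box-update} always minimizes the coordinate-wise smoothness upper bound over the feasible interval, which contains $\gamma=0$, so the objective cannot increase. Your additional observations — that the optional line search is also a descent step, and that the $L1$ case needs a separate computation only because its post-processing \eqref{eq:l1-update} is not itself a proximal step — are accurate and correctly identify why no case distinction is needed here.
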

 	\begin{proof}
 		This directly follows from the fact that we always minimize an upper bound on the function $f(\alphav)$ at every iteration.
 	\end{proof}
 	
 \subsection{Convergence in the Strongly Convex Case}
 \begin{theorem}\label{thm:strong-convex-box-appendix}
 	After $t$ steps of Algorithm \ref{alg:box-steepest} where in each step the coordinate was selected using a $\Theta$-approximate {\steepsub} rule, let $\B_t \subseteq [t]$ indicate the {\bad} steps. Assume that the function $f$ is $L$-coordinate-wise smooth and $\mu_1$ strongly convex with respect to the $L1$ norm. Then the size of $\abs{\B_t} \leq \floor{t/2}$ and
 	\[
 		f(\alphav^{(t+1)}) - f(\alphav^\star) \leq \encaser{1 - \min\encaser{\frac{\Theta}{2n},\frac{\Theta^2 \mu_1}{L}}}^{t - \abs{\B_t}}\encaser{f(\alphav^{(0)}) - f(\alphav^\star)}\,.
 	\]
 	As is standard, $f(\alphav^\star) = \min_{\alphav \in [0,1]^n}f(\alphav)$.
 \end{theorem}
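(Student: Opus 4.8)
I would prove Theorem~\ref{thm:strong-convex-box-appendix} by the same ``classify the steps'' scheme used for the $L1$ case, now with three classes. Write $h_s := f(\alphav^{(s)}) - f(\alphav^\star)$. The plan is: (i) every \good step contracts $h_s$ by a factor $1-\Theta^2\mu_1/L$; (ii) every \cross step contracts $h_s$ by a factor $1-\Theta/(2n)$; (iii) every \bad step leaves $h_s$ non-increasing (Lemma~\ref{lem:bad-progress-box}). Since Lemma~\ref{lem:count-bad-steps-box} gives $\abs{\B_t}\le\floor{t/2}$, at least $t-\abs{\B_t}\ge\ceil{t/2}$ of the first $t$ steps are \good or \cross, and because $h_s$ is monotone non-increasing, multiplying the per-step factors over all $t$ steps yields $h_{t+1}\le\encaser{1-\min\encaser{\tfrac{\Theta}{2n},\tfrac{\Theta^2\mu_1}{L}}}^{t-\abs{\B_t}}h_0$, which is the theorem (it uses $\min(\Theta/(2n),\Theta^2\mu_1/L)\le1$ so the factor is a genuine contraction).

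For the per-step bounds, note that in every step coordinate-wise smoothness and the box-constrained proximal update~\eqref{eq:box-update} (the optional line search in step~\ref{stp:box-linesearch} can only help) give $f(\alphav^{(t+1)})-f(\alphav^{(t)})\le\chi_{i_t}(\alphav^{(t)})$, so it suffices to bound $\chi_{i_t}(\alphav^{(t)})$. If the step is \good, Lemma~\ref{lem:good-step-progress-box} gives
\[
  \chi_{i_t}(\alphav^{(t)})\le\Theta^2\min_{\deltav\in[0,1]^n-\alphav^{(t)}}\encasecurly{\scal{\nabla f(\alphav^{(t)})}{\deltav}+\tfrac{L}{2}\norm{\deltav}_1^2}\,.
\]
I would then use the ``change of regularizing constant'' bound of Lemma~\ref{lem:change-mu-L1} with $\lambda=0$ (equivalently Lemma~9 of \citet{karimireddy2018adaptive} applied to the convex function $\deltav\mapsto\scal{\nabla f(\alphav^{(t)})}{\deltav}$ over the convex set $[0,1]^n-\alphav^{(t)}$, which contains $\0$) to replace $L$ by $\mu_1$ at the price of a factor $\mu_1/L$; this is legitimate since $\mu_1\le\mu_2\le L$. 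Evaluating the resulting $\mu_1$-regularized minimum at the feasible point $\deltav=\alphav^\star-\alphav^{(t)}$ and applying $\mu_1$-strong convexity of $f$ in the $L1$ norm bounds the bracket by $f(\alphav^\star)-f(\alphav^{(t)})=-h_t$, so $\chi_{i_t}(\alphav^{(t)})\le-\tfrac{\Theta^2\mu_1}{L}h_t$ and hence $h_{t+1}\le(1-\Theta^2\mu_1/L)h_t$.

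If the step is a \cross step, Lemma~\ref{lem:cross-step-progress-box} gives $\chi_{i_t}(\alphav^{(t)})\le\tfrac{\Theta}{2n}\min_{\vv\in[0,1]^n}\scal{\nabla f(\alphav^{(t)})}{\vv-\alphav^{(t)}}$; taking $\vv=\alphav^\star$ and using plain convexity of $f$ bounds the right-hand side by $\tfrac{\Theta}{2n}(f(\alphav^\star)-f(\alphav^{(t)}))=-\tfrac{\Theta}{2n}h_t$, so $h_{t+1}\le(1-\Theta/(2n))h_t$. Combining the three cases with the count of \bad steps and telescoping as described above finishes the proof.

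The step I expect to be the main obstacle is justifying the change-of-regularizing-constant move in the \emph{constrained} setting: Lemma~\ref{lem:change-mu-L1} is written for minimization over all of $\R^n$, whereas here it must be applied with the minimization restricted to the shifted box $[0,1]^n-\alphav^{(t)}$. One has to check that the underlying Lemma~9 of \citet{karimireddy2018adaptive} still goes through --- it does, because that lemma only requires the domain to be convex and to contain the origin (where the affine part vanishes) and $\norm{\cdot}_1$ to be a convex nonnegative function, all of which hold here. A secondary subtlety: strong convexity is only assumed on the domain of $f$, so one must note that $\deltav=\alphav^\star-\alphav^{(t)}$ is admissible precisely because both $\alphav^\star,\alphav^{(t)}\in[0,1]^n$; and it is worth flagging that the novel \cross-step estimate (Lemma~\ref{lem:cross-step-progress-box}) is exactly what forces the extra $1/(2n)$ term, and hence the $\min$, into the final rate.
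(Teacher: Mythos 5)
Your proposal is correct and follows essentially the same route as the paper's proof: classify steps as \good/\cross/\bad, use Lemma~\ref{lem:good-step-progress-box} plus the change-of-regularizing-constant bound and $\mu_1$-strong convexity for \good{} steps, Lemma~\ref{lem:cross-step-progress-box} plus convexity for \cross{} steps, Lemma~\ref{lem:bad-progress-box} for \bad{} steps, and the count $\abs{\B_t}\le\floor{t/2}$ to telescope. The one obstacle you flag --- applying the change-of-constant argument over the shifted box rather than all of $\R^n$ --- is handled in the paper by a dedicated box-constrained version (Lemma~\ref{lem:change-mu-box}), which invokes the same Lemma~9 of \citet{karimireddy2018adaptive} exactly as you describe.
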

 	
 We have almost everything we need in place to prove our convergence rates for the strongly convex, and the general convex case.
Recall that the function $f$ is $\mu_1$ strongly convex with respect to the $L1$ norm. This implies that
\[
	f(\alphav^\star) \geq f(\alphav) + \scal{\nabla f(\alphav)}{\alphav^\star - \alphav} + \frac{\mu_1}{2}\norm{\alphav^\star - \alphav}_1^2\,.
\]	
 We will need one additional Lemma from \citep{karimireddy2018adaptive} to relate the upper bound we minimize in Lemma \ref{lem:good-step-progress-box}.
 \begin{lemma}[Relating different regularizing constants \citep{karimireddy2018adaptive}]\label{lem:change-mu-box}
 For any vectors $\gv, \wv \in \R^n$, and constants $L \geq \mu > 0$, 
 	\begin{equation*}
 		\min_{\wv \in [0,1]^n}\encasecurly{\scal{\gv}{\wv - \alphav} + \frac{L}{2}\norm{\wv - \alphav}_1^2} \leq \frac{\mu}{L} \min_{\wv \in [0,1]^n}\encasecurly{\scal{\gv}{\wv - \alphav} + \frac{\mu}{2}\norm{\wv - \alphav}_1^2}\,.
 	\end{equation*}
 \end{lemma}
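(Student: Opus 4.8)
The plan is to reduce the claim to a one–dimensional rescaling identity along the segment from $\alphav$ to the minimizer of the right–hand side, exactly in the spirit of how Lemma~\ref{lem:change-mu-L1} was obtained. First I would note that the map $\wv \mapsto \scal{\gv}{\wv - \alphav} + \frac{\mu}{2}\norm{\wv - \alphav}_1^2$ is continuous and convex (the term $\norm{\cdot}_1^2$ is convex, being the composition of the convex nondecreasing function $x \mapsto x^2$ on $\R_{\geq 0}$ with a norm), hence its minimum over the compact convex set $[0,1]^n$ is attained at some $\wv^\star \in [0,1]^n$. Write $a := \scal{\gv}{\wv^\star - \alphav}$ and $b := \norm{\wv^\star - \alphav}_1^2 \geq 0$, so that the right–hand side of the lemma equals $\frac{\mu}{L}\bigl(a + \frac{\mu}{2} b\bigr)$.

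Next, set $t := \mu/L$, which lies in $(0,1]$ since $0 < \mu \leq L$, and consider the point $\wv_t := \alphav + t(\wv^\star - \alphav)$. Because $[0,1]^n$ is convex and both $\alphav$ and $\wv^\star$ lie in it, $\wv_t \in [0,1]^n$, so $\wv_t$ is admissible in the minimization on the left. Evaluating the left–hand objective at $\wv_t$ and pulling the factor $t$ out of the linear term and $t^2$ out of the squared $\ell_1$ term gives
\[
\min_{\wv \in [0,1]^n}\encasecurly{\scal{\gv}{\wv - \alphav} + \frac{L}{2}\norm{\wv - \alphav}_1^2} \leq t\,a + \frac{L t^2}{2}\, b\,.
\]
Substituting $t = \mu/L$ turns the right–hand side of this display into $\frac{\mu}{L} a + \frac{\mu^2}{2L} b = \frac{\mu}{L}\bigl(a + \frac{\mu}{2} b\bigr)$, which is exactly $\frac{\mu}{L}$ times the right–hand side of the lemma; this closes the argument.

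The proof is essentially a scaling computation, so there is no substantive obstacle. The only point deserving a word of care is the feasibility step $\wv_t \in [0,1]^n$, which uses $\alphav \in [0,1]^n$; this is automatic in every application here, since $\alphav$ is always an iterate of Algorithm~\ref{alg:box-steepest}, and in full generality the statement is precisely \cite[Lemma~9]{karimireddy2018adaptive} instantiated with the convex function $\wv \mapsto \scal{\gv}{\wv - \alphav}$ restricted to the convex set $[0,1]^n$ (which vanishes at $\wv = \alphav$).
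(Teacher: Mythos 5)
Your proof is correct. The scaling computation is sound: with $\wv^\star$ a minimizer of the $\mu$-objective over $[0,1]^n$, the point $\wv_t = \alphav + \frac{\mu}{L}(\wv^\star - \alphav)$ is feasible (given $\alphav \in [0,1]^n$) and evaluates the $L$-objective to exactly $\frac{\mu}{L}\bigl(a + \frac{\mu}{2}b\bigr)$, which is the right-hand side. The one place where you differ from the paper is in presentation rather than substance: the paper disposes of this lemma in one line by checking the hypotheses of an external result (Lemma~9 of \citealp{karimireddy2018adaptive}) --- namely that $\scal{\gv}{\wv-\alphav}$ is convex and vanishes at $\wv=\alphav$, that $[0,1]^n$ is convex, and that $\norm{\wv-\alphav}_1$ is a nonnegative convex function --- whereas you reprove that result from scratch via the interpolation-point argument. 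Your version is self-contained and makes the mechanism transparent; the paper's version is shorter and inherits whatever generality the cited lemma carries. You are also right to flag the feasibility of $\wv_t$ as the only genuine hypothesis: the lemma as stated omits the requirement $\alphav \in [0,1]^n$, and without it the claim is actually false (e.g.\ $n=1$, $\gv = 0$, $\alphav = -10$ gives $50L$ on the left versus $50\mu^2/L$ on the right); this requirement is implicit in the paper's verification that the linear function ``is $0$ when $\wv = \alphav$'' at a feasible point, and it always holds in the algorithm's iterates.
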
	
 	\begin{proof}
 		The function $\scal{\gv}{\wv - \alphav}$ is convex and is 0 when $\wv = \alphav$, the set $[0,1]^n$ is also convex, and $\norm{\wv - \alphav}_1$ is a convex positive function. Thus we can apply Lemma 9 from \citep{karimireddy2018adaptive}.
 	\end{proof}
 
 The proof of the theorem now easily follows. 
\paragraph*{Proof of Theorem \ref{thm:strong-convex-box-appendix}.}
First by Lemma \ref{lem:count-bad-steps-box}, we know that $\abs{\B_t} \leq \floor{t/2}$. Now suppose that $t$ was a {\good} step and updated the $i$th coordinate. The progress made in a proximal update using \eqref{eq:coordinate-progress} and Lemma \ref{lem:good-step-progress-box},
 \begin{align*}
  f(\alphav^{(t+1)}) &\leq f(\alphav^{(t)}) +  \chi_i(\alphav^{(t)})\\
  &\leq f(\alphav^{(t)}) + \Theta^2\min_{\wv \in [0,1]^n}\encasecurly{\scal{\nabla f(\alphav^{(t)})}{\wv - \alphav^{(t)}} + \frac{L}{2}\norm{\wv - \alphav^{(t)}}_1^2}\\
  &\leq f(\alphav^{(t)}) + \frac{\Theta^2\mu_1}{L}\min_{\wv \in [0,1]^n}\encasecurly{\scal{\nabla f(\alphav^{(t)})}{\wv - \alphav^{(t)}} + \frac{\mu_1}{2}\norm{\wv - \alphav^{(t)}}_1^2 }\,.
\end{align*} 	
In the last step we used Lemma \ref{lem:change-mu-box}. We will now use our definition of strong convexity. We have shown that
\begin{align*}
	f(\alphav^{(t+1)}) - f(\alphav^\star) &\leq f(\alphav^{(t)}) - f(\alphav^\star) + \frac{\Theta^2\mu_1}{L}\min_{\wv \in [0,1]^n}\encasecurly{\scal{\nabla f(\alphav^{(t)})}{\wv - \alphav^{(t)}} + \frac{\Theta^2\mu_1}{2}\norm{\wv - \alphav^{(t)}}_1^2}\\
	&\leq  f(\alphav^{(t)}) - f(\alphav^\star) + \frac{\Theta^2\mu_1}{L}\encaser{\scal{\nabla f(\alphav^{(t)})}{\alphav^\star - \alphav^{(t)}} + \frac{\Theta^2\mu_1}{2}\norm{\alphav^\star - \alphav^{(t)}}_1^2}\\
	&= (1 - \frac{\Theta^2\mu_1}{L})\encaser{f(\alphav^{(t)}) - f(\alphav^\star)}\,.
\end{align*}
Now instead suppose that the step $t$ was a {\cross} step. In that case Lemma \ref{lem:cross-step-progress-box} tells us that
\begin{align*}
	f(\alphav^{(t+1)}) &\leq f(\alphav^{(t)}) +  \chi_i(\alphav^{(t)})\\
	&\leq f(\alphav^{(t)}) + \frac{\Theta}{2n}\min_{\wv \in [0,1]^n}\scal{\nabla f(\alphav)}{\wv - \alphav}\\
	&\leq f(\alphav^{(t)}) + \frac{\Theta}{2n} \scal{\nabla f(\alphav)}{\alphav^\star - \alphav}\\
	&\leq f(\alphav^{(t)}) +  \frac{\Theta}{2n}\encaser{f(\alphav^\star) - f(\alphav)}\,.
\end{align*}
In the last step, we used the convexity of $f$. Subtracting $f(\alphav^\star)$ and rearranging gives that
\[
	f(\alphav^{(t+1)}) - f(\alphav^\star) \leq \encaser{1 - \frac{\Theta}{2n}}\encaser{f(\alphav^{(t)}) - f(\alphav^\star)}\,.
\]
Finally if the step was {\bad}, we know form Lemma \ref{lem:bad-progress-box} that the function value does not decrease. Putting these three results about the different cases together gives us the theorem.
 \qed

 \begin{remark}\label{rem:box-is-n-there}
 	In the box-constrained case, we do not quite get the straightforward linear rate involving the condition number $\frac{L}{\mu_1}$ which we were expecting. We instead get a rate which depends on $\max\encaser{\frac{L}{\mu_1}, n}$. The new $n$ term is because of the {\cross} steps. In the case of separable quadratics which a diagonal Hessian:
 	\[
 		\nabla^2 f(\alpha) = \text{diag}(\lambda_1,\dots,\lambda_n)\,.
 	\]	
 	In such a case, \cite{nutini_coordinate_2015}[Section 4.1] show that the constant $\mu_1$ is
 	\[
 		\mu_1 = \encaser{\sum_{j=1}^n \frac{1}{\lambda_j}}^{-1}\,.
 	\]
 	Since $L$ in our case was defined to be the max diagonal element of the Hessian, we have that
 	\[
 		\frac{L}{\mu_1} = \encaser{\max_{j \in [n]} \lambda_j}\encaser{\sum_{j=1}^n \frac{1}{\lambda_j}} \geq \encaser{\sum_{j=1}^n \frac{\lambda_j}{\lambda_j}} = n\,.
 	\]
 	This means that, at least in the separable quadratic case, $\frac{L}{\mu_1} \geq n$ and the rate of convergence depends only on the condition number.
 \end{remark}

 \subsection{Convergence in the General Convex Case}
 \begin{theorem}\label{thm:general-convex-box-appendix}
 			After $t$ steps of Algorithm \ref{alg:box-steepest} where in each step the coordinate was selected using the $\Theta$-approximate {\steepsub} rule, let $\B_t \subseteq [t]$ indicate the {\bad} steps. Assume that the function was $L$-coordinate-wise smooth. Then the size $\abs{\B_t} \leq \floor{t/2}$ and
	\[
		f(\alphav^{(t+1)}) - f(\alphav^\star) \leq \frac{8 L D^2}{\Theta^2 (t - \abs{\B_t})}\,,
	\]
	where $D$ is the $L1$ diameter of the level set. For the set of minima $\Q^\star$,
	\[
		D = \max_{\wv \in [0,1]^n}\min_{\alphav^\star \in \Q^\star}\encasecurly{\norm{\wv - \alphav^\star}_1 | f(\wv) \leq f(\alphav^{(0)})}\,.
	\]
\end{theorem}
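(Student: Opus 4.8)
The plan is to prove this as the box-constrained analogue of Theorem~\ref{thm:general-convex-L1-appendix}, the only genuinely new ingredients being the third ("cross") step type and the constraint $\gamma\in[0,1]$ that is absent in the $L1$/$\R^n$ case. First I would dispose of the bad steps: by Lemma~\ref{lem:count-bad-steps-box} at most $\floor{t/2}$ of the first $t$ steps are bad, so at least $t-\abs{\B_t}\ge\ceil{t/2}$ are good or cross, and by Lemma~\ref{lem:bad-progress-box} bad steps never increase $f$. Hence it suffices to re-index the good/cross steps as $1,\dots,T$ with $T=t-\abs{\B_t}$ and to establish the $O(1/T)$ rate for the (non-increasing) subsequence $h_k := f(\alphav^{(k)})-f(\alphav^\star)$; throughout, for each $k$ I would take $\alphav^\star\in\Q^\star$ to be a minimizer nearest to $\alphav^{(k)}$ in $\ell_1$, so that $\norm{\alphav^{(k)}-\alphav^\star}_1\le D$ (legitimate since $\alphav^{(k)}$ stays in the level set by monotonicity).

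Next, the per-step inequalities. For a \emph{good} step I would restrict the vector $\wv$ in Lemma~\ref{lem:good-step-progress-box} to the segment $\wv = \alphav^{(k)}+\gamma(\alphav^\star-\alphav^{(k)})$, $\gamma\in[0,1]$ (feasible since $[0,1]^n$ is convex), bound $\norm{\wv-\alphav^{(k)}}_1\le\gamma D$, and use convexity of $f$ to replace $\scal{\nabla f(\alphav^{(k)})}{\alphav^\star-\alphav^{(k)}}$ by $-h_k$; this gives $h_{k+1}\le h_k+\Theta^2\min_{\gamma\in[0,1]}\{-\gamma h_k+\tfrac{L\gamma^2}{2}D^2\}$, which evaluated at $\gamma=\min\{1,h_k/(LD^2)\}$ yields $h_{k+1}\le h_k-\Theta^2 h_k^2/(2LD^2)$ whenever $h_k\le LD^2$ and a geometric contraction $h_{k+1}\le(1-\tfrac{\Theta^2}{2})h_k$ when $h_k>LD^2$. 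For a \emph{cross} step, Lemma~\ref{lem:cross-step-progress-box} applied with $\vv=\alphav^\star$ together with convexity gives $h_{k+1}\le(1-\tfrac{\Theta}{2n})h_k$; inspecting its proof (there $\abs{\alpha_i^+-\alpha_i}=1$ forces $\abs{\nabla_i f}>L$, and also $\abs{\nabla_i f}\ge\Theta\max_{j\in\A}\abs{\nabla_j f}\ge\Theta h_k/D$ by the same active-set/convexity estimate used in that proof) I would additionally record the absolute drop $h_{k+1}\le h_k-L/2$ and $h_{k+1}\le(1-\tfrac{\Theta}{2D})h_k$. The upshot is that a cross step never makes less progress than a good step in the relevant regime, and the absolute $L/2$ decrease caps the total number of cross steps by $2\,(f(\alphav^{(0)})-f(\alphav^\star))/L$, so cross steps form only a bounded prefix and can be folded into the bookkeeping.

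Finally, the telescoping endgame. Once $h_k\le LD^2$ — a property preserved by monotonicity — I invert the good-step inequality via $1/(1-x)\ge 1+x$ to get $1/h_{k+1}\ge 1/h_k+\Theta^2/(2LD^2)$, and the cross-step inequalities give at least a comparable increase of $1/h_k$; summing over the steps from the point where $h$ first drops below $LD^2$ to step $T$ yields $h_T\le 2LD^2/(\Theta^2\,\cdot\,\#\{\text{such steps}\})$, and enlarging the constant from $2$ to $8$ absorbs the constant-many prefix steps (those with $h_k>LD^2$, and the cross steps). I expect the main obstacle to be exactly this initial regime $h_k>LD^2$: in the $L1$ case over $\R^n$ the unrestricted line-segment minimization already removes all dependence on $h_0$ in a single step, whereas here the combination of $\gamma\in[0,1]$ with the $\Theta^2$ prefactor means the $h_0$-dependence is shed only over a short geometric phase, and making the final constant genuinely uniform (and verifying that cross steps in this phase do not degrade the $D^2$ power) is the one place the box proof requires care beyond a transcription of the $L1$ argument.
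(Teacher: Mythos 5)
Your route is genuinely different from the paper's. The paper disposes of this theorem in three lines: it observes that on {\good} steps all three rules coincide (Lemma~\ref{lem:good-step-character-L1}'s box analogue) and that a {\cross} step is by definition the largest feasible step, hence a valid {\steeplookr} step, so the whole trajectory is a valid $\Theta$-approximate {\steeplookr} trajectory and the sublinear rate (with the constant $8$) is imported wholesale from Lemma~8 of \citet{DhillonNearestNeighborbased2011}. You instead transcribe the proof of Theorem~\ref{thm:general-convex-L1-appendix} directly, using Lemma~\ref{lem:good-step-progress-box} restricted to the segment towards the nearest minimizer for {\good} steps and Lemma~\ref{lem:cross-step-progress-box} (sharpened to $\max_{j\in\A}\abs{\nabla_j f}\geq h_k/D$, which is indeed derivable since $\scal{\nabla f}{\alphav-\alphav^\star}\leq\sum_{j\in\A}\nabla_jf(\alpha_j-\alpha_j^\star)\leq D\max_{j\in\A}\abs{\nabla_jf}$) for {\cross} steps, plus the observation that a {\cross} step forces $\abs{\nabla_i f}\geq L$ and hence an absolute drop of $L/2$. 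This self-contained argument is more informative than the paper's citation and is the right skeleton; what the paper's reduction buys is precisely that it never has to reconcile the three step types or the initial regime, since Dhillon et al.'s {\steeplookr} analysis already does that bookkeeping.

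Two claims in your write-up need repair before the stated bound follows. First, ``cross steps form only a bounded prefix'' is false: a coordinate sitting at the boundary can be pushed across at any iteration, so {\cross} steps are merely bounded in \emph{number} (by $2(f(\alphav^{(0)})-f(\alphav^\star))/L$, via the $L/2$ drop) and may be interleaved arbitrarily with {\good} steps; your telescoping must therefore handle them in place, not as a prefix. Second, and more substantively, the steps that fail the telescoping inequality $1/h_{k+1}\geq 1/h_k+\Theta^2/(2LD^2)$ are not ``constant-many'': a {\good} step fails it when $h_k>LD^2$ and a {\cross} step fails it when $h_k>LD/\Theta$ (both your $L/2$ and your $\Theta h_k/(2D)$ estimates reduce to this same threshold), and the number of iterations spent above these thresholds scales like $\Theta^{-2}\log\big(h_0/(LD^2)\big)$ resp.\ $h_0/L$ --- quantities depending on $h_0$, $D$ and $\Theta$ that cannot be absorbed into the universal factor $8$ in $8LD^2/(\Theta^2(t-\abs{\B_t}))$. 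What you actually prove is a bound of the form $8LD^2/(\Theta^2(T-T_0))$ with $T_0$ the length of this initial regime; to recover the theorem as stated you either need an a priori bound such as $h_0\leq O(LD^2)$ (which does not hold for constrained problems, where $\nabla f(\alphav^\star)\neq 0$), or you need to fall back on the paper's reduction to the {\steeplookr} analysis, which is exactly what the authors do.
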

\begin{proof}
	This essentially follows from the proof of \citep{DhillonNearestNeighborbased2011}[Lemma 8]. As we saw before, in {\good} steps all three rules {\steepsub}, {\steeplookr} and {\steeplook} coincide. For {\cross} steps, by definition we take the largest step possible and so is also a valid {\steeplookr} step. Thus we can fall back onto to the analysis of {\steeplookr} even for the {\steepsub} rule. One could also derive a rate directly from Lemma \ref{lem:good-step-progress-box} and Lemma \ref{lem:cross-step-progress-box}, but we would not achieve a significantly better rate. 
\end{proof}

\section{Proofs of Mapping Between $\steepsub$ and $\MIPS$}\label{sec:proofs-mapping}
In this section we will discuss the proof of our claims that our formulation of casting the problem of finding the steepest $\steepsub$ direction as an instance of the $\SMIPS$ problem. 
 \paragraph{Proof of Lemma \ref{lem:box-steepest} [Box case]:}
 We want to show that
 $$
 \max_i \encase{\min_{s\in \partial g_i} \abs{\nabla_i f(\alphav^{(t)}) + \sv }} = \max_{i \in \A_t}\abs{\nabla_i f(\alphav^{(t)})}\,,
 $$
 where the active set $\A_t$ is defined as
 $$
 	\A_t := \encasecurly{i \in [n]\ \text{ s.t. } \
 	\begin{split}
 		\alpha_i^{(t)} &\in (0,1), \text{ or}\\
 		\alpha_i^{(t)} &= 0 \text{ and } \nabla_i f(\alphav^{(t)}) < 0, \text{ or}\\
 		\alpha_i^{(t)} &= 1 \text{ and } \nabla_i f(\alphav^{(t)}) > 0 \,.
 	\end{split}}
 $$
 Let us examine the subgradient of the indicator function $\ind{[0,1]}$. If $\alpha_i^{(t)} \in (0,1)$, the subgradient of the indicator function is 0. If $\alpha_i^{(t)} = 0$, the subgradient is $\partial g_i = (-\infty,0]$ and if $\alpha_i^{(t)} = 1$, the subgradient is $\partial g_i = [0,\infty)$. Thus $\min_{s\in \partial g_i} \abs{\nabla_i f(\alphav^{(t)}) + \sv }$ equals $\abs{\nabla_i f(\alphav^{(t)})}$ if $\alpha_i \in (0,1)$, or if $\alpha_i^{(t)} = 0 \text{ and } \nabla_i f(\alphav^{(t)}) < 0, \text{ or } \alpha_i^{(t)} = 1 \text{ and } \nabla_i f(\alphav^{(t)}) > 0$. In all other cases, it is 0. This proves the lemma.
 \paragraph{Proof of Lemma \ref{lem:l1-steepest} [$L1$ case]:}
 We want to show that 
 $$
 	\max_i \encase{\min_{s\in \partial g_i} \abs{\nabla_i f(\alphav^{(t)}) + \sv }} = \max_{i}\abs{s(\alphav)_i}
 $$
 Here $g_i(\alpha_i^{(t)}) = \lambda \abs{\alpha_i^{(t)}}$. If $\alpha_i^{(t)} \neq 0$, $\partial g_i = \lambda\sign(\alpha_i^{(t)})$. If $\alpha_i^{(t)} = 0$, $\partial g_i = [-\lambda, \lambda]$. Thus the value of $\min_{s\in \partial g_i} \abs{\nabla_i f(\alphav^{(t)}) + \sv }$ is $\abs{\nabla_i f(\alphav^{(t)}) + \lambda\sign(\alpha_i^{(t)})}$ if $\alpha_i^{(t)} \neq 0$. If $\alpha_i^{(t)} = 0$, then  $\min_{s\in \partial g_i} \abs{\nabla_i f(\alphav^{(t)}) + \sv }$ evaluates to $\abs{\shrinkage_{\lambda}(\nabla_i f(\alphav^{(t)}))}$. In short, $\min_{s\in \partial g_i} \abs{\nabla_i f(\alphav^{(t)}) + \sv }$ exactly evaluates to $\sv(\alphav^{(t)})$. This finishes the proof of the lemma.
 \paragraph{Proof of Lemma \ref{lem:box-steepest-efficient} [Box case]:}
 We want to show that
 $$
 	\max_{\tilde{\Av}_j \in \B_t}\scal{\tilde{\Av}_j}{\qv_t} = \argmax_{j \in \A_t}\abs{\nabla_j f(\alphav^{(t)})}\,.
 $$
 Given the structure of $f(\alphav)$, we can rewrite $\nabla_j f(\alphav^{(t)}) = \scal{\Av_j}{\nabla l(A \alphav^{(t)})} + c_j$. Further we can write
 \begin{equation}\label{eq:box-steep-rewrite}
 	\argmax_{j \in \A_t}\abs{\nabla_j f(\alphav^{(t)})} = \argmax_{j \in \A_t}\ \max \encaser{ \ \encasecurly{\scal{\Av_j}{\nabla l(A \alphav^{(t)})} + c_i} \ ,\ \encasecurly{\scal{-\Av_j}{\nabla l(A \alphav^{(t)})} - c_i}}\,.
 \end{equation}
 Let us examine at the cases we need to ignore i.e. the points not in $\A_t$: i) $\alpha_i^{(t)} = 0$ and $\nabla_i f(\alphav^{(t)}) > 0$ or ii) $\alpha_i^{(t)} = 1$ and $\nabla_i f(\alphav^{(t)}) < 0$. Suppose some coordinate $j'$ falls into case (i). Instead of excluding it from $\A_t$, we could instead keep only the $\scal{-\Av_{j'}}{\nabla l(A \alphav^{(t)})} - c_i$ in the right hand side of \eqref{eq:box-steep-rewrite} in place of $\max\encaser{\scal{\Av_{j'}}{\nabla l(A \alphav^{(t)})} +c_i, \scal{-\Av_{j'}}{\nabla l(A \alphav^{(t)})}} -c_i$. This automatically excludes $j'$ from the $\argmax$ if $\nabla l(A \alphav^{(t)}) + c_i > 0$ or equivalently if $\nabla_{j'} f(\alphav^{(t)}) > 0$. Similarly if $j'$ falls in case (ii), we will keep only the $\scal{\Av_{j'}}{\nabla l(A \alphav^{(t)}) + c_i}$ term which excludes $j'$ from the $\argmax$ if $\nabla l(A \alphav^{(t)}) + c_i < 0$ or equivalently if $\nabla_{j'} f(\alphav^{(t)}) < 0$. This is exactly what the term $\max_{\tilde{\Av}_j \in \B_t}\scal{\tilde{\Av}_j}{\qv_t}$ does.
 \paragraph{Proof of Lemma \ref{lem:L1-steepest-efficient} [$L1$ case]:}
 We want to show that  
 $$
 	\max_{\tilde{\Av}_j \in \B_t}\scal{\tilde{\Av}_j}{\qv_t} = \argmax_{i}\abs{s(\alphav)_i}\,.
 $$
 Given the structure of $f(\alphav)$, we can rewrite $\nabla_j f(\alphav^{(t)}) = \scal{\Av_j}{\nabla l(A \alphav^{(t)})} + c_j$. Further we can use the equivalence in Lemma \ref{lem:l1-steepest} to reformulate the required statement as
 \begin{equation}\label{eq:L1-steepest-reformulation}
 	\max_{\tilde{\Av}_j \in \B_t}\scal{\tilde{\Av}_j}{\qv_t} = \argmax_{j}\abs{\min_{s \in \partial \abs{\alpha^{(t)}_j}} \scal{\Av_j}{\nabla l(A \alphav^{(t)})} + c_j + \lambda s}\,.
 \end{equation}
 Drawing from the proof of Lemma \ref{lem:l1-steepest}, we look at the following cases:
 \begin{enumerate}
 	\item{When $\alpha_j > 0$:} In this case $\partial \abs{\alpha^{(t)}_j}$ evaluates to 1 and the right side of \eqref{eq:L1-steepest-reformulation} becomes $$\max\encaser{ \scal{\Av_j}{\nabla l(A \alphav^{(t)})} + c_j + \lambda,  \scal{-\Av_j}{\nabla l(A \alphav^{(t)})} - c_j -  \lambda}\,.$$
 	\item{When $\alpha_j < 0$:} In this case $\partial \abs{\alpha^{(t)}_j}$ evaluates to -1 and the right side of \eqref{eq:L1-steepest-reformulation} becomes $$\max\encaser{ \scal{\Av_j}{\nabla l(A \alphav^{(t)})} + c_j - \lambda,  \scal{-\Av_j}{\nabla l(A \alphav^{(t)})} - c_j + \lambda}\,.$$
 	\item{When $\alpha_j = 0$ and $\scal{\Av_j}{\nabla l(A \alphav^{(t)})} + c_j \geq 0$:} The right side of \eqref{eq:L1-steepest-reformulation} becomes $$\max\encaser{ \scal{\Av_j}{\nabla l(A \alphav^{(t)})} + c_j - \lambda,  0}\,.$$
 	\item{When $\alpha_j = 0$ and $\scal{\Av_j}{\nabla l(A \alphav^{(t)})} + c_j \leq 0$:} The right side of \eqref{eq:L1-steepest-reformulation} becomes $$\max\encaser{ \scal{-\Av_j}{\nabla l(A \alphav^{(t)})} - c_j - \lambda,  0}\,.$$
 \end{enumerate}
 Comparing the four cases above to the definition of $\B_t$ shows that the lemma holds.

 \section{Further Extensions of the Steepest and $\MIPS$ Framework}\label{sec:extending}
 In this section we show that the steepest rules $\steepsub$ can be extended to take into account different smoothness constants along different coordinate directions and that the new rules are also instances of our $\MIPS$ framework. Further we show that the steepest rule for $L2$-regularized problems can also be cast as an $\MIPS$ instance.

 \subsection{Coordinate Specific Smoothness Constants}
 The {\steepsub} and the {\steeplook} strategies as described in this work reduce to the {\gs} rule when the function is smooth and $g =0$. However if the smoothness constants (Definition \ref{def:smoothness}) highly varies along different coordinate directions, then the {\gs}-L rule which accounts for this variation enjoys better theoretical convergence properties. We can similarly modify the {\steepsub} and the {\steeplook} strategies to account for coordinate specific smoothness constants.

 Suppose that we modify Definition \ref{def:smoothness} as in \cite{nutini_coordinate_2015} and let $L_i$ be the smoothness constant along coordinate direction $i$. Then we rescale the $i$th coordinate by $1/L_i$. 
 This step is equivalent to normalizing the columns of the data matrix $A$ when $f(\alphav)$ is of the form $l(A\alphav)$. The rescaling will make the smoothness constants of all the coordinates equal (in fact equal to 1). Now we can apply the techniques in this work to obtain steepest rules which implicitly take advantage of the coordinate specific smoothness constants $L_i$. Of course this would change the regularization parameter $\lambda$ along the coordinates, and we will have a coordinate-wise regularization constants $\lambda_i$. Our algorithms and proofs can easily be extended to this setting.

\subsection{Block Coordinate Setting}
Just as in the smooth case, it might be possible to also extend the theoretical results here for the block coordinate case using new norms in place of the $L1$ norm in which the analysis is currently done. Define $\xv_{[:i]}$ to mean a the $i$ largest absolute values of $\xv$ and $\xv_{[i:]}$ to mean the $n-i$ smallest absolute values. If we want to pick the top $\kappa$ coordinates for e.g., we could define a new norm using which the analysis could extend
\[
	\norm{\xv}_{[\kappa]}^2 \defeq  \norm{\xv_{[:n+1 -\kappa]}}_1^2 + \norm{\xv_{[n+1 -\kappa:]}}_2^2\,.
\]

 \subsection{Solving $\ell_2$ Regularized Problems using $\SMIPS$}
 Suppose we are given a problem of the form
 $$
   l(A\alphav) + \frac{\lambda}{2}\norm{\alphav}^2 \,.
 $$
 In this case, the {\steepsub} rule simplifies to
 $$
   \argmax_{j \in [n]}\abs{\scal{\Av_j}{\nabla l(A \alphav)} + \lambda \alpha_j}\,.
 $$
 We can cast this in the $\MIPS$ framework. We need to add $n$ new components to the $d$ dimensional vector $\Av_j$ as follows:
 \begin{equation}\label{eq:A-tilde-L2}
   \tilde{\Av}_i := \begin{pmatrix} \beta\unit_i \\ \Av_i \end{pmatrix}\,.
 \end{equation}
 The constant $\beta$ is tuned to ensure good performance of the underlying MIPS algorithm, and is typically chosen to be $O(1/\sqrt{n})$.
 Then, define
 \begin{align}\label{eq:P-l2-def}
   \begin{split}
     \P &:= \P^\plus \cup \P^\minus \,\text{where,}\\
     \P^\pm &:= \encasecurly{\pm\tilde{\Av}_1,\dots,\pm\tilde{\Av}_n} \,.
   \end{split}
 \end{align}
 Finally, construct the query vector $\qv_t$ as
 \begin{equation}\label{eq:query-l2-steepest}
   \qv_t := \begin{pmatrix} \frac{\lambda}{\beta}\alphav^{(t)} \\ \nabla l(A \alphav^{(t)})\end{pmatrix}\,. 
 \end{equation}
 Then it is easy to verify the following equivalence -
 $$
   \MIPS_\P(\qv_t) = \argmax_{j \in [n]}\abs{\scal{\Av_j}{\nabla l(A \alphav)} + \lambda \alpha_j}\,.
 $$

 However we are now dealing with vectors of size $d + n$ instead of just $d$ which is usually too expensive ($n \gg d$). Handling $\tilde{\Av}_i$ is easy since it has at most $d+1$ non-zero entires. To get around the computational cost of hashing $\qv_t$, we note that the hyperplane hashing only requires computing $\scal{\wv}{\qv_t}$ for some vector $\wv$. This can be broken down as
 $$
   \scal{\wv}{\qv_t} = \frac{\lambda}{\beta}\scal{\wv_1}{\alphav^{(t)}} + \scal{\wv_2}{\nabla l(A \alphav^{(t)})}\,.
 $$
 At iteration $t+1$, we will need to recompute the second part in time $O(d)$. However since $\alphav^{(t+1)}$ and $\alphav^{(t)}$ differ in only 1 component (say $i_t$), updating the first part of the hash computation can be done efficiently as
 $$
   \scal{\wv}{\qv_{t+1}} = \frac{\lambda}{\beta}(\scal{\wv_1}{\alphav^{(t)}} + w_{i_t}(\alpha_{i_{t+1}} - \alpha_{i_{t}})) + \scal{\wv_2}{\nabla l(A \alphav^{(t +1)})}\,.
 $$
 This can also be combined with ideas from Section \ref{sec:efficient} to solve problems which have both an $L2$-regularizer and are box-constrained or $L1$-regularized. This is important for example for solving elastic net regression.

\section{Additional Experiments}\label{sec:figures}
\subsection{Consistent poor performance of {\dhillon}}
Here we run {\dhillon} on the {\sector} datasets. Figure \ref{fig:dhillon} illustrates consistent poor performance of {\dhillon} rule.
\begin{figure}
	\hfill
	\includegraphics[width=0.3\linewidth]{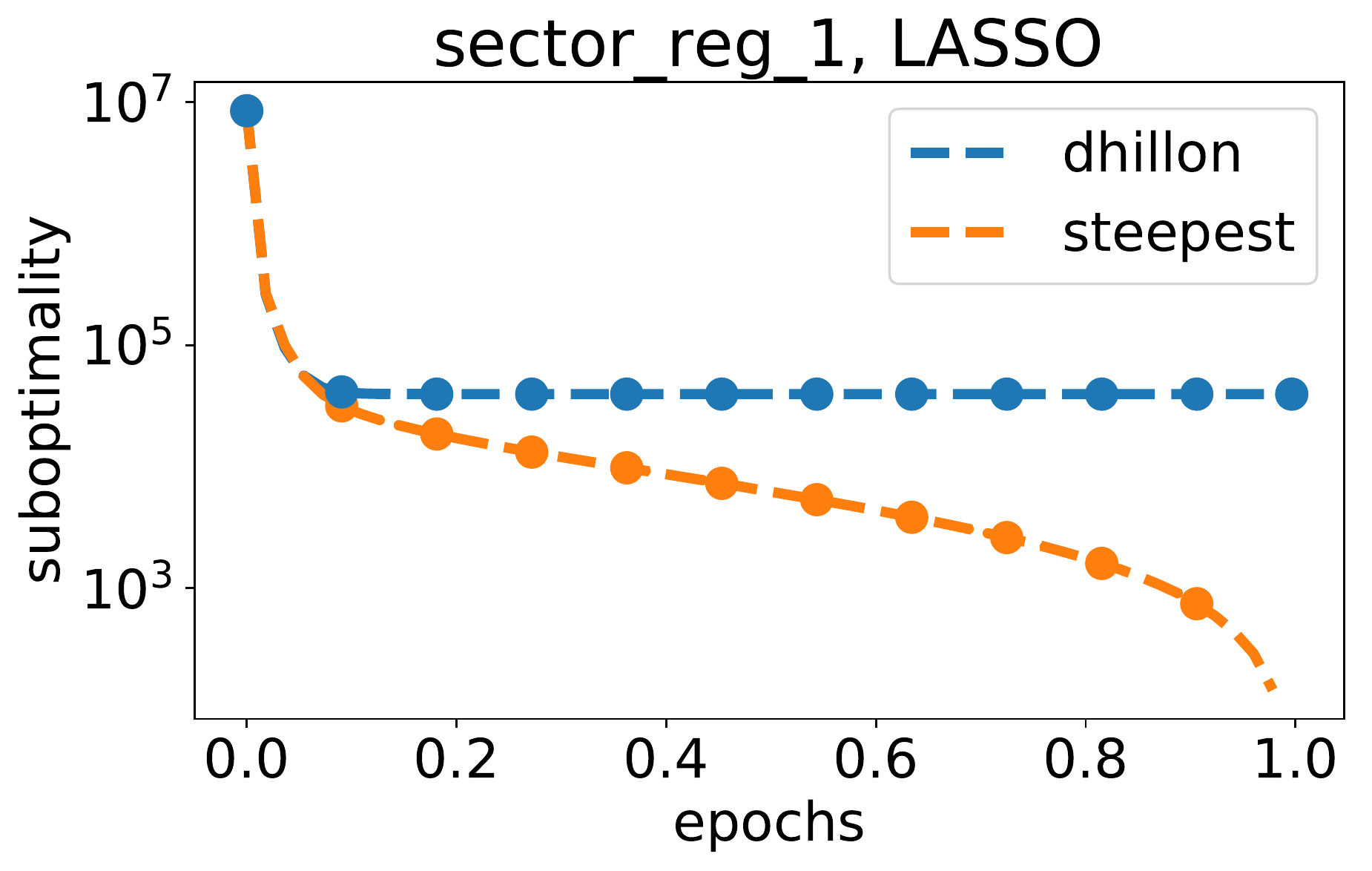}\hfill
	\includegraphics[width=0.3\linewidth]{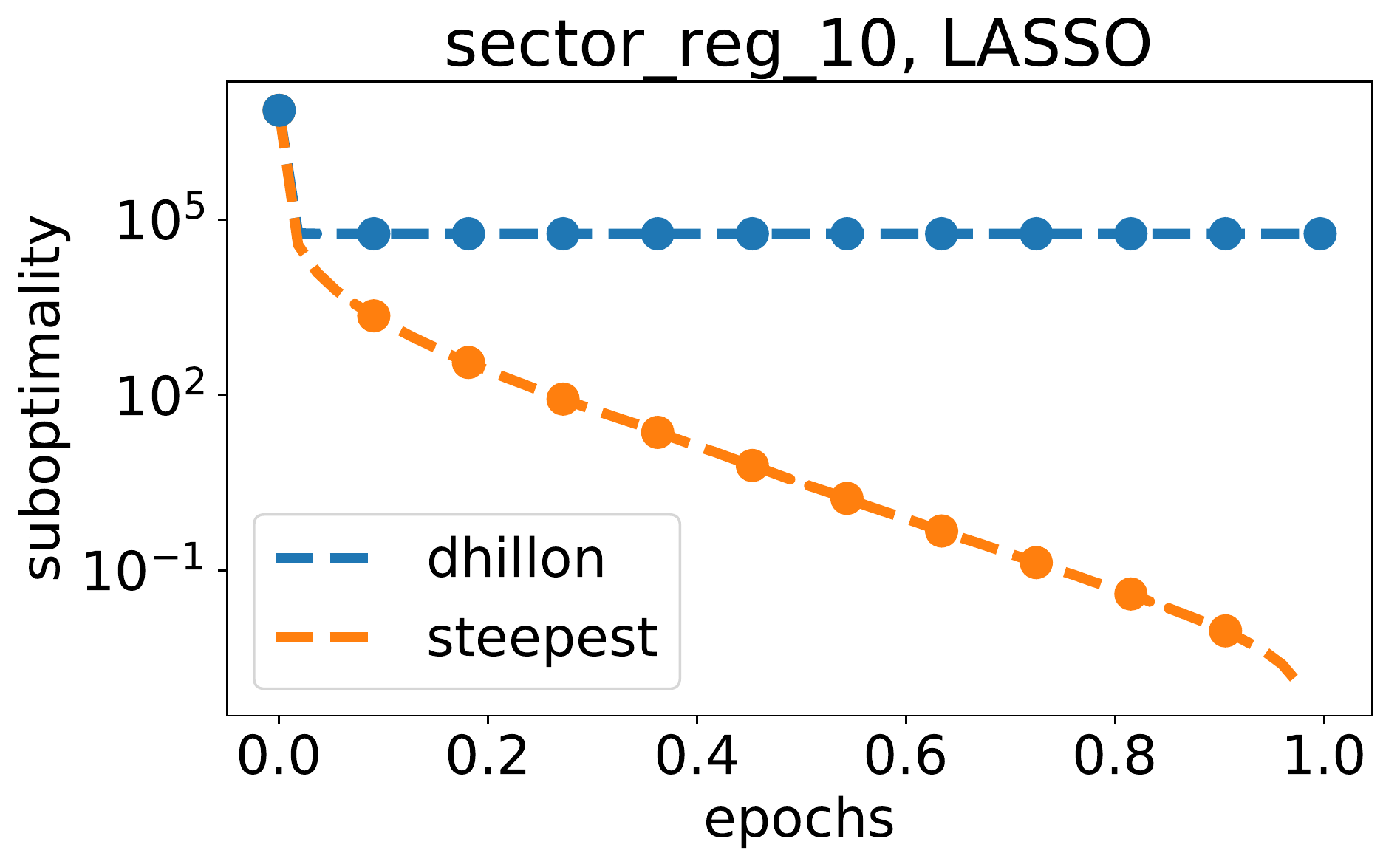}\hfill
	\caption{ Evaluating {\dhillon}: {\steepest} which is based on the {\steepsub} rule consistently outperforms {\dhillon} which quickly stagnates.}
	\label{fig:dhillon}
\end{figure}

\subsection{{\nmslib} on {\sector} dataset}
We perform additional evaluation of {\nms} algorithm on {\sector} dataset for the LASSO. {\nmslib} hyper-parameters and sparsity levels could be found in Table \ref{tab:datasets-sector}. Figure \ref{fig:sector-nmslib} shows that {\nms} for {\sector} dataset has the same strengths and weaknesses.

\begin{table}[]
	\centering
	\caption{Parameters for {\sector} dataset. ($\mathbf{d}$, $\mathbf{n}$) is dataset size, $\mathbf{\beta}$ is tunable constant from \eqref{eq:A-tilde-L1}, \eqref{eq:query-L1-steepest}, $\mathbb{\rho}$ is sparsity of the solution.}
	\label{tab:datasets-sector}
	{\small
		\setlength{\tabcolsep}{2pt}
		\begin{tabular}{lrrrrrrr}
			\hline
			\textbf{Dataset} & {$\mathbf{n}$} & {$\mathbf{d}$} & {$\sqrt{n}\mathbf{\beta}$} &  {efC}  & {efS} & {post} & {$\mathbb{\rho}$}\\ \hline
			{\sector}, $\lambda = 1$      &  55,197          & 6,412         & 10    & 100 & 100 &  2 & 10\\
			{\sector}, $\lambda = 10$      &  55,197          & 6,412         & 10    & 400 & 200  & 0& 3\\
		\end{tabular}
	}
\end{table}

\begin{figure}[!h]
	\hfill
	\includegraphics[width=0.24\linewidth]{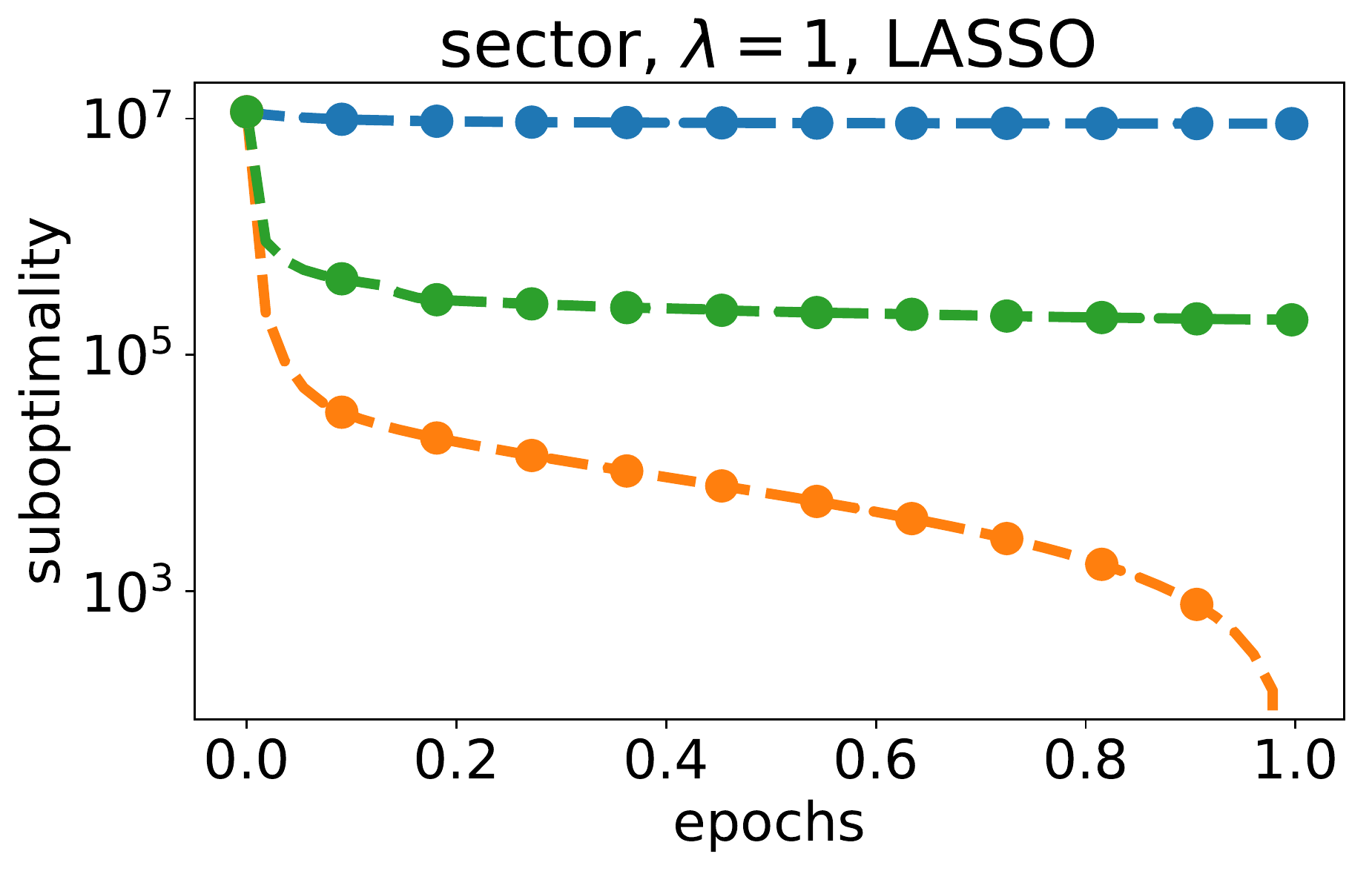}\hfill
	\includegraphics[width=0.24\linewidth]{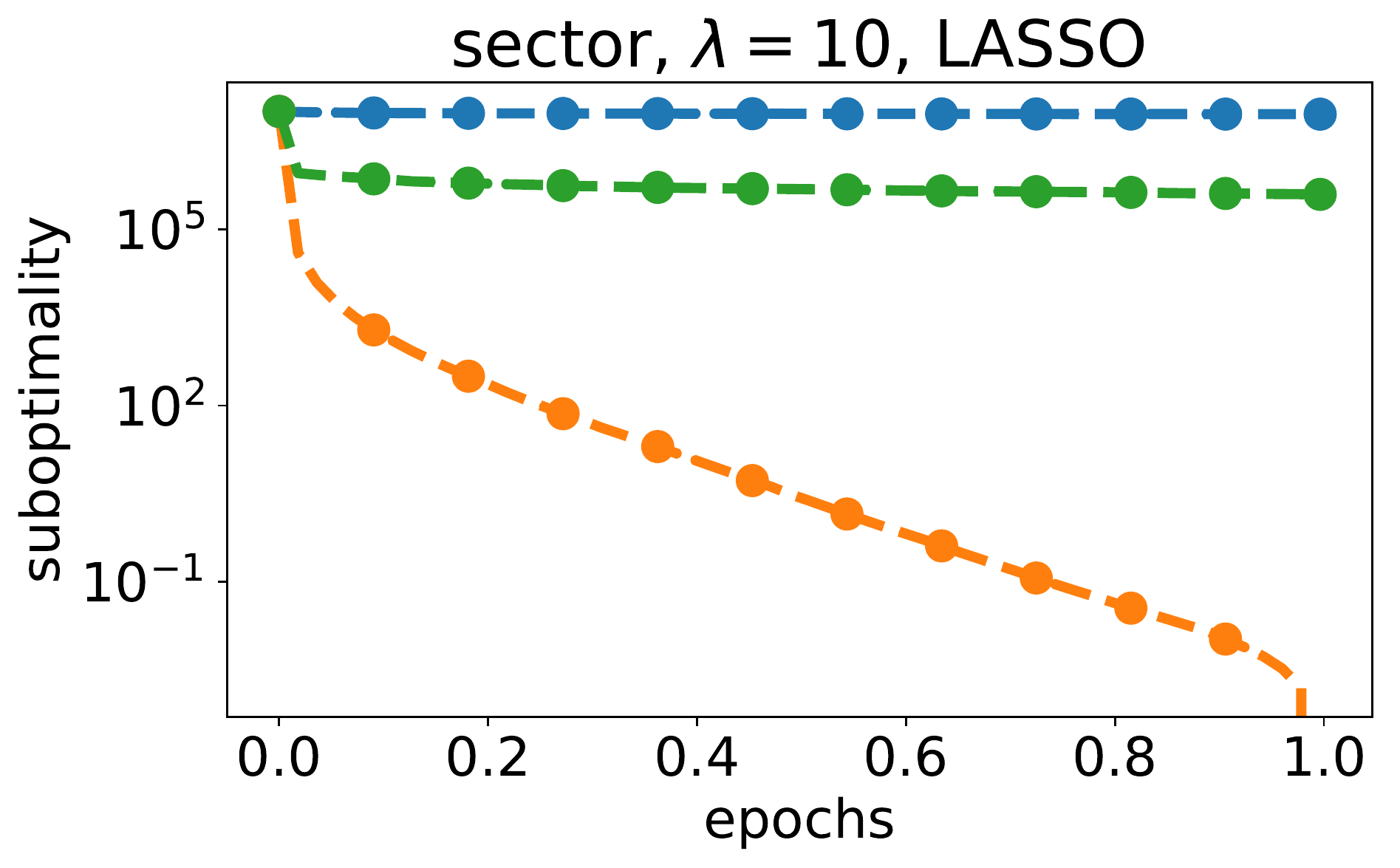}\hfill
	\includegraphics[width=0.24\linewidth]{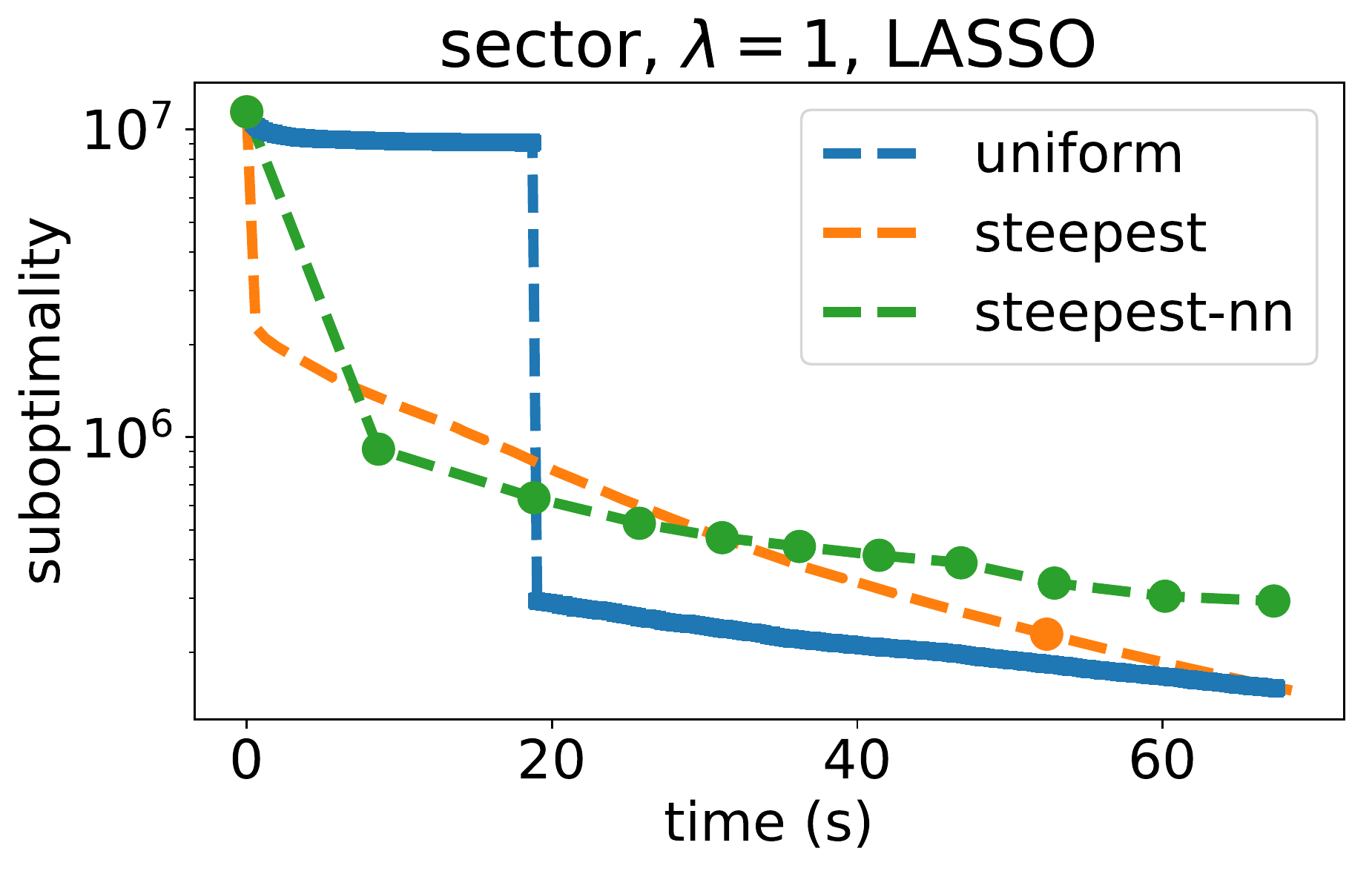}\hfill
	\includegraphics[width=0.24\linewidth]{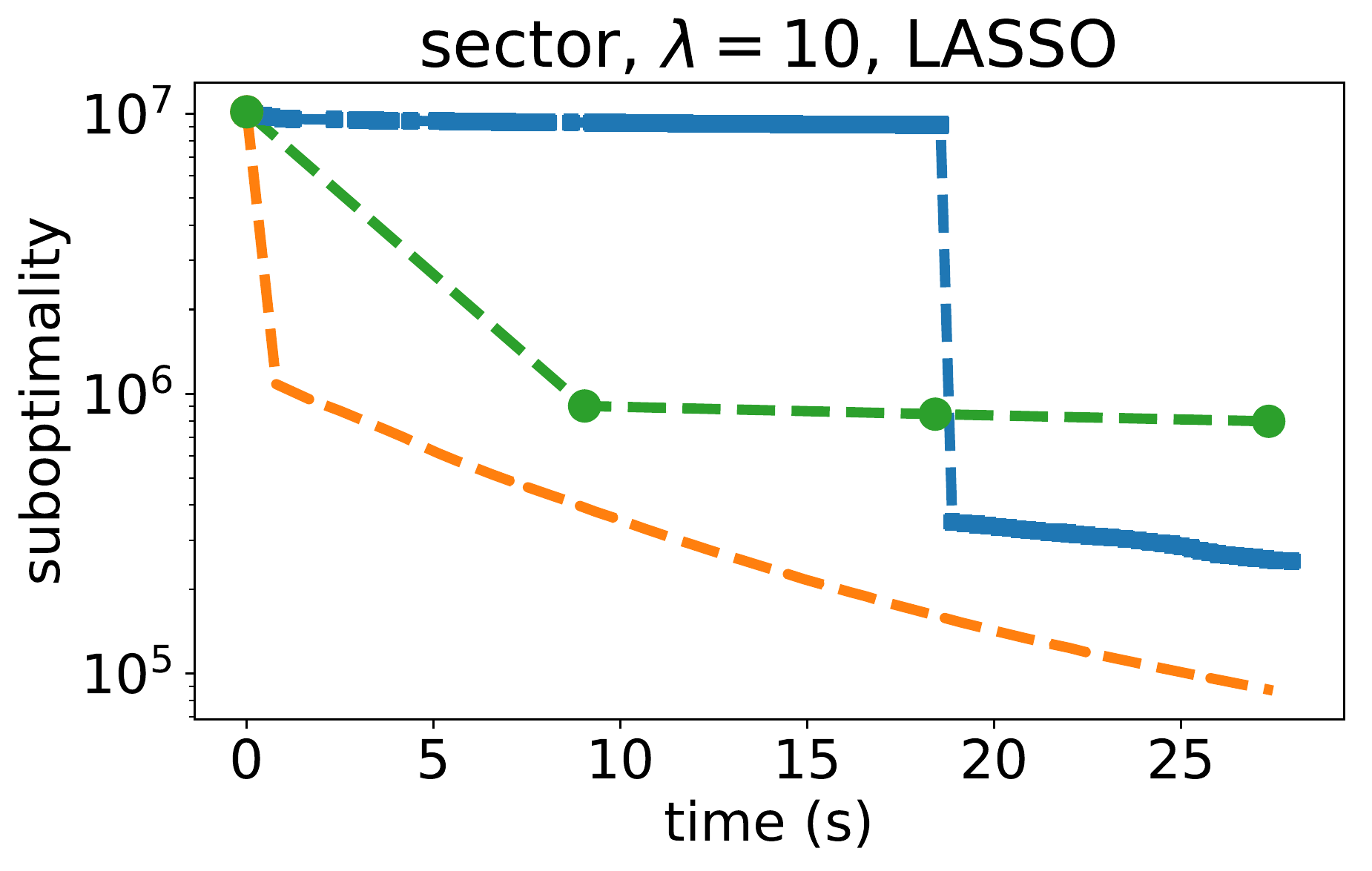}\hfill
	\caption{The performance of {\nms} for {\sector} dataset is similar to that on {\rcv}. }
	\label{fig:sector-nmslib}
\end{figure}

\subsection{Make regression}

We also use the {\makeregression} function from Sk-learn \citep{pedregosa2011scikit}\footnote{\url{http://scikit-learn.org/stable/modules/generated/sklearn.datasets.make_regression.html}} to create tunable instances for Lasso. Here we keep the number of informative features fixed at 100, fix $d = 1000$, and vary $n$ in $\encasecurly{10^4, 10^5, 10^6}$. From Fig.~\ref{fig:make-regression}, we can see that as $n$ increases, {\nms} starts significantly outperforming {\uniform} in terms of wall clock time. This is to be expected since the cost per iteration of {\nms} remains roughly fixed, whereas the gain in progress over {\uniform} grows with $n$.

\begin{figure}[!h]
    \hfill
	\includegraphics[width=0.3\linewidth]{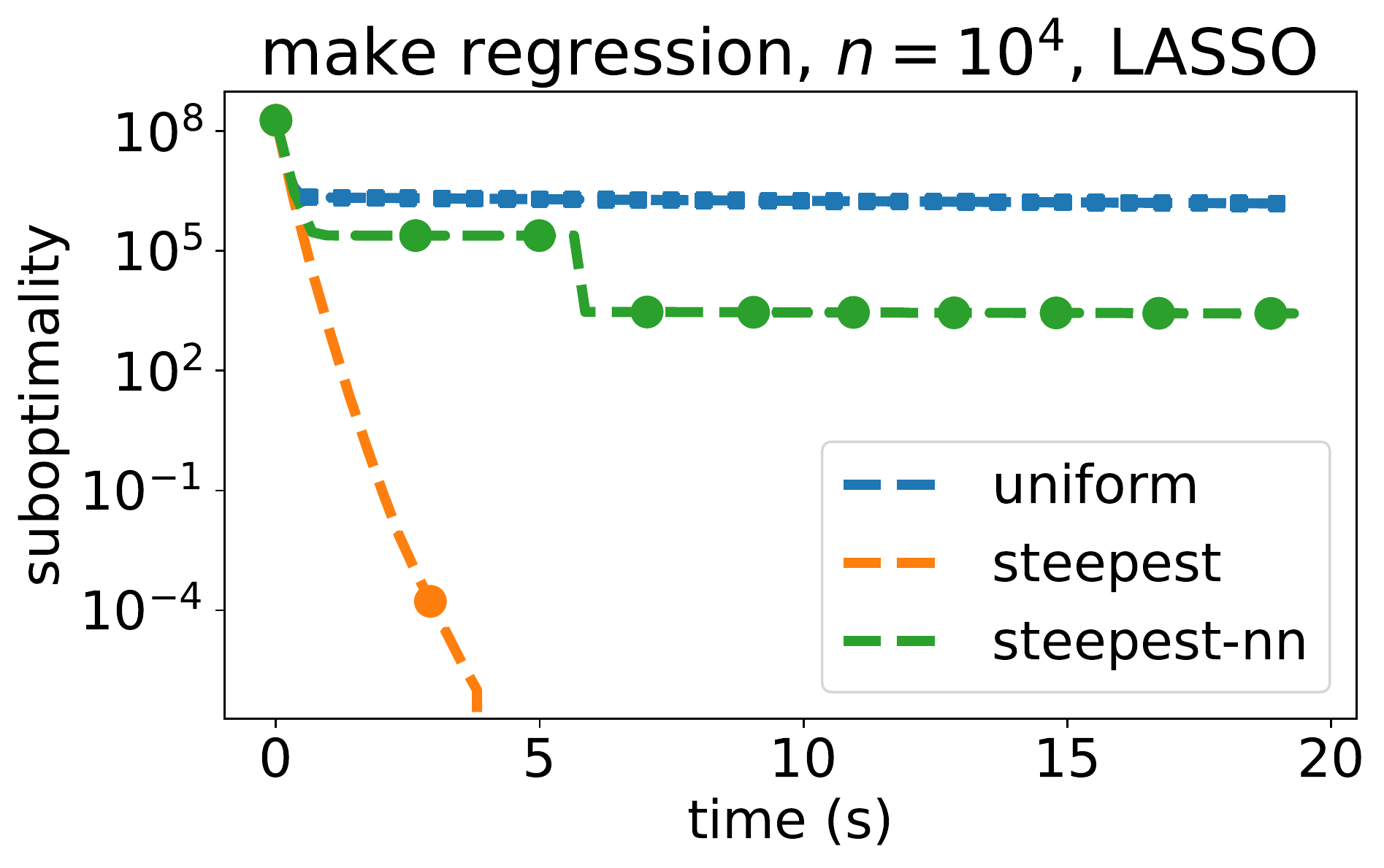}\hfill
	\includegraphics[width=0.3\linewidth]{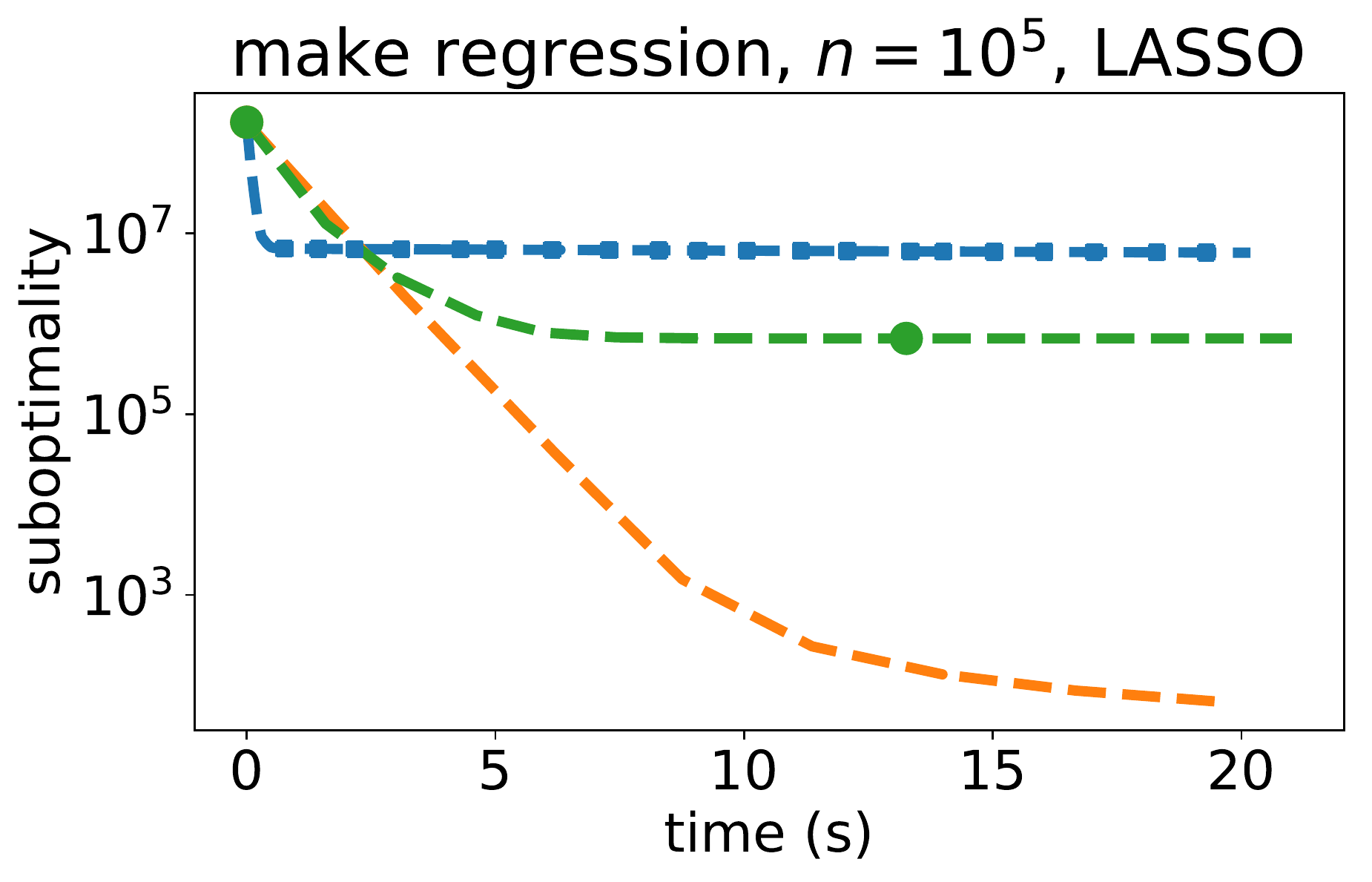}\hfill
	\includegraphics[width=0.3\linewidth]{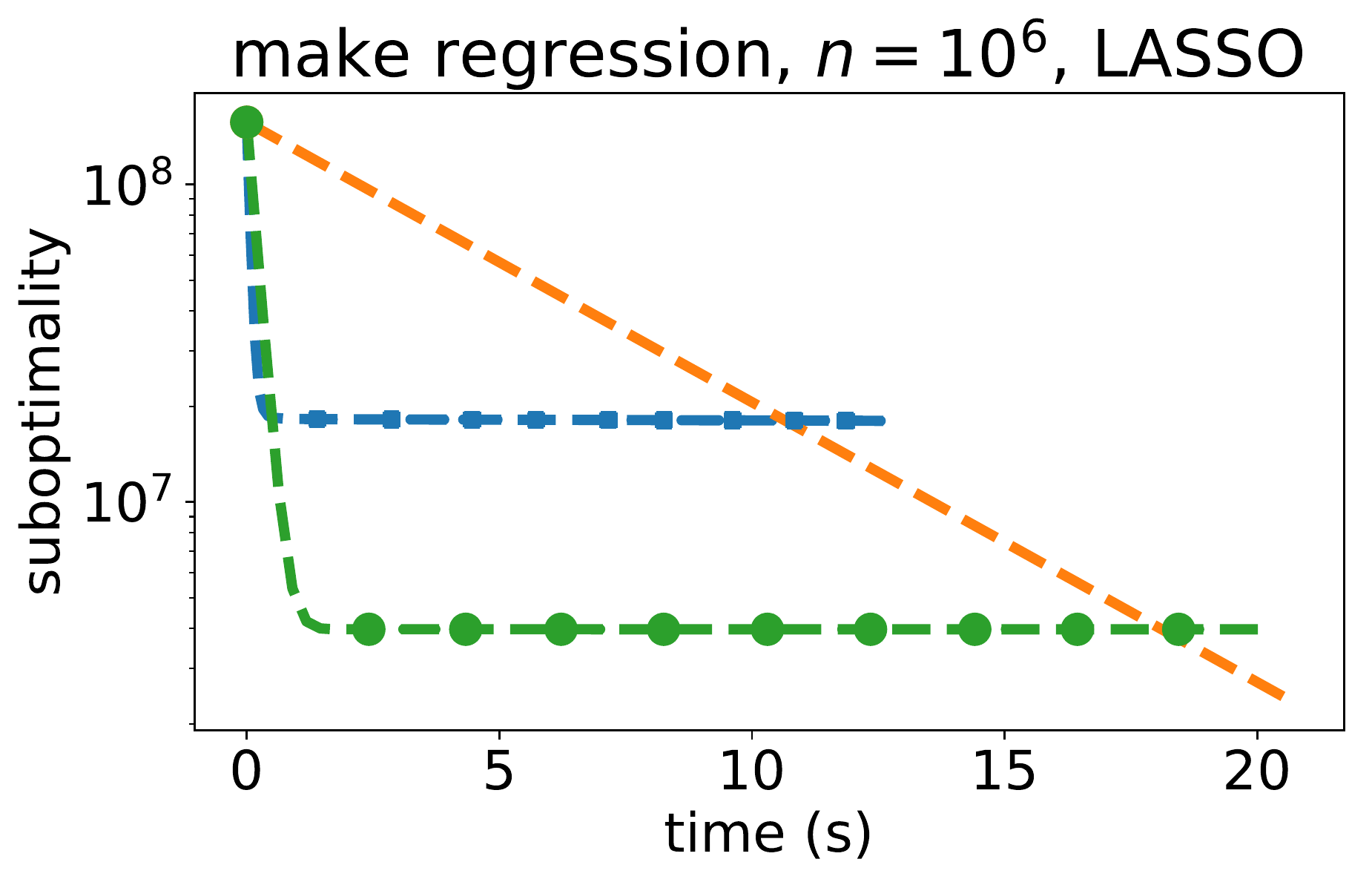}\hfill\null
	\caption{ Performance on {\makeregression} with $n \in \{10^4,10^5,10^6\}$. The advantage of {\nms} over {\uniform} and {\steepest} increases with increasing problem sizes.}
	\label{fig:make-regression}
\end{figure}

\subsection{Extreme Sparse Solution}

Here we experiment with LASSO on {\sector} dataset in extreme case, with $\lambda = 100$. The solution has only 5 non-zero coordinates. Figure \ref{fig:sector-sparse} shows comparison of {\nms} (implemented with {\nmslib}), {\uniform} and {\steepest} rules. {\steepest} converges to the true optimum in less than 10 iterations while {\nms} and {\uniform} strugling to find a good coordinate. As in other experiments, {\nms} shows its usefulness in early stage of optimization.

\begin{figure}[!h]
	\hfill
	\includegraphics[width=0.49\linewidth]{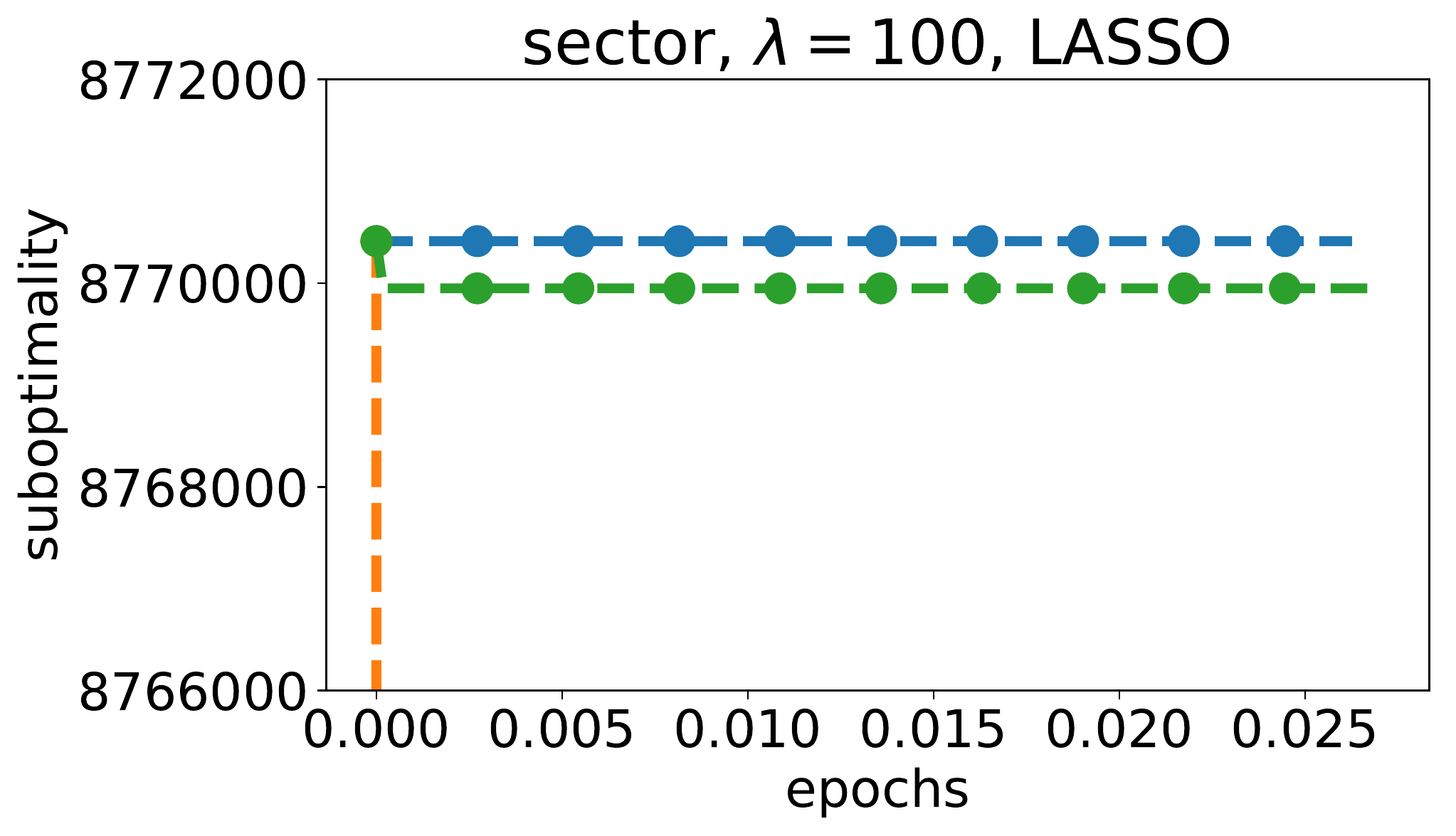}\hfill
	\includegraphics[width=0.49\linewidth]{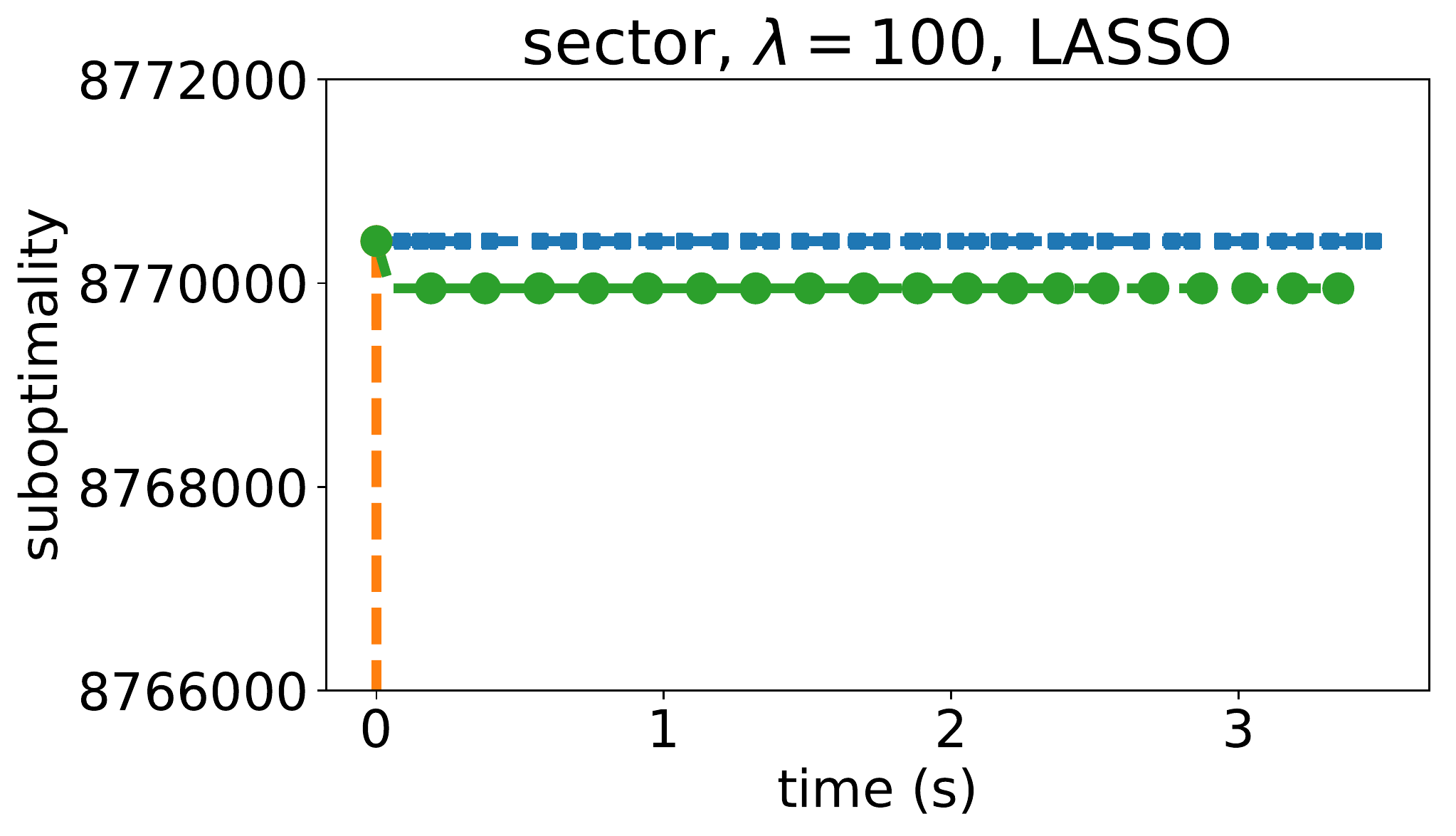}\hfill
	\caption{ The solution is extremely sparse. {\steepest} converges in less than 10 iterations. {\nms} has a small gain over {\uniform}, but both of them extremely bad compared to {\steepest} even in wall-time.}
	\label{fig:sector-sparse}
\end{figure}

\subsection{Test Accuracy}
For SVM experiments we randomly split the datasets to create train (75\%) and test (25\%) subsets.
Figure \ref{fig:accuracy} shows the primal function value, dual function value, duality gap, and accuracy on the test set for {\wa} and {\ijcnn} datasets. Even measured against wall time, {\nms} is very competitive with {\uniform} in all the metrics.
\begin{figure}[!h]
	\includegraphics[width=\linewidth]{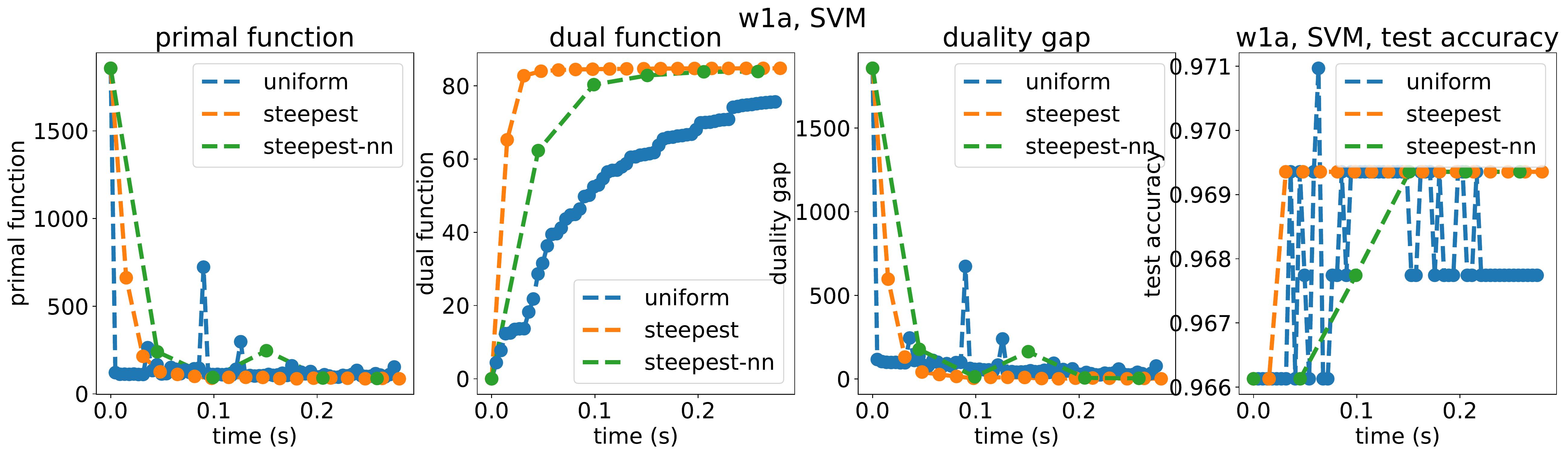}
	\includegraphics[width=\linewidth]{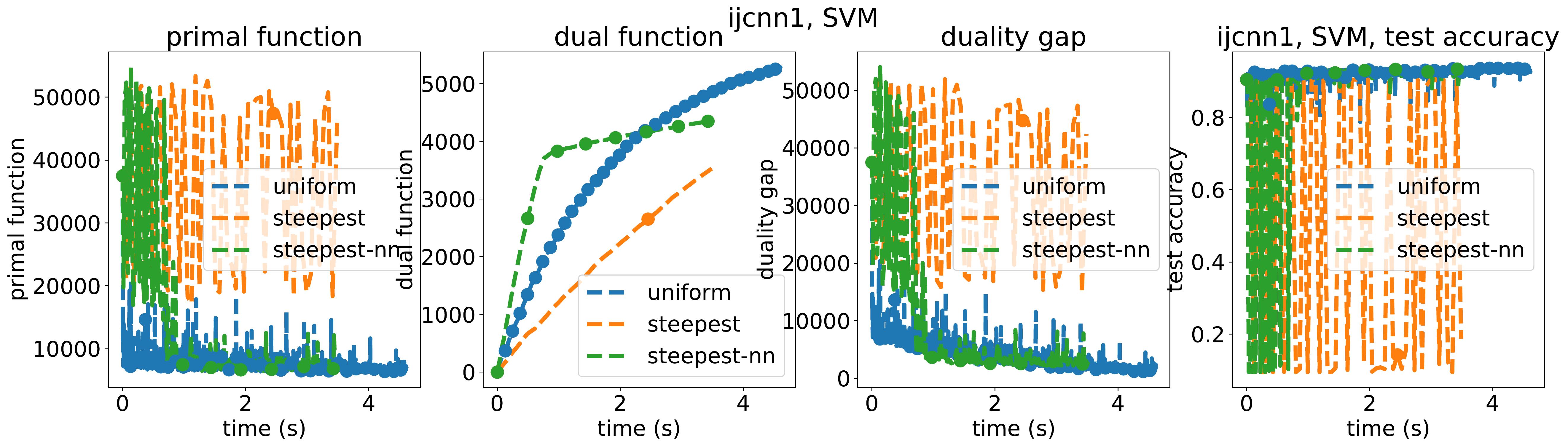}
	\caption{Primal function, dual function, duality gap and accuracy: Even measured against wall time, {\nms} is very competitive with {\uniform} in all the metrics.}\label{fig:accuracy}
\end{figure}

\subsection{Density of the solution}

In Figure \ref{fig:density}, we plot the number of non-zero coordinates of current solution as a function of time for datasets {\sector}, {\rcv} for Lasso and {\wa}, {\ijcnn} for SVM. Recall from Figures \ref{fig:time} and \ref{fig:accuracy} that {\nms} is competitive (and sometimes much better) than {\uniform} in terms of optimization error or accuracy especially at the start. Figure \ref{fig:density} shows that the solutions obtained by {\nms} are much sparser at the start. So if we want a \emph{quick}, sparse and accurate solution, then {\nms} could be the algorithm of choice. For SVMs sparsity translates to a small set of support vectors.

\begin{figure}[!h]
    \hfill
	\includegraphics[width=0.24\linewidth]{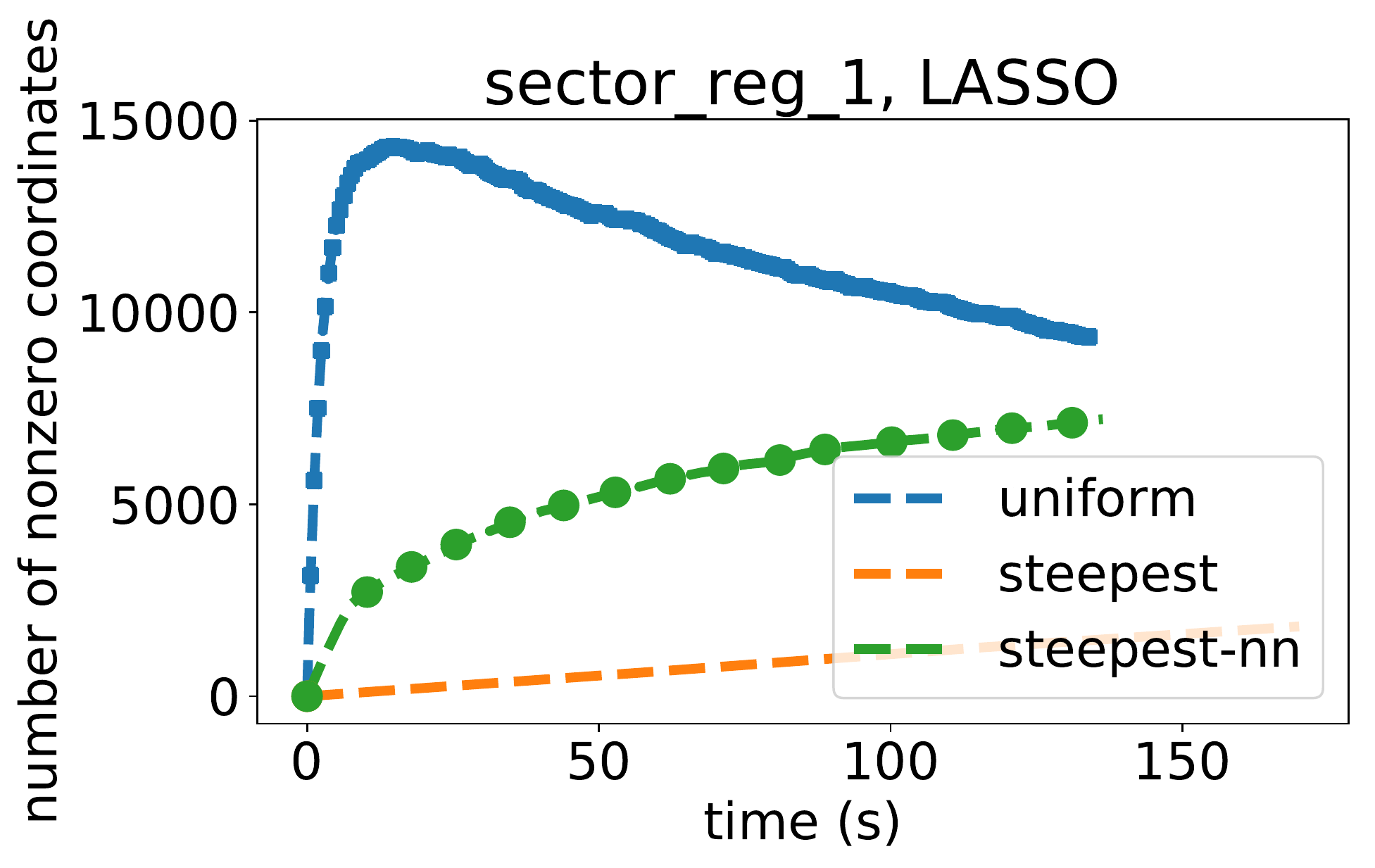}
	\includegraphics[width=0.24\linewidth]{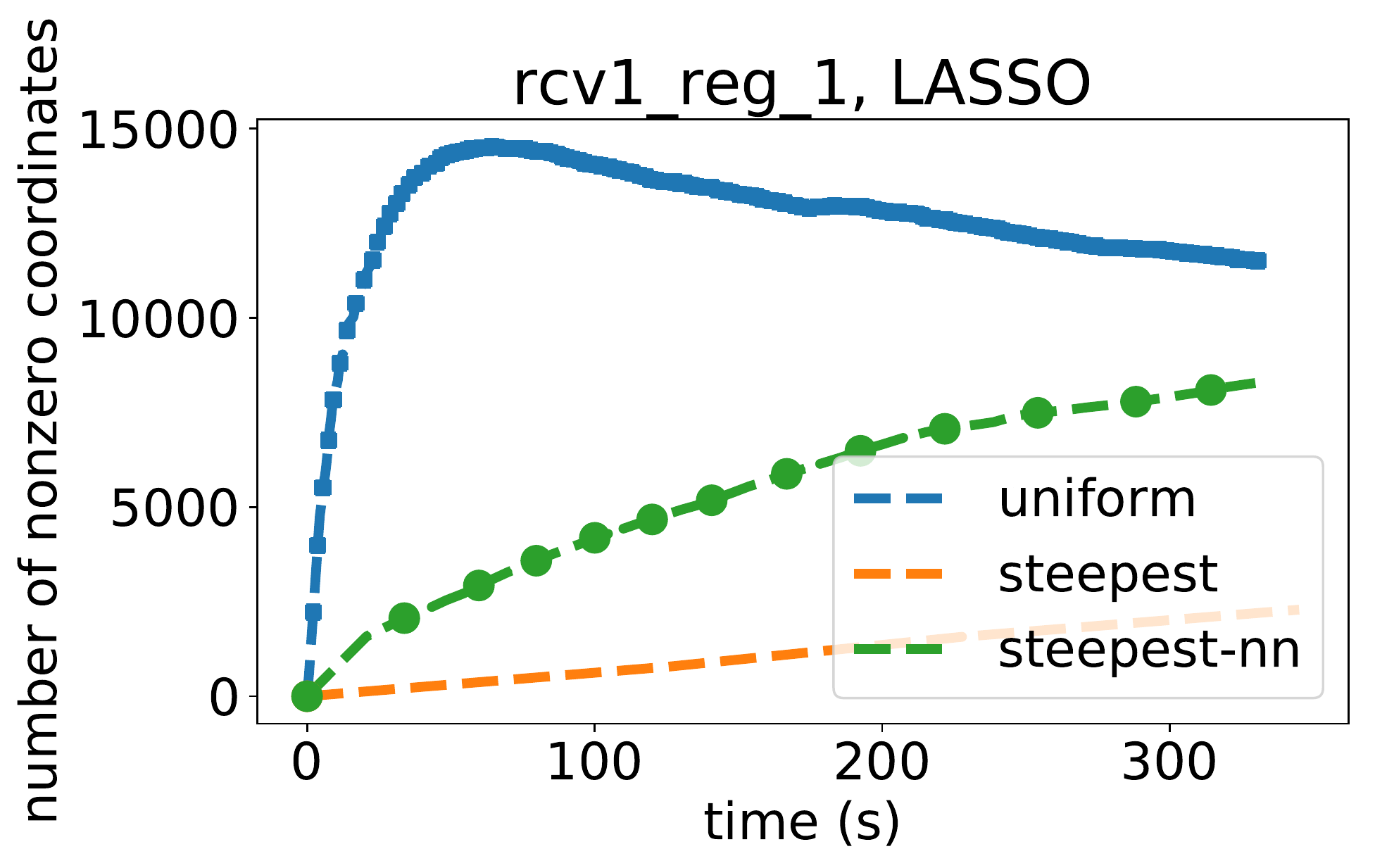}
	\includegraphics[width=0.24\linewidth]{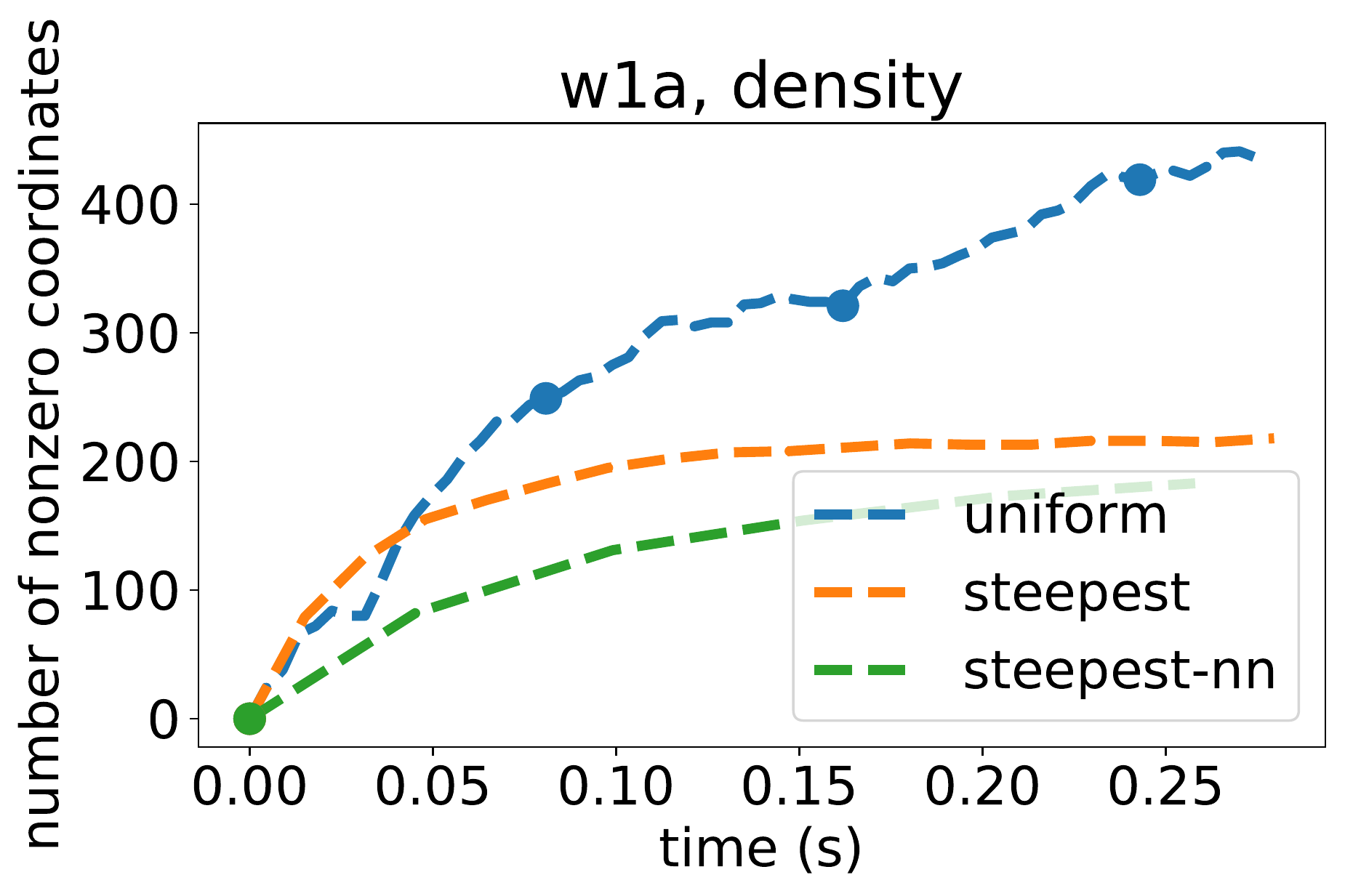}
	\includegraphics[width=0.24\linewidth]{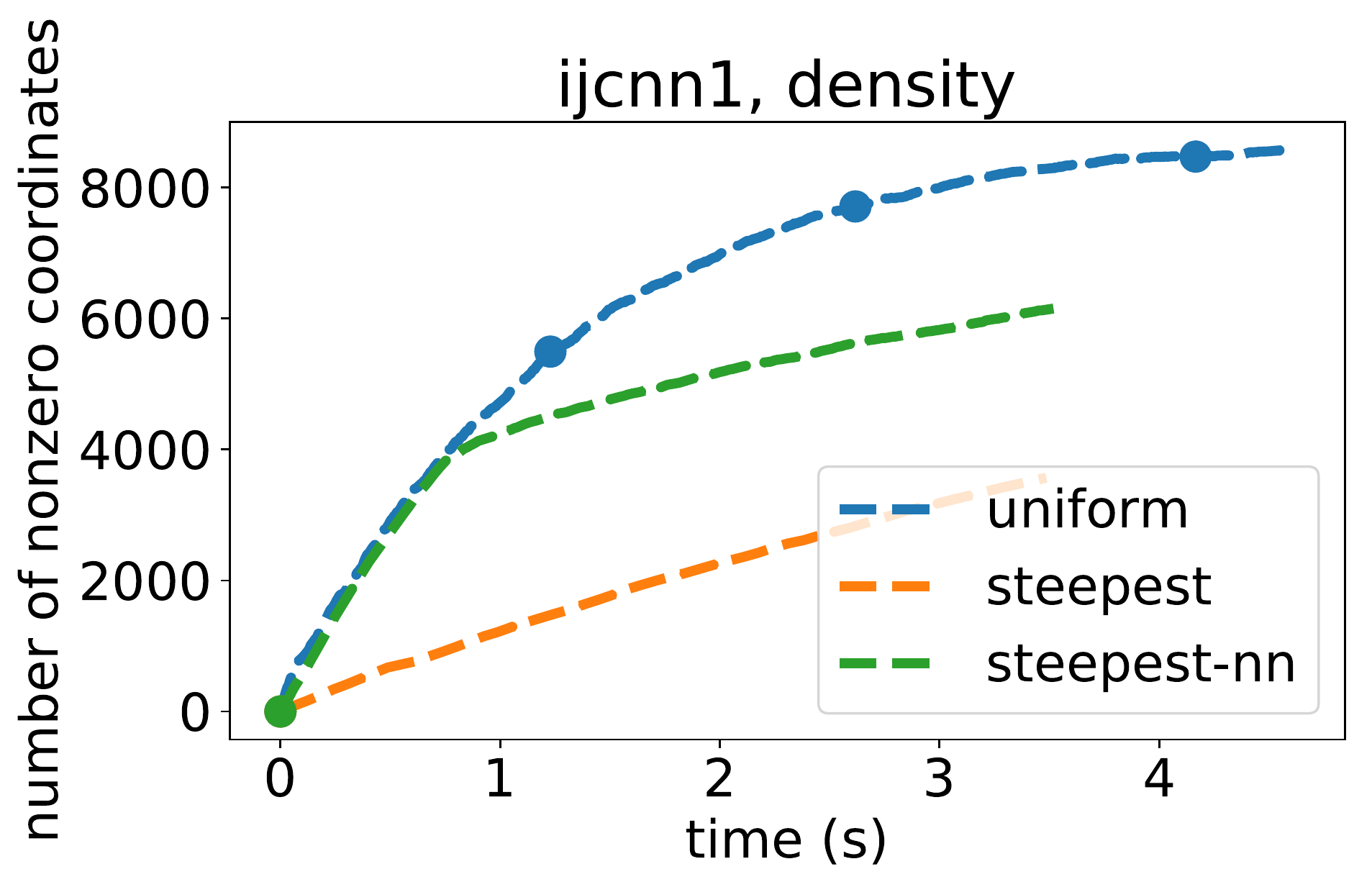}
	\caption{Density: The solutions obtained by {\nms} are much sparser than {\uniform}, especially towards the beginning.}\label{fig:density}
\end{figure}

\subsection{Effect of Adaptivity}

Here we investigate whether our adaptive choosing of the subsets has any adverse effect on the performance of {\nmslib} to solve the $\MIPS$ problem. At each iteration during the course of our algorithm, we compute the value of dot product of the query vector with i) the optimal point over all candidates ($\MIPS$), ii) the optimal point over only the subset of the candidates ($\SMIPS$), iii) result of running {\nmslib} on all candidates ($\MIPS$-nn), and vi) the result of running LSH on the masked subset ($\SMIPS$-nn). Comparing (i) and (iii) shows the performance of the {\nmslib} algorithm, and comparing (ii) and (iv) shows how well the {\nmslib} algorithm handles our adaptive queries. The results in Figure \ref{fig:adaptivity} show that indeed the influence of adaptivity is negligible - both ($\MIPS$-nn) and ($\MIPS$) are close to ($\SMIPS$-nn) and ($\SMIPS$) respectively. What is surprising though is the overall poor performance of the {\nmslib} algorithm even after spending significant effort tuning the hyper-parameters as evidenced by the gap between ($\MIPS$-nn) and ($\MIPS$). This strongly suggests that improving the underlying algorithm for solving our $\SMIPS$ instances could lead to significant gains.

\begin{figure}[!h]
	\includegraphics[width=0.24\linewidth]{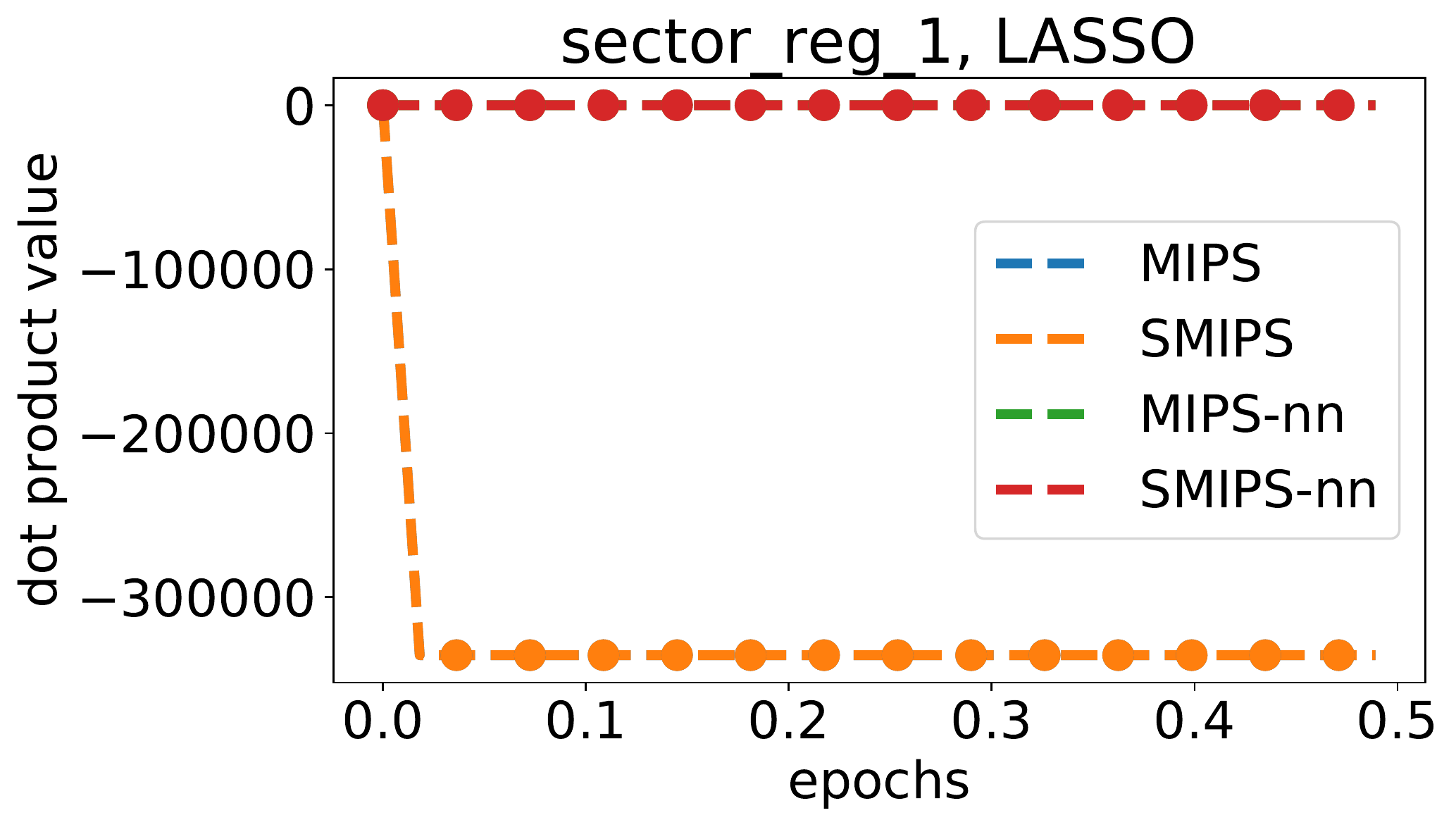}
	\includegraphics[width=0.24\linewidth]{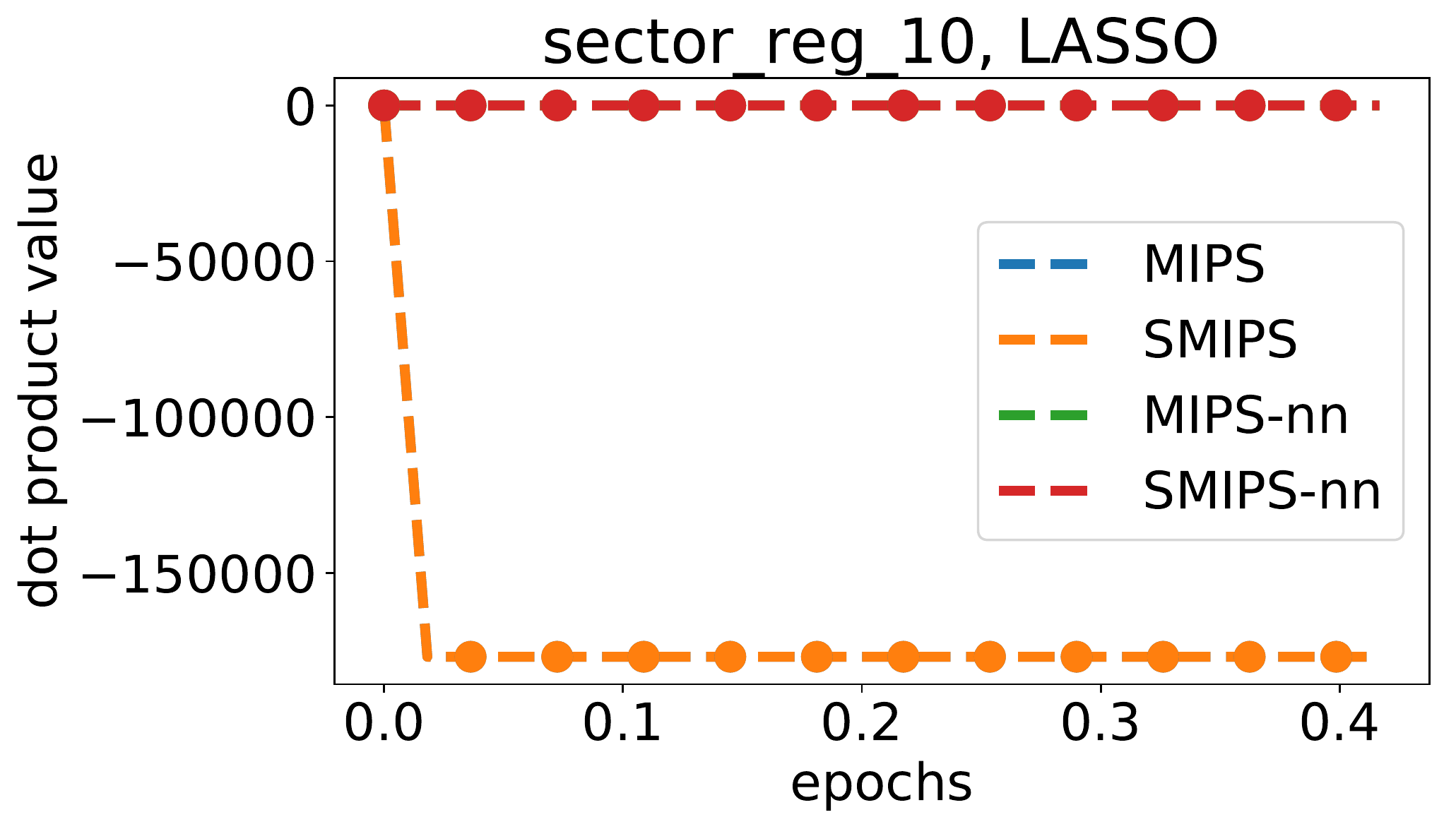}
	\includegraphics[width=0.24\linewidth]{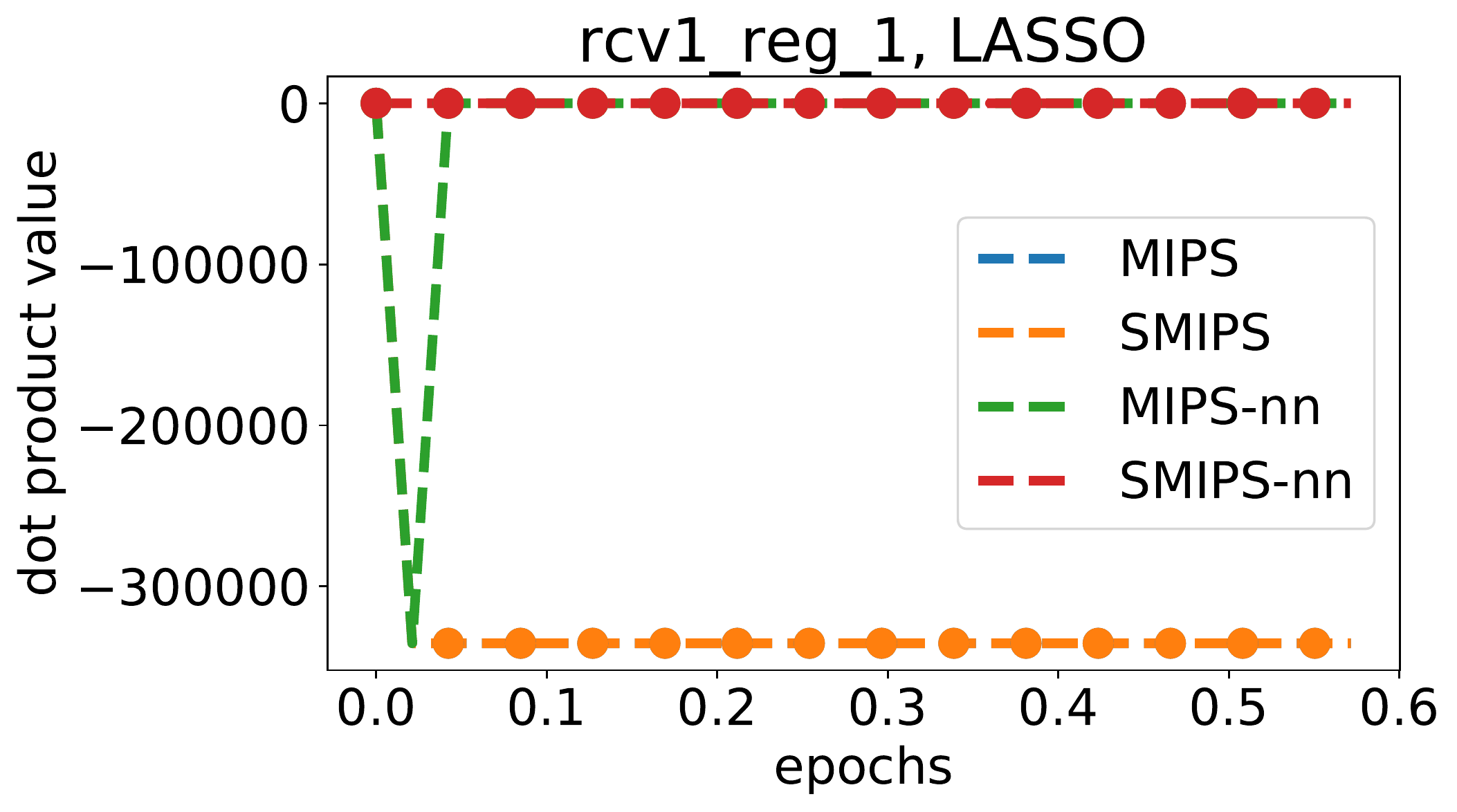}
	\includegraphics[width=0.24\linewidth]{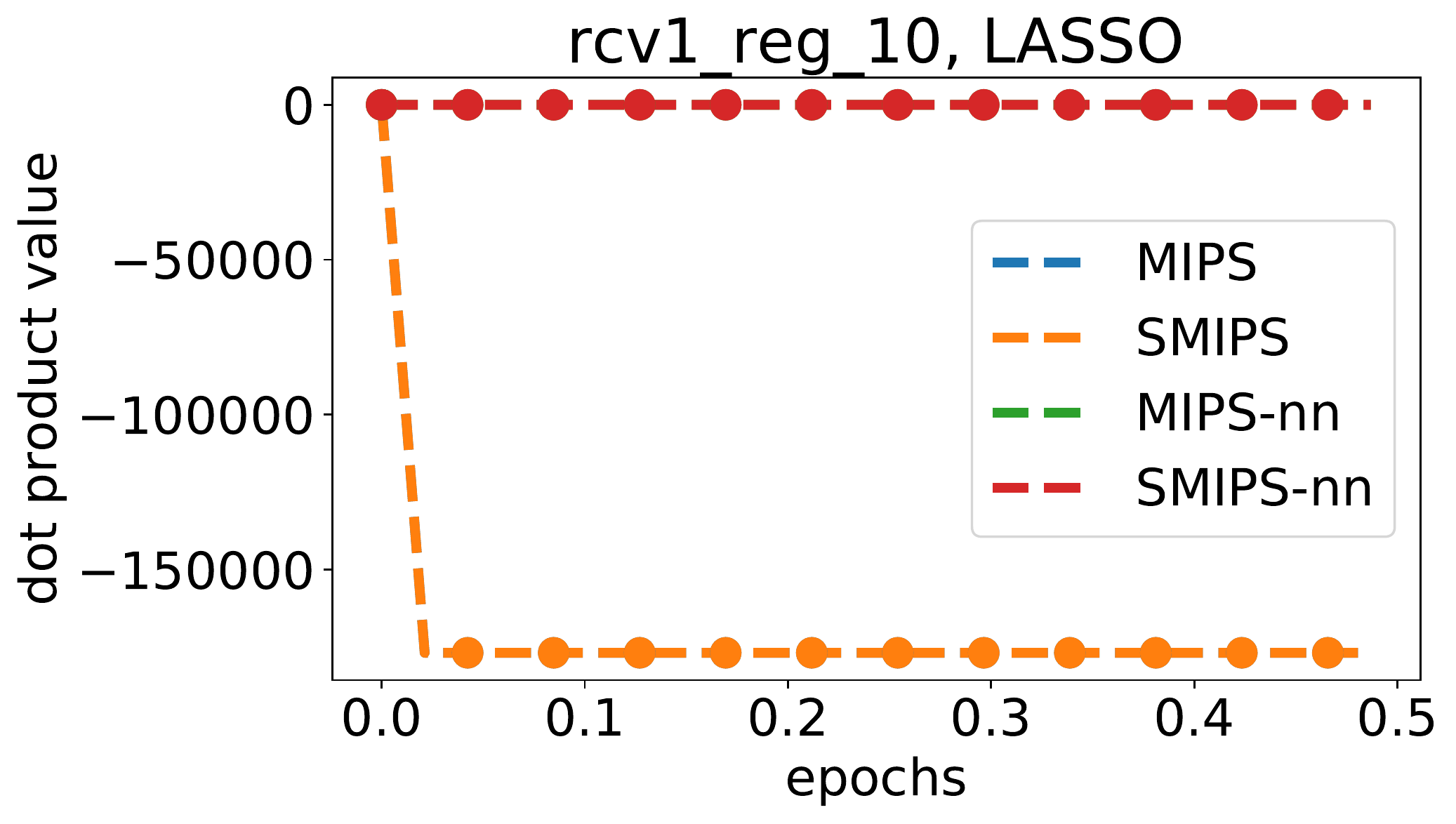}
	\caption{Adaptivity: ($\MIPS$-nn) and ($\MIPS$) are close to ($\SMIPS$-nn) and ($\SMIPS$) respectively indicating that choosing subsets adaptively does not affect {\nmslib} algorithm substantially. However the gap between ($\MIPS$-nn) and ($\MIPS$) indicates the general poor quality of the solutions returned by {\nmslib}.}\label{fig:adaptivity}
\end{figure}

\subsection{GS-s implementation using {\falconn} library}

\begin{figure}[!h]
	\includegraphics[width=0.24\linewidth]{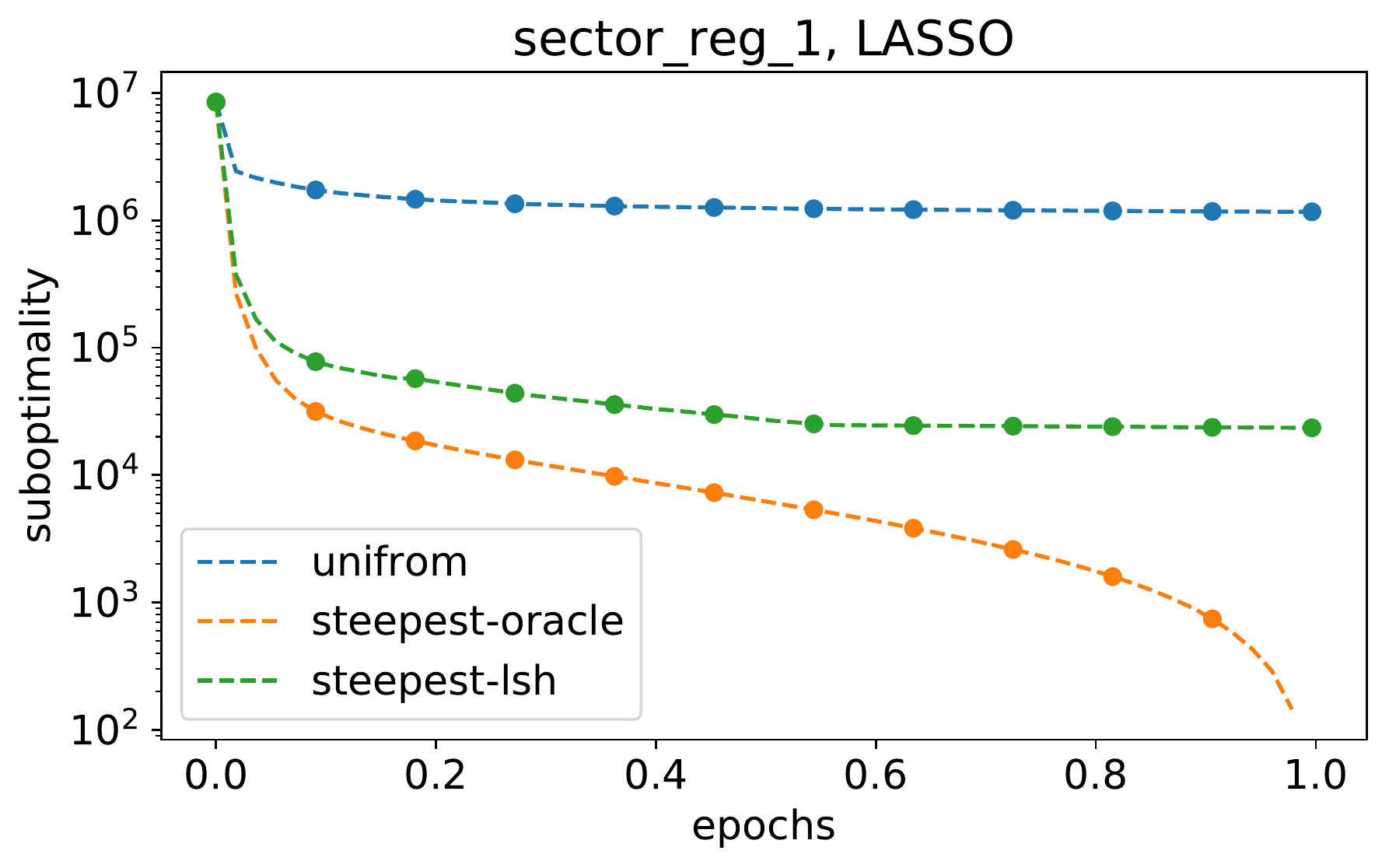}
	\includegraphics[width=0.24\linewidth]{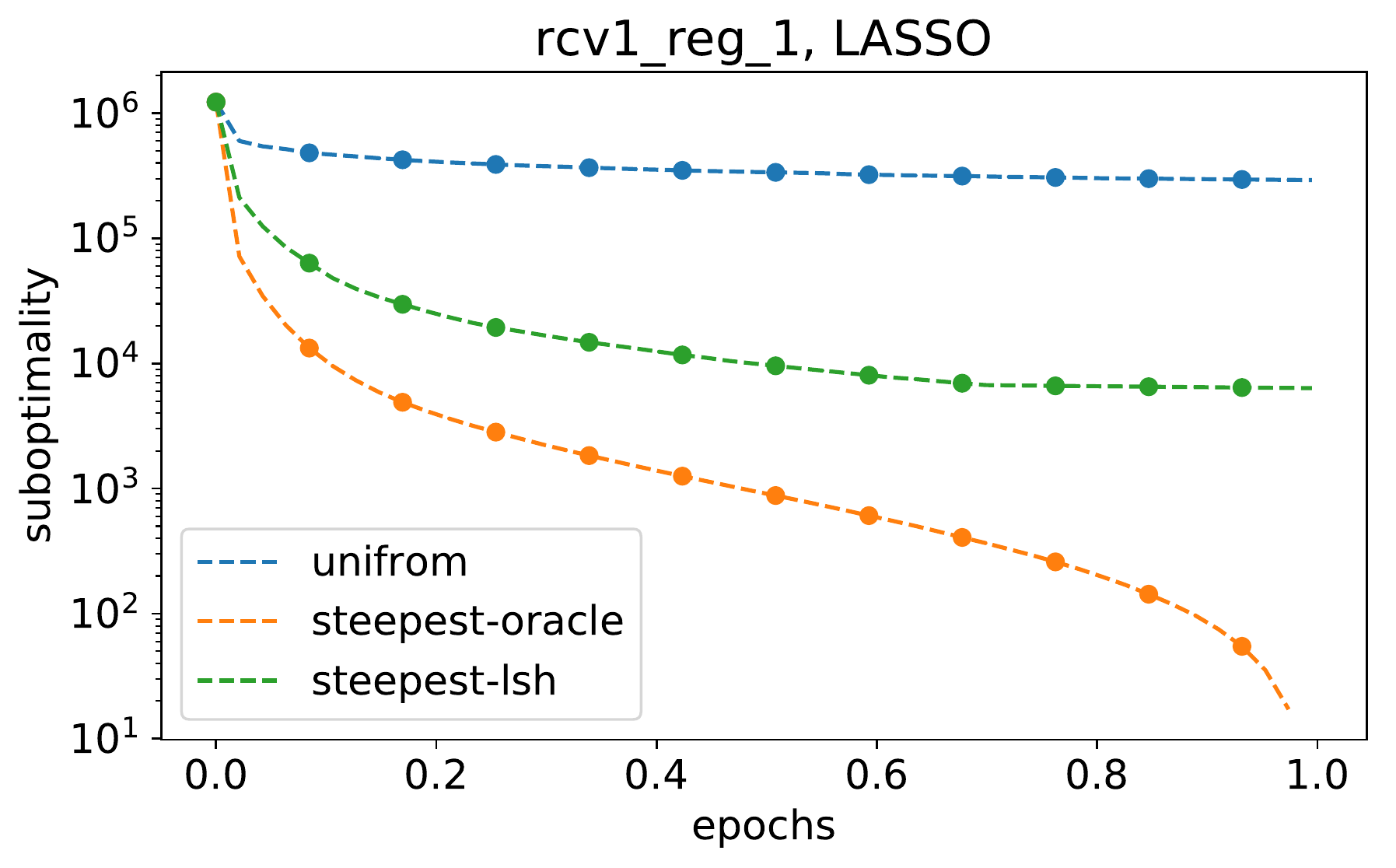}
	\includegraphics[width=0.24\linewidth]{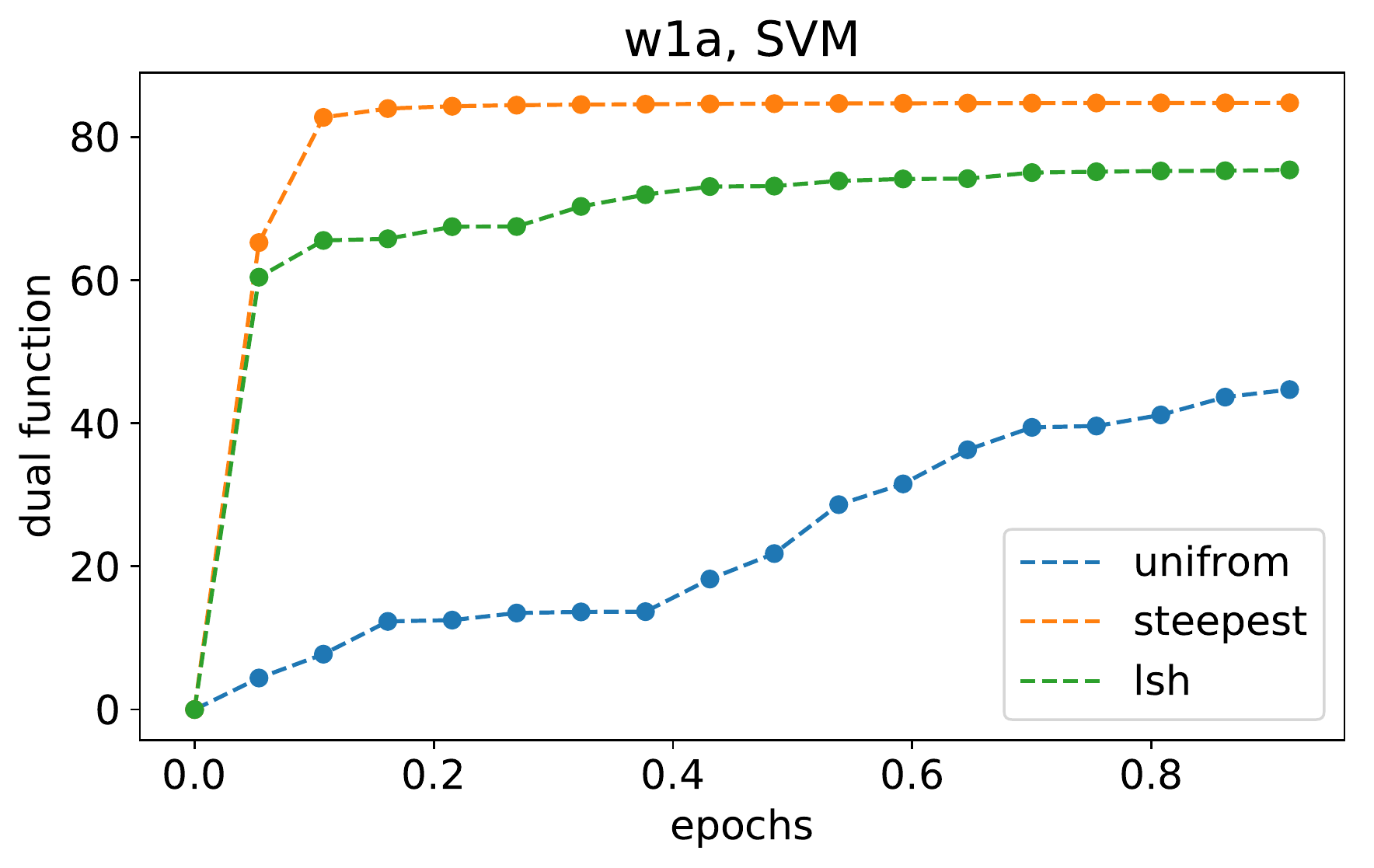}
	\includegraphics[width=0.24\linewidth]{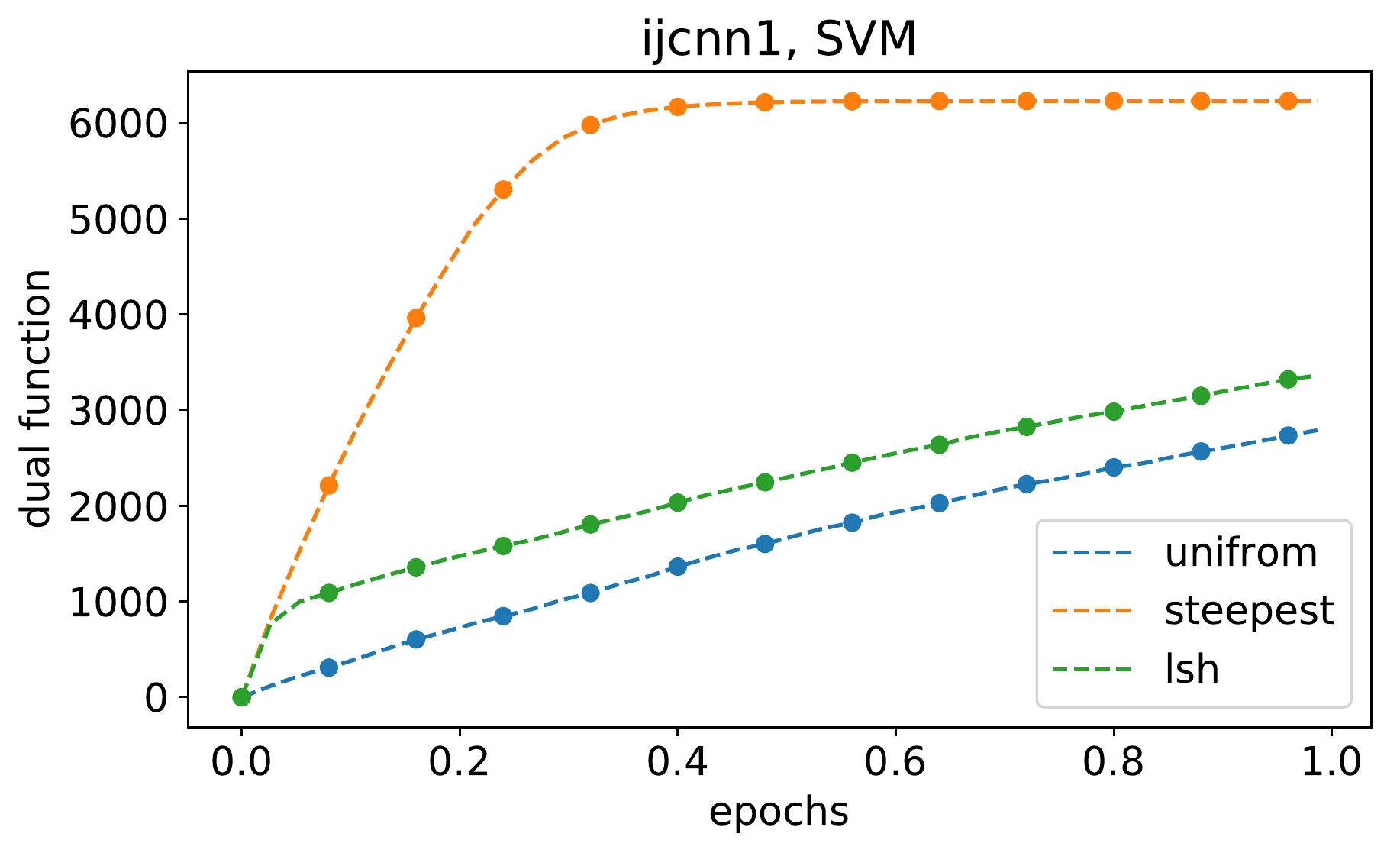}
	\caption{{\steepest} as well {\lsh} significantly outperform {\uniform} in number of iterations.}\label{fig:lsh-iterations}
\end{figure}

\begin{figure}[!h]
	\includegraphics[width=0.24\linewidth]{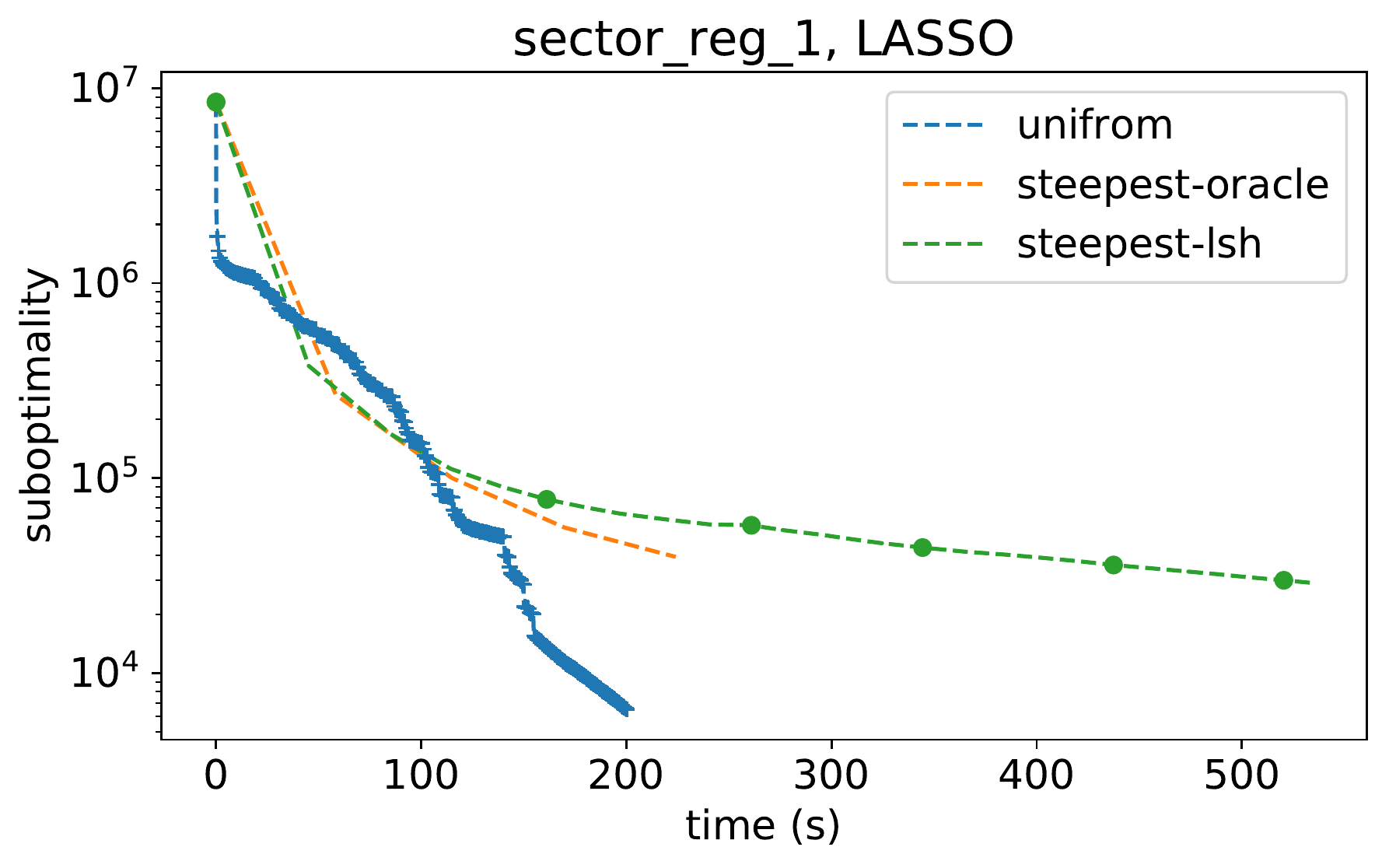}
	\includegraphics[width=0.24\linewidth]{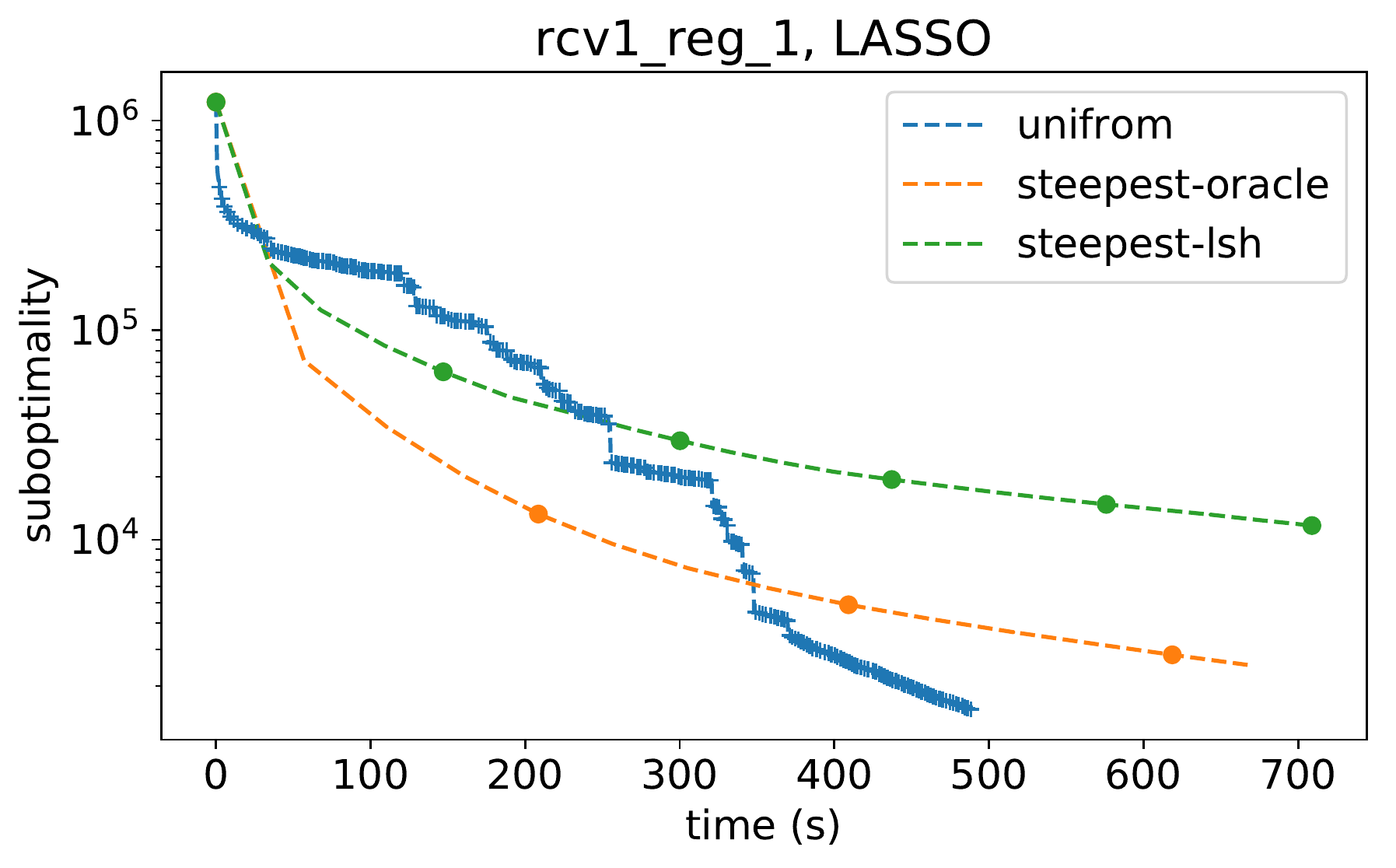}
	\includegraphics[width=0.24\linewidth]{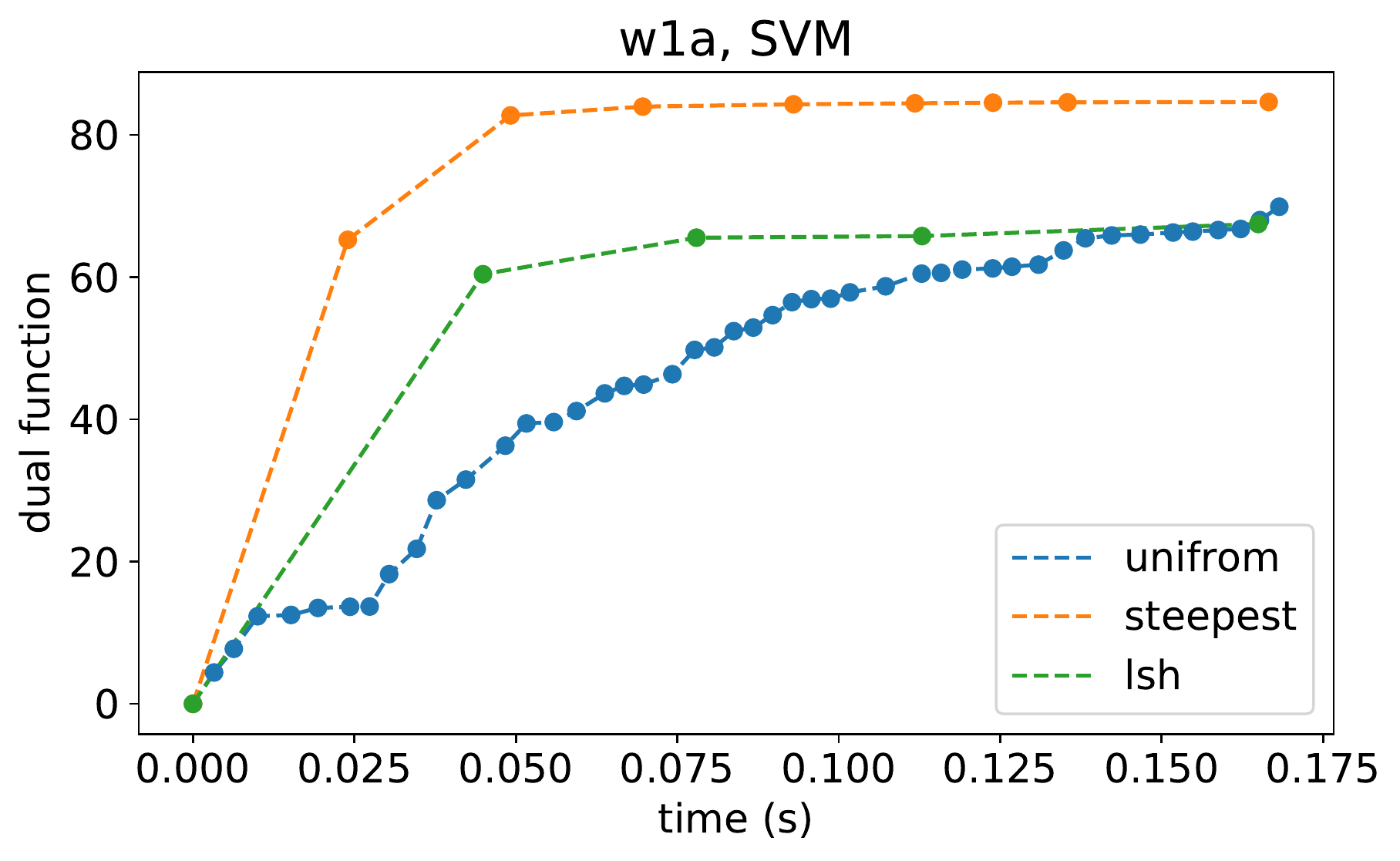}
	\includegraphics[width=0.24\linewidth]{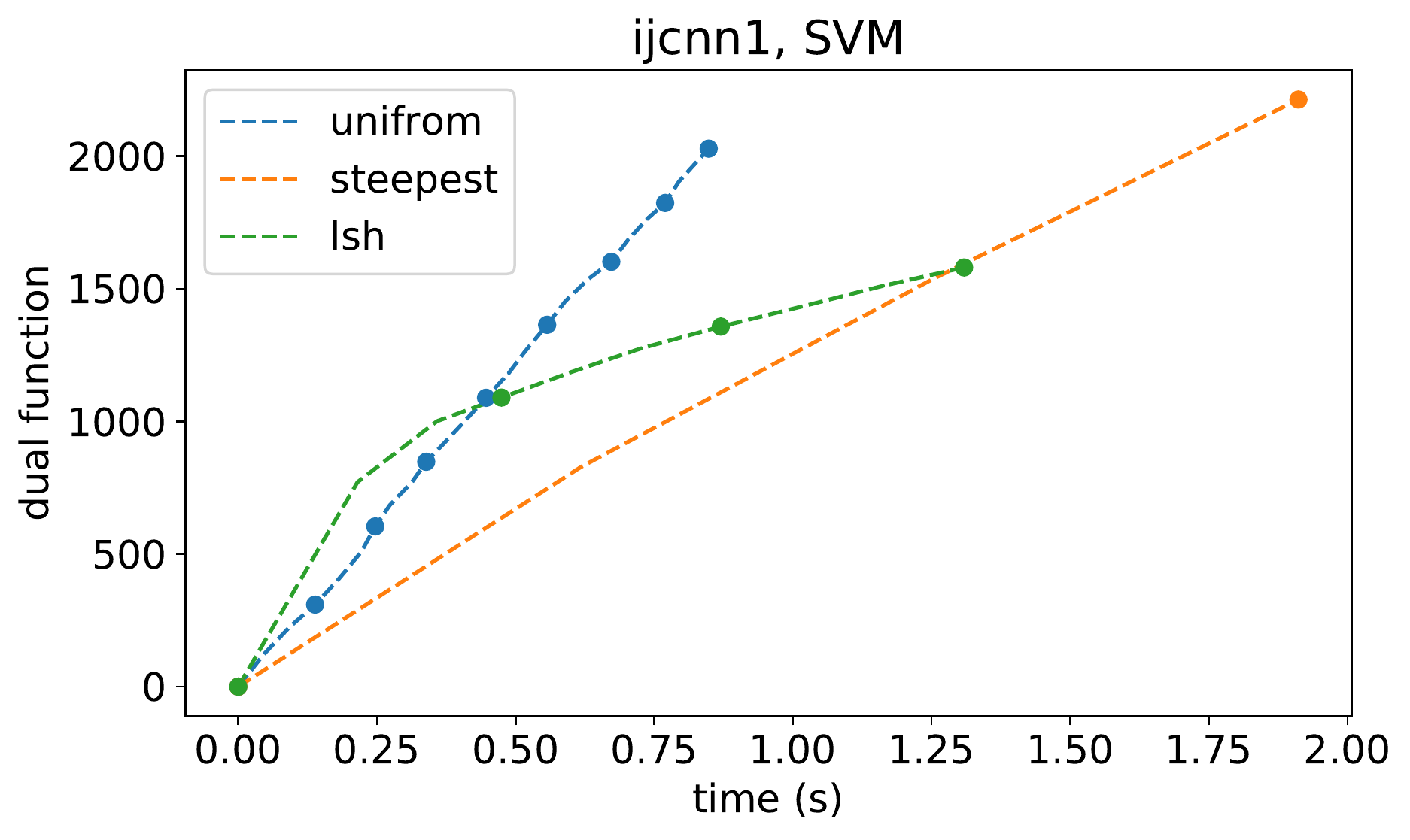}
	\caption{{\lsh} is very competitive and sometimes outperforms {\uniform} even in terms of wall time especially towards the beginning. However eventually the performance of {\uniform} is better than {\lsh}. This is because as the norm of the gradient becomes small, the \emph{hyperplane} LSH algorithm used performs poorly.}\label{fig:lsh-time}
\end{figure}

\begin{figure}[!h]
	\includegraphics[width=0.24\linewidth]{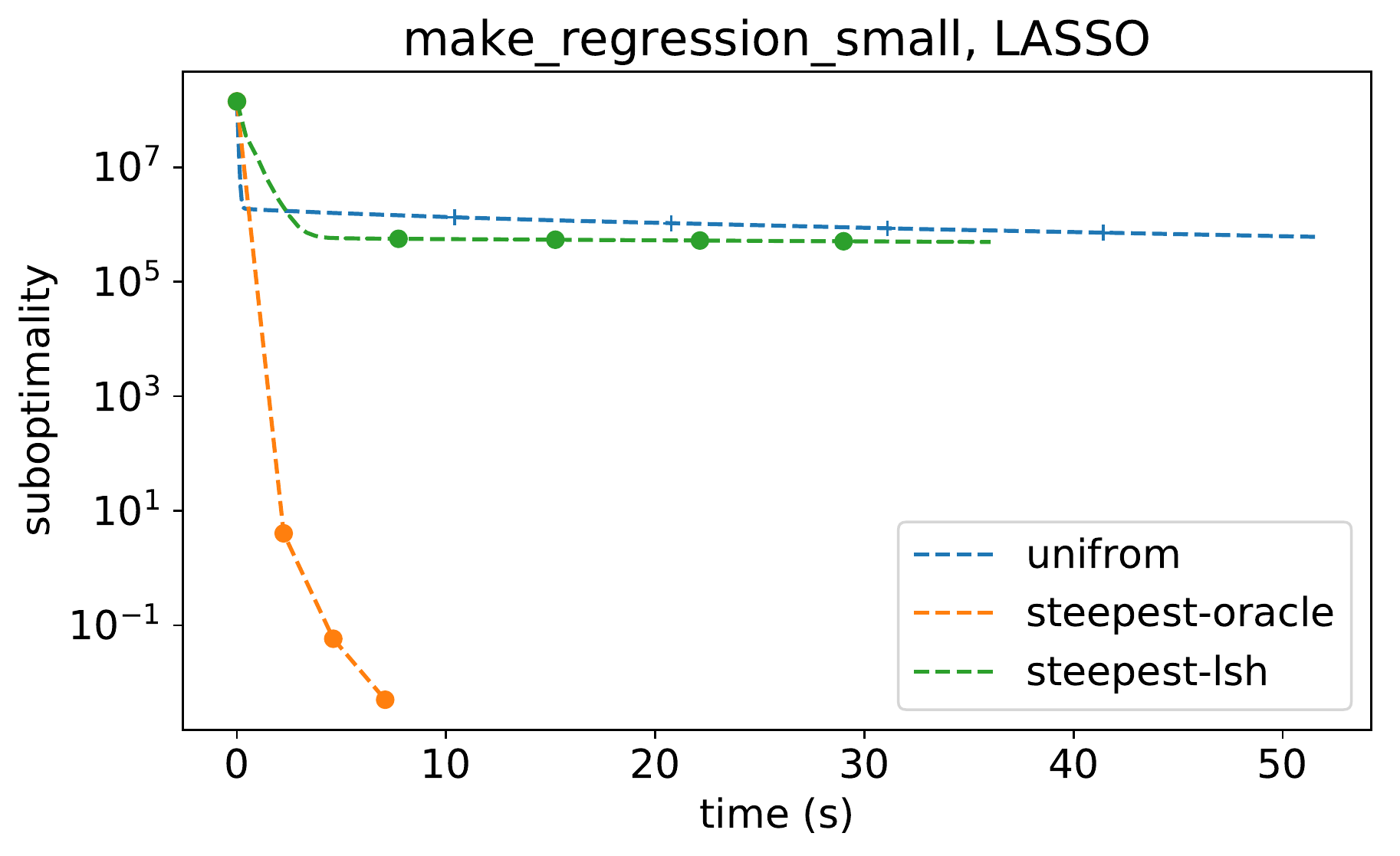}
	\includegraphics[width=0.24\linewidth]{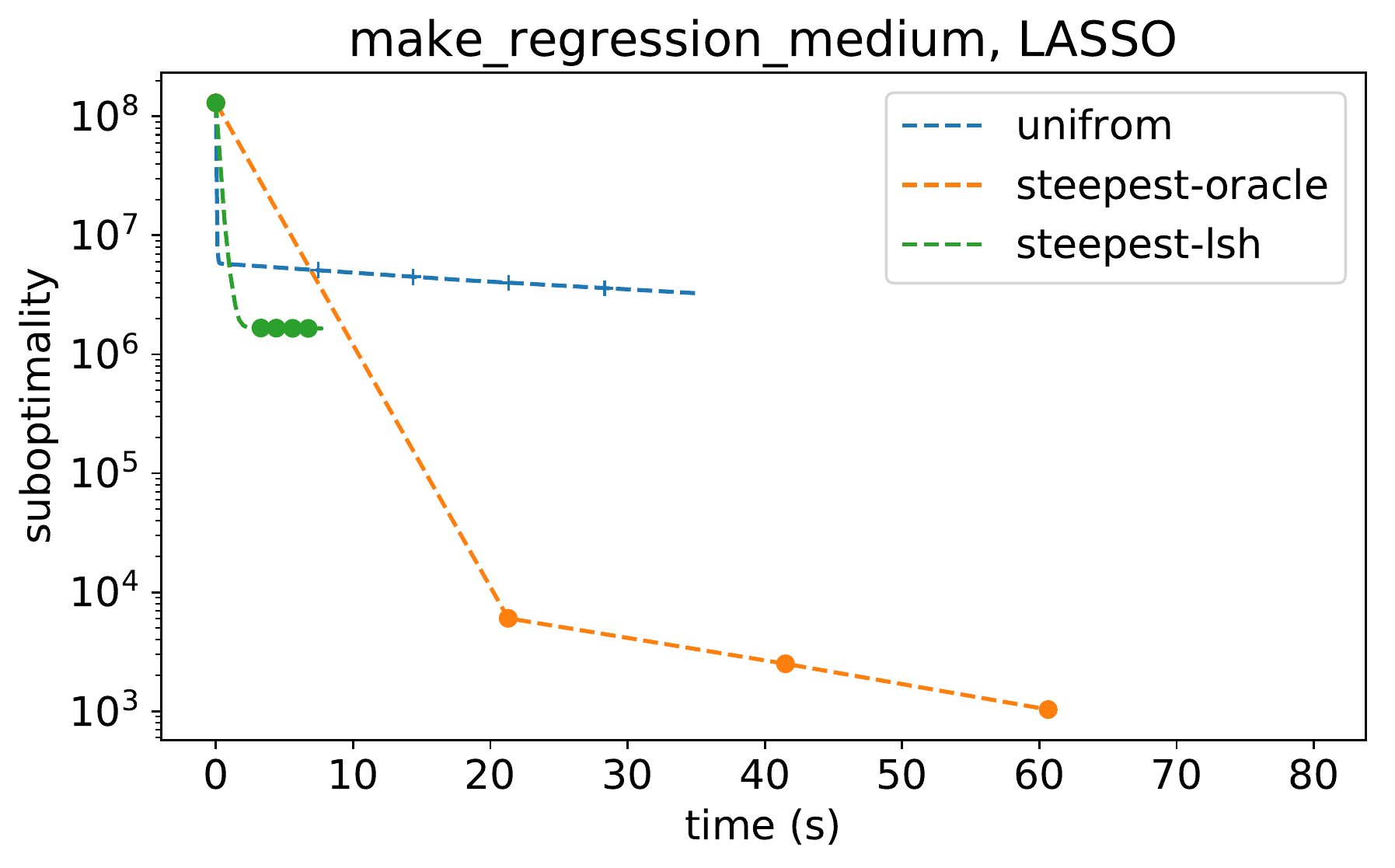}
	\includegraphics[width=0.24\linewidth]{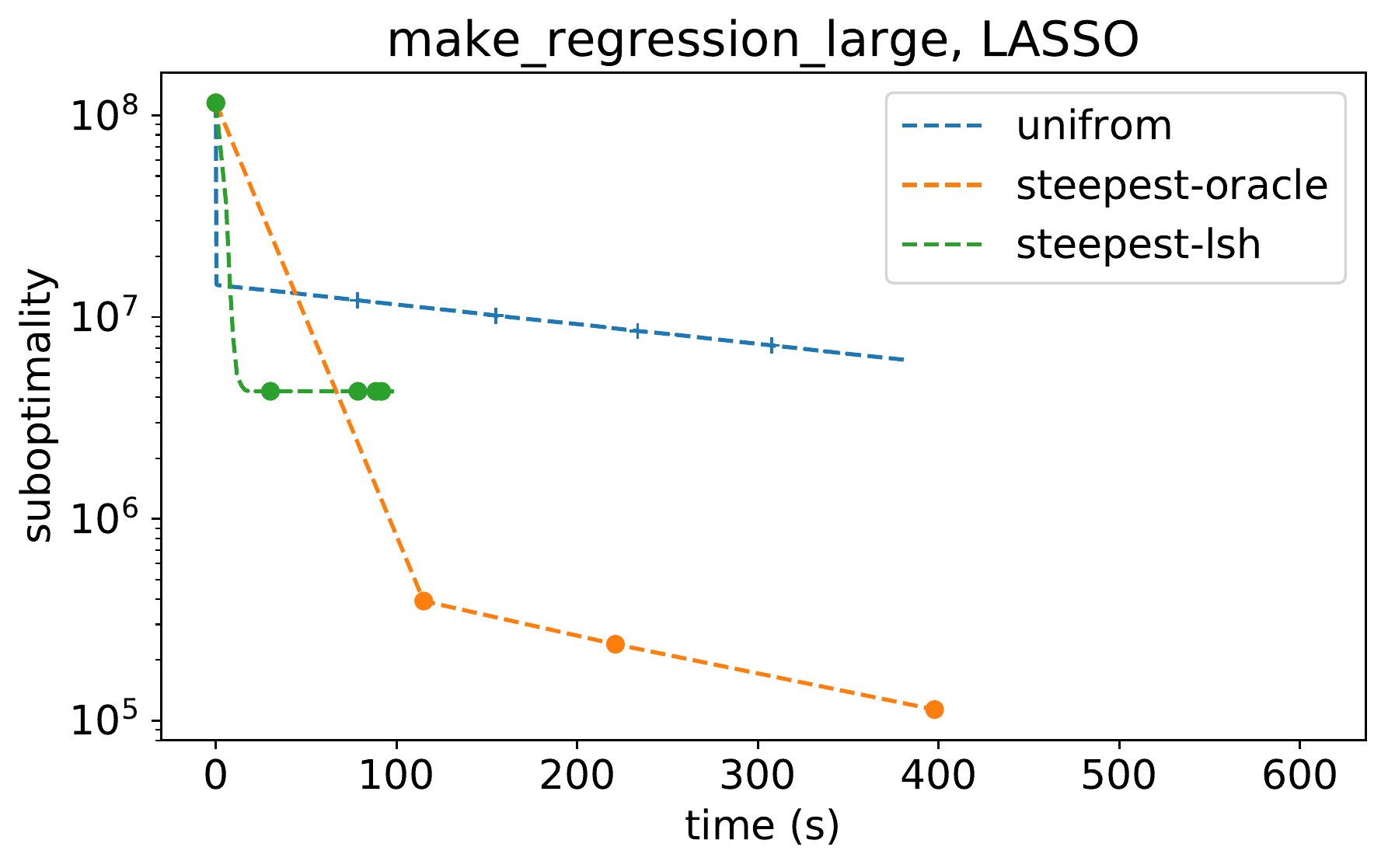}
	\caption{Performance on {\makeregression} with $n \in \{10^4,10^5,10^6\}$. The advantage of {\lsh} over {\uniform} consistently increases with increasing problem sizes, eventually significantly outperforming it.}\label{fig:lsh-make-regression}
\end{figure}

\begin{table}[]
	\centering
	\caption{Datasets and {\falconn} hyper-parameters: Lasso is run on {\rcv} and {\sector}, and SVM on {\wa} and {\ijcnn}. $L$ is fixed at 10, and $k = \floor{\log(n) - h}$.\vspace{1mm}}
	\label{tab:datasets-lsh}
	\begin{tabular}{llllll}
		\hline
		\textbf{Dataset} & {$\mathbf{n}$} & {$\mathbf{d}$} & {$\sqrt{n}\mathbf{\beta}$} &  {$\mathbf{m}$}  & {$\mathbf{h}$}\\ \hline
		{\sector}      &  55,197          & 6,412         & 10    & 90 & 1\\
		{\rcv}         &  47,236          & 15,564        & 1.5  & 90 & 0\\
		{\wa}          &  2,477           & 300           & 10    & 50 & 1\\
		{\ijcnn}       & 49,990         &  22           & 3    & 10 & 0\\ \hline
	\end{tabular}
\end{table}

\begin{figure}[!h]
	\includegraphics[width=0.24\linewidth]{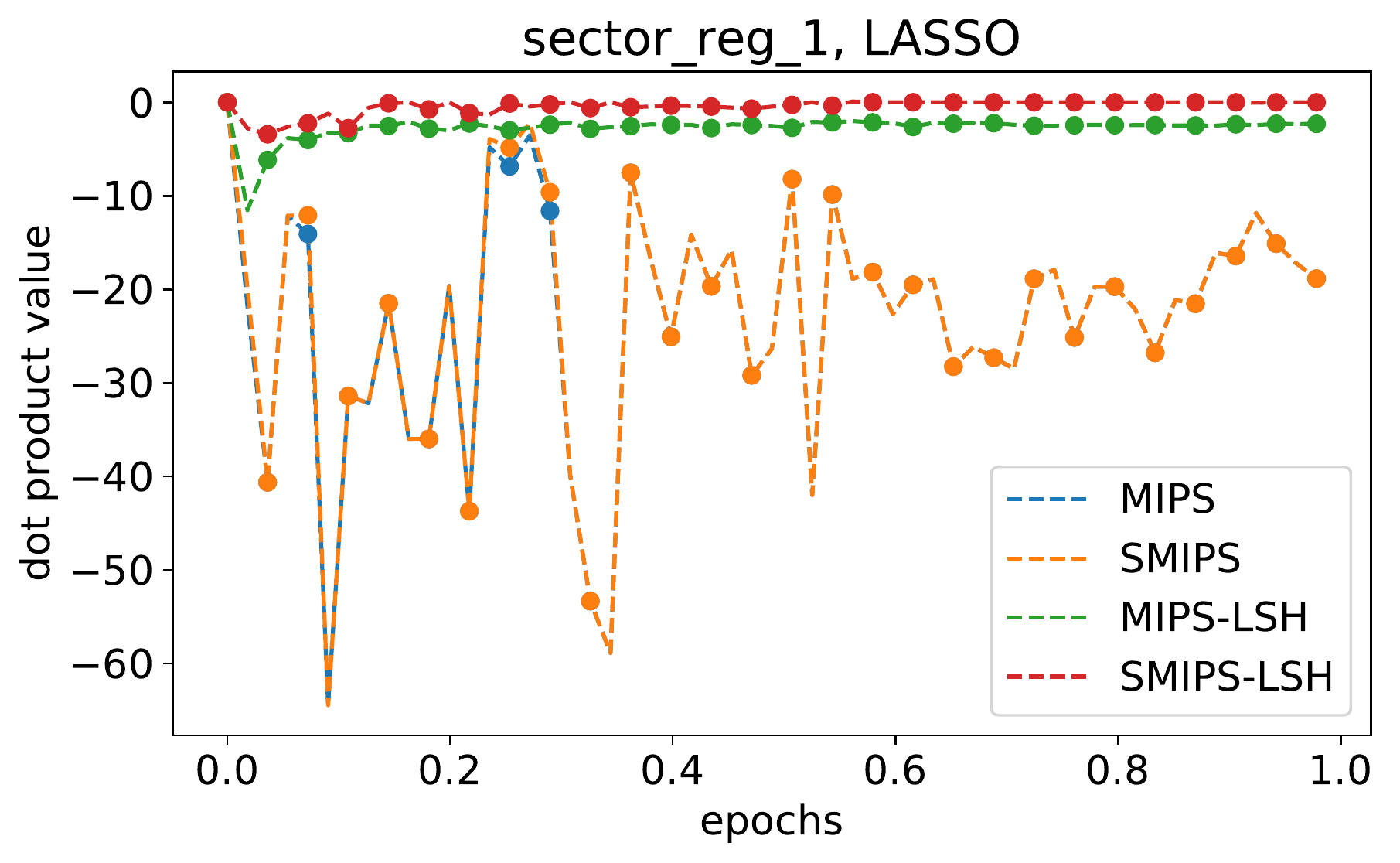}
	\includegraphics[width=0.24\linewidth]{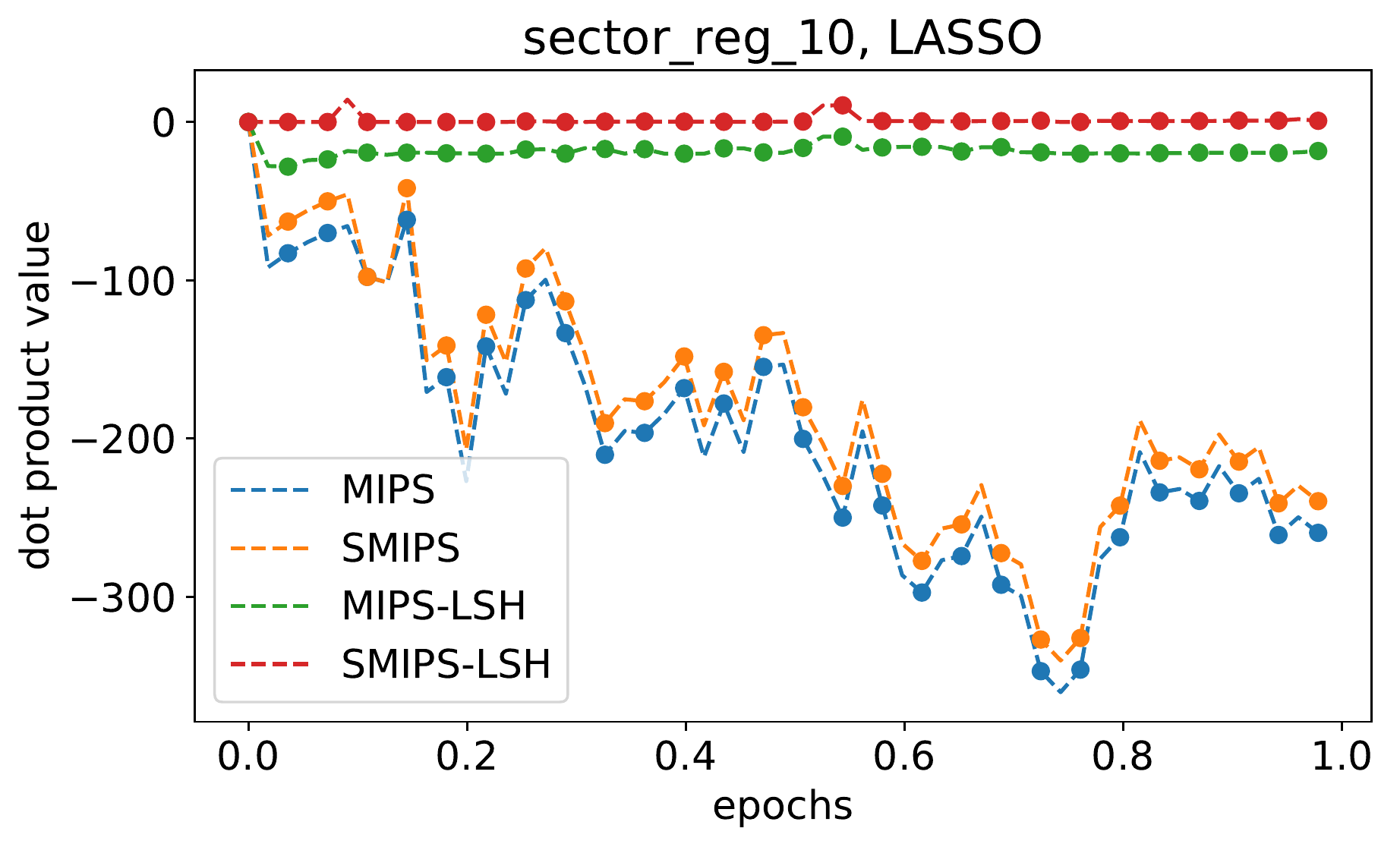}
	\includegraphics[width=0.24\linewidth]{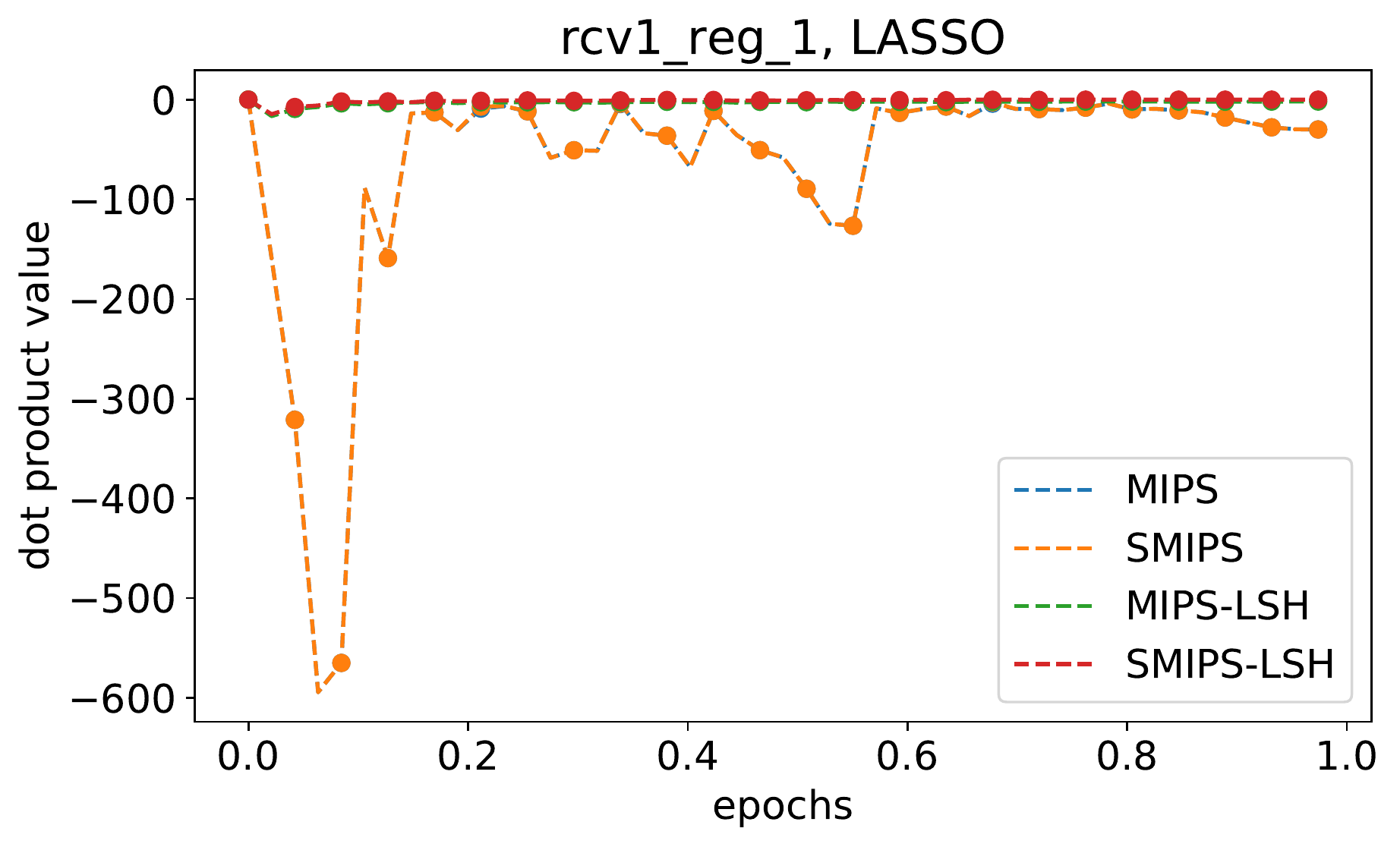}
	\includegraphics[width=0.24\linewidth]{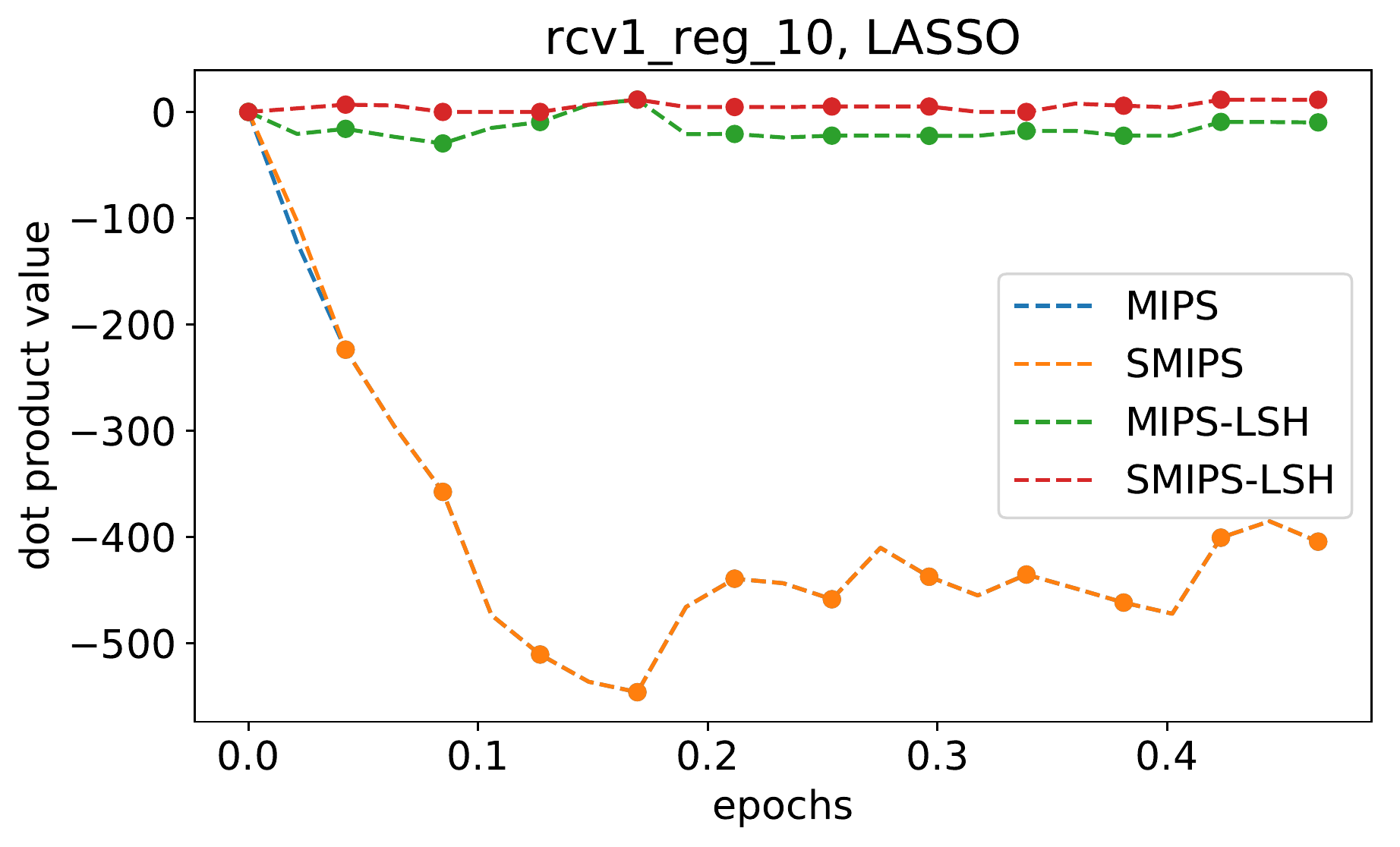}
	\caption{Adaptivity: ($\MIPS$-LSH) and ($\MIPS$) are close to ($\SMIPS$-LSH) and ($\SMIPS$) respectively indicating that choosing subsets adaptively does not affect LSH algorithm substantially. However the gap between ($\MIPS$-LSH) and ($\MIPS$) indicates the general poor quality of the solutions returned by LSH.}\label{fig:adaptivity-lsh}
\end{figure}

We repeat experiments from section \ref{sec:experiments} using {\falconn} library, which is an efficient implemenation of LSH. Used {\falconn} hyper-parameters could be found in the Table \ref{tab:datasets-lsh}. Figure \ref{fig:lsh-iterations} shows the superiority of {\lsh} and {\steepest} over {\uniform} in terms of iterations. Figure \ref{fig:lsh-time} shows their wall-time comparison. Qualitatively, the behaviour of {\lsh} is similar to the {\nmslib} results, whereas quantitive results are better using {\nmslib} library for the SVM and a bit worse for the LASSO.

\end{document}